\newtheorem{theorem}{Theorem}[section]
\newtheorem{question}[theorem]{Question}
\newtheorem{example}[theorem]{Example}
\newtheorem{definition}[theorem]{Definition}
\newtheorem{corollary}[theorem]{Corollary}
\newtheorem{lemma}[theorem]{Lemma}
\newtheorem{notation}[theorem]{Notation}
\newtheorem{note}[theorem]{Note}
\numberwithin{equation}{section}
\tikzset{every picture/.style={line width=0.75pt}}
\begin{document}

\setlength{\abovedisplayskip}{0cm}

\title{Rational Link Fertility}

\author{Andrew Ducharme}
\date{\today}

\maketitle

\begin{abstract}
A knot $K$ is the resultant of a knot $H$ if there exists a minimal crossing diagram $D$ of $K$ such that some crossings of $D$ can be altered to produce $H$. $K$ is fertile if every prime knot $H$ with crossing number less than $c(K)$ is a resultant of $K$. $K$ is $n$-fertile if every prime knot $H$ with crossing number less than $n$ is a resultant of $K$. We classify the fertility and fertility number of all rational links. This requires the introduction of the analogous concept of link fertility.
\end{abstract}

\section{Introduction}

Consider a minimal diagram of a knot $K$. Would the knot type be changed by switching the parity of a crossing from over to under or vice versa? How many other knots can be formed from a single knot? Can we form all smaller knots? The concept of knot fertility, introduced in \cite{MR3844207}, formalizes these questions.

\begin{definition}
A knot $K$ is \emph{fertile} if every prime knot $H$ with $c(H) < c(K)$ can be formed from the shadows of the minimal crossing diagrams of $K$.
\end{definition}

\begin{definition}
A knot $K$ has \emph{fertility number} $F(K) = m$, where all knots $H$ with $c(H) \leq m$ can be formed from the shadows of minimal crossing diagrams of $K$. We say $K$ is $p$-\emph{fertile} for all crossing numbers $p \leq m$.
\end{definition}

Instead of working with true knot diagrams, we start with \textit{shadows}, or projections, of the diagrams, where all over/undercrossing information is forgotten. We then examine the knots formed by particular choices of over/under parities for each crossing. The knot formed by a complete set of choices for each crossing is a \textit{resultant}. In Figure 1, for example, 1(a) is a diagram of the trefoil knot, 1(b) is the shadow of that diagram, and 1(c) is a particular resultant of that shadow.

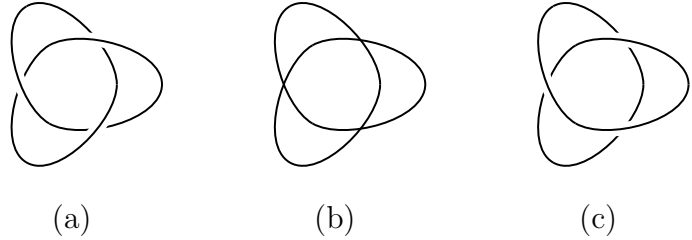
\begin{figure}[h]

\begin{center}
\begin{tikzpicture}

% trefoil knot
\begin{scope}[xshift = -3.5 cm, scale = 0.6]
	\coordinate (A) at (0:1cm) {};
	\coordinate (B) at (120:1cm) {};
	\coordinate (C) at (240:1cm) {};
	\coordinate (D) at (0:2cm) {};
	\coordinate (E) at (120:2cm) {};
	\coordinate (F) at (240:2cm) {};
	
	\draw (A) .. controls +(90:.8cm) and +(30:.8cm) .. (E);
	\draw[line width=5pt, white] (D)  .. controls +(90:.8cm) and +(30:.8cm) .. (B);
	\draw (C) .. controls +(-30:.8cm) and +(270:.8cm) .. (D)  .. controls +(90:.8cm) and +(30:.8cm) .. (B);
	\draw [line width=5pt, white] (F)  .. controls +(-30:.8cm) and +(-90:.8cm) .. (A);
	\draw (B) .. controls +(210:.8cm) and +(150:.8cm) .. (F)  .. controls +(-30:.8cm) and +(-90:.8cm) .. (A);
	\draw[line width=5pt, white] (E)  .. controls +(210:.8cm) and +(150:.8cm) .. (C);
	\draw (E)  .. controls +(210:.8cm) and +(150:.8cm) .. (C);
	
	\draw (0, -3) node {(a)};
\end{scope}

% knot shadow
\begin{scope}[scale = 0.6]
	\coordinate (A) at (0:1cm) {};
	\coordinate (B) at (120:1cm) {};
	\coordinate (C) at (240:1cm) {};
	\coordinate (D) at (0:2cm) {};
	\coordinate (E) at (120:2cm) {};
	\coordinate (F) at (240:2cm) {};
	
	\draw (A) .. controls +(90:.8cm) and +(30:.8cm) .. (E);
	%\draw[line width=5pt, white] (D)  .. controls +(90:.8cm) and +(30:.8cm) .. (B);
	\draw (C) .. controls +(-30:.8cm) and +(270:.8cm) .. (D)  .. controls +(90:.8cm) and +(30:.8cm) .. (B);
	%\draw [line width=5pt, white] (F)  .. controls +(-30:.8cm) and +(-90:.8cm) .. (A);
	\draw (B) .. controls +(210:.8cm) and +(150:.8cm) .. (F)  .. controls +(-30:.8cm) and +(-90:.8cm) .. (A);
	%\draw[line width=5pt, white] (E)  .. controls +(210:.8cm) and +(150:.8cm) .. (C);
	\draw (E)  .. controls +(210:.8cm) and +(150:.8cm) .. (C);
	
	\draw (0, -3) node {(b)};
\end{scope}

% resultant
\begin{scope}[xshift=3.5cm, scale = 0.6]
	\coordinate (A) at (0:1cm) {};
	\coordinate (B) at (120:1cm) {};
	\coordinate (C) at (240:1cm) {};
	\coordinate (D) at (0:2cm) {};
	\coordinate (E) at (120:2cm) {};
	\coordinate (F) at (240:2cm) {};
	
	\draw (A) .. controls +(90:.8cm) and +(30:.8cm) .. (E);
	\draw[line width=5pt, white] (D)  .. controls +(90:.8cm) and +(30:.8cm) .. (B);
	\draw (C) .. controls +(-30:.8cm) and +(270:.8cm) .. (D);
	\draw (D)  .. controls +(90:.8cm) and +(30:.8cm) .. (B);
	\draw (F)  .. controls +(-30:.8cm) and +(-90:.8cm) .. (A);
	\draw [line width=5pt, white] (C)  .. controls +(-30:.8cm) and +(-90:.8cm) .. (D);
	\draw (C) .. controls +(-30:.8cm) and +(270:.8cm) .. (D);
	\draw (B) .. controls +(210:.8cm) and +(150:.8cm) .. (F);
	\draw[line width=5pt, white] (E)  .. controls +(210:.8cm) and +(150:.8cm) .. (C);
	\draw (E)  .. controls +(210:.8cm) and +(150:.8cm) .. (C);
	
	\draw (0, -3) node {(c)};
\end{scope}

\end{tikzpicture}
\end{center}
\caption{(a) The trefoil knot, (b) the shadow of a trefoil, and (c) a resultant isotopic to the unknot.}
\end{figure}

The initial study of knot fertility calculated the fertility numbers of all knots with 10 or fewer crossings, discovering $3_1$, $4_1$, $5_2$, $6_2$, $6_3$, and $7_6$ are the only such fertile knots \cite{MR3844207}. Recent work showed there are no fertile alternating knots with greater than seven crossings \cite{MR4193872} while another duology \cite{MR4323911, kf2} determined any other extant fertile knot $K$ to have an odd crossing number $11 \leq c(K) \leq 21$, be genus 2, and have braid index 4.

In this paper, we completely determine the fertility and fertility number of all rational knots and links. This partially answers Question 2 posed in \cite{MR3844207}. In doing so, we must (trivially) extend the definition of fertility to links.

\begin{definition}
A link $L$ has $\mu (L)$ components. This is its \emph{component number}.
\end{definition}

\begin{definition}
A link $L$ is \emph{fertile} if every prime link $M$ with $c(M) < c(L)$ and $\mu(M) = \mu(L)$ can be formed from the shadows of the minimal crossing diagrams of $L$. $L$ has \emph{fertility number} $F(L) = n$, where all M with $c(M) \leq n$ and $\mu(M) = \mu(L)$ are resultants of $L$.
\end{definition}

\begin{theorem} (Corollary \ref{rat_fert})
The only fertile rational two-component links are $2^2_1$, $4^2_1$, $5^2_1$, $6^2_2$, $6^2_3$, and $7^2_2$.
\end{theorem}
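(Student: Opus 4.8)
The plan is to combine one uniform non-realizability obstruction, which disposes of all large rational links at once, with a finite case analysis of the small ones. I would set things up with the standard dictionary: a two-component rational link is $\mathfrak b(p,q)$ with $p$ even, $0<q<p$, $\gcd(p,q)=1$; its minimal crossing number is $a_1+\cdots+a_k$, where $p/q=[a_1,\dots,a_k]$ is an all-positive continued fraction; and its minimal diagram is the corresponding chain of twist regions. The essential structural input — which I would prove first, and which is presumably one of the lemmas the cited corollary rests on — is that \emph{every resultant of a minimal diagram of a rational link is again a two-component rational link or a non-prime link} (a split link or a connected sum). Indeed, rational links are alternating, so by the Tait flyping theorem all of their minimal diagrams are reduced alternating, related by flypes, hence again ``chain of twist regions'' diagrams; a crossing change inside a twist region of length $a_i$ turns it into a twist region of some signed length $t_i\equiv a_i\pmod 2$ with $|t_i|\le a_i$; so every resultant is one of the $2$-bridge links $[t_1,\dots,t_k]$ (read up to the equivalence $\mathfrak b(p,q)=\mathfrak b(p,q')$ for $q'\equiv q^{\pm1}\bmod p$), together with the non-prime links that arise when the continued fraction collapses, and in all cases the number of components is preserved. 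In particular no resultant of a rational link is a non-rational prime link.

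Granting this, the upper bound on crossing number is immediate. Inspecting the link tables, every prime two-component link with at most six crossings is rational — $2^2_1=\mathfrak b(2,1)$, $4^2_1=\mathfrak b(4,1)$, $5^2_1=\mathfrak b(8,3)$, and $6^2_1=\mathfrak b(6,1)$, $6^2_2=\mathfrak b(10,3)$, $6^2_3=\mathfrak b(12,5)$ — whereas of the eight prime two-component links with seven crossings only three are rational, so at least five are not. Fix a non-rational one $M$: then $c(M)=7$ and $\mu(M)=2$, but by the structural input $M$ is not a resultant of any minimal diagram of a rational link, so no rational two-component link $L$ with $c(L)\ge 8$ can be fertile. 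Hence every fertile rational two-component link has $c(L)\le 7$.

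It then remains to test the finitely many two-component rational links with at most seven crossings — $\mathfrak b(2,1)$, $\mathfrak b(4,1)$, $\mathfrak b(8,3)$, $\mathfrak b(6,1)$, $\mathfrak b(10,3)$, $\mathfrak b(12,5)$, and the three seven-crossing ones — by computing, for each, its full set of resultants (the admissible damped continued fractions over all of its minimal diagrams, identified up to $2$-bridge equivalence) and checking whether that set contains every prime two-component link of strictly smaller crossing number. One finds: $\mathfrak b(2,1)$ and $\mathfrak b(4,1)$ are fertile trivially; $\mathfrak b(8,3)=[2,1,2]$ is fertile (it realizes the Hopf link and, e.g. via $[2,-1,-2]=4/3\sim 4/1$, also $4^2_1$); $\mathfrak b(6,1)=[6]$ is a single twist region, so its only resultants are the $[t]=\mathfrak b(|t|,1)$ with $t$ even, which never include the Whitehead link $\mathfrak b(8,3)$, so $6^2_1$ is not fertile; $\mathfrak b(10,3)=[3,3]$ (realizing $[3,-3]=\mathfrak b(8,3)$, $[3,1]=\mathfrak b(4,1)$, $[1,1]=\mathfrak b(2,1)$) and $\mathfrak b(12,5)=[2,2,2]$ (similarly) are both fertile; and among the three seven-crossing rational links, exactly one — namely $7^2_2$ — realizes all six of $2^2_1,4^2_1,5^2_1,6^2_1,6^2_2,6^2_3$, the other two each missing at least one. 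Assembling the cases yields exactly $2^2_1,4^2_1,5^2_1,6^2_2,6^2_3,7^2_2$.

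I expect the difficulty to be concentrated in two places. First, making the structural input airtight: one must control \emph{all} minimal diagrams of a rational link — this is where the flyping theorem is needed — and handle the degenerate reductions carefully, checking that each collapsed continued fraction really does give a split link or a connected sum rather than some small rational link that has been overlooked; this is exactly the kind of boundary case where the $q\equiv\pm q^{-1}$ bookkeeping can bite. Second, the seven-crossing analysis is the laborious heart of the argument: one must enumerate the resultants of all minimal diagrams of each of the three seven-crossing rational links, recognize each resulting $2$-bridge link via the standard equivalence, and in particular confirm both that $7^2_2$ hits every one of the six smaller prime two-component links and that the other two genuinely miss one — the step where an arithmetic slip is most likely.
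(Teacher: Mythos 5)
Your proposal is correct in substance and reaches the right list, but the key reduction is driven by a different obstruction than the one the paper uses for this corollary. The paper's route is purely combinatorial counting: it first shows that a rational link $L=N[a_1\,\dots\,a_n]$ has at most $n-1$ distinct resultants with crossing number $c(L)-1$, then bounds $d(L)\le c(L)-2$, so an $8$-crossing rational link has at most $5$ resultants with $7$ crossings while there are $8$ prime two-component links to hit (and the deficit only worsens as $c(L)$ grows). The same count disposes of $6^2_1$, $7^2_1$, and $7^2_3$ uniformly, since they are too short to produce even the three six-crossing (resp.\ one five-crossing) two-component links. Your route instead uses the structural fact that every resultant of a rational shadow is again rational (which the paper also proves, as Theorem \ref{rat_gen_theorem}, and deploys in Section \ref{link_fert_num} to get $F(L)\le 6$), together with the observation that five of the eight $7$-crossing prime two-component links are not $2$-bridge; this kills $c(L)\ge 8$ just as effectively and is arguably cleaner for the binary question, but it says nothing about the seven-crossing rational links themselves, so you must check $7^2_1$ and $7^2_3$ by hand where the paper gets them for free from the length bound. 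Your explicit attention to all minimal diagrams via the Tait flyping theorem is a point of care the paper leaves implicit. Two small remarks: resultants of a rational shadow are always rational links (possibly trivial), never genuinely non-prime non-split links, so your hedge about connected sums is unnecessary; and your verification that $7^2_2$ hits all six smaller links is left as ``one finds'' --- the paper supplies the explicit assignments $N[\pm 3\,\pm 1\,\mp 1\,\pm 2]\approx 6^2_1$, $N[\pm 3\,\pm 1\,\pm 1\,\mp 2]\approx 6^2_2$, $N[\pm 3\,\mp 1\,\pm 1\,\pm 2]\approx 6^2_3$ in Section \ref{link_fert_num}, and you would need to exhibit such assignments (and the $5^2_1$, $4^2_1$, $2^2_1$ ones) to close the argument.
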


Like with knots, we see the number of fertile links dropping rapidly as the number of link types abound with crossing number. Showing that these are the sole fertile rational links is actually very simple, a direct consequence of the maximum allowed number of unique resultants for each link. As for fertility number specifically, we explicitly determine every rational links' fertility number, finding a limiting value for both component lengths.

%We then study the exact fertility number of all rational links. Defining the \textit{length} of a rational link $N[a_1 \; a_2 \; ... \; a_{n-1} \; a_n]$ as $n$, 

\begin{theorem} (Corollary \ref{link_max})
Rational two-component links \\ $L = N[a_1 \; a_2 \; ... \; a_n]$ have $F(L) = 6$ for $n > 3$.
\end{theorem}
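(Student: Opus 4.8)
The plan is to prove $F(L)=6$ by separately establishing the lower bound, that every prime two-component link $M$ with $c(M)\le 6$ occurs as a resultant of the standard alternating diagram $D=D[a_1,\dots,a_n]$, and the upper bound, that some prime two-component link with $c=7$ does not. By the tables of prime links of small crossing number, the links in the lower bound are exactly $2^2_1,4^2_1,5^2_1,6^2_1,6^2_2,6^2_3$, all rational, with Conway continued fractions among $[2]$, $[4]$, $[6]$, $[2\,1\,2]$, $[2\,2\,2]$, $[3\,3]$, while there are eight prime two-component links with $c=7$. So the theorem reduces to realizing those six fractions as resultants for every $(a_1,\dots,a_n)$ with $n\ge 4$, and to exhibiting one unrealizable seven-crossing link.

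For the lower bound the first tool is the \emph{twist reduction}: inside the $i$th twist region of $D$ (a row of $|a_i|$ like-signed crossings) any choice of parities reduces, after Reidemeister~II moves, to $b_i$ crossings with $|b_i|\le |a_i|$ and $b_i\equiv a_i\pmod 2$, and a region reduced to $b_i=0$ collapses the continued fraction, deleting that entry and adding its neighbours. Hence $N[b_1,\dots,b_n]$ and all its continued-fraction collapses are resultants whenever $|b_i|\le |a_i|$ and $b_i\equiv a_i$. When the $a_i$ do not all share a parity, a short case analysis on the parity pattern---only a handful of patterns survive once collapses are allowed, given $n\ge 4$---shows this repertoire already contains the six target fractions. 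The harder case is when all $a_i$ have the same parity: coherent twist reductions then preserve that parity, so to realize a fraction such as $[3\,3]$ starting from, say, $N[10,10,10,10]$ one must use \emph{incoherent} crossing changes, untangling most of $D$ by Reidemeister~I/II moves and leaving behind a prescribed small sub-diagram. I would give explicit such assignments for each of the six targets; this is precisely where $n>3$ is used, and it is sharp, consistent with the earlier corollary, since the analogous realizations fail for $n\le 3$.

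For the upper bound I would invoke control on the number of distinct resultants of a rational shadow: although $L$ itself may have arbitrarily large crossing number, a resultant of $D$ with at most seven crossings forces all but a bounded part of $D$ to collapse, so these small resultants form a list whose length is bounded independently of $n$ and of the $a_i$. Enumerating that list and intersecting with the two-component links of crossing number $7$ leaves at most seven of the eight, so at least one seven-crossing two-component link is never a resultant of $D$, giving $F(L)\le 6$; together with the lower bound this yields $F(L)=6$ for all $n>3$.

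The main obstacle is the incoherent part of the lower bound. Coherent twist reductions cannot change the parity of a continued-fraction entry, so for an arbitrarily long $[a_1,\dots,a_n]$ with all entries even and large one must construct parity assignments on $D$ that simplify all the way down to $[3\,3]$, $[2\,2\,2]$, $[2\,1\,2]$ and the torus fractions $[2],[4],[6]$---constructions that genuinely leave the class of rational tangles part way and return to it---and verifying that $n>3$ always leaves enough room to do so uniformly in the $a_i$ is where the real work lies.
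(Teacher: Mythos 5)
There is a genuine gap on both sides of your argument. For the upper bound, the assertion that a bounded enumeration of small resultants ``leaves at most seven of the eight'' seven-crossing two-component links is exactly the claim that needs proof, and you never identify which link is missed or why. The idea you are missing is that every resultant of a rational shadow is itself rational (in the paper this is Theorem \ref{rat_gen_theorem}: resolving each twist region of $N[a_1' \dots a_n']$ yields a diagram $N[c_1 \dots c_n]$, hence a rational link). Since five of the eight prime two-component links with seven crossings are non-rational, none of them can ever occur as a resultant, and $F(L) \leq 6$ follows in one line --- no enumeration of resultants of an arbitrarily long shadow is needed, and indeed your proposed enumeration would be hard to control, since the list of small resultants, while finite, depends on the parity pattern and collapses of all $n$ entries.

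For the lower bound, you correctly isolate the hard case (all $a_i$ large and of one parity, requiring ``incoherent'' assignments that leave and re-enter canonical form), but you defer precisely that construction: ``I would give explicit such assignments'' and ``verifying that $n>3$ always leaves enough room \dots is where the real work lies'' is the theorem, not a proof of it. As stated, your plan requires exhibiting the six target fractions for infinitely many tuples $(a_1,\dots,a_n)$ with no finite reduction. The paper supplies that reduction: a heredity lemma showing $F$ does not decrease when any $a_i$ is replaced by $a_i+2$ (so only the ``trunk'' links with entries in $\{1,2,3\}$ need checking), a lemma reducing length-$n$ links to length-$(n-1)$ and length-$(n-2)$ ones (Theorem \ref{f_across_L}), and an induction step (Theorem \ref{end_goal}) that leaves only the trunks $\mathcal{K}_4$, $\mathcal{K}_5$ and the single knot-shaped straggler $N[2\;1^{4}\;2]$ to verify by explicit assignments via the Untangling Lemma. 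Without some such finiteness mechanism, your ``short case analysis on the parity pattern'' cannot close, because within a fixed parity pattern the entries $a_i$ are still unbounded. One smaller point: the cutoff $n>3$ is not sharp in the way you suggest, since most length-3 two-component rational links also have $F=6$ (only $N[a_1\;1\;a_3]$ and $N[2\;a_2\;2]$ fail).
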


\begin{theorem} (Theorem \ref{knot_max})
Rational knots $K = N[a_1 \; a_2 \; ... \; a_n]$ have $F(K) = 7$ for $n > 7$.
\end{theorem}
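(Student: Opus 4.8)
\noindent\emph{Proof plan.} The strategy is to prove the two halves $F(K)\le 7$ and $F(K)\ge 7$ separately. The first is quick: a crossing change inside a $4$-plat diagram leaves unchanged the fact that the diagram has two maxima and two minima, so every resultant of $N[a_1\,\cdots\,a_n]$ is a knot of bridge number at most two, that is, a rational knot or the unknot. Since for $n>7$ the reduced form has $c(K)=\sum a_i\ge 9$, the prime knots of crossing number $8$ all lie below $K$; but $8_{19}$, $8_{20}$, $8_{21}$ (and the $3$-bridge knots $8_5,8_{10},8_{15},\dots,8_{18}$) are not $2$-bridge, hence are not resultants of $K$. Thus $K$ is not $8$-fertile and $F(K)\le 7$.

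For the lower bound one must show that every prime knot of crossing number at most $7$ — equivalently, each of the fourteen rational knots $3_1,4_1,5_1,5_2,6_1,6_2,6_3,7_1,\dots,7_7$ — is a resultant of $N[a_1\,\cdots\,a_n]$ for every admissible reduced vector $(a_1,\dots,a_n)$ with $n>7$. By the twist-region description of resultants of a rational shadow, the target $H=N[p/q]$, up to mirror image, is a resultant of $N[a_1\,\cdots\,a_n]$ exactly when $p/q$ admits a continued fraction $[b_1,\dots,b_n]$ with $b_i\equiv a_i\pmod 2$ and $|b_i|\le |a_i|$ for every $i$ (a zero entry being read off by the standard collapsing rules at interior and at boundary positions). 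Each target has a continued fraction of length at most $5$, so what remains to prove is a realization statement: for any vector of parities and magnitudes of length greater than $7$, and any of these fourteen fractions, such a $(b_i)$ exists. I would prove it by assembling, for each $H$, a short table of serviceable continued fractions of $H$ and of its mirror — the standard expansion, an all-even expansion (available since $p$ is odd, so one of $q$ and $p-q$ is even), an all-odd expansion, and a few expansions with many entries equal to $\pm1$ — and then showing that from a suitable choice among these one can run the length out to $n$ while matching the prescribed parities: at a position whose magnitude is even one inserts a $0$, which fuses its two neighbours; at a position whose magnitude is $1$ one inserts a forced $\pm1$ and removes it by the standard continued-fraction identities $[\dots,x,\pm1,y,\dots]=[\dots,x\pm1,-(y\pm1),\dots]$ (which trade the unit entry for sign changes further along, hence preserve all magnitudes and parities downstream), all the while keeping every entry inside its magnitude bound and never leaving a boundary twist region trivial.

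The main obstacle is this realization statement, and within it the \emph{parity-rigid} patterns — those with many entries of magnitude exactly $1$, forcing $b_i=\pm1$, or exactly $2$, forcing $b_i\in\{0,\pm2\}$ — for these leave few usable expansions, and the invariant that must be matched is finer than the determinant: a determinant-$7$ resultant is $7_1$ or $5_2$ according to the residue of its penultimate convergent modulo $7$, and similarly a determinant-$5$ resultant is $5_1$ or $4_1$ modulo $5$. I would package the argument as a normal-form lemma — for each $H$ and each parity pattern of length at most $7$ there is a usable continued fraction of $H$ or of its mirror realizing that pattern, a bounded if tedious verification — together with a padding lemma that trims any admissible pattern of length greater than $7$ to such a core without losing realizability. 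Finally, the threshold $n=7$ is genuinely needed: for instance $N[55/21]=N[2\,1\,1\,1\,1\,1\,2]$ has seven reduced twist regions yet is not $7$-fertile, since every determinant-$7$ resultant of that shadow has penultimate convergent $\not\equiv\pm1\pmod 7$ and hence is $5_2$ rather than $7_1$, whence $F\bigl(N[55/21]\bigr)=6$.
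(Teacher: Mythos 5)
Your upper bound is fine and is essentially the paper's: every resultant of a rational shadow is rational (the paper gets this from Theorem \ref{rat_gen_theorem} rather than from bridge number, but the content is the same), and non-rational $8$-crossing knots such as $8_{19}$ then block $8$-fertility. The gap is in the lower bound, which is the entire substance of the theorem. You correctly reduce it to a realization statement --- for every admissible parity/magnitude pattern of length $n>7$ and each of the fourteen targets, a compatible continued fraction $[b_1,\dots,b_n]$ exists --- but you then only describe how you \emph{would} prove that statement (``a short table of serviceable continued fractions,'' ``a normal-form lemma,'' ``a padding lemma,'' ``a bounded if tedious verification''), and you yourself flag it as the main obstacle. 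None of it is carried out, so the proof is not there. Worse, the one concrete mechanism you offer for the padding step is not sound as stated: the identity $[\dots,x,\pm1,y,\dots]=[\dots,x\pm1,-(y\pm1),\dots]$ changes the neighbouring entries $x$ and $y$ by $1$, so it flips their parities and alters their magnitudes; it does not ``preserve all magnitudes and parities downstream,'' and keeping every entry inside its bound after such a move is exactly the delicate point (compare the paper's Untangling Lemma, which tracks this carefully and is why the exceptional family $N[2\;1^{\ell}\;2]$ has to be treated separately).

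The paper closes this gap differently and more economically: two monotonicity results (fertility number does not decrease under $a_i\mapsto a_i+2$, and Theorem \ref{f_across_L}, which bounds $F(L)$ below by the fertility number of a shorter link obtained by zeroing or collapsing an end tangle) reduce the infinite family of length $\ge 8$ knots, via Theorem \ref{end_goal}, to the \emph{finite} list of trunk knots in $\mathcal{K}_8$ and $\mathcal{K}_9$ together with the single pathological knot $N[2\;1^{8}\;2]$, and those finitely many cases are verified explicitly (by exhibiting assignments, backed by the computation in the Appendix). If you want to salvage your plan, the cleanest route is to prove your padding/trimming reductions as standalone lemmas in the spirit of Theorems \ref{f_across_L} and \ref{end_goal} and then actually perform the finite check on the resulting core patterns; as written, the argument asserts the conclusion of that check without doing it. (Your closing remark about $N[2\,1^5\,2]$ having $F=6$ agrees with the paper's table for $9_{31}$, but the modular-arithmetic reason you give for it is not justified and is not needed for the theorem.)
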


And while we do not touch on this idea in this paper, we suggest future study on the rational resultants of a link could be promising.

\begin{definition}
A knot $K$ has \emph{rational fertility number} $F_R(K) = n$, where all rational knots $H$ with $c(H) \leq n$ can be formed from the shadows of any minimal crossing diagram of $K$.
\end{definition}

\begin{question}
What is $F_R(L)$ for all rational links?
\end{question}

Since all knots with seven or fewer crossings are rational, $F_R(K) = F(K) \leq 7$. The smallest crossing number rational knots with $F_R(K) = 8$ are $11a77 = N[2 \; 2 \; 1 \; 1 \; 1 \; 1 \; 1 \; 2]$, $11a91 = N[2 \; 1 \; 1 \; 2 \; 1 \; 1 \; 1 \; 2]$, $11a96 = N[2 \; 2 \; 2 \; 1 \; 1 \; 1 \; 2]$, and $11a159 = N[2 \; 2 \; 2 \; 2\; 1 \; 2]$. We believe the techniques used in this paper could be extended to completely classify $F_R$ over the rational links. 

The paper is outlined as follows: Section \ref{link_fert} answers the binary question of fertility, Section \ref{link_fert_num} considers the fertility number of rational 2-component links, and Section \ref{knot_fert_num} considers the fertility number of rational knots. Whereas the rest of the paper considers the existence of various resultants, Section \ref{combin} counts the number of each for some rational links. Finally, in the Appendix, we tabulate the fertility number of every rational link.

\section{Rational Link Fertility} \label{link_fert}

\begin{theorem}
The only fertile rational knots are $3_1$, $4_1$, $5_2$, $6_2$, $6_3$, and $7_6$. The only fertile rational links are $2^2_1$, $4^2_1$, $5^2_1$, $6^2_2$, $6^2_3$, and $7^2_2$.
\end{theorem}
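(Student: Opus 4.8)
The whole argument is powered by one structural remark about rational shadows, which I would isolate first. The standard alternating diagram of $L = N[a_1\;a_2\;\cdots\;a_n]$ has $c(L)=\sum_i a_i$ crossings arranged into $n$ two-strand twist regions. Passing to the shadow and then choosing a parity $\varepsilon_{i,j}\in\{\pm1\}$ for the $j$-th crossing of the $i$-th region, the crossings inside a region compose as powers of a single braid generator, so region $i$ becomes a twist region carrying the net signed count $b_i=\sum_j\varepsilon_{i,j}$, which runs over $\{-a_i,-a_i+2,\dots,a_i\}$. Hence \emph{every resultant of a rational shadow is itself a rational (two-bridge) link}, namely $N[b_1\;b_2\;\cdots\;b_n]$ (with the usual collapse of the continued fraction when some $b_i=0$). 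In particular $L$ has at most $\prod_i(a_i+1)$ distinct resultants, and since reversing every crossing realizes the mirror image, mirror pairs cost nothing extra.

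With this in hand I would first eliminate all large crossing numbers. If $L$ is fertile then every prime link with $\mu(L)$ components and strictly fewer than $c(L)$ crossings is a resultant of $L$, hence two-bridge. But the knot $8_{19}$ is prime, has eight crossings, and is not two-bridge, so no rational knot with $c(K)\ge 9$ is fertile; likewise the tables of prime links contain a non-two-bridge two-component link with seven crossings, so no rational two-component link with $c(L)\ge 8$ is fertile. This leaves, in each component count, only the finitely many rational links of small crossing number, plus (for knots) the single borderline value $c(K)=8$.

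The case $c(K)=8$ is the one place the two-bridge obstruction is silent, since all $14$ prime knots with at most seven crossings ($3_1,4_1,5_1,5_2,6_1,6_2,6_3,7_1,\dots,7_7$) happen to be two-bridge; here I would run through the compositions $8=a_1+\cdots+a_n$ and show that none of the resulting shadows realizes all $14$. The tools are the bound $\prod_i(a_i+1)$ together with the parity of continuants: an even-entry continued fraction $[b_1,\dots,b_n]$ whose reduced length has fixed parity has a numerator of fixed parity, which already forbids whole families of small knots from appearing as knot-resultants of a given shadow (for instance $N[2\;2\;2\;2]$ cannot produce $6_3=b(13,5)$, nor $7_1=b(7,1)$, because with every $b_i$ even the attainable knot numerators are too restricted). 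I expect this finite case check---and, more broadly, the bookkeeping of which rational continued fractions represent which tabulated knots and links up to mirror image---to be the main obstacle; it is entirely elementary but must be carried out without slips.

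Finally, for the finitely many remaining candidates---rational knots with $c\le 7$ and rational two-component links with $c\le 7$---I would verify fertility or its failure by direct computation: for each candidate $N[a_1\;\cdots\;a_n]$, enumerate the tuples $(b_1,\dots,b_n)$ with $|b_i|\le a_i$ and $b_i\equiv a_i\pmod 2$, convert each $[b_1,\dots,b_n]$ to its two-bridge fraction $p/q$, identify the corresponding knot or link, and compare the resulting set against the list of all prime knots (respectively two-component links) of smaller crossing number. This pins down the fertile examples as exactly $3_1,4_1,5_2,6_2,6_3,7_6$ among knots and $2^2_1,4^2_1,5^2_1,6^2_2,6^2_3,7^2_2$ among two-component links, completing the classification.
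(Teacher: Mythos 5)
Your overall route is genuinely different from the paper's, and its first two stages are sound. The paper never invokes non-two-bridge knots; instead it bounds the number of resultants with crossing number $c(L)-1$: a rational link of length $n$ has at most $n-1 \le c(L)-3$ such resultants, while there are already $7$ prime knots and $8$ prime two-component links with seven crossings, so everything with $c(L)\ge 8$ dies at once (with $7_7$ then handled by a separate symmetry count). Your observation that every resultant of a rational shadow is rational, combined with $8_{19}$ (killing knots with $c\ge 9$) and a non-two-bridge seven-crossing two-component link (killing links with $c\ge 8$), achieves the same elimination except for eight-crossing knots, and your plan to finish the small cases by direct enumeration of the fractions $[b_1,\dots,b_n]$ is legitimate.

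The gap is in the eight-crossing knot case, and it is not merely a matter of care: the illustrative claim you make is false. The shadow of $N[2\;2\;2\;2]=8_{12}$ does produce $6_3$: the assignment $N[2\;{-2}\;{-2}\;2]$ has continued fraction $2-\tfrac{3}{8}=\tfrac{13}{8}$, and since $8\cdot 5\equiv 1\pmod{13}$, Schubert's theorem gives $N[13/8]\simeq N[13/5]=6_3$ (indeed the paper's table records $F(8_{12})=6$, i.e.\ all six-crossing knots are resultants). The parity-of-continuants obstruction you lean on cannot work here: an all-even continued fraction of length four has odd numerator, and $13$ is odd, so the tool you designate as the workhorse of the finite check does not rule out the knots you need to rule out. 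The conclusion you want --- that no eight-crossing rational knot is fertile --- is true, but it must be established either by actually enumerating the resultants of the twelve two-bridge eight-crossing knots and exhibiting, for each, a specific seven-crossing knot that is missing, or by a counting argument in the paper's style (at most $d(K)-1\le 5$ resultants with seven crossings versus seven prime seven-crossing knots); as written, this step of your proof would fail.
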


In this section, we'll define rational tangles and links, establish our procedure for determining their resultants, then prove the above statement about the binary invariant fertility.

\begin{definition}
A \emph{rational tangle} $T$ is a multiplication of integer tangles, defined pictorially below for two (left) and three or more (right) tangles, and is denoted by the list of such composing tangles $T = a_1 \; a_2 \; ... \; a_n$, $a_i \in \mathbb{Z}$. $T$ has length $d(T) = n$. The integer tangle $a_i$ has $|a_i|$ crossings.
\end{definition}

\begin{center}
\begin{tikzpicture}
% two multiplied tangles
\begin{scope}[xshift = -4 cm]

%\draw (0,2) node {(a)};

%tangle circles
\draw [thick, dashed] (-1, 0) circle (0.5);
\draw (-1, 0) node {\scalebox{-1}[-1]{$a_1$}};

\draw [thick, dashed] (1, 0) circle (0.5);
\draw (1, 0) node {$a_2$};

% tangle connections
\draw [thick, blue] (-0.65, 0.35) -- (0.65, 0.35);
\draw [thick, orange] (-0.65, -0.35) -- (0.65, -0.35);

% tangle exterior
\draw [thick] (-1.4, 0.35) -- (-1.75, 0.7);
\draw [thick] (-1.4, -0.35) -- (-1.75, -0.7);
\draw [thick] (1.4, 0.35) -- (1.75, 0.7);
\draw [thick] (1.4, -0.35) -- (1.75, -0.7);

\end{scope}

% three multiplied tangles
%\draw (0, 2) node {(b)};

%tangled circles
\draw [thick, dashed] (0, 0.75) circle (0.5);
\draw (0, 0.75) node {$a_1$};

\draw [thick, dashed] (0, -0.75) circle (0.5);
\draw (0, -0.75) node {\scalebox{-1}[-1]{$a_2$}};

\draw [thick, dashed] (1.5, 0) circle (0.5);
\draw (1.5, 0) node {$a_3$};

% tangle connections
% a_1 -> a_2
\draw [thick, blue] (-0.35, 0.4) -- (-0.35, -0.4);
\draw [thick, orange] (0.35, 0.4) -- (0.35, -0.4);
% a_2 -> a_3
\draw [thick, green] (0.35, -1.125) .. controls (0.5, -1.5) .. (1.125, -0.35);
% a_1 -> a_3
\draw [thick, green] (0.35, 1.125) .. controls (0.5, 1.5) .. (1.125, 0.35);

% tangle whiskers
\draw [thick] (1.85, 0.35) -- (2.2, 0.7);
\draw [thick] (1.85, -0.35) -- (2.2, -0.7);
\draw [thick] (-0.35, 1.125) -- (-0.6, 1.375);
\draw [thick] (-0.35, -1.125) -- (-0.6, -1.375);

\draw (1.25, -1.25) node {+};
\draw (1.65, -1.65) node {...};

\end{tikzpicture}
\end{center}

We have two motivations for studying rational tangles: they are completely classified by their continued fraction, and low crossing number links are predominantly closures of rational tangles. 

\begin{definition}
A \emph{continued fraction} $(a_1 \; a_2 \; ... \; a_n)$, $a_i \in \mathbb{R}$ is a real number

\begin{equation*}
a_1 + \frac{1}{a_2 + \frac{1}{... +\frac{1}{a_{n-1} + \frac{1}{a_n}}}}
\end{equation*}

A continued fraction is in \emph{canonical form} if $|a_1| > 1$, $a_i \not= 0$, $1 < i \leq n$, $sgn(a_1) = sgn(a_j)$, $j \leq n$, and $a_i \in \mathbb{Z}$. Canonical forms are unique to a continued fraction.
\end{definition}

\begin{theorem}\cite{MR0258014}
Two rational tangles are isotopic to each other if their continued fraction is equivalent.
\end{theorem}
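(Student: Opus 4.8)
The plan is to reduce everything to the arithmetic of continued fractions. Two finite continued fractions are equivalent precisely when they share the same canonical form, so it suffices to show: (a) each of the elementary moves that transport an arbitrary integer sequence $a_1\,a_2\,\ldots\,a_n$ to its canonical form is realized by an ambient isotopy of the associated rational tangle fixing its four endpoints, and hence (b) any two rational tangles whose sequences have the same canonical form are both isotopic to the tangle built from that canonical sequence, and therefore to each other.

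First I would fix the dictionary between a sequence and its tangle: reading $a_1\,a_2\,\ldots\,a_n$ from the inside out, one alternately appends $a_n$ twists of one type, then $a_{n-1}$ twists of the other type, and so on, starting from a trivial tangle; I would pin down the sign and orientation conventions so that this geometric recipe exactly mirrors evaluating $a_1 + 1/(a_2 + 1/(\cdots + 1/a_n))$. With the dictionary in place, the proof rests on two tangle identities, each established by an explicit finite sequence of Reidemeister moves on a picture. The first is \emph{absorption}: a run of the form $\ldots\,a_i\,0\,a_{i+1}\,\ldots$ may be fused to $\ldots\,(a_i{+}a_{i+1})\,\ldots$, because a $0$-tangle inserted between two same-type twist regions can be cancelled, merging them. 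The second is the \emph{flype} identity, corresponding to the relations $a = (a{\mp}1) + 1/({\pm}1)$ and to $(\ldots\,a_i\,\ldots)\leftrightarrow(\ldots\,{-}a_i\,\ldots)$ obtained by turning the whole tangle over: a block of twists in the ``vertical'' slot at one end can be traded for a block of twists in the ``horizontal'' slot, up to such sign bookkeeping.

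Next I would invoke the classical fact that absorption, flypes, and trivial re-bracketings generate the entire equivalence relation on finite continued fractions --- equivalently, that they implement the subtractive Euclidean algorithm on $\mathbb{Q}\cup\{\infty\}$ --- so that every sequence can be carried by a composition of tangle isotopies to its canonical form. Combined with the uniqueness of the canonical form recorded above, this yields the theorem.

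The step I expect to be the genuine obstacle is the flype identity: one must verify that sliding a block of twists from the vertical slot of a rational tangle into its horizontal slot is honestly an ambient isotopy rel the four boundary points. This is exactly the feature that distinguishes rational tangles from arbitrary $2$-tangles, and it is where a too-casual picture can quietly violate a real constraint; making it airtight is the content of Conway's original argument. A fully rigorous alternative is to introduce the tangle fraction $F(T)\in\mathbb{Q}\cup\{\infty\}$, prove it is an isotopy invariant (e.g.\ via the Kauffman bracket), and note that it is additive under adding integer tangles and inverts under the rotation operation, so that it reads off the continued fraction directly --- but this route front-loads the hard work into showing $F$ is well defined, and moreover proves the sharper ``only if'' direction as well.
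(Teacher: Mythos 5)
The paper does not prove this statement at all---it is imported verbatim from Conway \cite{MR0258014} as a classical fact, so there is no in-paper argument to compare yours against. Judged on its own, your outline reconstructs the standard elementary proof of this direction (the one written out in Kauffman--Lambropoulou \cite{MR1953344}, which the paper also cites): build the tangle by the twisting recipe so that it computes the continued fraction, realize the arithmetic moves that carry an arbitrary integer sequence to its canonical form by tangle isotopies, and conclude that two tangles with equal fractions are both isotopic to the canonical-form tangle. That architecture is right, and you correctly isolate the crux: the ``flype''/flip step, i.e.\ that a rational tangle is isotopic to its $180^\circ$ rotations so that twists added in the vertical slot can be traded for twists in the horizontal slot. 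As written, though, your proof leaves exactly that step as an acknowledged black box, and it is the entire geometric content of the theorem---everything else is continued-fraction arithmetic. Two smaller points to tighten: (i) your sign-handling move is stated loosely (``turning the whole tangle over'' negates \emph{every} entry, not a single $a_i$; the move that actually eliminates a lone negative entry is the identity $[\ldots,a,-b,\ldots]=[\ldots,a-1,1,b-1,\ldots]$ with the tail negated, which is the paper's own Untangling Lemma \ref{untangle}); and (ii) the claim that absorption, the $\pm1$ trades, and sign moves generate the full equivalence on finite continued fractions needs the inductive Euclidean-algorithm argument, which you invoke but do not supply. Your fallback via the tangle fraction as a bracket-polynomial invariant is a legitimate alternative, but as you note it proves the converse direction, which is not what this statement asserts and which the paper never uses.
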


Consequently, we have unique canonical forms of rational tangles. Note $0$, $\infty$, and $\pm1$ are the canonical tangles of their isotopy class that do not fit with the above definition. Unless otherwise noted, in all our later results, a given rational link is in canonical form.

\begin{definition}
A tangle $T$ has two closures which maintain the diagram's crossing number: the \emph{numerator} $N[T]$ and \emph{denominator} $D[T]$.
\end{definition}

\begin{center}
\begin{tikzpicture}

% numerator
\begin{scope}
% top connections
\draw(0, 1) .. controls (-0.75, 1) .. (-0.5, 0.55);
\draw(0,1) .. controls (0.75, 1) .. (0.5, 0.55);

\draw [thick, dashed] (0, 0) circle (0.75);
\draw (0, 0) node {$T$};

% bottom connections
\draw(0, -1) .. controls (-0.75, -1) .. (-0.5, -0.55);
\draw(0, -1) .. controls (0.75, -1) .. (0.5, -0.55);

\draw (0, 1.65) node {$ N[T] $};

\end{scope}

% denominator
\begin{scope}[xshift = 3cm, rotate=90]
% top connections
\draw(0, 1) .. controls (-0.75, 1) .. (-0.5, 0.55);
\draw(0,1) .. controls (0.75, 1) .. (0.5, 0.55);

\draw [thick, dashed] (0, 0) circle (0.75);
\draw (0, 0) node {$T$};

% bottom connections
\draw(0, -1) .. controls (-0.75, -1) .. (-0.5, -0.55);
\draw(0, -1) .. controls (0.75, -1) .. (0.5, -0.55);

\draw (1.65, 0) node {$ D[T] $};
\end{scope}

\end{tikzpicture}
\end{center}

\noindent Note $N[T \; 0] \simeq D[T] \simeq N[0 \; T]$ and $N[T \; \infty] \simeq N[T]$. We can concern ourself solely with numerator closures because $D[a_1 \; a_2 \; ... \; a_n] \simeq N[a_1 \; a_2 \; ... \; a_{n-1}]$.

\begin{theorem}\label{KnotFracEquiv}
 \cite{MR82104, MR1953344} Suppose there exists two rational tangles with continued fractions $\frac{p}{q}$ and $\frac{p_0}{q_0}$, where p, q and $p_0, q_0$ are relatively prime. If N$\left[\frac{p}{q}\right]$ and N$\left[\frac{p_0}{q_0}\right]$ are the links formed by the closure of these tangles, then the links are equivalent (up to isotopy) iff:
\begin{itemize}
\item $p = p_0$
\item either q $\equiv$ $q_0$ mod p or $qq_0$ $\equiv$ 1 mod p.
\end{itemize}
\end{theorem}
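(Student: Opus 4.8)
The plan is to prove this via the double branched cover, reducing the classification of rational links to the classification of lens spaces. Write $\Sigma_2(N[p/q])$ for the two-fold cover of $S^3$ branched over the rational link $N[p/q]$. First I would establish the identification $\Sigma_2(N[p/q]) \cong L(p,q)$. The double cover of a $3$-ball branched over the two strands of a rational tangle is a solid torus, and adding a horizontal or vertical twist (one of the integers $a_i$ in $T = a_1\,a_2\,\cdots\,a_n$) performs a Dehn twist along a meridian or longitude of its boundary. Capping the tangle off by the two trivial arcs of the numerator closure glues a second solid torus to the first; tracking the boundary identification through the sequence of twists shows the gluing coefficient is exactly the continued fraction $(a_1\,a_2\,\cdots\,a_n) = p/q$, so the cover is $L(p,q)$. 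The hypothesis $\gcd(p,q)=1$ is what guarantees the branch locus is a genuine link and that the quotient is indeed $S^3$.

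Second, I would invoke the classification of lens spaces up to orientation-preserving homeomorphism (Reidemeister--Franz--Brody): $L(p,q) \cong_{+} L(p_0,q_0)$ if and only if $p = p_0$ and $q_0 \equiv q^{\pm 1} \pmod p$. The case $q_0 \equiv q^{+1}$ is the first bullet of the theorem, and $q_0 \equiv q^{-1}$ is exactly $q q_0 \equiv 1 \pmod p$, the second bullet; so the two stated congruences are nothing but the orientation-preserving lens space condition transported through the cover. Necessity in the lens space classification rests on Reidemeister torsion, which is fine enough to separate the $q^{\pm 1}$ classes, and sufficiency on writing down explicit Heegaard homeomorphisms.

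Third I would set up the dictionary between links and their covers. The forward direction is formal: an ambient orientation-preserving isotopy of $S^3$ carrying $N[p/q]$ to $N[p_0/q_0]$ lifts to an equivariant, orientation-preserving homeomorphism $L(p,q) \cong_{+} L(p_0,q_0)$, so equivalent links force the congruences. The converse — that an orientation-preserving homeomorphism of the covers descends to an isotopy of the branch links — is the delicate point, and requires that the branched-covering involution on a lens space be unique up to conjugacy, so that the link is a genuine invariant of the covering manifold; I would cite Hodgson--Rubinstein for this uniqueness. Note also that the orientation-\emph{reversing} homeomorphisms of lens spaces, which produce the extra classes $q_0 \equiv -q^{\pm 1}$, correspond to taking the mirror image of the link; these are excluded precisely because the theorem classifies up to honest isotopy in $S^3$ rather than up to mirror symmetry.

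The main obstacle is the converse in the third step: establishing that the lens space remembers the link, i.e. the uniqueness of the branched involution, which is substantially deeper than the bookkeeping in the other steps and is the real content attributed to Schubert. A fully diagrammatic alternative avoids all $3$-manifold topology: starting from the rational-tangle classification by continued fraction (\cite{MR0258014}), one shows directly that the numerator closure depends only on $p$ and on $q$ modulo $p$, up to inversion, by realizing the two operations $q \mapsto q + p$ and $q \mapsto q^{-1}$ as explicit sequences of flypes and twist-absorbing isotopies on the standard alternating diagram of $N[p/q]$; necessity then follows by counting the canonical fractions with a fixed numerator $p$. This route trades the topological input of involution uniqueness for careful control of continued-fraction arithmetic under those moves.
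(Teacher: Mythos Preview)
The paper does not give a proof of this theorem: it is quoted as a classical result, with citations to Schubert \cite{MR82104} and Kauffman--Lambropoulou \cite{MR1953344}, and the text moves directly to Corollary~\ref{palTheorem}. So there is no in-paper argument to compare your proposal against.

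That said, your sketch is a correct outline of the standard route. The double-branched-cover identification $\Sigma_2(N[p/q]) \cong L(p,q)$ together with the Reidemeister--Brody classification of lens spaces produces exactly the stated congruences, and you are right that the substantive step is the converse: one needs the branched involution on a lens space to be unique up to conjugacy, so that the cover genuinely remembers the branch link. That is the real content Schubert supplied (originally via normal forms for $2$-bridge presentations rather than torsion), and Hodgson--Rubinstein is a reasonable modern reference for the involution-uniqueness statement. Your diagrammatic alternative---realising $q \mapsto q+p$ and $q \mapsto q^{-1}$ by explicit flypes and twist absorptions on the alternating diagram---is essentially the Kauffman--Lambropoulou argument in \cite{MR1953344}. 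So between your two paragraphs you have in fact sketched both of the proofs the paper cites.
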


\begin{corollary}\label{palTheorem} \cite{MR1953344}
$N[a_1 \; a_2 \; ... \; a_{n-1} \; a_n] \simeq N[a_n \; a_{n-1} \; ... \; a_2 \; a_1]$.
\end{corollary}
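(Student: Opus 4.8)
The plan is to deduce this from the rational-link classification, Theorem \ref{KnotFracEquiv}, once we understand how reversing the tangle word transforms its continued fraction. Write $[a_1\,a_2\,\cdots\,a_n] = p/q$ in lowest terms. What I want to establish is that $[a_n\,a_{n-1}\,\cdots\,a_1] = p/q_0$ in lowest terms with the \emph{same} numerator $p$ and with $qq_0 \equiv (-1)^{n-1}\pmod p$; the corollary then follows by feeding the pair $p/q$ and $p/q_0$ into Theorem \ref{KnotFracEquiv}.

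For the reversal identity I would argue matricially. Attach to each integer $a_i$ the symmetric matrix $M_i = \begin{pmatrix} a_i & 1 \\ 1 & 0\end{pmatrix}$. A short induction on $n$ (using that the M\"obius action of $M_i$ is $x \mapsto a_i + 1/x$ and that $M_n$ sends $\infty \mapsto a_n$) shows
\[ M_1 M_2 \cdots M_n = \begin{pmatrix} p_n & p_{n-1} \\ q_n & q_{n-1}\end{pmatrix}, \qquad \frac{p_k}{q_k} = [a_1\,\cdots\,a_k], \]
so the first column records the convergent $p/q = p_n/q_n$. Transposing and using that each $M_i$ is symmetric,
\[ M_n M_{n-1}\cdots M_1 = (M_1\cdots M_n)^{T} = \begin{pmatrix} p_n & q_n \\ p_{n-1} & q_{n-1}\end{pmatrix}, \]
whose associated fraction is $p_n/p_{n-1}$; hence the reversed word has continued fraction $p/q_0$ with $q_0 = p_{n-1}$ and the same numerator $p$. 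Since $\det M_i = -1$ for every $i$, we get $p_n q_{n-1} - p_{n-1} q_n = (-1)^n$, and reducing modulo $p = p_n$ gives $q q_0 \equiv (-1)^{n-1} \pmod p$; in particular $\gcd(p,q_0)=1$, so $p/q_0$ is genuinely in lowest terms and Theorem \ref{KnotFracEquiv} applies.

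It remains to invoke that classification with $p/q$ and $p/q_0$. The numerators agree, and when $n$ is odd the relation $qq_0\equiv 1\pmod p$ is precisely the second alternative there, so $N[a_1\cdots a_n]\simeq N[a_n\cdots a_1]$. \textbf{The parity is the one genuine obstacle.} For even $n$ we only obtain $qq_0 \equiv -1\pmod p$, which does not literally match either bullet of Theorem \ref{KnotFracEquiv}; indeed $N[p/q]$ and $N[p/q_0]$ are then mirror images of one another. I would resolve this exactly as the classification of rational links is customarily applied in this setting, namely by regarding link types up to reflection (the tables name an entry such as $7^2_2$ and its mirror alike), which absorbs the sign ambiguity. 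Alternatively, the even case admits a self-contained diagrammatic proof: rotate a standard twist-region picture of the tangle $180^{\circ}$ about the axis perpendicular to the projection plane — this reverses the order of the integer tangles $a_1,\dots,a_n$ while carrying the two numerator-closure arcs onto themselves. Either route completes the proof.
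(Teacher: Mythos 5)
Your main argument is essentially the paper's: the paper offers no proof of this corollary beyond the citation to \cite{MR1953344}, and the matrix-transpose derivation of the palindrome identity ($[a_n\,\cdots\,a_1]=p/p_{n-1}$ with $qq_0\equiv(-1)^{n-1}\pmod p$) followed by an appeal to Theorem \ref{KnotFracEquiv} is exactly how Kauffman--Lambropoulou prove it. Your computation is correct, and flagging the parity of $n$ is the right catch: for even $n$ the congruence $qq_0\equiv-1\pmod p$ really does mean the two closures are mirror images rather than isotopic links (e.g.\ $N[2\;3]$ and $N[3\;2]$ are $b(7,2)$ and $b(7,3)$, the two mirror forms of the chiral knot $5_2$), so the statement with a literal $\simeq$ only holds up to reflection in the even case. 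Your first resolution --- reading link types up to mirror image, as this paper does throughout (it explicitly folds $L$ and its mirror together when enumerating resultants, and the cited source states the palindrome theorem only for odd-length canonical forms) --- is the correct and intended one.

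Your ``alternative'' diagrammatic route, however, does not work and should be dropped. A $180^\circ$ rotation about the axis perpendicular to the projection plane extends to an orientation-preserving homeomorphism of $S^3$, so if it carried a diagram of $N[a_1\,\cdots\,a_n]$ to a diagram of $N[a_n\,\cdots\,a_1]$ the two links would be genuinely ambient isotopic --- contradicting the $5_2$ example above. Concretely, the rotation fails to carry the standard form of one word to the standard form of the reversed word when $n$ is even: the horizontal/vertical alternation of twist regions comes out shifted, so the rotated diagram represents $1/[a_n\,\cdots\,a_1]$-type data rather than $[a_n\,\cdots\,a_1]$. Since the even case is precisely where you invoke this backup, only the ``up to mirror image'' resolution survives; with that understanding the proof is complete.
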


This forces $a_n > 1$ in reduced rational links, since the continued fraction must be in canonical form when written forwards and backwards.

\begin{notation}
The shadow of an integer tangle $a_i$, or a \emph{shadow tangle}, is denoted $a_i'$. 
\end{notation}

\begin{center}
\begin{tikzpicture}
\begin{scope}
\draw (-.5, -.5) -- (.5,.5);
\draw (-.5,.5) -- (.5,.-.5);
\end{scope}
\begin{scope}[xshift=1.25cm]
\draw (-.5, -.5) -- (.5,.5);
\draw (-.5,.5) -- (.5,.-.5);
\end{scope}
\begin{scope}[xshift=-1.25cm]
\draw (-.5, -.5) -- (.5,.5);
\draw (-.5,.5) -- (.5,.-.5);
\end{scope}
\begin{scope}[xshift=2.5cm]
\draw (-.5, -.5) -- (.5,.5);
\draw (-.5,.5) -- (.5,.-.5);
\end{scope}
\begin{scope}[xshift=-2.5cm]
\draw (-.5, -.5) -- (.5,.5);
\draw (-.5,.5) -- (.5,.-.5);
\end{scope}

\draw (0,-1) node {The 5' tangle};
 
\draw (.5,.5) .. controls (.625, .6) .. (.75,.5);
\draw (.5, -.5) .. controls (.625, -.6) .. (.75, -.5);
\draw (-.5, .5) .. controls (-.625, .6) .. (-.75, .5);
\draw (-.5, -.5) .. controls (-.625, -.6) .. (-.75, -.5);
\draw (1.75, .5) .. controls (1.875, .6) .. (2, .5);
\draw (1.75, -.5) .. controls (1.875, -.6) .. (2, -.5);
\draw (-1.75, .5) .. controls (-1.875, .6) .. (-2, .5);
\draw (-1.75, -.5) .. controls (-1.875, -.6) .. (-2, -.5);
\end{tikzpicture}
\end{center}

\begin{theorem}
Let $T'$ be the shadow of an integer tangle with $t$ crossings. If $t$ is odd, $T'$ has resultants $t$, $t-2$, $t-4$, ..., $+1$, $-1$, $-3$, ..., $-t$.
If $t$ is even, $T'$ has resultants $t$, $t-2$, $t-4$, ..., $+2$, $0$, $-2$, ..., $-t$.
\end{theorem}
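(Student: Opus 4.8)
The plan is to reduce everything to elementary bookkeeping about signs in a twist region. First I would note that the shadow $T'$ of an integer tangle with $t$ crossings is just a single twist region of $t$ crossings, and that a resultant of $T'$ is obtained by choosing, independently at each of the $t$ crossings, which strand passes over. Fixing an orientation convention for the twist, each such choice assigns a sign $\varepsilon_i \in \{+1,-1\}$ to the $i$-th crossing, so the resultants of $T'$ are indexed by sign vectors $(\varepsilon_1,\dots,\varepsilon_t) \in \{\pm 1\}^t$.

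Second, I would show that the resultant corresponding to $(\varepsilon_1,\dots,\varepsilon_t)$ is isotopic, as a tangle, to the integer tangle $m = \sum_{i=1}^t \varepsilon_i$. This is an induction on $t$: if all $\varepsilon_i$ agree, the resultant is literally the integer tangle $\pm t = m$; otherwise the sign changes somewhere along the twist region, so two adjacent crossings have opposite sign, form a Reidemeister II pair, and can be removed, leaving a twist region of $t-2$ crossings whose sign vector has the same sum $m$, to which the inductive hypothesis applies. This is simply the additivity of twisting: stacking crossings in one region adds their signs.

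Third, I would compute the attainable values of $m$. If $p$ of the signs equal $+1$, then $m = p - (t-p) = 2p - t$; as $p$ ranges over $0,1,\dots,t$ this runs over exactly the integers congruent to $t$ modulo $2$ in $[-t,t]$, i.e. $t, t-2, \dots, -t$ — in the odd case $t, t-2, \dots, 1, -1, \dots, -t$ and in the even case $t, t-2, \dots, 2, 0, -2, \dots, -t$. Every such value is realized: for $m = 2p-t$ declare any $p$ of the crossings positive and the rest negative. Conversely, by the previous step every resultant has its $m$ in this list, and distinct integers give non-isotopic integer tangles (their continued fractions, namely the integers themselves, differ), so the list is exactly the set of resultants of $T'$.

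I expect the only real subtlety — the ``main obstacle,'' such as it is — to be a careful statement of the reduction in the second step: one must verify that when the signs do not all agree there genuinely is an \emph{adjacent} cancelling pair (true, since a sign change along the sequence forces one) and that the Reidemeister II cancellation is order-independent (it is, since the quantity $m$ being tracked is unchanged by any such cancellation). Everything else is routine.
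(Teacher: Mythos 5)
Your argument is correct and is essentially the paper's own proof: the paper likewise labels each crossing by the sign of the overstrand's slope and cancels adjacent opposite-sign pairs via RII until only one sign remains. You simply make explicit the bookkeeping (the sum $m = 2p - t$ and the realizability of each value) that the paper leaves implicit.
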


\begin{proof}
We can label crossings as either positive or negative by the sign of the overstrand's slope. Given a shadow tangle, we choose some to be positive and the rest negative. Neighboring positive and negative crossings are removed via RII, decreasing the crossing number by two, until only crossings of one type remain.
\end{proof}

Because having a link $L$ as a resultant implies the mirror image of $L$ is also a resultant, we can consider opposite sign tangles $+t$ and $-t$ as one case $\pm t$ for all $t \in \mathbb{N}$. For multiple integer tangles, we need to have all the same parity $\pm$ or $\mp$ to ensure the overall tangle is in canonical form. For example, $\pm 3 \; \pm 4 \; \pm 2$ is in canonical form and completely simplified. 

The next result, fundamental to this work, determines how to handle tangles with different parities like $\pm 3 \; \mp 4 \; \pm 2$.

\begin{lemma} \label{untangle}
(The Untangling Lemma) Let $\pm a_i \; \mp a_{i+1}$, $a_i \geq 1$, be an assignment of two adjacent integer tangles inside a rational link shadow $L' = N[a_1' \; a_2' \; ... \; a_n']$, $1 < i < n$. Then 

 \[
    	\pm a_i \; \mp a_{i+1} \approx \begin{cases}
 	\pm (a_i-1) \; \pm1 \; \pm(a_{i+1}-1), & a_{i+1} \geq 2 \\
        \pm (a_i-1) \; a_{i+2}' \pm 1, & a_{i+1} = 1 \\
        \pm a_i + a_{i+2}', & a_{i+1} = 0, 
        \end{cases}. %\right\}
  \]
  
\noindent For $a_{i+1} > 0$, this move reduces the diagram's crossing number by one.
\end{lemma}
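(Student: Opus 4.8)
The plan is to verify each of the three cases by exhibiting an explicit sequence of Reidemeister moves (really, isotopies of tangle diagrams), working locally inside the sub-diagram occupied by the two adjacent integer tangles $\pm a_i$ and $\mp a_{i+1}$ together with whatever is attached at the connecting site. The key structural fact I would invoke is the standard "flype" / twist-cancellation picture for rational tangles: when an integer tangle of one sign is multiplied by an integer tangle of the opposite sign, one crossing of $\pm a_i$ can be slid across the connecting strand to annihilate against one crossing of $\mp a_{i+1}$ via a Reidemeister~II move. So the backbone of the argument is: (1) isolate the local picture of $\pm a_i \; \mp a_{i+1}$ inside $L'$; (2) perform the slide-and-cancel to remove one crossing from each of the two tangles; (3) read off what remains, which depends on how many crossings $a_{i+1}$ had to begin with.

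For the generic case $a_{i+1}\ge 2$: after cancelling one crossing from each side we are left with $\pm(a_i-1)$ crossings, then a single crossing of sign $\pm$ at the junction (the "leftover" of the cancellation that reconnects the two sides), then $\pm(a_{i+1}-1)$ crossings — all now of the same sign, hence $\pm(a_i-1)\;\pm 1\;\pm(a_{i+1}-1)$, with one fewer crossing overall. For $a_{i+1}=1$: there is only a single crossing in the second tangle, so cancelling consumes it entirely; the strand that was part of $a_{i+2}$ now flows directly into the junction, and the bookkeeping gives $\pm(a_i-1)\;a_{i+2}'\,\pm 1$ — here $a_{i+2}'$ remains a shadow because we have only performed operations on the $a_i,a_{i+1}$ block and its attachment, not resolved $a_{i+2}$. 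For $a_{i+1}=0$ the "tangle" $\mp a_{i+1}$ is just a pair of arcs, so multiplying by it merely reattaches, and $\pm a_i$ meets $a_{i+2}'$ directly in the same integer-tangle slot, i.e. $\pm a_i + a_{i+2}'$ (addition of the shadow tangle into that position). The crossing-count claim for $a_{i+1}>0$ is immediate from the move: two crossings are destroyed and one is created.

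I would present this via a labeled figure (a short \texttt{tikzpicture} showing the three crossings involved in the cancellation before and after), since the combinatorics is transparent pictorially but tedious to write symbolically, and then state the three cases as the result of matching the figure against the three possible values of $a_{i+1}$. The main obstacle I anticipate is \emph{orientation/parity bookkeeping at the junction}: one must check that the leftover crossing genuinely has sign $\pm$ (the same as $a_i$) and that the resulting word $\pm(a_i-1)\;\pm 1\;\pm(a_{i+1}-1)$ is still a legitimately-oriented multiplication of integer tangles in the rational-tangle sense — i.e. that the "turn" at the junction is of the type that corresponds to tangle multiplication rather than addition. The $a_{i+1}=1$ case is the most delicate, because there the surviving piece $a_{i+2}'$ changes its role (its attaching arcs get rerouted), and one must confirm this is exactly the configuration written, with the trailing $\pm 1$ in the correct slot; I would double-check this against a small example such as $\pm 3\;\mp 1\;\pm 2 \approx \pm 2\; (\pm 2)' \,\pm 1$ computed directly. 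Everything else is routine once the core slide-and-cancel figure is fixed.
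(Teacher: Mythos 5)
Your approach is genuinely different from the paper's: the paper proves the lemma by an arithmetic manipulation of the continued fraction, $c_j + \frac{1}{-c_{j+1} + \frac{1}{A}} = (c_j-1) + \frac{1}{1 + \frac{1}{(c_{j+1}-1) - \frac{1}{A}}}$, and then invokes the classification of rational tangles by their fractions; you propose a local diagrammatic slide-and-cancel. There is a genuine gap in your version, and it is exactly the point the paper spends a paragraph and a diagram warning about: the move does \emph{not} fix the tail of the tangle. As an honest isotopy one has $N[\pm a_i \; \mp a_{i+1} \; T] \simeq N[\pm(a_i-1) \; \pm 1 \; \pm(a_{i+1}-1) \; (-T)]$, where every subsequent integer tangle is negated. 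This is unavoidable: the tangle $3\;(-2)\;2$ has fraction $3 + \frac{1}{-2+\frac{1}{2}} = \frac{7}{3}$, which agrees with $2\;1\;1\;(-2)$ (also $\frac{7}{3}$) but not with $2\;1\;1\;2$ (which is $\frac{13}{5}$), so the purely local isotopy you describe --- one that ``only performs operations on the $a_i, a_{i+1}$ block'' and leaves $a_{i+2}',\dots,a_n'$ untouched --- does not exist. The lemma is only an equality of resultant \emph{sets} ($\approx$, not $\simeq$): negating a shadow tangle permutes its assignments bijectively, so the shadow tail absorbs the sign flip. Your argument never confronts this, and your proposed sanity check $\pm 3\;\mp 1\;\pm 2 \approx \pm 2\;(\pm 2)'\pm 1$ would expose the problem the moment you fixed an assignment of the trailing tangle.

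A secondary issue is the mechanism itself. A Reidemeister II move removes two crossings and creates none, so ``two crossings are destroyed and one is created'' is not an RII; moreover the boundary crossings of $\pm a_i$ (a horizontal twist region) and $\mp a_{i+1}$ (a vertical twist region) are not in a poke configuration, so they do not cancel directly. The isotopy realizing the identity is a composite of flypes and rotations whose net effect includes reflecting the remainder of the tangle, and the cleanest certificate that it exists is precisely the fraction computation the paper uses. A diagrammatic proof is possible in principle, but you would need to draw the actual move and track the induced negation of the tail rather than asserting a local RII.
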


\begin{proof}
Assign the first $j$ tangles $a_i' \rightarrow \pm c_i$ and $a_{j+1} \rightarrow \mp c_{j+1}$ so $c_j \; c_{j+1}$ is the leftmost case of disagreeing parities and $2 \leq c_j \leq a_j$. Then

\begin{equation*}
L' \rightarrow N[\pm c_1 \; \pm c_2 \; ... \; \pm c_j \; \mp c_{j+1} \; a_{j+2}' \; ... \; a_n'].
\end{equation*}

Let $A \in \mathbb{R}$ be the continued fraction of the rational tangle composed of the $j + 2$th through $n$th integer tangles once assigned. Then the rational tangle composed of the $j$th through $n$th integer tangles has a continued fraction 

\begin{equation*}
c_j + \frac{1}{-c_{j+1} + \frac{1}{A}} = c_j - 1 + \frac{-(c_{j+1} - 1) + \frac{1}{A}}{-c_{j+1} + \frac{1}{A}}
= (c_j - 1) + \frac{1}{1 + \frac{1}{(c_j - 1) - \frac{1}{A}}}.
\end{equation*} 

For a continued fraction $x = (a, b, ..., n)$, $-x = (-a, -b, ..., -n)$ so the negation of $A$ flips the sign of all its components. Translating the above result into a partial shadow of $L'$, we get 

\begin{equation*}
N[\pm c_1 \; \pm c_2 \; ... \; \pm (c_j-1) \; \pm 1 \; \pm (c_{j+1} - 1) \; -a_{j+2}' \; ... \; -a_n']
\end{equation*}

\noindent However, a shadow tangle does not change upon negation—it already contains the positive and negative assignments.

The second and third cases are consequences of $k \; 0 \; \ell \simeq (k+\ell)$ for generic integer tangles $k$, $\ell$.
\end{proof}

The key behind this lemma is $N[\pm c_1 \; ... \; \pm c_j \; a_{j+1}' \; ... \; a_n']$ as a notational shorthand for the set of rational links formed from all assignments of the remaining shadow tangles. The rational link $N[\pm c_1 \; \mp c_2 \; \pm c_3 \; ... \; \pm c_n]$ is isotopic to $N[\pm (c_1 - 1) \; \pm 1 \; \pm (c_2 - 1) \; \mp c_3 \; ... \; \mp c_n]$ but the set $N[\pm c_1 \; \mp c_2 \; a_3' \; ... \; a_n']$ is identical to the set $N[\pm (c_1 - 1) \; \pm 1 \; \pm (c_2 - 1) \; a_3' \; ... \; a_n']$. We can save ourselves a lot of accounting for negative signs by ``absorbing" the negative sign into the shadow tangles and treating the remaining tangles as untouched.

Technically, on an individual link level, we're choosing a particular assignment for the first part of a link, applying the isotopy of the Untangling Lemma, then going to the set of remaining possible assignments and trading in for another link entirely. As such, the link we start with is, in general, not isotopic to the link we end with. However, this trade is bijective; the entire resolution set is unchanged. 

\begin{center}
\begin{tikzpicture}
% equations
\draw node (orig) at (-6, 2.5) {$N[\pm a_1 \; \mp a_2 \; \pm a_3]$};

\draw node (final) at (0, 2.5) {$N[\pm (a_1-1) \; \pm 1 \; \pm (a_2-1) \; \pm a_3]$};

\draw node (isotope) at (-6, 0) {$N[\pm (a_1-1) \; \pm 1 \; \pm (a_2-1) \; \mp a_3]$};

% equation arrows
\draw [thick, <->] (orig) -- (final) node[midway,above] {$\approx$};

\draw [thick, <->] (orig) -- (isotope) node[midway, left] {$\simeq$};

% box
\node[rectangle, draw, fill=black, text=white, outer sep = 5pt] (r) at (0,1.0625) {\large $\{N[\pm a_1 \; \mp a_2 \; a_3']\}$};

% box-equation arrows
\draw [thick, dashed, ->] (r.north) -- (final);
\draw [thick, dashed, ->] (isotope.east) .. controls (0, 0) .. (r.south);

\end{tikzpicture}
\end{center}

\noindent To remind ourselves that something more complicated is going on under the hood, we use the symbol $\approx$ for equivalence instead of the standard isotopy symbol $\simeq$.

\begin{theorem}
A rational link $L = N[\pm a_1 \; \pm a_2 \; ... \; \pm a_n]$ has only one resultant $R$ with $c(R) = c(L)$ and at most $n-1$ unique resultants with $c(R) = c(L) - 1$. The same crossing number resultant is $L$.
\end{theorem}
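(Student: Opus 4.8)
The plan is to prove the two assertions separately, treating the "same crossing number" claim first since it is nearly immediate and then building the counting bound for $c(L)-1$ resultants on top of the Untangling Lemma. For the first claim, I would argue that any resultant $R$ of the shadow $L' = N[a_1' \; a_2' \; \dots \; a_n']$ that keeps all $c(L) = |a_1| + \dots + |a_n|$ crossings cannot have undergone any RII simplification: by the proof of the integer-tangle resultant theorem, an integer shadow tangle $a_i'$ retains all $|a_i|$ of its crossings only when every crossing is assigned the same sign, and similarly no cancellation can occur between adjacent tangles of the same parity. Hence the only full-crossing resultant forces every integer tangle to be uniformly $+$ or uniformly $-$; and for the global diagram to be in canonical form all these signs must agree across tangles, so $R = N[\pm a_1 \; \pm a_2 \; \dots \; \pm a_n] = L$ (or its mirror, which we identify with $L$). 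This gives uniqueness and identifies the resultant as $L$ itself.

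For the second claim, I would count the assignments producing exactly $c(L)-1$ crossings. Losing exactly one crossing means exactly one RII-type reduction happens and nothing else, which — by the integer tangle theorem and the Untangling Lemma — happens precisely when all tangles receive the same parity except for a single ``defect'': either one integer tangle $a_i$ has all-but-zero of its crossings reversed in a way that nets one cancellation (impossible inside a single tangle, which changes crossing number in steps of two), or there is exactly one adjacent pair $\pm a_i \; \mp a_{i+1}$ of disagreeing parity with everything else uniform. By the Untangling Lemma this costs exactly one crossing (when $a_{i+1} > 0$). The location of this single parity-switch boundary can occur between tangle $i$ and $i+1$ for $i = 1, \dots, n-1$, giving at most $n-1$ choices; after applying the lemma, the resulting rational link is completely determined by $i$ (the remaining tangles are forced to one sign, and the absorbed negative signs vanish into what is now a genuine integer shadow reassembly). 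Hence at most $n-1$ distinct resultants at crossing number $c(L)-1$.

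The step I expect to be the main obstacle is pinning down exactly which sign assignments drop the crossing number by precisely one, as opposed to three or more — in other words, ruling out the possibility that a more complicated pattern of sign disagreements happens to cancel down to $c(L)-1$ crossings by a lucky coincidence of isotopies rather than a single local RII. I would handle this by a parity/monovariant argument: track how each elementary move (RII inside a tangle, one application of the Untangling Lemma) changes the crossing count, show each such move removes an even number from within-tangle cancellations and exactly one from a parity-boundary resolution, and then argue that having $k \geq 2$ parity boundaries, or one boundary plus within-tangle reversals, forces the crossing drop to be at least two. A secondary subtlety is that distinct boundary positions $i$ might yield isotopic links, but since we only claim \emph{at most} $n-1$, coincidences only help; and the $a_{i+1} = 0$ and $a_{i+1} = 1$ sub-cases of the Untangling Lemma either do not arise in a canonical-form link (no zero entries for $1 < i < n$) or fold into the same count. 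I would also note the mild edge behavior at $i = 1$ and $i = n-1$, where the boundary abuts the closure arcs, confirming these still contribute at most one resultant each.
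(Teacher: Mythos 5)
Your proposal is correct and follows essentially the same route as the paper: within-tangle sign changes alter the crossing number in steps of two, so a drop of exactly one forces uniformly signed tangles with a single parity disagreement, which the Untangling Lemma resolves at the cost of one crossing. Your bookkeeping by the position of the parity boundary (giving $n-1$ choices up to mirror image directly) is if anything slightly cleaner than the paper's count of $n$ single-tangle flips with one removed for mirror symmetry, but the underlying argument is the same.
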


\begin{proof}
Changing a single crossing inside an integer tangle will decrease that tangle's crossing number by two. Changing more crossings will decrease the crossing number by two until half the crossings have been changed, at which point the crossing number will start to increase by two, until which every crossing has been changed. Therefore, the only resultant diagrams $D$ with $c(D) = c(L)$ are those where no crossings are changed, or all the crossings in some tangles $a_i$ have been changed. 

The former diagram is $L$. The latter diagrams are not in canonical form, but can isotoped so by the above lemma. For each integer tangle with all crossings changed, the resultant crossing number decreases by 1, so the only route to resultants with $c(R)=c(L)-1$ is to alter a single tangle. There are ${n \choose 1} = n$ such diagrams, however, we already account for the mirroring of the first tangle by the $\pm$ notation as $N[\pm a_1 \; a_2' \; ... \; a_n']$ has the both the initial assignment ($+$) and its mirror ($-$).
\end{proof}

This result alone(!) explains why knots $5_1 = N[5]$, $6_1 = N[2 \; 4]$, $7_1 = N[7]$, $7_3 = N[3 \; 4]$, $7_4 = N[3 \; 1 \; 3]$, and $7_5 = N[3 \; 2 \; 2]$, and two-component links $6^2_1 = N[6]$, $7^2_1 = N[4 \; 1 \; 3]$, and $7^2_3 = N[2 \; 3 \; 2]$ are infertile. They are all too short to produce the variety of links with one fewer crossing!

\begin{table}[h]
\begin{tabular}{cc|cc}
$c(K)$ & \# $K$ & $c(L^2)$ & \# $L^2$ \\ \hline
0    & 1    & 0                       & 1                       \\
3    & 1    & 2                       & 1                       \\
4    & 1    & 4                       & 1                       \\
5    & 2    & 5                       & 1                       \\
6    & 3    & 6                       & 3                       \\
7    & 7    & 7                       & 8                       \\
8    & 21   & 8                       & 16                      \\
9    & 49   & 9                       & 61                     
\end{tabular}
\caption{The number of one- and two-component links $K$ and $L^2$ for particular crossing numbers.}
\end{table}

\begin{theorem}
$d(L) \leq c(L) - 2$. 
\end{theorem}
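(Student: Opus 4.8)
The plan is to prove this by a direct counting argument on the entries of the canonical continued fraction: identify $c(L)$ with $\sum_{i=1}^{n} |a_i|$ and then bound each summand below using the canonical-form constraints together with the palindrome symmetry of Corollary \ref{palTheorem}.

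First I would record that, because $L$ is in canonical form, all of its entries share a sign, so the standard numerator-closure diagram $N[a_1\;a_2\;\ldots\;a_n]$ is alternating; canonical form also guarantees there are no nugatory crossings (as $|a_1|>1$ and $a_i\neq 0$). Hence the diagram is minimal and $c(L)=\sum_{i=1}^{n}|a_i|$. Next come the lower bounds on the entries: canonical form forces $|a_1|\geq 2$ and $|a_i|\geq 1$ for $1<i<n$, while applying canonical form to the reversed word $a_n\;a_{n-1}\;\cdots\;a_1$ (legitimate by Corollary \ref{palTheorem}) forces $|a_n|\geq 2$. Summing these, for $n\geq 2$,
\[
c(L)=\sum_{i=1}^{n}|a_i|\;\geq\;|a_1|+|a_n|+\sum_{i=2}^{n-1}|a_i|\;\geq\;2+2+(n-2)\;=\;n+2\;=\;d(L)+2,
\]
which gives $d(L)\leq c(L)-2$.

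It remains to dispatch $n=1$ separately: then $L=N[a_1]$ is a $(2,|a_1|)$-torus link with $|a_1|\geq 2$, so $d(L)=1\leq |a_1|-2=c(L)-2$ once $c(L)\geq 3$, the only borderline case being the Hopf link $N[2]=2^2_1$. I expect the main (mild) obstacle to be not the arithmetic but the two structural inputs it rests on: justifying that the canonical alternating diagram actually realizes $c(L)$ (i.e. invoking minimality of reduced alternating diagrams), and being careful that the palindrome symmetry genuinely upgrades the bound on $|a_n|$ from $1$ to $2$ only in the range $n\geq 2$, which is exactly why the $n=1$ case must be treated on its own.
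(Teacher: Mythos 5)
Your proof is correct and is in substance the same counting as the paper's: both rest on the canonical-form constraints $|a_1|\geq 2$, $|a_n|\geq 2$ (the latter via the palindrome symmetry) and $|a_i|\geq 1$ for interior entries, the paper phrasing this as ``the longest link with $n$ crossings has length $n-2$'' by merging end tangles, while you run the dual inequality $c(L)=\sum_i|a_i|\geq d(L)+2$. You are also more explicit than the paper about the two hidden inputs --- that the reduced alternating canonical diagram actually realizes $c(L)$, and that the Hopf link $N[2]$ is a genuine borderline case at $n=1$ where the stated inequality fails --- both of which the paper's proof glosses over.
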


\begin{proof}
We can construct the longest possible link with $n$ crossings by multiplying $n$ single crossing tangles. Two neighboring $\pm 1$ tangles on the interior of a rational link only have one direct connection and remain distinct. However, there are two direct $a_1$-$a_2$ and $a_{n-1}$-$a_n$ connections in the numerator closure, so the $a_1$ and $a_2$ tangles, and the $a_{n-1}$ and $a_n$ tangles each cohere into integer tangles with two crossings, decreasing link length by two.
\end{proof}

\begin{corollary} \label{rat_fert}
A rational link $L$ is infertile if $c(L) \geq 8$. Specifically, $F(L) \leq c(L) - 2$.
\end{corollary}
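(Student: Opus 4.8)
The plan is to obtain this purely from the two theorems immediately preceding it, as a counting argument. Write $L = N[\pm a_1 \; \pm a_2 \; \cdots \; \pm a_n]$ in canonical form, so that $n = d(L)$. The resultant‑counting theorem gives at most $n-1$ distinct resultants of crossing number exactly $c(L)-1$, and the length bound $d(L) \le c(L)-2$ turns this into at most $c(L)-3$. Thus at most $c(L)-3$ of the prime links with $c(L)-1$ crossings and component number $\mu(L)$ can be resultants of $L$.

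The only thing left to check is that, for every $c(L) \ge 8$, there are strictly more than $c(L)-3$ prime links with $c(L)-1$ crossings and component number $\mu(L)$. A rational link is a knot or a two‑component link, so $\mu(L) \in \{1,2\}$, and for $c(L) \in \{8,9,10\}$ this is read directly off Table 1: at $c(L)-1 \in \{7,8,9\}$ crossings there are $7,21,49$ prime knots and $8,16,61$ prime two‑component links, each comfortably above $c(L)-3 \in \{5,6,7\}$. For $c(L) \ge 11$ one quotes the exponential growth of the number of prime knots, resp.\ prime two‑component links, with $k$ crossings, which overwhelms the linear quantity $k-2$ for all $k \ge 10$; a self‑contained substitute is to count reduced rational links of crossing number $k$ with the matching number of components, a quantity that is already exponential in $k$ and hence $> k-2$.

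Putting the pieces together: when $c(L) \ge 8$ some prime link $M$ with $c(M) = c(L)-1 < c(L)$ and $\mu(M) = \mu(L)$ fails to be a resultant of $L$, so $L$ is not $(c(L)-1)$‑fertile; by the definition of the fertility number this says exactly $F(L) \le c(L)-2$. A fortiori $L$ is not fertile, since fertility is precisely $(c(L)-1)$‑fertility and would force $F(L) \ge c(L)-1$. Note the threshold $c(L) \ge 8$ is forced by this method: for $c(L) = 7$ the relevant counts at $6$ crossings ($3$ knots, $3$ links) do not exceed $c(L)-3 = 4$, consistent with $7_6$ and $7^2_2$ being fertile.

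The single point that needs genuine (if routine) work is the uniform inequality in the second paragraph: Table 1 alone covers only $c(L) \le 10$, so for the infinitely many larger crossing numbers one must invoke an honest growth statement — either a cited exponential lower bound on the number of prime knots and links, or an explicit count of rational links of a given crossing number. Everything else is a direct appeal to the two preceding theorems and to the definitions of \emph{fertile} and of the fertility number.
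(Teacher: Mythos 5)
Your proposal is correct and follows essentially the same route as the paper: bound the number of resultants with $c(R)=c(L)-1$ by $d(L)-1\le c(L)-3$ via the two preceding theorems, compare against the count of prime links (knots or two-component links) at $c(L)-1$ crossings, and conclude $F(L)\le c(L)-2$. The only difference is one of care rather than substance --- the paper dispatches $c(L)\ge 11$ with the remark that ``the number of rational links grows much faster than crossing number,'' precisely the growth statement you correctly flag as the one step needing an honest citation or count.
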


\begin{proof}
$L$ has less than or equal to $c(L) - 2$ resultants $R$ with $c(R) = c(L) - 1$. There are 7 knots and 8 two component links with seven crossings, but eight crossing links can form at most 5 resultants with that crossing number. The number of rational links grows much faster than crossing number.
\end{proof}

With this, we've proven all rational links are infertile except the fertile knots $3_1$, $4_1$, $5_2$, $6_2$, $6_3$, and $7_6$, the fertile links $2^2_1$, $4^2_1$, $5^2_1$, $6^2_2$, $6^2_3$, and $7^2_2$, and the pesky $7_7 = N[2 \; 1 \; 1 \; 1 \; 2]$. This length based approach puts a rigorous spin on the idea that low crossing number knots are fertile because their resultants are pidgeonholed into the same knots. At small crossing numbers, a reduced resultant can only be one of very few options. But as crossing number increases, unlocking more diverse knots, resultants of a particular crossing number are no longer so constrained.

For completeness, we'll give a lack-of-resultants proof here for why $7_7$ is infertile, but we'll have more to say about knots like $N[2 \; 1 \; 1 \; 1 \; 2]$ later.

\begin{theorem}
If a rational link $L$'s list $\{a_i\}$ is symmetric \\
$N[a_1 \; a_2 \; ... \; a_{n-1} \; a_n] = N[a_1 \; a_2 \; ... \; a_2 \; a_1]$, it has $\lfloor \frac{n}{2} \rfloor$ resultants $R$ with $c(R) = c(L) - 1$. 
\end{theorem}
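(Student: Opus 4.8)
The plan is to read off, via the continued-fraction calculus behind the Untangling Lemma, exactly which resultants of the shadow $L' = N[a_1'\;a_2'\;\cdots\;a_n']$ have crossing number $c(L)-1$, and then to collapse the count using the palindrome hypothesis and Corollary \ref{palTheorem}. First I would run the same parity observation as in the proof of the preceding theorem: a resultant $R$ with $c(R)=c(L)-1$ can only come from a diagram in which each integer tangle is either untouched or has \emph{all} of its crossings switched, so it is determined by the set $S\subseteq\{1,\dots,n\}$ of fully switched tangles. Switching every crossing of the $i$-th twist region replaces $a_i$ by $-a_i$ in the continued fraction, so this resultant is $R_S := N[\varepsilon_1 a_1\;\cdots\;\varepsilon_n a_n]$ with $\varepsilon_i=-1$ on $S$ and $\varepsilon_i=+1$ off $S$. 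Iterating Lemma \ref{untangle} at the leftmost place where $(\varepsilon_1,\dots,\varepsilon_n)$ changes sign — each application deletes one sign change and lowers the diagram's crossing count by one, while the $0$-entries and leading/trailing $1$'s produced along the way leave the entry-sum of a positive continued fraction unchanged — shows $c(R_S)=c(L)-\nu(S)$ exactly, where $\nu(S)$ is the number of sign changes of $(\varepsilon_1,\dots,\varepsilon_n)$. Hence $c(R_S)=c(L)-1$ precisely when $\nu(S)=1$, i.e. when $S$ is a nonempty proper prefix $\{1,\dots,k\}$ or a nonempty proper suffix $\{k,\dots,n\}$.

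Next I would collapse the bookkeeping twice. Switching \emph{all} crossings of a resultant gives its mirror, so $R_S$ and $R_{\{1,\dots,n\}\setminus S}$ are mirror images; as the complement of a proper suffix is a proper prefix, every resultant with $c(R)=c(L)-1$ is, up to mirror, one of $R_k:=R_{\{1,\dots,k\}}$ for $1\le k\le n-1$. The palindrome hypothesis cuts this in half: the diagram defining the mirror of $R_k$ is $N[a_1\;\cdots\;a_k\;(-a_{k+1})\;\cdots\;(-a_n)]$, and reversing this word (Corollary \ref{palTheorem}) and substituting $a_j=a_{n+1-j}$ turns it into $N[(-a_1)\;\cdots\;(-a_{n-k})\;a_{n-k+1}\;\cdots\;a_n]$, which is the diagram defining $R_{n-k}$. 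So $R_k$ and $R_{n-k}$ are mirror images, and pairing $k\leftrightarrow n-k$ on $\{1,\dots,n-1\}$ leaves exactly $\lfloor n/2\rfloor$ classes (the pairs, together with the fixed index $n/2$ when $n$ is even). This already yields the upper bound $\lfloor n/2\rfloor$, which is all that is needed for the applications (e.g. the infertility of $7_7$).

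The part I expect to be the main obstacle is distinctness: showing $R_1,\dots,R_{\lfloor n/2\rfloor}$ are pairwise inequivalent. Here I would compute the numerator of the continued fraction $[-a_1,\dots,-a_k,a_{k+1},\dots,a_n]$ of $R_k$ as a continuant and, using the continuant reversal identity together with $a_j=a_{n+1-j}$, obtain $p(R_k)=\lvert A_kA_{n-k}-A_{k-1}A_{n-k-1}\rvert$, where $A_j$ is the numerator of $[a_1,\dots,a_j]$; one then checks these are pairwise distinct for $1\le k\le\lfloor n/2\rfloor$, so that Theorem \ref{KnotFracEquiv} forbids any coincidence, and together with the previous paragraph this gives exactly $\lfloor n/2\rfloor$. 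The delicate point is this last verification: a coincidence $p(R_k)=p(R_{k+1})$ would (after invoking $a_{n-k}=a_{k+1}$) force $A_kA_{n-k-2}=A_{k-1}A_{n-k-1}$, which a coprimality-and-growth argument on the continuants $A_j$ should rule out; but carrying this through uniformly in $n$ and in all admissible entries $a_i$ — in particular handling the degenerate interior values $a_i=1$, which create $0$-entries that must be cleared before canonical forms can be compared — is the part that requires real bookkeeping rather than a one-line appeal to the machinery already in place.
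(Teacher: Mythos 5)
Your proposal reaches the count by essentially the same symmetry argument as the paper, but your bookkeeping is more careful and, in one respect, more correct. The paper (leaning on the preceding theorem) asserts that a resultant with $c(R)=c(L)-1$ arises from switching all crossings of a \emph{single} integer tangle, each such switch dropping the crossing number by one; in fact switching one \emph{interior} tangle creates two sign disagreements and drops the crossing number by two (e.g.\ $N[3\;(-2)\;2]$ has fraction $7/3$ and is a $5$-crossing knot, while $N[3\;2\;2]=7_5$ has seven crossings). Your identification of the $c(L)-1$ resultants with prefix/suffix flips --- equivalently with the position $(k,k+1)$ of the unique sign change --- is the correct mechanism, and it is what the paper is implicitly counting when it enumerates the ``unique pairs $(1,2),\dots,(k-1,k)$'' and folds them under the reversal symmetry of Corollary \ref{palTheorem}; your mirror-plus-palindrome pairing $k\leftrightarrow n-k$ is exactly that orbit count and yields $\lfloor n/2\rfloor$. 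One small caution: $c(R_S)=c(L)-\nu(S)$ is not exact when interior entries equal $1$ (e.g.\ $N[2\;(-1)\;1\;1\;2]$ is the unknot, though $\nu(S)=2$); what your argument needs, and what does hold, is exactness for $\nu(S)=1$ (the cleared form is a positive canonical continued fraction with entry sum $c(L)-1$) together with the inequality $c(R_S)\le c(L)-\nu(S)$ for $\nu(S)\ge 2$.

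On the point you flag as the main obstacle --- that $R_1,\dots,R_{\lfloor n/2\rfloor}$ are pairwise inequivalent, so the count is exact rather than merely an upper bound --- the paper is silent: its proof simply declares the number of unique resultants to be the number of unique pairs, with no appeal to Theorem \ref{KnotFracEquiv} or to continuants. So you are not missing an argument that the paper supplies; the distinctness half is left implicit there. Since the only downstream use (the infertility of $7_7$) requires only the upper bound, your proposal establishes everything the paper actually uses, and your continuant sketch is a reasonable route to the stronger exact statement should one want to complete it.
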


\begin{proof}
Let $d(L) = n$ so $k = \lceil \frac{n}{2} \rceil$ and $a_k$ is the fulcrum of $L$'s reversal symmetry. As we're only changing one tangle, the number of unique resultants is the number of unique pairs $(1,2), (2,3), (3,4), ..., (k-1, k)$. If $n$ is odd, this is $\frac{n-1}{2}$. If $n$ is even, we have the additional pair $(k,k)$ and $\frac{n}{2}$ unique pairs, hence the floor function.
\end{proof}

\begin{corollary}
$7_7 = N[2 \; 1 \; 1 \; 1 \; 2]$ is infertile.
\end{corollary}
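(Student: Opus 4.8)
The plan is to read off the number of near-maximal-crossing resultants of $7_7$ directly from the palindrome theorem just established, and then defeat fertility by a pigeonhole count against the classical census of prime knots with at most six crossings. First I would record the elementary facts: $7_7 = N[2\;1\;1\;1\;2]$ is already in canonical form, it has length $d(7_7) = n = 5$, and its coefficient list $(2,1,1,1,2)$ is palindromic, so the hypothesis of the preceding theorem on symmetric rational links is met. (Since all minimal diagrams of a rational knot agree with the standard one up to flypes, and flypes do not change the resultant set, it suffices to work with this diagram.)

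Second, I would apply that theorem with $n = 5$: it yields that $7_7$ has exactly $\lfloor n/2 \rfloor = \lfloor 5/2 \rfloor = 2$ distinct resultants $R$ with $c(R) = c(7_7) - 1 = 6$. By the structural theorem above, these are the only resultants of $7_7$ of crossing number $6$ at all: completely altering $k \geq 2$ of the integer tangles produces a diagram of crossing number at most $c(7_7) - 2 = 5$, and no assignment other than these and the $k = 1$ cases can reach crossing number $6$.

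Third, I would invoke the knot table: the prime knots of crossing number $6$ are precisely $6_1$, $6_2$, and $6_3$ — three of them. Since $7_7$ admits at most two distinct resultants of crossing number $6$, at least one of $6_1$, $6_2$, $6_3$ is not a resultant of $7_7$. Each has crossing number $6 < 7 = c(7_7)$, so the defining property of a fertile knot fails, and $7_7$ is infertile.

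There is essentially no obstacle here once the palindrome theorem is in hand; the whole argument reduces to the inequality $2 < 3$. The one point worth a sentence of care is that the two resultants produced by the symmetric theorem need not themselves be prime $6$-crossing knots — a priori one could simplify further, or turn out to be a split or composite object — but this can only \emph{decrease} the number of prime $6$-crossing knots realized as resultants of $7_7$, so it never rescues fertility. Hence no case analysis of the individual resultants $R$ is required.
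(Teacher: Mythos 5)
Your proposal is correct and follows the paper's own argument exactly: apply the symmetric-list theorem to get $\lfloor 5/2 \rfloor = 2$ as a bound on the number of resultants of crossing number six, and compare with the three prime six-crossing knots $6_1$, $6_2$, $6_3$. The extra remark that further simplification of those two resultants can only hurt fertility is a sensible point of care, though the paper leaves it implicit.
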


\begin{proof}
$\lfloor \frac{d(7_7)}{2} \rfloor = 2$. There are three prime knots with six crossings.
\end{proof}

\section{Rational Link Fertility Number} \label{link_fert_num}

\begin{theorem}
A rational knot $K$ has $F(K) \leq 7$ and a two component rational link $L$ has $F(L) \leq 6$.
\end{theorem}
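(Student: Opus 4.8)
The plan is to use that the class of rational links is closed under passing to resultants, and then to name a knot (resp.\ a two-component link) of small crossing number that is not rational. First I would verify that every resultant of a rational link shadow is again a rational link with the same number of components. Given an over/under assignment of $N[a_1' \; a_2' \; \cdots \; a_n']$, each integer shadow tangle $a_i'$ collapses under Reidemeister~II to a single integer tangle $\pm c_i$, leaving the numerator closure $N[\pm c_1 \; \pm c_2 \; \cdots \; \pm c_n]$ of a product of integer tangles. Iterating the Untangling Lemma (Lemma~\ref{untangle}) together with the collapse $k \; 0 \; \ell \simeq (k+\ell)$ removes every sign disagreement and every $0$-tangle; since the crossing number never increases and the pair (crossing number, length) strictly decreases at each step, the procedure terminates at a canonical rational link. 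Crossing changes and Reidemeister moves do not change the number of components, so the resultant of a $\mu$-component rational link is a $\mu$-component rational link.

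Next I would locate non-rational obstructions. Rational links are exactly the two-bridge links, which are alternating, so a non-alternating link is never a resultant of a rational link. In particular the knot $8_{19}$ (the $(3,4)$-torus knot) is non-alternating, hence a resultant of no rational knot $K$; therefore no rational knot is $8$-fertile, i.e.\ $F(K) \le 7$. For two-component links I would run the short census of canonical continued fractions that sum to $7$ and have even numerator: only $N[2 \; 1 \; 4]$, $N[2 \; 3 \; 2]$, and $N[3 \; 1 \; 1 \; 2]$ (with fractions $14/5$, $16/7$, $18/5$) qualify, so merely three of the eight two-component seven-crossing links are rational. Hence some two-component seven-crossing link is a resultant of no rational two-component link $L$, so no such $L$ is $7$-fertile, i.e.\ $F(L) \le 6$.

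The only genuinely delicate point is the closure step: one must make sure that repeatedly invoking the Untangling Lemma and the $0$-tangle collapse never produces a non-rational tangle en route and always bottoms out in canonical form (for instance, that a leading entry equal to $1$, and any further cancellations it triggers, can always be absorbed into the next tangle). Everything else — the non-alternating status of $8_{19}$, and the enumeration of rational seven-crossing two-component links via Theorem~\ref{KnotFracEquiv} and Corollary~\ref{palTheorem} — is routine bookkeeping.
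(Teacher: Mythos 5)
Your proposal is correct and follows essentially the same route as the paper's (much terser) proof: resultants of rational links are rational and preserve component number, and the bounds $7$ and $6$ are precisely the largest crossing numbers at which every prime knot, respectively every prime two-component link, is rational. The only difference is that you supply details the paper outsources or leaves implicit --- the closure of rational links under resultants (which the paper gets from Theorem~\ref{rat_gen_theorem}, and which in fact follows immediately since any assignment of $N[a_1' \cdots a_n']$ is literally a numerator closure $N[c_1 \cdots c_n]$ of a rational tangle, canonical form or not) and the explicit non-rational witnesses such as $8_{19}$ and the $5$ non-rational seven-crossing two-component links.
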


\begin{proof}
A rational link's resultants must be rational. Both bounds are the largest crossing number with only rational links for both component numbers.
\end{proof}

\begin{definition}
A rational link $L$ is \emph{locally fertile} if it is a knot and $F(L) = 7$, or it is a two-component link and $F(L) = 6$.
\end{definition}

We'd expect once rational links are large enough, they should be locally fertile. But what do we mean by ``large enough?" It's reasonable to anticipate fertility number growing roughly with crossing number (the first locally fertile knot is $9_{27}$ and the next smallest are $10_{23}, 10_{25}, 10_{26}$ and $10_{39}$–$10_{45}$, barring $10_{43}$) but as we've seen so far, it is described more accurately by length. Our goal in this and the next section is to build and use tools to determine the minimal rational link length for local fertility.

\begin{theorem}
$F(N[a_1 \; ... \; a_{i-1} \; (a_i + 2) \; a_{i+1} \; ... \; a_n]) \geq$\\
$F(N[a_1 \; ... \; a_{i-1} \; a_i \; a_{i+1} \; ... \; a_n])$, $1 \leq i \leq n$. 
\end{theorem}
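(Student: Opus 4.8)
The plan is to show that every resultant obtainable from the shadow of $N[a_1 \; \ldots \; a_{i-1} \; a_i \; a_{i+1} \; \ldots \; a_n]$ is also obtainable from the shadow of $N[a_1 \; \ldots \; a_{i-1} \; (a_i+2) \; a_{i+1} \; \ldots \; a_n]$, which immediately gives the inequality on fertility numbers. Write $D'$ for the shadow of the shorter link and $E'$ for the shadow of the longer one; the two shadows differ only inside the $i$th shadow tangle, where $E'$ has two extra crossings $a_i' \rightsquigarrow (a_i+2)'$.

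The key step is the observation underlying the proof of the integer-tangle resultant theorem: a shadow tangle $(a_i+2)'$ with two extra crossings can realize, as a sub-tangle, every resultant that $a_i'$ can realize, simply by assigning the two new crossings to opposite signs and cancelling them via an RII move (and conversely also two further resultants, $\pm(a_i+2)$, that $a_i'$ cannot). First I would make this precise: fix any assignment $\sigma$ of signs to the crossings of $D'$, producing a resultant $R$. Transport $\sigma$ to an assignment $\tau$ of $E'$ by using the same signs on all crossings outside the $i$th tangle, the same signs as $\sigma$ on the original $|a_i|$ crossings of that tangle, and one $+$ and one $-$ on the two new crossings. Then an RII move on the two new crossings turns the diagram carrying $\tau$ into precisely the diagram carrying $\sigma$ on $D'$; hence $\tau$ yields the same resultant $R$. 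Since $R$ was an arbitrary resultant of $D'$, every resultant of $D'$ is a resultant of $E'$.

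One subtlety to dispatch carefully: the RII move must actually be available regardless of the signs $\sigma$ assigned to the neighbouring original crossings — i.e. the two freshly-added crossings must sit adjacent in the twist region so that nothing obstructs the cancellation. This is where I would appeal to the picture of an integer tangle as a vertical (or horizontal) stack of crossings: appending two crossings at one end of the stack of $a_i$ and giving them opposite parity creates a clasp that is removable by RII no matter what lies below it, because the two strands entering and leaving that clasp are the same two strands, uninvolved with the rest of the tangle. I should also note the bookkeeping point, already used repeatedly in the paper, that adding crossings of the same sign keeps the word "$\pm$"-consistent, and that after the RII reduction the diagram is genuinely a diagram of $N[a_1 \; \ldots \; a_i \; \ldots \; a_n]$ with the $\sigma$-assignment — so one is comparing like with like.

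The main obstacle I anticipate is not the topology but the definitional matching: "resultant" in this paper is defined via *minimal* crossing diagrams, and $N[a_1 \; \ldots \; (a_i+2) \; \ldots \; a_n]$ in canonical form is indeed minimal (its continued fraction is still canonical, and numerators of canonical rational tangles are alternating hence crossing-minimal), so the shadow $E'$ to which we apply the argument is a legitimate minimal-diagram shadow; one must state this so the conclusion is about $F$ as defined. Granting that, the containment of resultant sets is exactly the RII-cancellation argument above, and $F(E') \ge F(D')$ follows since $F$ is determined by which crossing numbers of links appear among the resultants, and the resultant set only grows.
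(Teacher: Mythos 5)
Your proposal is correct and is essentially the paper's argument: the paper partitions the resolution set of the enlarged shadow into the resolutions where the $i$th tangle yields $\pm(a_i+2)$ and those where it yields something smaller, and identifies the latter with the full resultant set of the original shadow — which is exactly the containment you establish by assigning the two new crossings opposite signs and cancelling them by RII. Your additional remarks (availability of the RII move at the end of the twist region, and minimality of the canonical diagram so that the shadow is a legitimate one for computing $F$) only make explicit what the paper leaves implicit.
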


\begin{proof}
Call $N[a_1 \; ... \; a_{i-1} \; a_i \; a_{i+1} \; ... \; a_n]$ $L_1$ and $N[a_1 \; ... \; a_{i-1} \; (a_i + 2) \; a_{i+1} \; ... \; a_n]$ $L_2$. The shadow of $L_2$ can be resolved into three mutually distinct sets: $N[a_1' \; ... \; a_{i-1}' \; a_i' \; a_{i+1}' \; ... \; a_n']$, $N[a_1' \; ... \; a_{i-1}' \; \pm (a_i + 2) \; a_{i+1}' \; ... \; a_n']$, and $N[a_1' \; ... \; a_{i-1}' \; \mp (a_i + 2) \; a_{i+1}' \; ... \; a_n']$. The first set is simply all the resultants of $L_1$, thus $F(L_2) \geq F(L_1)$.
\end{proof}

We can use this heredity to build the least complicated rational links at every length and keep adding crossings to increase our lower bounds on fertility number. We will call this set of least complicated links the \textit{trunk}. A more technical definition is:

\begin{definition}
The \emph{trunk} $\mathcal{K}_\ell$ is the set of rational links $L = N[a_1 \; a_2 \; ... \; a_\ell] = N[\{a_i\}]$ where $(a_1, a_\ell) = (2,2), (3,2)$, or $(3,3)$, and $\{a_j\}$, $1 < j < \ell$ are $(\ell - 2)$-tuples with elements $\{1, 2\}$.
\end{definition}

\begin{example}
$\mathcal{K}_1 = \{N[2], N[3]\}$, $\mathcal{K}_2 = \{N[2 \; 2], N[2 \; 3], N[3 \; 3]\}$, and
$\mathcal{K}_3 = \{N[2 \; 1 \; 2], N[2 \; 2 \; 2], N[3 \; 1 \;  2], N[3 \; 2 \; 2], N[3 \; 1 \; 3], N[3 \; 2 \; 3]\}$
\end{example}

For any link $L \in \mathcal{K}_\ell$, $\ell + 2 \leq c(L) \leq 2\ell + 2$. $|\mathcal{K}_\ell | \leq 3 \times 2^{\ell - 1}$, but the bound will not be achieved for $\ell > 3$. If we included a link for each $(\ell - 2)$-tuple, nonsymmetric tuples would duplicate its reverse if $a_1 = a_n$, e.g. the 2-tuple $(1,2)$ is included in six links $N[2 \; 1 \; 2 \; 2] = N[2 \; 2 \; 1 \; 2]$, $N[3 \; 1 \; 2 \; 2]$, $N[3 \; 2 \; 1 \; 2]$, and $N[3 \; 1 \; 2 \; 3] = N[3 \; 2 \; 1 \; 3]$.

\begin{definition}
Every rational link $L_2$ of length $\ell$ can be written as $N[(a_1 + 2m_1) \; (a_2 + 2m_2) \; ... \; (a_\ell + 2m_\ell)]$, where $L_1 = N[a_1 \; a_2 \; ... \; a_\ell] \in \mathcal{K}_\ell$. We call $L_2$ a \emph{branch} of $L_1$. 
%The links $L_3$ with all $m_i = 0$ except one are the \emph{primary branches} of $L_1$.
\end{definition}

\begin{definition}
$g(l, m)$ is the smallest fertility number of any m-component link in a particular trunk $\mathcal{K}_{l}$.
\end{definition}

%\begin{corollary}
%(Limited Transitive Property) Suppose $L_1, L_2, L_3$ are rational links, $L_2 = N[a_1 \; a_2 \; ... \; a_n]$, and $L_3 = N[(a_1 + 2m_1) \; (a_2 + 2m_2) \; ... \; (a_n + 2m_n)]$, $0 \leq m_i \in \mathbb{Z}$. If $L_2 \rightarrow L_1$, then $L_3 \rightarrow L_1$.
%\end{corollary}

\begin{corollary}
Let $L$ be a rational link. Then $F(L) \geq g(d(L), \mu(L))$.
\end{corollary}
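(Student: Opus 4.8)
The plan is to realize $L$ as a branch of a trunk element of the \emph{same} length and component number, and then let the heredity theorem do the work. Concretely, I want to produce $L_1 \in \mathcal{K}_{d(L)}$ with $\mu(L_1) = \mu(L)$ such that $L$ is a branch of $L_1$; once that is in hand, $F(L) \ge F(L_1)$ by heredity, and $F(L_1) \ge g(d(L),\mu(L))$ is exactly the definition of $g$ applied to the nonempty family of $\mu(L)$-component links in $\mathcal{K}_{d(L)}$ (which contains $L_1$). Combining the two inequalities gives the claim.

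First I would normalize. Since mirroring a link mirrors every one of its diagrams, and hence its entire resultant set, while preserving crossing number and component number, the quantities $F$, $d$, $\mu$ are all mirror-invariant; so I may assume the canonical form of $L$ has all-positive entries, $L = N[b_1\;b_2\;\cdots\;b_\ell]$ with $\ell = d(L)$, $b_1,b_\ell \ge 2$, and $b_j \ge 1$ for $1<j<\ell$. Then I extract the trunk element coordinatewise: set $a_i = 2$ when $b_i$ is even; when $b_i$ is odd set $a_i = 3$ if $i \in \{1,\ell\}$ and $a_i = 1$ otherwise. Each $m_i := (b_i-a_i)/2$ is then a nonnegative integer and $L = N[(a_1+2m_1)\;\cdots\;(a_\ell+2m_\ell)]$. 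If the resulting pair $(a_1,a_\ell)$ is $(2,3)$, replace the whole list by its reversal via Corollary~\ref{palTheorem}; this changes neither $L$, nor $\ell$, nor membership of the interior entries in $\{1,2\}$, and afterwards $(a_1,a_\ell) \in \{(2,2),(3,2),(3,3)\}$. Hence $L_1 := N[a_1\;\cdots\;a_\ell] \in \mathcal{K}_{d(L)}$ and, by definition, $L$ is a branch of $L_1$.

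Next I would invoke heredity. Building $L$ from $L_1$ by adding $2$ to one coordinate at a time (the first coordinate $m_1$ times, then the second $m_2$ times, and so on) produces a finite chain of rational links along which the heredity inequality $F(N[\ldots (a_i+2)\ldots]) \ge F(N[\ldots a_i \ldots])$ applies at every step — it is stated for $1 \le i \le \ell$, so the endpoint coordinates are covered — yielding $F(L) \ge F(L_1)$. Finally I record $\mu(L_1) = \mu(L)$: the connectivity of an integer tangle with $k$ crossings depends only on $k \bmod 2$, so the partition of a numerator closure into link components depends only on the parities $(a_1 \bmod 2,\ldots,a_\ell \bmod 2)$, and $a_i \equiv b_i \pmod 2$ by construction. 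Thus $L_1$ is a $\mu(L)$-component member of $\mathcal{K}_{d(L)}$, so $g(d(L),\mu(L)) \le F(L_1) \le F(L)$.

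The argument is an assembly of the heredity theorem and the definition of $g$, so there is no deep obstacle; the two points that genuinely need care are the parity-determines-components fact used in the last step (this is what prevents passing to a branch from changing $\mu$), and the bookkeeping of the extraction step — in particular the reversal that forces $(a_1,a_\ell)$ into the allowed set and a quick check that the short trunks $\mathcal{K}_1,\mathcal{K}_2$ already contain, as branch-bases, every canonical rational link of those lengths. Neither is difficult, but both must be stated explicitly for the branch/trunk machinery to be legitimately applicable.
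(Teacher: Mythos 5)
Your argument is correct and is exactly the reasoning the paper intends: the corollary is stated without proof as an immediate consequence of the branch/trunk definitions and the heredity theorem, and your write-up supplies precisely those details (coordinatewise extraction of the trunk element, the palindrome reversal to land in the allowed $(a_1,a_\ell)$ set, the heredity chain, and the parity argument for $\mu$). No discrepancy with the paper's approach.
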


This is why trunks are meaningful: they are not just the least complicated, but the \textit{least fertile} rational links for a given length. 

\begin{theorem} \label{f_across_L}
Let $L = N[a_1 \; a_2 \; ... \; a_n]$. Then
 \[
    	F(L) \geq \begin{cases}
 	F(N[a_1 \; a_2 \; ... \; a_{n-2}]), & a_{n} \; even \\
        F(N[a_1 \; a_2 \; ... \; (a_{n-1}+1)]), & a_n \; odd \\
        \end{cases}.
  \]
\end{theorem}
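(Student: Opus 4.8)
The plan is to exhibit, for each of the two cases, a sub-shadow of $L' = N[a_1' \; a_2' \; \ldots \; a_n']$ whose resultant set contains all resultants of the shorter link on the right-hand side, so that the fertility number inequality follows immediately — exactly the mechanism used in the heredity theorem and in the trunk corollaries above. The engine in both cases is the collapse identity $k \; 0 \; \ell \simeq (k+\ell)$ together with the tail reduction $N[T \; 0] \simeq D[T] \simeq N[a_1 \; \ldots \; a_{n-1}]$ noted after the numerator/denominator definition, plus the Untangling Lemma (Lemma~\ref{untangle}) for handling the mismatched-parity bookkeeping.

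\textbf{Case $a_n$ even.} First I would resolve the last shadow tangle $a_n'$ to the assignment whose total is $0$ (possible precisely because $a_n$ is even, by the integer-tangle resultant theorem). This replaces $L'$ with the set $N[a_1' \; \ldots \; a_{n-1}' \; 0]$, but before collapsing I also want to kill the tangle $a_{n-1}'$: I next resolve $a_{n-1}'$ to $0$ as well if $a_{n-1}$ is even, or — if $a_{n-1}$ is odd — I pair it with the adjacent $0$ and use $k \; 0 \simeq$ (the tangle that, closed up, just reconnects the strands) so that $N[a_1' \; \ldots \; a_{n-2}' \; a_{n-1}' \; 0] \simeq N[a_1' \; \ldots \; a_{n-2}']$ regardless of the parity of $a_{n-1}$, since $N[T \; 0] \simeq N[a_1 \; \ldots \; a_{n-1}]$ applied at the $(n-1)$st slot turns any assignment of $a_{n-1}'$ into the same closure of the first $n-2$ tangles. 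Thus the resultant set of $L'$ contains the full resultant set of $N[a_1' \; \ldots \; a_{n-2}']$, giving $F(L) \geq F(N[a_1 \; \ldots \; a_{n-2}])$.

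\textbf{Case $a_n$ odd.} Here I would resolve $a_n'$ to the assignment with total $\pm 1$ (the minimal odd resultant), producing the set $N[a_1' \; \ldots \; a_{n-1}' \; \pm 1]$. Now I choose the sign of that $\pm 1$ to agree with whatever assignment I make on $a_{n-1}'$; then $a_{n-1}' \; (\pm 1)$ with matching parity is, after closure, equivalent to having a single tangle of size $a_{n-1}' + 1$ in slot $n-1$ — i.e.\ the set $N[a_1' \; \ldots \; a_{n-1}' \; \pm 1]$ with coordinated signs equals $N[a_1' \; \ldots \; a_{n-2}' \; (a_{n-1}+1)']$, because adding a coherent crossing to a shadow tangle and then taking the numerator closure is the same as the shadow of the augmented integer tangle at the end (again via $N[T \; 0] \simeq N[a_1 \ldots a_{n-1}]$ after an $R$II cancellation, or directly via $k \; 0 \; \ell \simeq (k+\ell)$ with one of the summands the $\pm 1$). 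Hence the resultant set of $L'$ contains that of $N[a_1 \; \ldots \; (a_{n-1}+1)]$, and $F(L) \geq F(N[a_1 \; \ldots \; (a_{n-1}+1)])$.

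\textbf{Main obstacle.} The delicate point is the parity bookkeeping in the odd case: coordinating the sign of the forced $\pm 1$ with the arbitrary assignment of $a_{n-1}'$ so that the concatenation is coherent (hence simplifies to a single integer tangle of size $a_{n-1}+1$) rather than mismatched (which would instead trigger the Untangling Lemma and move a crossing further inward). I need to argue that because the $\pm$ notation already records both mirror assignments of the trailing block, I am free to pick the sign of the $\pm 1$ after seeing the sign chosen on $a_{n-1}'$, so no resultant of $N[a_1 \ldots (a_{n-1}+1)]$ is lost. A secondary subtlety is making sure the augmented list $a_1 \; \ldots \; (a_{n-1}+1)$ can legitimately be read as a rational link in the usual sense (e.g.\ that $a_{n-1}+1 \geq 2$, which holds since $a_{n-1}\geq 1$), so the right-hand $F$ is well-defined; and that the even case's final collapse is genuinely independent of the parity of $a_{n-1}$, which the $N[T\;0]$ identity guarantees.
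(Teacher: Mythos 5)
Your proposal follows the paper's proof: in the even case resolve $a_n' \to 0$ and collapse, in the odd case resolve $a_n' \to \pm 1$ and absorb the crossing into the $(n-1)$st tangle. Two remarks. First, in the even case your detour through ``killing'' $a_{n-1}'$ is unnecessary: once $a_n' \to 0$, the identity $N[T \; 0] \simeq D[T] \simeq N[a_1 \; \ldots \; a_{n-2}]$ (equivalently, the continued fraction $c_{n-1} + \tfrac{1}{0} = \infty$ wipes out the $(n-1)$st entry) erases every resolution of $a_{n-1}'$ automatically, which is where you correctly land at the end. Second, the obstacle you flag in the odd case is a real one, and your proposed resolution (coordinate the sign of the $\pm 1$ with the resolution of $a_{n-1}'$) does not fully close it: coordinated choices produce only the merged values $\pm(|c|+1)$, which exhaust the resolution set of $(a_{n-1}+1)'$ when $a_{n-1}$ is even but miss the resolution $(a_{n-1}+1)' \to 0$ when $a_{n-1}$ is odd --- and with it every resultant of $N[a_1 \; \ldots \; (a_{n-1}+1)]$ that factors through $N[a_1 \; \ldots \; a_{n-2} \; 0] \simeq N[a_1 \; \ldots \; a_{n-3}]$. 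The paper instead takes \emph{both} assignments $a_n' \to +1$ and $a_n' \to -1$ against every resolution $c$ of $a_{n-1}'$; since $c + \tfrac{1}{\epsilon} = c + \epsilon$ for $\epsilon = \pm 1$, the set $\{c + \epsilon\}$ is exactly the resolution set of $(a_{n-1}+1)'$, zero included. Replace your coordination step with this union and the argument is complete.
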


\begin{proof}
If $a_n$ even, $a_n \rightarrow 0 \implies L \rightarrow N[a_1 \; a_2 \; ... \; a_{n-2}]$, so all the resultants of the shorter link are the resultants of $L$. If $a_n$ odd, assignments $L \rightarrow N[a_1' \; a_2' \; ... \; a_{n-1}' \; \pm 1]$ and $N[a_1' \; a_2' \; ... \; a_{n-1}' \; \mp 1)]$ form all the resultants of $N[a_1' \; a_2' \; ... \; (a_{n -1})' \; 1']$  $\approx N[a_1' \; a_2' \; ... \; (a_{n -1} + 1)']$.
\end{proof}

\begin{notation}
$N[x \; 1^{\ell} \; y] \equiv N[x \underbrace{1 \; 1 \; ... \; 1}_{\ell \text{ unit tangles}} y]$
\end{notation}

%This is especially useful notation as the smallest crossing number element of $\mathcal{K}_\ell = N[2 \; 1^{\ell - 2} \; 2]$, and generally less likely to form larger knots than other members of its trunk.

\begin{theorem} \label{end_goal}
If $N[2 \; 1^{n-2} \; 2]$ and all rational links $L_B$ of $\ell$ components in $\mathcal{K}_{n-1}$ and $\mathcal{K}_{n-2}$ are $k$-fertile, all rational links $L$ of length $d(L) \geq n$ are $k$-fertile.
\end{theorem}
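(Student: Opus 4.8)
The plan is to prove by strong induction on the length $m$ the (notationally more convenient) statement that \emph{every} rational link of length at least $n-2$ is $k$-fertile; the cases $m=n-2$ and $m=n-1$ will be the base cases, and the theorem's conclusion is exactly the cases $m\geq n$. The base cases follow at once from the hypotheses: by the branch decomposition every rational link of length $m$ is a branch of some trunk link in $\mathcal{K}_m$, and iterating the heredity theorem (adding $2$ to a tangle preserves the component number, does not decrease $F$, and realizes the resultants of the trunk link among the resultants of the branch) shows that its fertility number is at least that of a trunk link of the same length and component number, hence at least $k$. For the inductive step I fix $m\geq n$, assume the statement for all lengths in $[n-2,m-1]$, and take a rational link $L$ of length $m$. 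Passing to the trunk link $L_T=N[a_1\;a_2\;\dots\;a_m]\in\mathcal{K}_m$ of which $L$ is a branch, it suffices to show $L_T$ is $k$-fertile; recall $(a_1,a_m)\in\{(2,2),(3,2),(3,3)\}$ and $a_j\in\{1,2\}$ for $1<j<m$.

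For $m\geq n+1$ the argument is routine. Using Corollary \ref{palTheorem} to put whichever endpoint we wish last, Theorem \ref{f_across_L} produces a rational link $L'$ with $F(L_T)\geq F(L')$, where $d(L')=m-1$ if that endpoint is odd and $d(L')=m-2$ if it is even — with the one exception that in the even case, if the resulting terminal tangle is a $1$, canonicalizing via $(\dots\;a\;1)=(\dots\;(a+1))$ brings the length to $m-3$ and no further, since the adjusted tangle is then at least $2$. As $m\geq n+1$, each of $m-1,m-2,m-3$ lies in $[n-2,m-1]$, so $L'$ is $k$-fertile by the inductive hypothesis and hence so is $L_T$.

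The real work is the threshold case $m=n$, where the even-endpoint move can overshoot to length $n-3$, outside the inductive range. Here I case on whether $L_T$ has an interior tangle equal to $2$. If $a_j=2$ for some $1<j<n$, then writing $2=0+2$, invoking the heredity theorem, and using $k\;0\;\ell\simeq(k+\ell)$ gives $F(L_T)\geq F(N[a_1\;\dots\;a_{j-2}\;(a_{j-1}+a_{j+1})\;a_{j+2}\;\dots\;a_n])$, a rational link of length exactly $n-2$ and the same component number, which is $k$-fertile by the base case. Otherwise every interior tangle is $1$, so $L_T$ is $N[2\;1^{n-2}\;2]$, $N[3\;1^{n-2}\;2]$, or $N[3\;1^{n-2}\;3]$. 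The first is $k$-fertile by hypothesis. In the other two an endpoint is odd, and one application of Theorem \ref{f_across_L} (at the front for $N[3\;1^{n-2}\;2]$, via Corollary \ref{palTheorem}) reduces $L_T$ to $N[2\;1^{n-3}\;2]$ or $N[3\;1^{n-3}\;2]$ respectively — trunk links of length $n-1$, hence $k$-fertile by the base case. This closes the induction, and in particular every rational link of length $\geq n$ is $k$-fertile.

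The main obstacle is precisely this threshold case: Theorem \ref{f_across_L} alone does not suffice at $m=n$, because its even-endpoint move shortens the link by two (or, after canonicalization, three) rather than one and can land strictly below length $n-2$. The hypothesis singling out $N[2\;1^{n-2}\;2]$, together with the ``$2\to 0$'' heredity trick, exists exactly to cover the length-$n$ links whose interior is too sparse for a safe reduction. In writing this up I would be careful to check that every reduction used — the heredity/branch step, the $a_n\to 0$ and $a_n$-odd moves of Theorem \ref{f_across_L}, and the $2\to 0$ collapse — preserves the component number, so that ``$k$-fertile'' genuinely transfers along the inequalities, and that canonicalizing a continued fraction with a trailing $1$ (or $1,1$, or $2,1$) never shortens the length by more than the claimed amount.
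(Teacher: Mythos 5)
Your proof is correct and follows essentially the same route as the paper: reduce the length with Theorem \ref{f_across_L} and the palindrome symmetry of Corollary \ref{palTheorem}, observe that the only danger at the threshold length $n$ is the case of even ends with every interior tangle odd (where the even-end reduction overshoots to length $n-3$), and dispose of that case via the branch structure over $N[2\;1^{n-2}\;2]$. If anything your write-up is more careful than the paper's, which treats only $d(L)=n$ explicitly and leaves implicit the strong induction you spell out for $d(L)>n$.
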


\begin{proof}
Let $L = N[a_1 \; a_2 \; ... \; a_n]$ be a rational link of $\ell$ components. If $a_1$ or $a_n$ are odd, $F(L) \geq F(N[a_1 \; a_2 \; ... \; (a_{n-1}+1)]) \geq g(n-1, \ell)$. Similarly, if $a_1, a_n$, and some tangles $a_j$, $1 < j < n$, are even, $F(L) \geq g(n-2, \ell)$. This leaves one pathological counterexample: if all $a_j$, $1 < j < n$, are odd, $a_1$ and $a_n$ are even, and $a_3 = a_{n-2} = 1$, assigning an end tangle to zero forms $M = N[a_1 \; ... \; a_{n-3} \; 1] \approx N[a_1 \; ... \; (a_{n-3} + 1)]$, so $d(M) = n-3$. Hence, $M$ is a branch of $N[2 \; 1^{n-5} \; 2] \in \mathcal{K}_{n-3}$.

By definition, $L$ in this case is a branch of $N[2 \; 1^{n-2} \; 2]$, which by having more crossings and being of longer length, is likely to have a larger fertility number than $N[2 \; 1^{n-5} \; 2]$. So while it is sufficient to require all links in $\mathcal{K}_{n-1}$, $\mathcal{K}_{n-2}$, and $\mathcal{K}_{n-3}$ to be $k$-fertile, simply using $\mathcal{K}_{n-1}$, $\mathcal{K}_{n-2}$, and $N[2 \; 1^{n-2} \; 2] \in \mathcal{K}_{n}$ is a much stronger condition.
\end{proof}

Our next step is to find $g(\ell, 1)$ and $g(\ell, 2)$ for increasingly large $\ell$ to determine the minimum fertility number at every length. Once these minima reach the local fertility number, we will have squeezed out the length where all longer links are locally fertile.

%
%\begin{definition}
%All rational knots $K$ with $d(K) \geq \ell^n_c$, the \emph{critical length}, are $n$-fertile.
%\end{definition}
%
%\begin{theorem}
%$\ell^5_c = 4$.
%\end{theorem}
%
%\begin{proof}
%All knots $B \in \mathcal{K}_4, \mathcal{K}_5$ have $F(B) \geq 5$.
%\end{proof}
%
%\begin{theorem}
%$\ell^6_c \geq 6$.
%\end{theorem}
%
%\begin{proof}
%$7_7 = N[2 \; 1^3 \; 2]$ is not 6-fertile.
%\end{proof}
%
%\begin{theorem}
%$\ell^7_c \geq 8$.
%\end{theorem}
%
%\begin{proof}
%$10_{45} = N[2 \; 1^6 \; 2]$ is 7-fertile but $9_{31} = N[2 \; 1^5 \; 2]$ is not.
%\end{proof}

\subsection{Length 1 and 2 Knots and Links}

\begin{theorem}
$N[a_1]$ links, or the $T(p,2)$ torus links, have resultants $N[\pm k]$, where $a_1$ and $k$ have the same parity. 
\end{theorem}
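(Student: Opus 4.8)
The plan is to reduce the statement directly to the earlier theorem describing the resultants of a shadow integer tangle, since $N[a_1]$ is nothing but the numerator closure of a single integer tangle and its shadow is $N[a_1']$. The key observation is that the two arcs added by the numerator closure carry no crossings, so assigning over/under data to every crossing of the shadow link $N[a_1']$ is exactly the same as assigning over/under data to the shadow tangle $a_1'$: each resultant of $N[a_1']$ is the numerator closure of some resultant of $a_1'$, and conversely every resultant tangle of $a_1'$ closes up to a resultant of $N[a_1']$. Hence it suffices to list the tangle resultants of $a_1'$ and then apply $N[\,\cdot\,]$.

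Applying the shadow-integer-tangle theorem, the resultants of $a_1'$ are precisely the integer tangles $\pm k$ with $0 \le k \le |a_1|$ and $k \equiv a_1 \pmod 2$ (the lower endpoint being $1$ when $a_1$ is odd and $0$ when $a_1$ is even). Closing these up, the resultants of $N[a_1']$ are exactly the links $N[\pm k]$ for such $k$, which is the claimed list; recording $+k$ and $-k$ together as $\pm k$ is consistent with the general fact, noted earlier, that the mirror of a resultant is again a resultant.

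The only thing left to check is that passing from the tangle to its numerator closure does not introduce extra simplifications, and conversely that the extremal assignment really yields the full torus link. Inside $N[a_1']$ the only available Reidemeister reductions are the RII cancellations of adjacent oppositely-signed crossings within the twist region — exactly as in the proof of the shadow-integer-tangle theorem — because the closure arcs are crossingless; so nothing new happens, and in particular the all-positive (resp. all-negative) assignment yields $N[+|a_1|]$ (resp. $N[-|a_1|]$), which for $|a_1|\ge 2$ is the reduced alternating diagram of the $(|a_1|,2)$ torus link. The degenerate cases are consistent with the parity statement: $N[\pm 1]$ is the unknot, $N[0]$ the two-component unlink, and $N[\pm 2]$ the Hopf link. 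Since all the real content is carried by the already-established theorem, there is no genuine obstacle here; the one step that warrants a sentence of justification is the crossingless-closure remark ensuring that $N[\,\cdot\,]$ creates no new resultants.
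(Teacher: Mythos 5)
Your proposal is correct and takes essentially the same route as the paper, which simply notes that the claim ``follows directly from the resolution of integer tangles.'' Your additional remark that the crossingless closure arcs introduce no new simplifications is a reasonable bit of extra care, but it is not a different method.
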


\begin{proof}
This follows directly from the resolution of integer tangles.
\end{proof}

\begin{corollary}
$F(T(p, 2)) = 3$, $p > 1$, odd. The trefoil is the only fertile $T(p,2)$ knot.
\end{corollary}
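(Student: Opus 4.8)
The plan is to feed the resultant computation of the preceding theorem into the classification of low–crossing knots. Fix $p > 1$ odd, so that $T(p,2) = N[p]$ is a knot. By the theorem just proved, every resultant of $N[p]$ has the form $N[\pm k]$ with $k$ odd and $1 \le k \le p$; in particular every resultant is again a $(2,k)$–torus knot, with $N[\pm 1]$ the unknot and $N[\pm 3]$ the trefoil $3_1$ (both chiralities, as expected since a resultant's mirror is also a resultant).

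First I would establish $F(T(p,2)) \ge 3$. The only knots with crossing number at most $3$ are the unknot and $3_1$, and both occur among the resultants listed above (take $k = 1$ and $k = 3$). Hence every knot $H$ with $c(H) \le 3$ is a resultant of $N[p]$, which is exactly what the lower bound requires.

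Next I would show $F(T(p,2)) < 4$, forcing equality. The unique knot of crossing number exactly $4$ is the figure–eight $4_1$, and $4_1$ is not a $(2,k)$–torus knot — for instance it is amphichiral (hence not a nontrivial torus knot), or one may compare Alexander polynomials. Since every resultant of $N[p]$ is a torus knot, $4_1$ is never a resultant, so it is false that every knot of crossing number $\le 4$ is a resultant; combined with the previous paragraph this yields $F(T(p,2)) = 3$ for all odd $p > 1$.

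Finally, for the fertility claim: if $p = 3$ the link is the trefoil itself, $c(3_1) = 3$, and there are no prime knots of crossing number $< 3$, so $3_1$ is (vacuously) fertile. If $p \ge 5$, then $4_1$ is a prime knot with $c(4_1) = 4 < p = c(T(p,2))$ that is not a resultant, so $T(p,2)$ is infertile; hence the trefoil is the only fertile $(2,p)$–torus knot. I do not expect any real obstacle here: the only ingredient beyond the preceding theorem is the classical fact that $4_1$ is the unique $4$–crossing knot and is not a torus knot, which is precisely what caps the fertility number at $3$.
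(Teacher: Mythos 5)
Your proposal is correct and follows essentially the same route as the paper: the preceding theorem confines all resultants of $N[p]$ to the torus knots $N[\pm k]$, so the figure-eight knot $4_1 = N[2\;2]$ can never appear, capping the fertility number at $3$, while the unknot and trefoil occur as resultants for every odd $p>1$. The paper states only the key obstruction (no $4_1$ resultant); you have simply filled in the routine lower-bound and fertility bookkeeping that it leaves implicit.
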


\begin{proof}
These knots cannot have the figure-eight knot $N[2 \; 2]$ as a resultant.
\end{proof}

This has been repeatedly discovered \cite{MR3844207, MR3084750, MR1001742, MR4190429, MR3356086, MR2514545}.

\begin{corollary}
The Hopf link $2^2_1 = N[2]$ and $4^2_1 = N[4]$ are fertile ($F(N[2]) = 2$, $F(N[4]) = 4$). Otherwise, $N[a_1]$ links, $a_1 > 4$, have fertility number $4$.
\end{corollary}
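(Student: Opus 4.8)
The plan is to read the resultants of $N[a_1]$ directly off the preceding theorem and compare them against the (very short) list of two-component links of small crossing number. Since $a_1$ is even whenever $N[a_1]$ is a two-component link, that theorem tells us the resultants of $N[a_1]$ are exactly the $N[\pm k]$ with $k$ even and $0 \le k \le a_1$: namely the $(k,2)$ torus links $T(0,2)$ (the two-component unlink $N[0]$), $T(2,2) = 2^2_1$, $T(4,2) = 4^2_1$, $T(6,2) = 6^2_1$, \dots, $T(a_1,2)$, where $T(k,2)$ has crossing number $k$. Two facts are then immediate and do all the work: every resultant is a $(k,2)$ torus link, and the two-component links of crossing number at most four are precisely the unlink, $2^2_1$, and $4^2_1$, while the first two-component link that is \emph{not} of this torus form is the Whitehead link $5^2_1$ (crossing number $5$).

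I would then split on the value of $a_1$. If $a_1 = 2$, the resultants are the unlink and $2^2_1$, which exhaust the two-component links of crossing number at most $c(N[2]) = 2$; as there is no two-component link of crossing number $3$ and $4^2_1$ is not a resultant, $F(N[2]) = 2$, and $N[2]$ is fertile because no prime two-component link has fewer than two crossings. If $a_1 = 4$, the resultants are the unlink, $2^2_1$, and $4^2_1$ --- exactly the two-component links of crossing number at most four --- so $F(N[4]) \ge 4$; the next two-component link $5^2_1$ is not a resultant, so $F(N[4]) = 4$, and $N[4]$ is fertile since $2^2_1$, its only prime two-component link of smaller crossing number, is a resultant. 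Finally, for even $a_1 > 4$, i.e., $a_1 \ge 6$, the resultants still include the unlink, $2^2_1$, and $4^2_1$, so $F(N[a_1]) \ge 4$; but $5^2_1$ has crossing number $5 < a_1$, is prime, and is not a resultant, so $N[a_1]$ is not fertile and $F(N[a_1]) = 4$.

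The argument is essentially bookkeeping once the previous theorem is in hand; the only steps requiring care are the enumeration of two-component links by crossing number (that $2^2_1$ and $4^2_1$ are the only ones with at most four crossings, and that the next one, $5^2_1$, fails to be a $(k,2)$ torus link) together with the convention that the fertility number of a link as small as $N[2]$ is capped by its own crossing number. I do not expect any genuine obstacle beyond correctly identifying $5^2_1$ as the decisive ``missing'' link.
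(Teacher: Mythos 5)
Your proof is correct and follows the same route as the paper: both arguments rest on the preceding theorem that all resultants of $N[a_1]$ are torus links $N[\pm k]$, so that $5^2_1 = N[2\;1\;2]$ can never be a resultant, which caps the fertility number at $4$ for $a_1 > 4$. The paper's proof is literally the one sentence ``these links cannot form $5^2_1$''; your version just spells out the bookkeeping (which small two-component links exist and which are torus links) that the paper leaves implicit.
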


\begin{proof}
These links cannot form $5^2_1 = N[2 \; 1 \; 2]$.
\end{proof}

\begin{theorem} \label{2n_closed}
All resultants of $N[2' \; a_2']$ shadows are $N[2 \; \ell]$ knots, $0 \leq \ell \leq a_2$.
\end{theorem}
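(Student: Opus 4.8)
The plan is to enumerate all resultants by resolving the two shadow tangles separately and then putting each of the finitely many two-bridge diagrams that result into canonical form. A choice of over/under information at every crossing of $N[2'\;a_2']$ restricts to a choice on the two crossings of $2'$ and to a choice on the $a_2$ crossings of $a_2'$; because these two groups of crossings lie in disjoint regions of the diagram, the RII cancellations used to simplify one group leave the other group and the two closure arcs untouched. By the integer-tangle resolution theorem, $2'$ collapses to an integer tangle $s\in\{2,0,-2\}$ and $a_2'$ collapses to an integer tangle $m$ with $m\equiv a_2 \pmod 2$ and $|m|\le a_2$. Hence every resultant is isotopic to $N[s\;m]$ for one of these pairs, and since resultants occur in mirror pairs we may assume $s\in\{0,2\}$.

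I would then check that each $N[s\;m]$ belongs to the claimed one-parameter family. If $s=0$, then $N[0\;m]\simeq D[m]$ by the relation $N[0\;T]\simeq D[T]$, and the denominator closure of an integer tangle is the unknot $=N[2\;0]$, so $\ell=0$. If $s=2$ and $m\ge 0$, then $N[2\;m]$ is already of the stated form with $\ell=m\le a_2$. If $s=2$ and $m=-k<0$, the continued fraction is $2-\tfrac1k=\tfrac{2k-1}{k}$; a short computation together with Theorem \ref{KnotFracEquiv} identifies the mirror of this link as $\tfrac{2k-1}{k-1}=N[2\;(k-1)]$, so up to mirror it is $N[2\;\ell]$ with $\ell=k-1$ and $0\le\ell\le a_2-1$. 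In every case $0\le\ell\le a_2$, and since $N[2\;\ell]$ has odd numerator $2\ell+1$ it is genuinely a knot, consistent with the statement.

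For the reverse containment — that every $N[2\;\ell]$ with $0\le\ell\le a_2$ actually occurs — I would assign both crossings of $2'$ the same sign and resolve $a_2'$ to $+m$ with $0\le m\le a_2$ and $m\equiv a_2$, realizing the members with $\ell\equiv a_2$ directly; resolving $a_2'$ instead to $-k$ with $k\equiv a_2$ realizes, by the previous paragraph, the members with $\ell=k-1$, which have the opposite parity, so together all $\ell$ from $0$ to $a_2$ appear.

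The only place anything nonroutine happens is the mixed-sign case $N[2\;-k]$: a priori, letting the clasp and the twist region disagree in sign could produce a link outside the family, and collapsing it back to $N[2\;(k-1)]$ requires the continued-fraction bookkeeping of Theorem \ref{KnotFracEquiv} (equivalently, recognizing these diagrams as twist knots). Everything else is immediate from the product structure of the tangle and from the already-established resolution of integer-tangle shadows.
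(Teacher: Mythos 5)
Your proof is correct, but it does not follow the paper's route. The paper treats this statement as well-known (citing several references) and only later re-derives it as the $a_1=2$ specialization of its general classification of $N[a_1'\;a_2']$ resultants into $N[k\;\ell]$, $N[(k-1)\;1\;(\ell-1)]$, and unknots, observing that $k=2$ forces $N[1\;1\;(\ell-1)]\simeq N[2\;(\ell-1)]$; the mixed-sign case there is handled by the Untangling Lemma, i.e.\ by a tangle-level isotopy $N[\pm k\;\mp\ell]\approx N[\pm(k-1)\;\pm1\;\pm(\ell-1)]$. You instead specialize to $a_1=2$ from the outset and dispose of the mixed-sign case $N[2\;{-k}]$ by computing its fraction $\tfrac{2k-1}{k}$ and invoking Schubert's classification (Theorem \ref{KnotFracEquiv}) to identify it, up to mirror, with $N[2\;(k-1)]$ — which is legitimate under the paper's convention of identifying a resultant with its mirror, since the all-flipped assignment realizes the mirror anyway. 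What your approach buys is a short, self-contained argument that also establishes surjectivity onto the whole family $0\le\ell\le a_2$ (a point the paper leaves to its more general combinatorial theorems); what it gives up is generality, since the paper's Untangling Lemma computation covers all $N[a_1'\;a_2']$ at once and is reused repeatedly elsewhere. Your observation that the two twist regions resolve independently is exactly the paper's implicit use of the integer-tangle resolution theorem, so no gap there.
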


This too is well-trodden \cite{MR3844207, MR3084750, MR4190429, MR3356086}. 

\begin{theorem}
Let $a_1, a_2, k, \ell$ be even, positive integers, $2 \leq k \leq a_1$, $2 \leq \ell \leq a_2$. Then the $N[a_1' \; a_2']$ knot shadow has the $N[k \; \ell]$ and $N[ (k-1) \; 1 \; (\ell-1)]$ knots and the unknot as resultants.
\end{theorem}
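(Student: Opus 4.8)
The plan is to enumerate the resultants obtained by assigning parities to the two shadow tangles $a_1'$ and $a_2'$ and then simplifying via the results already established. Since $a_1$ and $a_2$ are even, the integer tangle resolution theorem tells us that $a_1'$ can be simplified to any even value $\pm k$ with $0 \leq k \leq a_1$, and likewise $a_2'$ to any even $\pm \ell$ with $0 \leq \ell \leq a_2$. When the two simplified tangles carry the same parity ($\pm k \; \pm \ell$ or $\mp k \; \mp \ell$), the diagram is already in canonical form and is the link $N[k \; \ell]$; taking $k, \ell \geq 2$ gives the first family of claimed resultants. When either $k$ or $\ell$ equals $0$, the closure collapses: $N[k \; 0] \simeq N[k]$ (a $T(k,2)$ torus link, hence a $(k,\ell)=(k,0)$ degenerate case), and in particular if both are zero we get the unknot. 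So the unknot is a resultant, and I would note the $N[k \; 0]$ and $N[0 \; \ell]$ cases are not new information (they are $N[a_1]$-type links already covered) but they confirm the unknot appears.

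The interesting case is opposite parities, $\pm k \; \mp \ell$ with $k, \ell \geq 2$. Here I invoke the Untangling Lemma (Lemma \ref{untangle}) directly: with $n = 2$ we are at the very end of the list, so the ``$a_{i+2}'$'' terms are empty and the first case of the lemma applies, giving
\[
\pm k \; \mp \ell \approx \pm(k-1)\;\pm 1\;\pm(\ell-1),
\]
which is the link $N[(k-1)\;1\;(\ell-1)]$. This is exactly the second claimed family. Since $k, \ell$ range over all even values from $2$ to $a_1$ and $2$ to $a_2$ respectively, this produces $N[(k-1)\;1\;(\ell-1)]$ for all such $k,\ell$, establishing that part of the statement. (One should double-check the lemma's hypothesis ``inside a rational link shadow'' with $1 < i < n$ is not literally violated at the end of the list; the lemma's proof only uses the continued-fraction identity $c_j + 1/(-c_{j+1} + 1/A) = (c_j-1) + 1/(1 + 1/((c_j-1) - 1/A))$, which holds with $A = \infty$, i.e. an empty tail, so the identity and hence the move go through.)

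Finally I would check that the listed resultants are genuinely distinct links and that nothing has been over- or under-claimed: the $N[k\;\ell]$ with $k,\ell$ even and the $N[(k-1)\;1\;(\ell-1)]$ with $k,\ell$ even are disjoint families (the continued-fraction fractions $\frac{k\ell+1}{\ell}$ versus the fraction of $(k-1,1,\ell-1)$, which simplifies to $\frac{k\ell - 1}{\ell - 1}$ or similar, are different), plus the unknot. The main obstacle I anticipate is not the enumeration but the bookkeeping of the Untangling Lemma at the boundary of the tangle list — making sure the ``$a_{i+1} = 1$'' and ``$a_{i+1} = 0$'' sub-cases of the lemma don't secretly contribute extra resultants here (they don't, because $\ell - 1 \geq 1$ always since $\ell \geq 2$, so we stay in the first case), and confirming that no further RII-type simplifications collapse $N[(k-1)\;1\;(\ell-1)]$ further when $k-1$ or $\ell-1$ equals $1$ (it does not reduce the crossing number, since $1\;1$ tangles in the interior stay distinct per the length theorem). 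Everything else is a direct appeal to theorems already in hand.
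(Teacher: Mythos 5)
Your argument is correct and follows the paper's own proof essentially verbatim: assignments sending a tangle to zero yield unknots, matching-sign assignments yield $N[k \; \ell]$ (alternating, hence reduced), and opposite-sign assignments yield $N[(k-1) \; 1 \; (\ell-1)]$ via the continued-fraction identity underlying the Untangling Lemma. One small correction: $N[k \; 0] \simeq D[k]$ is the unknot, not the torus link $N[k]$ (which for even $k$ would be a two-component link and so could not arise from this knot shadow); this slip does not affect your conclusion, since the unknot still appears as claimed.
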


\begin{note}
$N[\pm (2 - 1) \; \pm 1 \; \pm (2 - 1)] \approx N[\pm 3]$.
\end{note}

\begin{proof}
We can automatically untangle the knot shadow for assignments $a_1'$ or $a_2' \rightarrow 0$. We now must identify $N[ m \; n ]$ diagrams, where $m$, $n \not= 0$. For specific positive indices $k$, $\ell$, we have knot diagrams $N[ \pm k \; \pm \ell]$ and $N[ \pm k \; \mp \ell] \simeq N[\pm (k - 1) \; \pm 1 \; \pm (\ell - 1)]$. Diagrams with matching signs are alternating, thus fully reduced. 
\end{proof}

\begin{corollary}
The nontrivial knots $N[a_1 \; a_2]$, $a_1$, $a_2$ even, have $F(N[a_1 \; a_2]) = 4$. The only such fertile knot is the figure-eight knot $4_1 = N[2 \; 2]$.
\end{corollary}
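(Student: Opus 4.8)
The plan is to determine the resultant set of the shadow $N[a_1'\;a_2']$ precisely enough to do two things: produce every knot with at most four crossings (which forces $F \geq 4$), and exhibit a five-crossing knot that never occurs as a resultant (which forces $F \leq 4$). The claim about fertility then drops out of a crossing-number count.

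For the lower bound I would just quote the preceding theorem. The only knots with at most four crossings are the unknot, $3_1$, and $4_1$. Applying that theorem with $k = \ell = 2$ (legitimate since $a_1, a_2 \geq 2$) already gives the unknot, $N[2\;2] = 4_1$, and $N[(2-1)\;1\;(2-1)] = N[1\;1\;1]$ as resultants, and by the Note $N[1\;1\;1] \approx N[3] = 3_1$. Hence all three knots of crossing number $\leq 4$ are resultants, so $F(N[a_1\;a_2]) \geq 4$.

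For the upper bound I would show that $5_1 = N[5]$ is never a resultant. By the preceding theorem together with its proof, the resultants of $N[a_1'\;a_2']$ are exactly the unknot and the knots $N[k\;\ell]$ and $N[(k-1)\;1\;(\ell-1)]$ with $k, \ell$ even and $2 \leq k \leq a_1$, $2 \leq \ell \leq a_2$. Their continued fractions are $(k\ell+1)/\ell$ and $(k\ell-1)/\ell$ respectively, each already in lowest terms, so the numerator $p$ of any nontrivial resultant is $k\ell \pm 1$ with $k\ell$ a multiple of $4$. Requiring $p = 5$ forces $k\ell = 4$, i.e. $(k,\ell) = (2,2)$, whose resultant is $N[2\;2] = 4_1$; by Theorem \ref{KnotFracEquiv} this is not $5_1$, since $N[2\;2]$ has $p/q = 5/2$ while $5_1 = N[5]$ has $p/q = 5/1$, and $2 \not\equiv 1$ and $2\cdot 1 \not\equiv 1 \pmod 5$. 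So no resultant equals $5_1$, giving $F(N[a_1\;a_2]) < 5$ and hence $F(N[a_1\;a_2]) = 4$.

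For the fertility statement: a nontrivial $N[a_1\;a_2]$ with $a_1, a_2$ even positive is alternating and reduced with $c = a_1 + a_2 \geq 4$, an even number. If $a_1 + a_2 = 4$ the knot is $4_1 = N[2\;2]$, and the only prime knot with fewer crossings, $3_1$, is a resultant, so $4_1$ is fertile. If $a_1 + a_2 \geq 6$ then $5_1$ is a prime knot of crossing number $5 < c$ that is not a resultant, so the knot is not fertile; thus $4_1$ is the only fertile knot of this shape. The main obstacle I anticipate is the bookkeeping in the upper bound: confirming that the two listed families genuinely exhaust the resultants — including the degenerate small-index cases, where $N[(k-1)\;1\;(\ell-1)]$ collapses to a shorter fraction — and carrying out the fraction/Schubert-equivalence check that isolates $4_1$ from $5_1$. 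The lower bound and the crossing-number argument are routine once the resultant set is pinned down.
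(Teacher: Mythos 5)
Your proof is correct and follows essentially the same route as the paper: both obtain $F \geq 4$ from the preceding theorem (the trefoil arising from $k = \ell = 2$ via $N[1\;1\;1] \approx N[3]$) and both obtain $F \leq 4$ by checking that $5_1$ is absent from the enumerated resultant set. The only difference is in how that absence is certified: the paper simply observes that every nontrivial resultant other than the trefoil has canonical length at least $2$ while $5_1 = N[5]$ has length $1$, whereas you compute the Schubert fractions $(k\ell \pm 1)/\ell$ and rule out numerator $5$ arithmetically --- a more explicit but equivalent verification.
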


\begin{proof}
The choice $a_1' \rightarrow \pm 2$ and $a_2' \rightarrow \mp 2$ produces the trefoil. Any other nontrivial resultant $R$ has $d(R) \geq 2$, thus $5_1 = N[5]$ cannot be a resultant.
\end{proof}

% N[a_1 a_2], one even, one odd theorem
\begin{theorem}
Let $a_1, k$ be even integers and $a_2, \ell$ be odd integers, $2 \leq k \leq a_1$, $3 \leq \ell \leq a_2$. Then the $N[a_1' \; a_2']$ knot shadow has resultants of the forms $N[k+1]$, $N[k \; \ell]$, and $N[(k-1) \; 1 \; (\ell-1)]$, in addition to the unknot.
\end{theorem}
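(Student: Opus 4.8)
The plan is to follow the template of the even–even case: resolve the two shadow tangles separately and read off the continued fraction of the resulting knot. A shadow tangle with an even number $a_1$ of crossings resolves to any of $0,\pm 2,\dots,\pm a_1$, and one with an odd number $a_2$ of crossings resolves to any of $\pm 1,\pm 3,\dots,\pm a_2$ (with either choice of sign available), so every single-tangle resolution of $N[a_1'\;a_2']$ is a diagram $N[s\;t]$ with $s\in\{0\}\cup\{\pm k : 2\le k\le a_1,\ k\text{ even}\}$ and $t\in\{\pm\ell : 1\le\ell\le a_2,\ \ell\text{ odd}\}$. Since crossing changes preserve the number of components and $a_1a_2+1$ is odd, all of these are knots, consistent with the stated forms.

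First I would handle the unknot by taking $a_1'\to 0$: then $N[0\;a_2']\simeq D[a_2']$, and the denominator closure of any integer tangle is unknotted. Next, the same-sign resolutions: choosing $a_1'\to\pm k$ and $a_2'\to\pm\ell$ with matching signs gives the diagram $N[\pm k\;\pm\ell]$, whose continued fraction has all entries of a single sign; such a rational diagram is alternating, hence minimal by Tait's theorem. This yields $N[k\;\ell]$ for every even $k\in[2,a_1]$ and odd $\ell\in[1,a_2]$. The value $\ell=1$ gives precisely the separately-listed form $N[k\;1]\simeq N[k+1]$ (which is excluded from the $\ell$-range of the other two forms), while $\ell\ge 3$ gives the form $N[k\;\ell]$.

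For the third form I would take the opposite-sign resolutions $a_1'\to\pm k$, $a_2'\to\mp\ell$ with $\ell\ge 3$, producing $N[\pm k\;\mp\ell]$. The continued fraction identity underlying Lemma \ref{untangle} gives $k+\frac{1}{-\ell}=(k-1)+\frac{1}{1+\frac{1}{\ell-1}}=\frac{k\ell-1}{\ell}$, so by Theorem \ref{KnotFracEquiv} this diagram is isotopic to $N[(k-1)\;1\;(\ell-1)]$. With $k$ even and $\ell$ odd the entries $k-1\ge 1$, $1$, $\ell-1\ge 2$ are all positive, so $N[(k-1)\;1\;(\ell-1)]$ is alternating and therefore realizes its crossing number; it gives the third form for all the stated $k,\ell$ (when $k=2$ it appears in the nonreduced but still minimal presentation $N[1\;1\;(\ell-1)]\simeq N[(\ell-1)\;2]$, which is harmless).

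The point requiring the most care — more a bookkeeping subtlety than a genuine obstacle — is that Lemma \ref{untangle} is stated only for an interior disagreement $1<i<n$, whereas here the two tangles sit at the outer positions; one must justify the opposite-sign move directly from the continued fraction equivalence of Theorem \ref{KnotFracEquiv}, exactly as was done for the even–even analogue. Beyond that, everything reduces to the single observation that a same-sign (equivalently, alternating) rational diagram realizes the crossing number, which certifies each exhibited diagram as a minimal diagram of the claimed knot and hence as a bona fide resultant.
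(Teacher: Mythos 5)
Your argument is correct and follows essentially the same route as the paper: resolve each shadow tangle, send $a_1'$ to $0$ for unknots, read the same-sign resolutions as alternating (hence minimal) diagrams $N[\pm k \; \pm \ell]$, and convert the opposite-sign resolutions to $N[\pm(k-1) \; \pm 1 \; \pm(\ell-1)]$ via the continued-fraction identity, with $\ell = 1$ supplying the $N[k+1]$ form. The only case you leave implicit is $N[\pm k \; \mp 1] \simeq N[\pm(k-1)]$ (the unknot when $k=2$), which the paper treats explicitly but which lands back among the listed forms, so nothing is lost.
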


% N[a_1 a_2], one even, one odd theorem proof
\begin{proof}
Without loss of generality, choose $a_1$ to be even. Like above, we can take $a_1 \rightarrow 0$ to produce unknots, and can otherwise form $N[\pm k \; \pm \ell]$ and $N[\pm k \; \mp \ell] \approx N[\pm (k-1) \; \pm 1 \; \pm (\ell-1)]$ diagrams.
 
This leaves $N[\pm k \; \pm1] \simeq N[\pm(k+1)]$ and $N[\pm k \; \mp 1] \simeq N[\pm(k - 1)]$. $N[\pm 2 \; \mp 1] \simeq N[\pm 1 \; \pm 1 \; 0] \simeq D[\pm 2],$ or the unknot. 
\end{proof}

Theorem \ref{2n_closed} follows from $k = 2 \implies N[(k-1) \; 1 \; (\ell-1)] = N[1 \; 1 \; (\ell-1)] \simeq N[2 \; (\ell-1)]$.

\begin{corollary}
The only fertile knots of the form $N[2 \; a_2]$, $a_2 \in \mathbb{N}$, are $N[2 \; 3] = 5_2$, $N[2 \; 2] = 4_1$, $N[2 \; 1] = 3_1$, and $N[2 \; 0] = 0_1$. If $a_2 \geq 2$, $F(N[2 \; a_2]) = 4$.
\end{corollary}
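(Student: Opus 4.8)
The plan is to lean on Theorem~\ref{2n_closed}, which already pins down the resultant set of the shadow $N[2' \; a_2']$ exactly: its resultants are precisely the knots $N[2 \; \ell]$ with $0 \le \ell \le a_2$. Since $N[2\;0] = 0_1$, $N[2\;1] \simeq N[3] = 3_1$, $N[2\;2] = 4_1$, $N[2\;3] = 5_2$, and $c(N[2\;\ell]) = \ell + 2$ for every $\ell \ge 2$, the resultant set is completely transparent, and both assertions reduce to comparing this set against the lists of small knots.

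For the fertility number in the range $a_2 \ge 2$: the only knots with at most four crossings are $0_1$, $3_1$, and $4_1$, and all three occur among the resultants as $N[2\;0]$, $N[2\;1]$, and $N[2\;2]$ (this is where $a_2 \ge 2$ enters, since one needs $4_1 = N[2\;2]$ to be available). Hence $F(N[2\;a_2]) \ge 4$. To get equality I would verify that $5_1 = N[5]$ is never a resultant. Writing $N[5] = N[5/1]$ and $N[2\;\ell] = N[(2\ell+1)/\ell]$ (both fractions reduced), Theorem~\ref{KnotFracEquiv} forces $2\ell + 1 = 5$, i.e. $\ell = 2$; but then one would also need $\ell \equiv 1 \pmod 5$ or $\ell \cdot 1 \equiv 1 \pmod 5$, and $\ell = 2$ satisfies neither. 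So $5_1$ appears in no resultant set of an $N[2\;a_2]$ knot, and since $c(5_1) = 5$ the fertility number cannot reach $5$; thus $F(N[2\;a_2]) = 4$.

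For the list of fertile members: $N[2\;0] = 0_1$ and $N[2\;1] = 3_1$ are fertile vacuously, there being no prime knot of smaller crossing number. For $4_1 = N[2\;2]$, the only prime knot with fewer crossings is $3_1 = N[2\;1]$, a resultant, so $4_1$ is fertile; for $5_2 = N[2\;3]$, the prime knots with fewer crossings are exactly $3_1$ and $4_1$, realized as $N[2\;1]$ and $N[2\;2]$, so $5_2$ is fertile. Finally, for $a_2 \ge 4$ we have $c(N[2\;a_2]) = a_2 + 2 \ge 6 > 5$, so fertility would require $5_1$ as a resultant, which the previous paragraph rules out; hence no $N[2\;a_2]$ with $a_2 \ge 4$ is fertile, leaving exactly $N[2\;0]$, $N[2\;1]$, $N[2\;2]$, $N[2\;3]$.

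The knot-table bookkeeping here is routine; the one genuinely load-bearing step is the verification that $5_1$ escapes the resultant set (and, more generally, that every torus knot $T(p,2) = N[p]$ with $p \ge 5$ does). This is precisely the length obstruction flagged earlier in the paper---a length-one rational knot cannot be a resultant of a length-two rational knot once it is large enough---and it is enforced cleanly by the $p = p_0$ clause of Theorem~\ref{KnotFracEquiv}.
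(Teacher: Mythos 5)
Your proof is correct and follows essentially the same route as the paper: both arguments reduce everything to the resultant set $\{N[2\;\ell]:0\le\ell\le a_2\}$ from Theorem \ref{2n_closed} and then rule out $5_1$ via the fraction classification of Theorem \ref{KnotFracEquiv} (the paper's version is the one-line observation that the fraction $\ell+\tfrac12$ is never the integer $5$, which is your $p=2\ell+1$ computation without needing the mod-$p$ clause). The extra bookkeeping you supply for the lower bound $F\ge 4$ and for the list of fertile members is left implicit in the paper but is consistent with its preceding theorems.
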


\begin{proof}
$5_1 = N[5]$ cannot be written as a $N[2 \; \ell]$ knot: $\ell + \frac{1}{2} \not= 5$ for any $\ell \in \mathbb{Z}$.
\end{proof}

\begin{corollary}
Nontrivial knots $N[a_1 \; a_2]$, $a_1$ even, $a_2$ odd, $4 \leq a_1$, $2 < a_2$ are infertile with $F(N[a_1 \geq 4 \; \; a_2]) = 5$.
\end{corollary}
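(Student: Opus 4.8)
The plan is to prove the two bounds $F(N[a_1\;a_2]) \ge 5$ and $F(N[a_1\;a_2]) \le 5$ separately, both as direct consequences of the preceding theorem's description of the resultants of an $N[a_1'\;a_2']$ shadow with $a_1$ even and $a_2$ odd. Throughout write $K = N[a_1\;a_2]$; since $a_1$ is even and $a_2$ odd the continued fraction $a_1 + \tfrac1{a_2}$ has odd numerator, so $K$ is indeed a knot, it is alternating (canonical form), and $c(K) = a_1 + a_2 \ge 7$.

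First I would establish $F(K) \ge 5$ by exhibiting every knot with at most five crossings as a resultant, reading the assignments straight off the previous theorem. The unknot arises from $a_1' \to 0$. Taking $k = 2$ gives the resultant $N[k+1] = N[3] = 3_1$, and taking $k = 4$ — valid since $a_1 \ge 4$ — gives $N[5] = 5_1$. Taking $(k,\ell) = (2,3)$ — valid since $a_1 \ge 2$ and $a_2 \ge 3$ — gives the resultants $N[k\;\ell] = N[2\;3] = 5_2$ and $N[(k-1)\;1\;(\ell-1)] = N[1\;1\;2] \simeq N[2\;2] = 4_1$. Since $\{0_1, 3_1, 4_1, 5_1, 5_2\}$ exhausts the knots with crossing number at most five, this yields $F(K) \ge 5$; note this is precisely the step that consumes the hypotheses $a_1 \ge 4$ and $a_2 \ge 3$.

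Next I would show $F(K) \le 5$. The enumeration in the previous theorem is exhaustive, since a resultant of a shadow built from only two integer tangles is determined by the resolutions of those two shadow tangles; hence every resultant of $K$ is the unknot, a torus knot $N[k \pm 1]$, a knot $N[k\;\ell]$, or a knot $N[(k-1)\;1\;(\ell-1)]$, and each of these has a canonical form of length at most $3$. On the other hand $6_3$ has continued fraction $13/5$ and canonical form $N[2\;1\;1\;2]$, of length $4$ (by Theorem \ref{KnotFracEquiv} no fraction equivalent to $13/5$ has a canonical representative of length at most $3$). Since canonical forms are unique, $6_3$ cannot coincide with any resultant, so $F(K) < 6$, and together with the lower bound $F(K) = 5$. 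Because $6_3$ is a prime knot with $c(6_3) = 6 < c(K)$, $K$ is moreover infertile.

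The only delicate points — the nearest thing to an obstacle in an otherwise routine argument — are the two bookkeeping assertions: that the earlier list of resultant forms is genuinely complete, so no longer resultant slips through, and that $6_3$ really has canonical length $4$ and therefore, by uniqueness of canonical forms, differs from every length-$\le 3$ resultant. Both are quick: the first because $K$'s shadow has only two integer tangles, the second a short run of the Schubert classification. As a cross-check one may instead enumerate the six-crossing resultants directly — in $N[k\;\ell]$ parity forces $k+\ell$ odd, so none of those has six crossings, while $N[(k-1)\;1\;(\ell-1)]$ with six crossings forces $(k,\ell) = (4,3)$, giving only $6_2 = N[3\;1\;2]$ — which again shows both $6_1$ and $6_3$ are absent.
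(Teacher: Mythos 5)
Your main argument is correct and is essentially the paper's: the paper likewise gets the lower bound by noting $a_2$ odd (with $a_1\geq 4$) allows $5_1=N[k+1]$, $k=4$, alongside the smaller knots already exhibited, and gets the upper bound by observing that every resultant has canonical length at most $3$ while $6_3=N[2\;1\;1\;2]$ has length $4$. One correction to your closing ``cross-check,'' though it does not affect the theorem: the equation $k+\ell-1=6$ with $k$ even, $\ell\geq 3$ odd does \emph{not} force $(k,\ell)=(4,3)$; it also admits $(k,\ell)=(2,5)$ whenever $a_2\geq 5$, and the corresponding resultant $N[1\;1\;4]$ has fraction $9/5\sim 9/2$, i.e.\ it \emph{is} $6_1$ (indeed $6_1$ is a resultant of $7_2=N[2\;5]$, consistent with the paper's tables). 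So $6_1$ is generally present; only the absence of $6_3$ — which your length argument establishes correctly — is what caps $F(K)$ at $5$.
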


\begin{proof}
$a_2$ odd allows the formation of $N[5] = 5_1$, however, all resultants $R$ have $d(R) \leq 3$, precluding formation of $6_3 = N[2 \; 1 \; 1 \; 2]$.
\end{proof}

% end two component rational knot discussion

For two-component rational links $L$, we already know if $d(L) > 1$, $F(L) \geq 5$. For further information, we just need to determine if a rational link can form $6^2_1 = N[6]$, $6^2_2 = N[3 \; 3]$, and $6^2_3 = N[2 \; 2 \; 2]$.

\begin{theorem}
Let $L = N[a_1 \; a_2]$ be a rational two-component link. Then $F(L) = 5$.
\end{theorem}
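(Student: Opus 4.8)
The plan is to prove the two bounds $F(L)\ge 5$ and $F(L)\le 5$, after first narrowing down what $L$ can be. Since $L=N[a_1\;a_2]$ is written in canonical form of length $2$, the canonical-form condition $|a_1|>1$ together with Corollary \ref{palTheorem} (which forces the last entry to exceed $1$) gives $a_1,a_2\ge 2$, and the Conway fraction of $L$ is $a_1+\tfrac{1}{a_2}=\tfrac{a_1a_2+1}{a_2}$; its numerator is even precisely when $a_1a_2$ is odd, so $L$ has two components if and only if $a_1$ and $a_2$ are both odd, hence both odd and $\ge 3$. That is the only case to treat.

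The lower bound $F(L)\ge 5$ holds since $d(L)=2>1$, as noted above; explicitly, the shadow $N[a_1'\;a_2']$ realizes all three two-component links with at most five crossings: resolving both odd-weight shadow tangles to $\pm1$ with the same sign gives $N[\pm1\;\pm1]\simeq N[2]=2^2_1$; the assignment $a_1'\to\pm3$ (legal since $a_1\ge3$), $a_2'\to\pm1$ gives $N[\pm3\;\pm1]\simeq N[4]=4^2_1$; and $a_1'\to\pm3$, $a_2'\to\mp3$ followed by the opposite-parity reduction $N[\pm k\;\mp\ell]\approx N[\pm(k-1)\;\pm1\;\pm(\ell-1)]$ gives $N[\pm2\;\pm1\;\pm2]=5^2_1$.

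The real content is the upper bound, for which I would show that $6^2_3=N[2\;2\;2]$ is never a resultant of $L$. First classify the resultants of $N[a_1'\;a_2']$: by the resolution of integer tangles each shadow tangle becomes $\pm k$, $\pm\ell$ with $k,\ell$ odd, $1\le k\le a_1$, $1\le\ell\le a_2$ (and symmetrically with the two roles swapped). When the signs agree, $N[\pm k\;\pm\ell]$ is alternating, hence reduced, with Conway numerator $p=k\ell+1$; when they disagree, $N[\pm k\;\mp\ell]\approx N[\pm(k-1)\;\pm1\;\pm(\ell-1)]$, a diagram $N[m\;1\;n]$ with Conway numerator $(m+1)(n+1)-1=k\ell-1$; and in every degenerate sub-case ($k=1$ or $\ell=1$) the two-tangle diagram collapses to a $(2,p)$-torus link $N[\pm p]$, whose Conway denominator satisfies $q\equiv\pm1\pmod p$. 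Now $6^2_3=N[2\;2\;2]$ has Conway fraction $\tfrac{12}{5}$, so realizing it would require $p=12$ with $q\equiv5$ (or $7$) $\pmod{12}$, in particular $q\not\equiv\pm1$. But $k\ell=11$ and $k\ell=13$ are impossible with $k,\ell$ odd and $\ge3$; the only factorizations $11=1\cdot11$, $13=1\cdot13$ force $k$ or $\ell$ to be $1$, landing in the torus case with $q\equiv\pm1$. Hence no resultant of $L$ is $6^2_3$, so $L$ is not $6$-fertile and $F(L)\le5$; combined with the previous paragraph, $F(L)=5$.

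The step I expect to be the main obstacle is making the classification of resultants genuinely exhaustive — in particular controlling the degenerate collapses (when an integer tangle resolves to $\pm1$, the two-tangle diagram simplifies to a torus link, and one must verify none of these torus links is $N[12/5]$) and confirming that the opposite-parity reduction $N[\pm k\;\mp\ell]\approx N[\pm(k-1)\;\pm1\;\pm(\ell-1)]$ is available even though $N[a_1'\;a_2']$ has no interior tangle for Lemma \ref{untangle} to act on, so that one must invoke the two-tangle versions proved for the other parity cases. Casting the obstruction in terms of the single invariant $p$, rather than the full pair $(p,q)$ and Theorem \ref{KnotFracEquiv}, is what keeps the final check to a one-line divisibility argument and immune to closure-convention bookkeeping.
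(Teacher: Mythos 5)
Your proof is correct and follows essentially the same route as the paper: the paper's one-line argument observes that the only length-three resultants of $N[a_1'\;a_2']$ arise from the mixed-sign reduction and hence have the form $N[\pm c_1\;\pm 1\;\pm c_3]$, so $6^2_3=N[2\;2\;2]$ cannot occur, which is exactly your obstruction recast via the fraction $p=k\ell\pm1\neq 12$ except degenerately. Your version merely makes explicit the details the paper leaves implicit (the exhaustive classification of resultants, the lower bound via explicit assignments rather than the preceding Taniyama-based theorem), and both are sound.
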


\begin{proof}
The longest resultants of any $L$ are of the form $N[\pm c_1 \; \pm 1 \; \pm c_3] \implies 6^2_3 = N[2 \; 2 \; 2]$ cannot be a resultant.
\end{proof}

\subsection{Length 3 and Higher Links}

\begin{theorem}
Let $L$ be a nontrivial prime two-component link. If $L = 2_1^2  = T(2,2)$, $F(L) = 2$. If $L = T(p,2)$, $p$ even, $p \geq 4$, $F(L) = 4$. Otherwise, $F(L) \geq 5$.
\end{theorem}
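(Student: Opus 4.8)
The plan is to break this trichotomy into its three natural cases and handle each with the tools already assembled. The first two claims are essentially bookkeeping: we have already seen (via the corollaries on $N[a_1]$ links) that $F(T(2,2)) = F(N[2]) = 2$ because the only smaller two-component link is the trivial one, and that $F(T(p,2)) = F(N[p]) = 4$ for even $p \geq 4$, since $N[p]$ with $p > 4$ realizes $N[2]$ and $N[4]$ as resultants but cannot produce $5^2_1 = N[2\;1\;2]$ (its resultants are all of the form $N[\pm k]$). So I would state these two cases as immediate consequences of the length-1 analysis and spend no further effort on them.

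The substance is the final clause: every nontrivial prime two-component rational link $L$ other than a $T(p,2)$ torus link has $F(L) \geq 5$. First I would note that such an $L$, being prime, rational, and not a $T(p,2)$, must have canonical form $N[a_1\;a_2\;\dots\;a_n]$ with $n = d(L) \geq 2$. The goal is then to show $F(L) \geq 5$, i.e. that $L$ realizes every two-component link of crossing number $\leq 5$ as a resultant — concretely the Hopf link $2^2_1 = N[2]$, then $4^2_1 = N[4]$, then $5^2_1 = N[2\;1\;2]$ (there are no prime two-component links of crossing number $3$). For the length-2 case $L = N[a_1\;a_2]$ this is exactly the preceding theorem (which gives $F = 5$), so I only need $n \geq 3$. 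By the heredity theorem ($F$ is nondecreasing under $a_i \mapsto a_i + 2$) and Theorem \ref{f_across_L} (relating $F(L)$ to $F$ of shorter links by zeroing or decrementing an end tangle), it suffices to establish $F \geq 5$ on the trunk $\mathcal{K}_n$ for small $n$ and then propagate upward; equivalently, to show $g(n,2) \geq 5$ for all $n \geq 2$. Since a two-component trunk link of length $n$ has crossing number at least $n+2 \geq 4$, the only content is producing $N[2]$, $N[4]$, and $N[2\;1\;2]$ inside its shadow.

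The concrete mechanism I would use is integer-tangle resolution plus the Untangling Lemma. Given a two-component $L = N[\{a_i\}]$, choosing an interior tangle's crossings to cancel in pairs (and zeroing out all-but-a-bounded-number of tangles) collapses $L'$ down to short subshadows; the parity-of-components constraint forces $\sum a_i$-type congruences that I would track to confirm the collapsed shadow is still two-component. For the Hopf link: zero out every tangle except one even-parity interior spot where we leave two crossings of the same sign, giving $N[\pm 2] = 2^2_1$ up to mirror (using the $k\;0\;\ell \simeq (k+\ell)$ reduction from Lemma \ref{untangle}); for $4^2_1 = N[4]$: leave four same-sign crossings in one even position, or combine $N[\pm 2 \; 0 \; \pm 2]$. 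For $5^2_1 = N[2\;1\;2]$: this needs genuine length-$3$ structure, so I would use an assignment with disagreeing parities, $\pm a_i \; \mp a_{i+1}$, and invoke the first case of the Untangling Lemma to produce a $\pm(a_i-1)\;\pm 1\;\pm(a_{i+1}-1)$ pattern, then tune the surviving crossings down to $N[\pm 2 \; \pm 1 \; \pm 2]$ — this is precisely where having $d(L) \geq 3$ (so that the link is not a torus link and not length $2$) is used, and it is why the torus links are the exceptions. The main obstacle I anticipate is the careful case analysis on parities of the end tangles $a_1, a_n$ and on whether the relevant interior positions equal $1$ or $2$: the Untangling Lemma's second and third branches behave differently when $a_{i+1} = 1$ or $0$, so a handful of small-length trunk links (the same pathological shape $N[2\;1^{\,m}\;2]$ flagged in Theorem \ref{end_goal}) need to be checked by hand to confirm $N[2\;1\;2]$ really does appear. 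Once those base cases are verified, heredity and Theorem \ref{end_goal} push the bound $F \geq 5$ to all longer lengths, completing the proof.
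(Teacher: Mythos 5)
Your proposal diverges from the paper in a way that leaves a real gap: the theorem is stated for \emph{every} nontrivial prime two-component link, not only rational ones, and the paper's proof is correspondingly a two-line citation --- Taniyama \cite{MR1030506} proved that any prime two-component link with five or more crossings majorizes $2^2_1$, $4^2_1$, and $5^2_1$, and \cite{MR3844207} showed that majorizing a link forces it to appear as a resultant. Your argument begins by putting $L$ into the canonical form $N[a_1\;a_2\;\dots\;a_n]$ of a rational tangle closure, so it can only ever address rational $L$. Non-rational prime two-component links exist already at seven crossings (the paper's Table 1 counts eight prime two-component links at $c=7$, of which only $7^2_1$, $7^2_2$, $7^2_3$ are rational), and your machinery --- trunks, heredity under $a_i \mapsto a_i+2$, the Untangling Lemma --- says nothing about them. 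The general statement is actually needed downstream, so restricting to the rational case is not a harmless convention.

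Even within the rational case your argument is a plan rather than a proof. The crux is producing $5^2_1 = N[2\;1\;2]$ from an arbitrary two-component shadow of length $\geq 3$, and this is exactly where the parity bookkeeping bites: which tangles can be set to $0$, whether an interior tangle of the required parity and size exists, and how the second and third branches of the Untangling Lemma behave when the neighboring tangle is $1$. You correctly flag the $N[2\;1^{m}\;2]$ family as the problematic shape and say it ``needs to be checked by hand,'' but you do not perform the check, nor do you verify the trunk base cases that Theorem \ref{end_goal} requires before heredity can propagate the bound. If you want a self-contained argument for the rational case it can certainly be completed along these lines (and the paper effectively does so, link by link, in Section \ref{link_fert_num}), but as written the proposal neither closes the rational case nor touches the non-rational one; the clean route for the full statement is the majorization theorem the paper cites.
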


\begin{proof}
In \cite{MR1030506}, Taniyama proved any prime two-component link of five or more crossings majorizes the links $2_1^2$, $4_1^2$, and $5_1^2$. \cite{MR3844207} showed a link that majorizes some link $L$ has $L$ as a resultant.
\end{proof}

\begin{theorem} \label{3tangle}
Let $L = N[a_1 \; a_2 \; a_3]$ be a nontrivial two-component link. If $a_2 = 1$ or $a_1 = a_3 = 2$, $F(L) = 5$. Otherwise $F(L) = 6$.
\end{theorem}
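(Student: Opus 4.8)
The plan is to squeeze $F(L)$ between the two bounds already available. Since $L$ is a nontrivial rational (hence prime) link and $N[a_1\;a_2\;a_3]$ does not reduce ($a_1,a_3\ge 2$ by canonicity, and the middle tangle keeps it length $3$), $L$ is not a $T(p,2)$ link, so the preceding theorem gives $F(L)\ge 5$; and a rational link has only rational resultants, so $F(L)\le 6$ by the bound for two-component rational links. Hence everything reduces to deciding which of $6^2_1=N[6]$, $6^2_2=N[3\;3]$, $6^2_3=N[2\;2\;2]$ occur as resultants: $F(L)=6$ precisely when all three do, and $F(L)=5$ as soon as one of them fails to.

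For the two exceptional families I will isolate a single six-crossing link that is never a resultant. If $a_2=1$, a numerator-parity computation shows $L$ is a two-component link only when $a_1$ and $a_3$ are both even; then the middle shadow tangle resolves to $\pm 1$, and running the Untangling Lemma on each sign pattern (plus the degenerate choices sending an end tangle to $0$, which give torus links or the unknot) shows every resultant is a torus link, a trivial link, a rational link of length $\le 2$, or a length-$3$ rational link whose middle entry equals $1$. Since the canonical form of $6^2_3=N[2\;2\;2]$ has length $3$ with middle entry $2$, it is not a resultant, so $F(L)=5$. If $a_1=a_3=2$ with $a_2\ge 2$, the two end shadow tangles resolve to $0$ or $\pm 2$, and a short enumeration of continued fractions shows every nontrivial non-torus resultant is an $N[\pm 2\;\pm c_2\;\pm 2]$ and hence has numerator divisible by $4$; as $6^2_2=N[3\;3]$ has numerator $10$, it is not a resultant, so $F(L)=5$. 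The two families meet only at $N[2\;1\;2]=5^2_1$, where $F=5$ holds trivially.

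For the remaining links I will produce, for each, explicit sign assignments realizing all three of $N[6]$, $N[3\;3]$, $N[2\;2\;2]$. Numerator parity forces one of two cases: (i) $a_1,a_3$ both even, in which case by Corollary \ref{palTheorem} we may assume $a_1\ge 4$ since $a_1=a_3=2$ is excluded; or (ii) $a_1,a_3$ both odd (hence $\ge 3$) with $a_2$ even. In case (ii) the assignments $(a_1',a_2',a_3')\to(+3,-2,+3)$, $(+1,+2,+3)$, $(+3,0,+3)$ have continued fractions $12/5$, $10/7$, $6$, i.e.\ $N[2\;2\;2]$, $N[3\;3]$ (as $10/7\simeq 10/3$ by Theorem \ref{KnotFracEquiv}), and $N[6]$. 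Case (i) splits on the parity of $a_2$ and uses assignments such as $(+4,-2,+2)\to 10/3$, $(+2,+3,-2)\to 12/5$, $(+4,0,+2)\to 6$ when $a_2$ is even, and $(+2,+1,-4)\to 10/3$, $(+4,+1,-2)\to 6$, $(+2,+3,-2)\to 12/5$ when $a_2$ is odd (invoking Corollary \ref{palTheorem} when $a_3=2$). Each assignment is admissible because every integer value demanded ($0$, $2$, $3$, $4$) is reachable from the corresponding $a_i$ under the integer-tangle resolution rule given the established lower bounds. Combined with $F(L)\ge 5$ this yields $F(L)=6$.

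The main obstacle is this last step: no single sign pattern realizes a given six-crossing link for all admissible $(a_1,a_2,a_3)$, so the construction must be organized by the parities of $a_1,a_3$ and of $a_2$, respecting which integer tangles each shadow can actually resolve to, with Corollary \ref{palTheorem} used to hold down the sub-case count. The exceptional families are easier in spirit but still demand an exhaustive sweep over sign patterns; the two invariants isolated above — ``length $3$ with middle entry $1$'' and ``numerator divisible by $4$'' — are exactly what make those sweeps finite and clean.
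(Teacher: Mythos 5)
Your overall architecture is the same as the paper's: establish $5 \le F(L) \le 6$ from the Taniyama-based lower bound and the all-resultants-are-rational upper bound, then reduce everything to deciding which of $6^2_1$, $6^2_2$, $6^2_3$ occur as resultants, with the families $a_2=1$ and $a_1=a_3=2$ each missing exactly one of them. Your treatment of $a_2=1$ (every length-3 resultant has middle entry $1$, so $6^2_3$ fails) matches the paper's lemma. For $a_1=a_3=2$ you diverge in an interesting way: the paper rules out $6^2_1=N[6]$ by a length/crossing-number count of the reduced resultants, whereas you rule out $6^2_2=N[3\;3]$ by observing that every resultant fraction $2\epsilon_1+1/(m+1/(2\epsilon_3))$ has numerator divisible by $4$ while $N[3\;3]=N[10/3]$ does not; this is a clean invariant-based argument (and it is robust even for the mixed-sign assignments that are not literally of the form $N[\pm2\;\pm c_2\;\pm2]$, so you should state it that way).

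One step fails as written. In Case (i) with $a_2$ even you list $(+2,+3,-2)$ to realize $12/5=6^2_3$, but an even shadow tangle resolves only to even integers, so $a_2'\to 3$ is inadmissible there; your blanket claim that ``every integer value demanded is reachable'' is false for this entry. The repair is immediate: when $a_1,a_2,a_3$ are all even take the direct assignment $(+2,+2,+2)$, whose fraction is $2+1/(2+1/2)=12/5$. With that substitution your remaining assignments check out ($(+3,-2,+3)\to 12/5$, $(+1,+2,+3)\to 10/7\simeq 10/3$, $(+2,+3,-2)\to 12/5$ when $a_2$ is odd, etc.), and the use of the palindrome corollary to force $a_1\ge 4$ or $a_3\ge 4$ is legitimate. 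So the proposal is correct modulo this single parity slip.
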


The rational link $N[a_1 \; a_2 \; a_3]$ has two components when $a_1, a_3$ even, $a_2$ odd, or $a_1, a_3$ odd, $a_2$ even, or $a_1, a_2, a_3$ even.

\begin{lemma}
Let $L = N[a_1 \; a_2 \; a_3]$ be a nontrivial two-component link. $6^2_3$ is a resultant iff $a_2 > 1$.
\end{lemma}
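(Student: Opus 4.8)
The plan is to identify $6^2_3$ through its Conway fraction and then read off, in both directions, which assignments of the shadow $N[a_1'\;a_2'\;a_3']$ can realize it. Since $6^2_3=N[2\;2\;2]$ has continued fraction $2+\frac{1}{2+1/2}=\frac{12}{5}$, Theorem~\ref{KnotFracEquiv} (together with the fact that a resultant always comes with its mirror) says a rational link is $6^2_3$ exactly when its fraction, in lowest terms, is $\frac{12}{q}$ with $q\equiv\pm5\pmod{12}$. On the other hand, every resultant of the length-three shadow is the numerator closure $N[\varepsilon_1 c_1\;\varepsilon_2 c_2\;\varepsilon_3 c_3]$ of a rational tangle, where $0\le c_i\le a_i$, $c_i\equiv a_i\pmod 2$, each $\varepsilon_i=\pm1$, and (using the global mirror) $\varepsilon_1=+1$; so the resultant is determined by the value $\varepsilon_1 c_1+\frac{1}{\varepsilon_2 c_2+\frac{1}{\varepsilon_3 c_3}}$, and the whole argument becomes arithmetic with this fraction.

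For the ``if'' direction ($a_2>1\Rightarrow 6^2_3$ a resultant) I would split into the three ways $N[a_1\;a_2\;a_3]$ is a two-component link. If $a_1,a_2,a_3$ are all even then $a_2\ge2$ and assigning each $a_i'\to\pm2$ gives $N[2\;2\;2]=6^2_3$ outright. If $a_1,a_3$ are even and $a_2$ is odd, then $a_2>1$ forces $a_2\ge3$, and the assignment $N[\pm2\;\pm3\;\mp2]$ (legal since $a_1,a_3\ge2$ and $a_2\ge3$) has fraction $2+\frac{1}{3+1/(-2)}=\frac{12}{5}$; equivalently, the Untangling Lemma rewrites this seven-crossing diagram as the alternating $N[2\;2\;1\;1]$. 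If $a_1,a_3$ are odd and $a_2$ is even, then $a_1,a_3\ge3$ in canonical form and $a_2\ge2$, and $N[\pm3\;\mp2\;\pm3]$ again has fraction $\frac{12}{5}$. In each case the resultant is $6^2_3$.

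For the ``only if'' direction I would argue the contrapositive: if $a_2=1$ then $L$ can be two-component only with $a_1,a_3$ even, so $c_1,c_3$ are even. The cases $c_1=0$ or $c_3=0$ collapse the closure to an unlink or a $(2,c_1)$-torus link, never $6^2_3$, so assume $c_1,c_3\ge2$. A direct check shows that for each of the four sign patterns the fraction of $N[c_1\;\pm1\;\varepsilon_3 c_3]$ is already in lowest terms, with numerator $(c_1\pm1)(c_3\pm1)\mp1$ and denominator $c_3\pm1$. If the resultant were $6^2_3$ its determinant would be $12$, forcing $(c_1\pm1)(c_3\pm1)\in\{11,13\}$; both are prime and $c_1,c_3\ge2$, so the equal-sign pattern is impossible, while each mixed pattern forces $\{c_1,c_3\}=\{2,10\}$ or $\{2,14\}$, whose denominators come out to $1$, $11$, or $13$ --- all $\equiv\pm1\pmod{12}$. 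Hence such a resultant is a $(2,12)$-torus link, not $6^2_3$. The delicate part of the whole proof is exactly this bookkeeping in the ``only if'' direction: keeping the four sign cases straight, confirming each fraction is genuinely reduced so its numerator really is the determinant, and verifying that the surviving determinant-$12$ candidates all have the wrong $q$. I expect that to be the only place a sign slip could creep in, since the ``if'' direction is just exhibiting three explicit diagrams.
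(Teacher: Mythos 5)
Your proof is correct, but its two halves relate to the paper's argument differently. The ``if'' direction follows the paper's strategy exactly --- one explicit assignment for each of the three parity patterns --- and your sign choices are in fact the right ones: $N[\pm2\;\pm3\;\mp2]$ and $N[\pm3\;\mp2\;\pm3]$ both evaluate to $\tfrac{12}{5}$, the fraction of $6^2_3$, whereas the assignments printed in the paper, $N[\pm2\;\mp3\;\pm2]$ and $N[\pm3\;\mp2\;\mp3]$, evaluate to $\tfrac{8}{5}$ (the Whitehead link) and $\tfrac{18}{7}$ under the paper's own continued-fraction convention; so your version repairs what appear to be sign slips in the published proof. The ``only if'' direction is where you genuinely diverge: the paper invokes the Untangling Lemma (Lemma \ref{untangle}) to observe that when $a_2=1$ any disagreement of signs collapses the resultant to length at most $2$, so the only length-three resultants are the alternating diagrams $N[\pm c_1\;\pm1\;\pm c_3]$, which cannot be $N[2\;2\;2]$ by uniqueness of canonical form. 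You instead push all four sign patterns through Theorem \ref{KnotFracEquiv}, check that each fraction is already reduced (the numerator is $\equiv\mp1$ modulo the denominator $c_3\pm1$), and eliminate the determinant-$12$ candidates by factoring $11$ and $13$, finding only $(2,12)$-torus links with $q\equiv\pm1\pmod{12}$. Both arguments are sound; the paper's is shorter and reuses machinery already built, while yours is more self-contained and independently checkable --- at the cost of exactly the sign bookkeeping you flag, and of a numerator formula whose $\pm/\mp$'s are stated a little loosely (the four sign patterns really give four distinct formulas, though each one does check out).
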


\begin{proof}
Suppose $a_2 = 1$. Flipping the sign of one or more integer tangles will produce resultants $R$ of the forms $N[\pm c_1 \; \pm 1 \; \mp c_3]$ or $N[\pm c_1 \; \mp 1 \; \pm c_3] \implies d(R) \leq 2$. Therefore, the only length three resultants will be $N[\pm c_1 \; \pm 1 \; \pm c_3]$, thus $6^2_3$ is not a resultant.

We now prove if $a_2 > 1$, $6^2_3$ is a resultant of $L$. If $a_2$ is even, a direct assignment $N[\pm2 \; \pm2 \; \pm2] = 6^2_3$ is possible. If $a_2 > 1$ is odd, the assignment $N[\pm 2 \; \mp 3 \; \pm 2]$ is possible. In the remaining case, $a_1, a_3$ odd and $a_2$ even, the assignment $N[\pm3 \; \mp 2 \; \mp 3]$ is possible.
\end{proof}

\begin{lemma}
Let $L = N[a_1 \; a_2 \; a_3]$ be a nontrivial two-component link. $6^2_1$ is a resultant iff one of $a_1, a_3 \not=2$.
\end{lemma}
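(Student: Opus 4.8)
The plan is to prove the two implications separately. For the forward implication I will produce, for each shape a nontrivial two-component link $N[a_1\;a_2\;a_3]$ can take, an explicit crossing assignment whose resultant is isotopic to $6^2_1=N[6]$; for the converse I will enumerate every resultant of $N[2\;a_2\;2]$ and check that none has determinant $6$, the determinant of $N[6]$. Throughout I use that a nontrivial two-component $N[a_1\;a_2\;a_3]$ has either $a_1,a_3$ even and $a_2$ odd, or $a_1,a_3$ odd and $a_2$ even, or all three even, and that canonical form forces $a_1,a_3>1$.

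For the ``if'' direction, assume some $a_i\neq2$; by the palindrome isotopy (Corollary \ref{palTheorem}) we may relabel so that $a_1\neq2$, hence $a_1\geq3$, and in fact $a_1\geq4$ when $a_1$ is even. If $a_2$ is even I would assign $a_2'\to0$ and invoke the third case of the Untangling Lemma (Lemma \ref{untangle}), equivalently $k\;0\;\ell\simeq(k+\ell)$: assigning the end tangles to $\pm4,\pm2$ (when the ends are even) or to $\pm3,\pm3$ (when the ends are odd, where both are $\geq3$) produces the resultant $N[\pm6]\simeq6^2_1$. If $a_2$ is odd — so the ends are even and $a_1\geq4$ — I would assign $a_1'\to4$, $a_2'\to1$, $a_3'\to-2$; the resulting diagram $N[4\;1\;-2]$ has continued fraction $4+\frac{1}{1+\frac{1}{-2}}=4+2=6$, hence is isotopic to $6^2_1$. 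These cases exhaust the possibilities, so $6^2_1$ is always a resultant when some $a_i\neq2$.

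For the ``only if'' direction, suppose $a_1=a_3=2$, so $L=N[2\;a_2\;2]$. Resolving each two-crossing end shadow tangle by a choice of signs leaves an integer tangle of the same parity, so every resultant is a diagram $N[c_1\;c_2\;c_3]$ with $c_1,c_3\in\{0,\pm2\}$ and $c_2$ an integer of parity $a_2$ with $|c_2|\leq a_2$; its determinant is the absolute value of the numerator, in lowest terms, of $c_1+\frac{1}{c_2+\frac{1}{c_3}}$. I would run through the cases. If $c_1$ and $c_3$ are both $\pm2$, a direct computation gives numerator $4|c_2+1|$, $4|c_2-1|$, $4|c_2|$, or (when $c_2=0$) $|c_1+c_3|\in\{0,4\}$, always a multiple of $4$. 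If exactly one of $c_1,c_3$ is $0$, say $c_1=0$ and $c_3=\pm2$, then $\frac{1}{c_2+1/c_3}=\frac{c_3}{c_2c_3+1}$ has numerator $|c_3|=2$ since $\gcd(c_3,c_2c_3+1)=1$; if instead $c_3=0$ the fraction is $c_1\in\{0,\pm2\}$. If $c_1=c_3=0$ the fraction is $0$, the two-component unlink. In every case the determinant lies in $\{0,2\}\cup4\mathbb{Z}_{>0}$, never $6$, so $6^2_1$ is not a resultant of $L$.

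The main obstacle is the bookkeeping in the ``only if'' direction, and in particular the degenerate configurations in which an end twist region is cancelled: one must be sure the enumeration of resultants by the triple $(c_1,c_2,c_3)$ is exhaustive — which rests on the fact that all resultants of a rational link shadow are rational, hence diagrams of the form $N[c_1\;c_2\;c_3]$ — and that the continued fraction is handled correctly when some $c_i$ vanishes. Once those points are settled, the argument collapses to the short continued-fraction computations above, which show the determinant of every resultant of $N[2\;a_2\;2]$ is $0$, $2$, or a multiple of $4$, and therefore never $6=\det 6^2_1$.
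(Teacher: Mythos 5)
Your proof is correct. The ``if'' direction coincides with the paper's argument: after reducing to $a_1\neq 2$ via Corollary \ref{palTheorem}, both you and the paper exhibit the same witnesses, namely $N[\pm 4\;\pm 1\;\mp 2]\approx N[\pm 6]$ when $a_2$ is odd and $N[\pm 4\;0\;\pm 2]$ or $N[\pm 3\;0\;\pm 3]$ when $a_2$ is even. The ``only if'' direction, however, takes a genuinely different route. The paper pushes every resultant of $N[2'\;a_2'\;2']$ through the Untangling Lemma and argues by length and crossing number that each nontrivial resultant is either a specific small link ($4^2_1$, the Hopf link, the unlink) or has $d(R)=3$ in canonical form, hence is never the length-one link $6^2_1=N[6]$; some of these claims (e.g.\ that the only length-one resultant of $N[\pm 2\;a_2'\;\pm 2]$ is $4^2_1$) are asserted rather than spelled out. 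You instead note that every resultant is a diagram $N[c_1\;c_2\;c_3]$ with $c_1,c_3\in\{0,\pm 2\}$ and invoke Theorem \ref{KnotFracEquiv}: the numerator $p$ of the fraction of any such diagram lies in $\{0,2\}\cup 4\mathbb{Z}_{>0}$, while $6^2_1$ has $p=6$. The one delicate step --- that reducing $4k/(\pm 2c_2+1)$ to lowest terms preserves divisibility of the numerator by $4$ because the denominator is odd --- you handle correctly, and the degenerate cases $c_i=0$ are dispatched by $N[T\;0]\simeq D[T]$. Your arithmetic obstruction is more uniform and self-checking than the paper's diagrammatic normalization, and it would adapt to excluding other candidate resultants by determinant; the paper's approach, by contrast, yields an explicit description of what the resultants of $N[2\;a_2\;2]$ actually are, which is reused elsewhere in its classification.
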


\begin{proof}
Let $a_1 = a_3 = 2$. $a_1'$ or $a_3' \rightarrow 0$ produce only the un- or Hopf links, so we must consider $N[\pm 2 \; a_2' \; \pm 2]$ and $N[\pm 2 \; a_2' \; \mp 2]$. The only length one resultant of $N[\pm 2 \; a_2' \; \pm 2]$ for either parity of $a_2$ is $4^2_1$. 

For $N[\pm 2 \; a_2' \; \mp 2] $, $a_2' \rightarrow 0$ forms the unlink. Otherwise, we have $N[\pm 2 \; \pm a_2 \; \mp 2]$ and $N[\pm 2 \; \mp a_2 \; \mp 2] \approx N[\pm 2 \; \pm (a_2 - 1) \; \mp 2]$. Since $a_2$ can be either parity, we are essentially considering the same case $R = N[\pm 2 \; \pm (c_2 - 2) \; \pm 2]$. If $c_2 < 3$, $c(R) < 6$. If $c_2 \geq 3$, $d(R) = 3$. Therefore, $6^2_1$ cannot be a resultant.

We now show if one of $a_1, a_3 > 2$, then we can form $6^2_1$. If $a_2$ odd, we can form $N[\pm 4 \; \pm 1 \; \mp 2] \approx N[\pm 6]$. If $a_2$ even, we can form $N[\pm 4 \; 0 \; \pm 2]$ or $N[\pm 3 \; 0 \; \pm 3]$.
\end{proof}

We now prove Theorem \ref{3tangle}, completing the classification of length 3 rational links.

\begin{proof}
By the above lemmas, if $a_2 = 1$ or $a_1 = a_3 = 2$, $F(L) = 5$. We now check if larger links can form $6^2_2$.

\textbf{Case 1: $a_1, a_3$ even, $a_2$ odd}
If one of $a_1, a_3 > 2$, then we can form $N[\pm 4 \; \mp 1 \; \pm 2] \approx N[\pm 3 \; \pm 3] \implies F(L) \geq 6$. 

\textbf{Case 2: $a_1, a_3$ odd, $a_2$ even}
The first such link is $8^2_4 = N[3 \; 2 \; 3]$. $N[\pm1 \; \pm2 \; \pm3] = 6^2_2 \implies F(L) \geq 6$. 

\textbf{Case 3: $a_1, a_2, a_3$ even}
If one of $a_1, a_3 > 2$, we can form $N[\pm 4 \; \mp 2 \; \mp 2] = 6^2_2 \implies F(L) = 6$.

The upper bound $F(H) \leq 6$ on rational two-component links $H$ proves $F(L) = 6$.
\end{proof}

\begin{theorem}
All length 4 rational two-component links are locally fertile.
\end{theorem}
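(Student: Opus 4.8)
The plan is to reduce the statement to a finite check on the trunk $\mathcal{K}_4$. Adding $2$ to any integer tangle leaves the parity of every $a_i$ fixed, hence leaves the parity of the continued fraction's numerator—and therefore the component number—fixed; so every length-$4$ rational two-component link is a branch of some two-component member of $\mathcal{K}_4$, and by the branch-heredity theorem together with the bound $F(L)\ge g(d(L),\mu(L))$ it is enough to show $F=6$ for each two-component link in $\mathcal{K}_4$. First I would pin down that list. Reducing each $a_i$ modulo $2$ and multiplying the corresponding $2\times 2$ matrices over $\mathbb{F}_2$ shows the numerator is even for exactly three elements of $\mathcal{K}_4$, namely $N[3\ 1\ 1\ 2]$, $N[3\ 1\ 2\ 2]$, and $N[3\ 2\ 2\ 3]$; the other seven elements of $\mathcal{K}_4$ are knots. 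Since every rational two-component link has $F\le 6$, proving these three have $F=6$ finishes the proof.

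Two of the three are handled abstractly. For $N[3\ 2\ 2\ 3]$ the last tangle is odd, so Theorem \ref{f_across_L} gives $F(N[3\ 2\ 2\ 3])\ge F(N[3\ 2\ 3])$; the list $3,2,3$ has $a_2=2>1$ and $a_1=a_3=3\neq 2$, so $N[3\ 2\ 3]$ is a nontrivial two-component link with $F(N[3\ 2\ 3])=6$ by Theorem \ref{3tangle}, hence $F(N[3\ 2\ 2\ 3])=6$. For the remaining two I would work directly with the shadows $N[3'\ 1'\ 1'\ 2']$ and $N[3'\ 1'\ 2'\ 2']$: an assignment $a_i'\to\pm c_i$ of the shadow tangles yields the numerator closure of the rational number $(c_1\ c_2\ c_3\ c_4)$, so to reach $F=6$ it suffices to exhibit, for each shadow, sign and zero choices whose continued fraction equals—up to the equivalences of Theorem \ref{KnotFracEquiv} and taking mirrors—each of $2,4,6,\tfrac{8}{3},\tfrac{10}{3},\tfrac{12}{5}$, i.e.\ each of $2^2_1,4^2_1,6^2_1,5^2_1,6^2_2,6^2_3$. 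For $N[3'\ 1'\ 1'\ 2']$ the assignments $(1,1,1,0)$, $(3,1,1,0)$, $(3,-1,-1,2)$, $(3,1,-1,-2)$, $(3,1,1,-2)$, $(-3,1,1,2)$ already realize $2,4,\tfrac{8}{3},6,\tfrac{10}{3},-\tfrac{12}{5}$ respectively, and an analogous six-entry list (for instance $(3,1,-2,2)$ giving $6$ and $(3,1,0,2)$ giving $\tfrac{10}{3}$) handles $N[3'\ 1'\ 2'\ 2']$.

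Combining the three cases gives $g(4,2)=6$, so $F(L)\ge 6$ for every length-$4$ rational two-component link $L$, and with the bound $F(L)\le 6$ this is exactly local fertility. I expect the only real obstacle to be the last step: one must run the Untangling Lemma (Lemma \ref{untangle}), or equivalently carry out the continued-fraction bookkeeping, carefully enough to bring each chosen resultant into canonical form and identify its link type; it is also worth recording that, since crossing changes preserve the component number, every resultant of these shadows is automatically a two-component link, so no stray knots can intrude. This is routine case-checking rather than a conceptual hurdle, but it is where a slip is easiest.
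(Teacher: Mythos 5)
Your proposal is correct and follows essentially the same route as the paper: it identifies the same three two-component links in $\mathcal{K}_4$ ($N[3\;1\;1\;2]$, $N[3\;1\;2\;2]$, $N[3\;2\;2\;3]$), exhibits assignments realizing $6^2_1$, $6^2_2$, $6^2_3$, and closes with heredity plus the upper bound $F\leq 6$. The only deviations are cosmetic but sound: you dispatch $N[3\;2\;2\;3]$ by reducing to $N[3\;2\;3]$ via Theorem \ref{f_across_L} instead of listing assignments, and your continued-fraction checks for the other two shadows are verifiably correct (e.g.\ $(3,1,-1,-2)$ does give fraction $6$, whereas the paper's printed assignment $N[\pm3\;\pm1\;\mp1\;\pm2]$ evaluates to fraction $2$, apparently a sign typo), with the unverified remainder being genuinely routine since $2^2_1$, $4^2_1$, $5^2_1$ already follow from the Taniyama majorization result quoted earlier in the paper.
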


\begin{proof}
The only two-component links in $\mathcal{K}_4$ are $7^2_2 = N[3 \; 1 \; 1 \; 2]$, \\ $8^2_5 = N[3 \; 1 \; 2 \; 2]$, and $N[3 \; 2 \; 2 \; 3]$. All have the three six crossing two-component links as resultants. 

For $7^2_2$, $N[\pm 3 \; \pm 1 \; \mp 1 \; \pm 2] \approx 6^2_1$, $N[\pm 3 \; \pm 1 \; \pm 1 \; \mp 2] \approx 6^2_2$, and $N[\pm 3 \; \mp 1 \; \pm 1 \; \pm 2] \approx 6^2_3$.

For $8^2_5$, $N[\pm 3 \; \pm 1 \; \mp 2 \; \mp 2] \approx 6^2_1$, $N[\pm 3 \; \pm 1 \; 0 \; \pm 2] \approx 6^2_2$, and $N[\pm 1 \; \pm 1 \; \pm 2 \; \pm 2] \approx 6^2_3$.

For $N[3 \; 2 \; 2 \; 3]$, $N[\pm 3 \; 0 \; \pm 2 \; \pm 1] \approx 6^2_1$, $N[\pm 3 \; 0 \; 0 \; \pm 3] \approx 6^2_2$, and $N[\pm 3 \; \mp 2 \; \mp 2 \; \pm 1] \approx 6^2_3$.
\end{proof}

\begin{theorem}
All length 5 rational two-component links are locally fertile.
\end{theorem}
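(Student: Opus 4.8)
The plan is to follow the template already used for lengths $3$ and $4$: pass to the trunk $\mathcal{K}_5$, enumerate its two-component members, and exhibit for each of them explicit resolutions realizing $2^2_1, 4^2_1, 5^2_1, 6^2_1, 6^2_2, 6^2_3$ as resultants.

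First I would reduce to the trunk. Adding $2$ to an entry of a continued fraction preserves the parity of its numerator, hence the component number, so every length-$5$ two-component rational link is a branch of some two-component $L_B\in\mathcal{K}_5$ (using Corollary \ref{palTheorem} to normalize the endpoint pair to one of $(2,2),(3,2),(3,3)$), and the heredity theorem gives $F(L)\ge F(L_B)$. Thus it suffices to show $F(L_B)=6$ for the two-component members of $\mathcal{K}_5$. Running the numerator-parity recursion on $(a_1\,a_2\,a_3\,a_4\,a_5)$ shows there are exactly eight:
\begin{gather*}
N[2\,1\,2\,1\,2],\qquad N[2\,1\,2\,2\,2],\qquad N[2\,2\,2\,2\,2], \\
N[3\,1\,1\,1\,3],\qquad N[3\,1\,2\,1\,3],\qquad N[3\,2\,2\,2\,3], \\
N[3\,2\,1\,1\,2],\qquad N[3\,2\,1\,2\,2].
\end{gather*}
Each has at least $8$ crossings, so by the majorization argument of the previous subsection (Taniyama's theorem) each has $2^2_1,4^2_1$, and $5^2_1$ as resultants, and the two-component unlink is obtained as for shorter links. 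It remains to realize $6^2_1=N[6]$, $6^2_2=N[3\,3]$, and $6^2_3=N[2\,2\,2]$ as resultants of each of the eight.

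For the three links with odd end tangle, Theorem \ref{f_across_L} finishes the job at once: resolving $a_5\to\pm1$ gives $F(L_B)\ge F(N[a_1\,a_2\,a_3\,(a_4+1)])$, and here $N[a_1\,a_2\,a_3\,(a_4+1)]$ is, respectively, $N[3\,1\,1\,2]=7^2_2$, $N[3\,1\,2\,2]$, and $N[3\,2\,2\,3]$ — precisely the three length-$4$ two-component links in $\mathcal{K}_4$, all locally fertile by the preceding theorem. For the five links with even end tangle, $a_5\to 0$ only yields $F(L_B)\ge F(N[a_1\,a_2\,a_3])$, which equals $5$ in each case, so this produces $6^2_2$ or $6^2_3$ for three of the five but leaves several resultants to be built by hand. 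A few are transparent: resolving the middle tangle of $N[2\,1\,2\,1\,2]$ to $0$ sends it to $N[2\,1\,0\,1\,2]\simeq N[2\,2\,2]=6^2_3$ via $1\,0\,1\simeq 2$; the assignment $a_1'\to0$ sends $N[2\,2\,2\,2\,2]$ to $N[0\,2\,2\,2\,2]\simeq N[2\,2\,2]=6^2_3$, while $a_2',a_4'\to0$ sends it to $N[2\,0\,2\,0\,2]\simeq N[6]=6^2_1$ via $k\,0\,\ell\simeq(k+\ell)$. The remaining resolutions — above all, realizing $N[6]=6^2_1$ from links such as $N[2\,1\,2\,1\,2]$, $N[3\,2\,1\,1\,2]$, $N[3\,2\,1\,2\,2]$, whose unit tangles cannot be zeroed out — require assigning mixed parities and then chaining applications of the Untangling Lemma (Lemma \ref{untangle}), identifying the resulting non-canonical diagrams by computing their continued fraction and invoking Theorem \ref{KnotFracEquiv}; for instance the assignment with signed twist vector $(-2,+1,+2,+1,-2)$ turns the shadow of $N[2\,1\,2\,1\,2]$ into the fraction $-6/5\simeq N[6]$. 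Finally, since a rational two-component link has $F\le 6$, each $F(L_B)=6$, and heredity gives $F(L)=6$ for every length-$5$ two-component rational $L$.

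The main obstacle is precisely this last batch of resolutions. A single crossing change drops an integer tangle's size by two, and the Untangling Lemma trades a parity disagreement for a new unit tangle, so reaching a one-tangle link such as $N[6]=T(2,6)$ from an eight- or nine-crossing diagram takes several coordinated moves with carefully chosen signs; since $\approx$ is a set-level equivalence rather than an isotopy, the sign bookkeeping is where errors are easiest to make. A secondary point requiring care is the completeness of the eight-link enumeration — getting the numerator-parity recursion right so that no two-component trunk link of length $5$ is missed.
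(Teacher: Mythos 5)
Your setup matches the paper's: the same eight two-component trunk links (your $N[2\,1\,2\,2\,2]$ is the paper's $9^2_{11}=N[2\,2\,2\,1\,2]$ written backwards), the same reduction to $\mathcal{K}_5$, and the same use of Theorem \ref{f_across_L} for the links ending in a $3$. But the proof is not finished. You explicitly defer ``the last batch of resolutions'' --- realizing $6^2_1$, $6^2_2$, $6^2_3$ for $N[3\,2\,1\,1\,2]$, $N[3\,2\,1\,2\,2]$, $N[2\,1\,2\,2\,2]$, together with $6^2_2$ for $N[2\,1\,2\,1\,2]$ and $N[2\,2\,2\,2\,2]$ --- to an unexecuted chain of Untangling Lemma applications, and you name this the main obstacle rather than resolving it. Only one of these assignments (the $(-2,+1,+2,+1,-2)$ resolution of $N[2\,1\,2\,1\,2]$, which does check out as $\pm6^2_1$) is actually computed. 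As written, five of the eight trunk links are not verified to be $6$-fertile, so the theorem is not proved.

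The gap is avoidable, and the paper's route shows how. First, Theorem \ref{f_across_L} applies at \emph{either} end via Corollary \ref{palTheorem}: $N[3\,2\,1\,1\,2]\simeq N[2\,1\,1\,2\,3]$ and $N[3\,2\,1\,2\,2]\simeq N[2\,2\,1\,2\,3]$ both end in an odd tangle, so they reduce to $F(N[2\,1\,1\,3])=F(7^2_2)=6$ and $F(N[2\,2\,1\,3])=F(8^2_5)=6$ respectively --- all five links containing a $3$ are dispatched at once, not just your three. Second, for the all-even-ended survivors the efficient move is to zero an \emph{interior} tangle and land on a length-$3$ link already classified by Theorem \ref{3tangle}: $N[2\,2\,2\,2\,2]\to N[4\,2\,2]$ (which has $F=6$, giving all three six-crossing links simultaneously, including the $6^2_2$ your argument never produces), and $N[2\,2\,2\,1\,2]\to N[4\,1\,2]$ (which supplies $6^2_1$ and $6^2_2$) together with $a_5\to0$ for $6^2_3$. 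That leaves only $8^2_7=N[2\,1\,2\,1\,2]$ needing two explicit mixed-parity assignments ($N[\pm2\,\mp1\,\pm2\,\pm1\,\mp2]\approx6^2_1$ and $N[\pm2\,\pm1\,\mp2\,\pm1\,\mp2]\approx6^2_2$), rather than the large hand computation you anticipate.
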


\begin{proof}
There are eight two-component links in $\mathcal{K}_5$. First consider \\ $N[3 \; 1 \; 1 \; 1 \; 3]$, $N[3 \; 1 \; 2 \; 1 \; 3]$, $N[3 \; 2 \; 2 \; 2 \; 3]$, $N[3 \; 2 \; 1 \; 1 \; 2]$, and $N[3 \; 2 \; 1 \; 2 \; 2]$. By Theorem \ref{f_across_L}, since each link has odd $a_1$ or $a_n$ and all length 4 rational links are locally fertile, these links are too.

Now consider $N[2 \; 2 \; 2 \; 2 \; 2]$. Choose $a_2 \rightarrow 0$ to form $N[4 \; 2 \; 2]$. This is a length three rational link with $a_2 > 1$ and one of $a_1, a_3 > 2$.

The remaining two-component links in $\mathcal{K}_5$ are $8^2_7 = [2 \; 1 \; 2 \; 1 \; 2]$ and $9^2_{11} = N[2 \; 2 \; 2 \; 1 \; 2]$. 

For $8^2_7$, choose $a_3 \rightarrow 0$ to form $N[2 \; 2 \; 2]$. Also, $N[\pm 2 \; \mp 1 \; \pm 2 \; \pm 1 \; \mp 2] \approx 6^2_1$ and $N[\pm 2 \; \pm 1 \; \mp 2 \; \pm 1 \; \mp 2] \approx 6^2_2$.

For $9^2_{11}$, choose $a_2 \rightarrow 0$ to form $N[4 \; 1 \; 2]$ and have $6^2_1$ and $6^2_2$ as resultants. Also, $N[2 \; 2 \; 2 \; 1 \; 0] \simeq 6^2_3$.
\end{proof}

From Theorem \ref{end_goal}, the last two results nearly complete our classification of rational two-component links. 

\begin{theorem} \label{link_max}
All rational two component links $L$ with $d(L) \geq 4$ are locally fertile. 
\end{theorem}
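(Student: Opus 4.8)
The plan is to combine the two preceding theorems with Theorem \ref{end_goal}: induct on the length $d(L)$, with base cases $d(L)\in\{4,5\}$, and for $n\ge 6$ re-run the case analysis in the proof of Theorem \ref{end_goal} to reduce $L$ to shorter links. So fix $n\ge 6$, assume every rational two-component link of length $m$ with $4\le m<n$ is locally fertile, and write $L=N[a_1\;a_2\;\ldots\;a_n]$ in canonical form, so $a_1,a_n\ge 2$. The engine throughout is Theorem \ref{f_across_L} and the heredity inequality $F(N[\ldots(a_i+2)\ldots])\ge F(N[\ldots a_i\ldots])$, together with the observation that a crossing change and the simplifications (RII moves and the Untangling Lemma) used to reduce resultants to canonical form leave the number of components unchanged, so that every link obtained from $L$ by these moves is again a two-component link. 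Since $F(L)\le 6$ for any rational two-component link, in each case it suffices to prove $F(L)\ge 6$.

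I would split on the parities of the end tangles. If $a_1$ or $a_n$ is odd, then after possibly replacing $L$ by $N[a_n\;\ldots\;a_1]$ (Corollary \ref{palTheorem}) Theorem \ref{f_across_L} gives $F(L)\ge F(N[a_1\;\ldots\;a_{n-2}\;(a_{n-1}+1)])$; as $a_{n-1}+1\ge 2$, the list on the right is canonical of length $n-1$ and its link has two components, so the inductive hypothesis closes this case. If $a_1,a_n$ are even but some interior $a_j$ ($1<j<n$) is even, then resolving $a_j\to 0$ merges $a_{j-1}$ with $a_{j+1}$ and produces a canonical two-component rational link of length $n-2\ge 4$, again handled by induction. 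The remaining case has $a_1,a_n$ even and every interior $a_j$ odd; reducing each interior tangle to $1$ and each end tangle to $2$ exhibits $L$ as a branch of $N[2\;1^{n-2}\;2]$, so $F(L)\ge F(N[2\;1^{n-2}\;2])$ and it remains only to show $N[2\;1^{n-2}\;2]$ is locally fertile. Since branch operations preserve component number, this link is a two-component link exactly when $n\equiv0\pmod3$ (a short computation with the continued fraction $(2;1,\ldots,1,2)$ identifies its numerator as the Fibonacci number $F_{n+3}$), so the case is vacuous otherwise. For $n\ge 9$ a further application of Theorem \ref{f_across_L} (with final tangle $2$) gives $F(N[2\;1^{n-2}\;2])\ge F(N[2\;1^{n-3}])=F(N[2\;1^{n-5}\;2])$, a two-component rational link of length $n-3\in[6,n)$, which is locally fertile by the inductive hypothesis; hence $F(L)=6$.

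This leaves a single link to settle by hand: the base $N[2\;1^4\;2]$ of the last case, at $n=6$. Here Theorem \ref{f_across_L} alone yields only $F\ge F(5^2_1)=5$, so I would verify directly that the three six-crossing two-component links $6^2_1=N[6]$, $6^2_2=N[3\;3]$, and $6^2_3=N[2\;2\;2]$ are all resultants of the $8$-crossing shadow $N[2'\;1'\;1'\;1'\;1'\;2']$, by choosing appropriate over/under assignments and using the Untangling Lemma to reduce the resulting non-alternating diagrams to canonical form, exactly in the style of the length-$4$ and length-$5$ verifications above (the links $5^2_1,4^2_1,2^2_1$ then follow, e.g.\ by deleting an end tangle). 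This is the one genuinely computational step and the main obstacle; the only other point needing care is the component-number and canonical-length bookkeeping in the three cases, i.e.\ confirming at each reduction that the shorter link really has two components and really attains the claimed length, so that the inductive hypothesis applies. Combining the cases with the base cases $d(L)\in\{4,5\}$ then gives $F(L)=6$ for every rational two-component link with $d(L)\ge 4$.
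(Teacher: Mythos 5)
Your reduction is essentially the paper's: Theorem \ref{end_goal} with $n=6$, fed by the two preceding theorems on lengths $4$ and $5$, funnels everything into the single link $N[2\;1^4\;2]$, and your inline re-derivation of the case analysis (odd end tangle $\Rightarrow$ length $n-1$; even interior tangle $\Rightarrow$ length $n-2$; all interior tangles odd $\Rightarrow$ branch of $N[2\;1^{n-2}\;2]$) is faithful to the proof of that theorem. Your extra observations are correct and even sharpen the picture slightly: the parity of the numerator of $(2\;1^{n-1})$ does make the last case vacuous unless $n\equiv 0 \pmod 3$, and the reduction $F(N[2\;1^{n-2}\;2])\ge F(N[2\;1^{n-5}\;2])$ for $n\ge 9$ checks out (e.g.\ for $n=9$ both fractions have numerator $34$ and $21\cdot 13\equiv 1 \bmod 34$, so Theorem \ref{KnotFracEquiv} applies), whereas the paper handles this tail by noting the branch relationship inside the proof of Theorem \ref{end_goal}.

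The one genuine gap is that the step you defer --- verifying that $6^2_1$, $6^2_2$, and $6^2_3$ are all resultants of $N[2'\;1'\;1'\;1'\;1'\;2']$ --- is the entire content of the paper's proof of this theorem; everything else you write is scaffolding already supplied by Theorems \ref{end_goal} and \ref{f_across_L} and the length-$4$ and length-$5$ results. You correctly identify it as the main obstacle but do not carry it out, so as written the proof is incomplete at exactly the point where work is required. The gap is fillable: the paper exhibits $N[\pm 2 \; \mp 1 \; \pm 1 \; \pm 1 \; \mp 1 \; \pm 2] \approx 6^2_1$, $N[\pm 2 \; \mp 1 \; \pm 1 \; \pm 1 \; \pm 1 \; \mp 2] \approx 6^2_2$, and $N[\pm 2 \; \mp 1 \; \pm 1 \; \mp 1 \; \pm 1 \; \pm 2] \approx 6^2_3$, each reduced to canonical form by repeated application of the Untangling Lemma, and you should supply these (or equivalent) assignments explicitly for the argument to stand.
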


\begin{proof}
It remains to show $N[2 \; 1 \; 1 \; 1 \; 1 \; 2]$ is 6-fertile. \\ $N[\pm 2 \; \mp 1 \; \pm 1 \; \pm 1 \; \mp 1 \; \pm 2] \approx 6^2_1$, $N[\pm 2 \; \mp 1 \; \pm 1 \; \pm 1 \; \pm 1 \; \mp 2] \approx 6^2_2$, and $N[\pm 2 \; \mp 1 \; \pm 1 \; \mp 1 \; \pm 1 \; \pm 2] \approx 6^2_3$.
\end{proof}

\begin{corollary}
The only non-locally fertile two-component rational links are the torus links $T(p, 2)$, $p$ even, and length three links $N[2 \; a_2 \; 2]$ and $N[a_1 \; 1 \; a_3]$.
\end{corollary}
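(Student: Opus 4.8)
The plan is to prove the corollary as a purely organizational consolidation of the length-by-length results established in this section. For a two-component rational link $L$ written in canonical form, I would simply run through the cases $d(L) = 1$, $d(L) = 2$, $d(L) = 3$, and $d(L) \geq 4$, and in each case read off from an already-proved statement whether $F(L)$ reaches the local-fertility value $6$; the non-locally-fertile links are then exactly the union of the exceptional families that appear.

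First, for $d(L) = 1$ we have $L = N[a_1] = T(a_1,2)$, and being two-component forces $a_1$ even; the $T(p,2)$ corollaries give $F(T(2,2)) = 2$ and $F(T(p,2)) = 4$ for even $p \geq 4$, so none of these is locally fertile, which produces the torus-link family in the statement. For $d(L) = 2$, two-componentness forces $L = N[a_1 \; a_2]$ with $a_1,a_2$ both odd (so that the continued-fraction numerator $a_1 a_2 + 1$ is even), and the length-two theorem gives $F(L) = 5 < 6$; here I would take care to record that such links genuinely occur (for instance $6^2_2 = N[3 \; 3]$) so that the final list is complete. For $d(L) = 3$, Theorem \ref{3tangle} says that a nontrivial two-component $N[a_1 \; a_2 \; a_3]$ has $F(L) = 5$ exactly when $a_2 = 1$ or $a_1 = a_3 = 2$, and $F(L) = 6$ otherwise; the two exceptional conditions are precisely the families $N[a_1 \; 1 \; a_3]$ and $N[2 \; a_2 \; 2]$, which overlap at $5^2_1 = N[2 \; 1 \; 2]$. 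Finally, Theorem \ref{link_max} disposes of $d(L) \geq 4$: every such link is locally fertile. Collecting the non-locally-fertile families over all lengths gives exactly the stated torus links together with the length-three families $N[2 \; a_2 \; 2]$ and $N[a_1 \; 1 \; a_3]$ (and the length-two links $N[a_1 \; a_2]$, both $a_i$ odd).

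Since all of the analytic work already lives in the cited theorems, there is no real obstacle; the only thing to be careful about is \emph{completeness}. Concretely, I would verify that Theorem \ref{3tangle}'s two exceptional conditions together with the behaviour at $d(L) \leq 2$ cover every case with $F(L) < 6$ and that Theorem \ref{link_max} leaves no gap for $d(L) \geq 4$, and I would double-check the parity bookkeeping: that $N[a_1 \; 1 \; a_3]$ is two-component only for $a_1,a_3$ even and that $N[2 \; a_2 \; 2]$ is two-component for the appropriate parity of $a_2$, so the families named in the statement consist of genuine two-component links in canonical form rather than shorter links in disguise. That cross-checking, not any single argument, is the "hard part."
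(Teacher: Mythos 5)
Your proposal is correct and is exactly the consolidation the paper intends: the corollary is stated without proof, and the only argument available is the length-by-length case analysis you describe, citing the torus-link corollaries, the length-two theorem, Theorem \ref{3tangle}, and Theorem \ref{link_max}. Your parenthetical about length two is not just bookkeeping, though --- it exposes a genuine omission in the statement itself. The length-two theorem gives $F(N[a_1\;a_2]) = 5$ for every two-component link of that form (necessarily $a_1, a_2$ both odd), so links such as $6^2_2 = N[3\;3]$ and $8^2_2 = N[5\;3]$ (both listed with $F = 5$ in the appendix tables) are non-locally-fertile yet appear nowhere in the corollary's list: they are not torus links $T(p,2)$, and their fractions $(a_1 a_2 + 1)/a_2$ are not realized by $N[2\;a_2\;2]$ or $N[a_1\;1\;a_3]$ in canonical form. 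So either the statement should be amended to include the family $N[a_1\;a_2]$ with both entries odd, or an isotopy to one of the listed families would have to be exhibited, and none exists. Your cross-checking instinct was the right one here.
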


\section{Rational Knot Fertility Number} \label{knot_fert_num}

There are far more rational knots than links in trunks: $\mathcal{K}_6$ is composed of 12 links and 24 knots. Moreover, rational knots can have fertility numbers between 4 and 7 instead of the binary of 5 or 6 for rational links $L$ with $c(L) \geq 5$. To balance clarity with brevity, we will explicitly discuss length 3 and 4 rational knots to demonstrate the general approach, then provide the fertility number for all other higher length rational knots without explanation.

To tabulate $F(L)$, we wrote a program in Mathematica (available on Github \cite{gitRepository}) which identifies rational link resultants by their continued fraction. Its results matched those in Table 2 of \cite{MR3844207}, which were calculated for all rational knots through 10 crossings, and our results from \cite{MR4190429}, which were calculated by computing the Jones polynomial of all resultant diagrams. See Appendix 1 for all compiled results.

\subsection{Length 3 Knots}

\begin{theorem} \cite{MR3844207}
Let $K$ be a nontrivial prime knot. If $K = T(p, 2)$, $p$ odd, $F(K) = 3$. If $K = 4_1$, $F(K) = 4$. Otherwise, $F(K) \geq 4$ and $K$ has $5_2$ as a resultant.
\end{theorem}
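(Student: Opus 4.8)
The statement collects three claims about nontrivial prime knots, and I would prove them by separating the torus-knot case from the rest. For $K = T(p,2) = N[p]$ with $p$ odd, the resultants are exactly the $N[\pm k]$ with $k$ odd by the integer-tangle resolution theorem already proved; among these the only nontrivial knots with fewer crossings are unknot and $3_1 = N[3]$, and $4_1 = N[2\;2]$ is not of this form, so $F(K) = 3$. For $K = 4_1 = N[2\;2]$, Theorem \ref{2n_closed} says every resultant is some $N[2\;\ell]$, $0 \le \ell \le 2$; the nontrivial options are $3_1$ and the unknot, so $F(4_1) = 4$ (it is in fact fertile, there being no other prime knot with fewer than $4$ crossings). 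The substantive content is the last clause: every nontrivial prime knot $K$ other than the odd torus knots has $5_2$ as a resultant, hence $F(K) \ge 4$.

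The plan for the last clause is to invoke the majorization machinery quoted in the link section (Taniyama-style results plus the observation from \cite{MR3844207} that majorizing a link forces it as a resultant), but specialized to knots: one needs a statement that every prime knot of crossing number $\ge 5$ other than $T(p,2)$ majorizes $5_2$, or more precisely has $5_2$ among its resultants. Since $5_2 = N[2\;3]$, a cleaner route is direct: show that for any rational knot $K = N[a_1\;\dots\;a_n]$ that is not an odd torus knot, one can choose an assignment of parities to the crossings that reduces the shadow to $N[2\;3]$. When $n \ge 2$ this follows from the explicit reductions developed earlier — the $N[a_1'\;a_2']$ analysis and the Untangling Lemma produce $N[k\;\ell]$ and $N[(k-1)\;1\;(\ell-1)]$ type resultants for small $k,\ell$, and in particular $N[2\;3]$ whenever the first two tangles have enough crossings of the right parities; the awkward subcase is $a_1 = 1$ or very short lists, which must be handled by reassociating (e.g. $N[1\;a_2\;\dots] \simeq N[(a_2+1)\;\dots]$) or by Corollary \ref{palTheorem} to move a large tangle into position. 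For non-rational prime knots one falls back on the cited majorization theorems: a prime knot with $c(K) \ge 5$ that is not $T(p,2)$ majorizes $5_2$, and majorization implies resultant.

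The main obstacle I anticipate is the boundary bookkeeping for small or degenerate cases: prime knots with exactly $5$ crossings are only $5_1 = T(5,2)$ and $5_2$ itself (for which the claim is trivial once we note every knot is its own resultant via the no-change assignment), but one must be careful that $c(K) < 5$ cases ($3_1$, $4_1$) are genuinely excluded from the "otherwise" clause or handled separately, and that the $T(p,2)$ exclusion is exactly the set of knots that *cannot* reach $5_2$ — this needs the observation that the only resultants of $N[p]$ are again $2$-braid torus links, so $5_2$ (length $2$) is unreachable, which is already a consequence of the length-based obstruction theorem. So the proof is really: (i) dispatch $T(p,2)$ and $4_1$ by explicit resultant computation; (ii) for everything else, exhibit $5_2$ as a resultant, using the rational-tangle reductions of Section \ref{link_fert} in the rational case and Taniyama majorization in the non-rational case; (iii) conclude $F(K) \ge 4$ since $5_2$ has crossing number $5 > 4$ and $c(5_2) = 5$ forces $F(K) \ge 4$ only if all smaller prime knots are also resultants — which for the non-torus knots again follows from the same reductions giving $3_1$ and $4_1$ as well.
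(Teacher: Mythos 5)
The paper does not prove this theorem internally: it is imported verbatim from \cite{MR3844207}, and the nearest in-house analogue is the two-component link version, whose entire proof is ``Taniyama showed every prime two-component link of five or more crossings majorizes $2^2_1$, $4^2_1$, $5^2_1$, and majorization implies resultant.'' Your treatment of the two explicit cases is correct and complete: the resolutions of $N[p]$ are the odd $N[\pm k]$, which miss $4_1$, and Theorem \ref{2n_closed} pins down the resultants of $N[2\;2]$. Your fallback to Taniyama-type majorization for the ``otherwise'' clause is exactly the mechanism the cited source (and this paper, for links) uses.

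The weak point is the separate rational-knot branch. First, it is unnecessary: rational knots are prime, so the majorization argument already covers them, and splitting them off only creates work. Second, as written it is not a proof. The proposal to reach a literal $N[2\;3]$ ``whenever the first two tangles have enough crossings of the right parities'' fails outright for knots such as $7_4 = N[3\;1\;3]$, where every integer tangle resolves to an odd value, so no choice of the first two tangles yields the pair $(2,3)$; there $5_2$ arises only as $N[3\;1\;1]$ or $N[1\;1\;3]$, recognized via the head collapse $N[1\;a_2\;\ldots] \simeq N[(a_2+1)\;\ldots]$ or, for other assignments, via the fraction equivalence $7/4 \equiv 7/2$ of Theorem \ref{KnotFracEquiv}. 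You gesture at these repairs but explicitly leave the ``boundary bookkeeping'' undone, and the same unfinished reductions are also what you lean on to produce $3_1$ and $4_1$, which are needed for the claim $F(K) \geq 4$. Either carry out that case analysis in full (the tools of Sections \ref{link_fert}--\ref{link_fert_num} do suffice) or, more simply, delete the branch and let majorization do all the work, which is what the citation is for.
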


\begin{theorem}
A rational knot $K$, $d(K) \geq 3$, has $F(K) \geq 5$ unless $d(K) = 3$, $a_2 = 1$, and $a_1, a_3$ are both odd.
\end{theorem}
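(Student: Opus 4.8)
The plan is to reduce the statement to a single missing resultant, settle $d(K)=3$ by a short parity case analysis, and then bootstrap to all longer lengths.

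\emph{Reduction.} If $d(K)\ge 3$ then $K$ is a nontrivial prime knot which is neither a $T(p,2)$ torus knot (length $1$) nor $4_1=N[2\;2]$ (length $2$), so by the preceding theorem $K$ already has the unknot, $3_1$, $4_1$ and $5_2$ as resultants. Hence $F(K)\ge 5$ is equivalent to the single assertion that $5_1=N[5]$ is a resultant of the shadow of $K$. I would record three convenient ``targets'', each a diagram isotopic to $5_1$ up to mirror image: $N[2\;0\;3]\simeq N[5]$, using $k\;0\;\ell\simeq (k+\ell)$; the alternating diagram $N[1\;3\;1]$, whose continued fraction is $5/4$; and $N[\pm 2\;\mp 1\;\mp 3]$, whose continued fraction is also $5/4$. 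Since the mirror of a resultant is again a resultant, hitting $5/4$ is as good as hitting $5/1$.

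\emph{Length $3$.} Because $N[a_1\;a_2\;a_3]$ is a knot, up to the reversal $N[a_1\;a_2\;a_3]\simeq N[a_3\;a_2\;a_1]$ its parity pattern is one of $(\text{even},\text{even},\text{odd})$, $(\text{even},\text{odd},\text{odd})$, or $(\text{odd},\text{odd},\text{odd})$; canonical form forces $a_1\ge 2$ and $a_3\ge 2$, hence $a_1\ge 3$ when $a_1$ is odd and $a_3\ge 3$ when $a_3$ is odd. If the pattern is $(\text{even},\text{even},\text{odd})$, resolve the shadow tangles to $2,0,3$ (legal: $a_1\ge 2$, $a_2$ even, $a_3\ge 3$), obtaining $N[2\;0\;3]\simeq N[5]$. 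If the pattern is $(\text{even},\text{odd},\text{odd})$, resolve to $+2,-1,-3$ (legal since $a_3\ge 3$), with continued fraction $5/4$. If the pattern is $(\text{odd},\text{odd},\text{odd})$ and $a_2\ge 3$, resolve to $1,3,1$ (legal since $a_2\ge 3$), giving the alternating diagram $N[1\;3\;1]$ with continued fraction $5/4$. The only remaining subcase is $a_2=1$ with $a_1,a_3$ odd, which is exactly the excluded family, so nothing is claimed there; one checks separately (e.g.\ via the residue of $|p|$ modulo $4$ for the possible resultant fractions) that these genuinely fail to produce $N[5]$.

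\emph{Length $\ge 4$, and the main obstacle.} I would argue by strong induction on $n=d(K)$ with base case $n=3$. By Theorem~\ref{f_across_L}, $F(K)\ge F(K')$ for a rational link $K'$ obtained by discarding ($a_n$ even) or absorbing ($a_n$ odd) the last tangle, so $d(K')\in\{n-2,n-1\}$ with the same leading tangle; if $K'$ is not one of the finitely many explicitly characterized links with fertility number below $5$ — the $T(p,2)$, $4_1$, the $N[2\;a_2]$ and both-even $N[a_1\;a_2]$ knots, and the excluded length-$3$ knots — then $F(K')\ge 5$ by induction and we are done. Otherwise $K'$ pins down the shape of $K$ tightly (for instance $a_1=a_2=2$, or $a_2=1$ with $a_1$ odd and $a_3$ even), and instead of peeling off $a_n$ I would send an interior even tangle to $0$ via $k\;0\;\ell\simeq(k+\ell)$, or apply the Untangling Lemma once, to collapse $K$ directly onto one of the four targets above; since $n\ge 4$ there are always enough free tangles to do so. (Alternatively, invoke Theorem~\ref{end_goal}: verifying $5$-fertility by hand for the knots of $\mathcal{K}_4$ and $\mathcal{K}_5$ and for $N[2\;1^{4}\;2]$ forces $5$-fertility for all rational knots of length $\ge 6$ via trunk-monotonicity and Theorem~\ref{end_goal}, leaving only lengths $3,4,5$ to do by hand.) The hard part is precisely this length-$\ge 4$ step: cataloguing exactly which reductions land on a non-$5$-fertile link and, for each such sub-family, producing a legal alternative assignment reaching $N[5]$ while respecting the parity constraints (which tangles may be sent to $0$, to $\pm 3$, \dots) and the canonical-form conditions $a_1,a_n>1$. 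This is bookkeeping rather than a conceptual difficulty, but it is the step most prone to an overlooked case.
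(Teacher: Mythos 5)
Your reduction to the single missing resultant $5_1$ and your length-$3$ parity analysis are both correct and self-contained (the target diagrams $N[2\;0\;3]$, $N[1\;3\;1]$, and $N[\pm2\;\mp1\;\mp3]$ do realize the fraction $5/1$ or $5/4$, hence $5_1$ up to mirror image, and the legality of each assignment follows from $a_1,a_3\ge 2$ in canonical form). The genuine gap is the case $d(K)\ge 4$, which in your write-up is a plan rather than a proof. The induction via Theorem \ref{f_across_L} can land on a link $K'$ with $F(K')\le 4$ --- for instance $7_6=N[2\;2\;1\;2]$ drops to $N[2\;2]=4_1$ --- and for those exceptional shapes you only assert that ``there are always enough free tangles'' to reach one of your targets; you neither enumerate the exceptional sub-families nor exhibit the legal assignments, and you yourself flag this as the step most prone to an overlooked case. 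The fallback via Theorem \ref{end_goal} is likewise only gestured at: it would require verifying $5$-fertility by hand for all knots of $\mathcal{K}_4$, $\mathcal{K}_5$ and for $N[2\;1^{4}\;2]$, which you do not do. As written, the statement is established only for $d(K)=3$.

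The paper avoids this bookkeeping entirely by quoting a stronger fact from the same source as the ``preceding theorem'': \cite{MR3844207} show that every nontrivial prime knot that is not a pretzel knot $P(p_1,p_2,p_3)$ with all $p_i$ odd and positive has $5_1$ as a resultant. It then only remains to observe that the sole rational knots of length at least $3$ expressible as such pretzel knots are the length-$3$ knots with $a_2=1$ and $a_1,a_3$ odd --- exactly the excluded family --- so the conclusion holds for every length at once, with no induction and no case analysis. If you want a citation-free proof along your lines, the honest cost is carrying out the length-$\ge 4$ case split in full; otherwise the pretzel-knot result is the efficient route.
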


\begin{proof}
\cite{MR3844207} similarly show nontrivial prime knots that are not pretzel knots $P(p_1, p_2, p_3)$ with $p_i$ all odd, positive, integers have $5_1$ as a resultant. Of the rational knots $K = N[a_1 \; a_2 \; ... \; a_n]$ with $d(K) \geq 3$, only rationals with $d(K) = 3$ and $a_2 = 1$ can be written as pretzels knots described by exactly three positive $p_i$. Further requirements on $a_i$ follow from the requirements from \cite{MR3844207}.
\end{proof}

We point the reader unfamiliar with pretzel links to \cite{pretzel}.

\begin{lemma}
Length three rational knots have $F(K) \leq 6$. 
\end{lemma}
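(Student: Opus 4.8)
The plan is to show that no length-three rational knot $K = N[a_1 \; a_2 \; a_3]$ can achieve the local fertility number $7$, i.e.\ that no such $K$ has $7_7 = N[2 \; 1 \; 1 \; 1 \; 2]$ (equivalently some seven-crossing knot of length $\geq 4$) as a resultant. The key structural fact is that every resultant of a length-three shadow is, after simplification, short. First I would enumerate the resolution of $N[a_1' \; a_2' \; a_3']$ by the Untangling Lemma: assigning any of the three shadow tangles to $0$ produces $N[a_1 \; a_2]$-type or shorter links, which have length at most $2$; so the only way to get a longer resultant is to keep all three tangles nonzero. A nonzero assignment gives a diagram $N[\pm c_1 \; \pm c_2 \; \pm c_3]$ (up to mirror) with $c_i \geq 1$, and the only non-alternating case $N[\pm c_1 \; \mp c_2 \; \pm c_3]$ reduces by the Untangling Lemma to $N[\pm(c_1-1) \; \pm 1 \; \pm(c_2-1) \; \pm c_3]$ — but the case $c_1 = 1$ or $c_3 = 1$ collapses further, and when $c_2 = 1$ one gets $N[\pm c_1 \; a_3' \; \pm 1] \approx N[(\pm c_1 \pm 1) \; \mp (c_3' \text{-part})]$ of length $\leq 2$ again. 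So the only resultants of length $\geq 3$ are the alternating ones $N[\pm c_1 \; \pm c_2 \; \pm c_3]$, hence of length exactly $3$.

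Next I would observe that a knot of length $3$ cannot be $7_7$, nor indeed any of the fourteen knots $8_\ast$ or the knots with crossing number $\geq 8$ of length $\geq 4$, but more to the point: among knots with $c \leq 7$, the ones of continued-fraction length $\geq 4$ are exactly $6_3 = N[2\;1\;1\;2]$, $7_4 = N[3\;1\;3]$ (length $3$, so fine), $7_7 = N[2\;1\;1\;1\;2]$, and $7_5, 7_3$ of length $\leq 3$. The genuinely problematic ones are $6_3$ and $7_7$, each of length $4$. So it suffices to rule these out, and by the paragraph above they are not resultants of any length-three shadow because every length-$\geq 3$ resultant has length exactly $3$. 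Combined with the universal upper bound $F(K) \leq 7$ for rational knots, failure to realize $6_3$ already forces $F(K) \leq 5$ for every length-three $K$; but since the statement only claims $F(K) \leq 6$, ruling out $7_7$ — which has $c(7_7) = 7$ — suffices, and that is immediate.

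Concretely I would write: by the resolution analysis, any resultant $R$ of $N[a_1' \; a_2' \; a_3']$ with $d(R) \geq 3$ satisfies $d(R) = 3$; since $d(6_3) = d(7_7) = 4$, neither is a resultant; as $7_7$ is a seven-crossing prime knot, $K$ is not $7$-fertile, hence $F(K) \leq 6$. The main obstacle is the careful bookkeeping in the first paragraph: one must verify that the Untangling Lemma never lengthens a diagram past length $3$ — in particular that the substitution $\pm a_i \; \mp a_{i+1} \approx \pm(a_i - 1)\;\pm 1\;\pm(a_{i+1}-1)$ applied to the interior tangle of a length-three diagram yields something that, after the forced $a_3$-adjacent reductions and the numerator-closure coherence of the first two and last two tangles (Theorem on $d(L) \leq c(L) - 2$), collapses back to a canonical form of length $\leq 3$. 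Handling the sub-case $a_2 = 1$ (where flipping $a_2$ or an outer tangle drops a crossing and merges tangles) requires separate attention but is routine. Everything else follows from the already-established resolution rules and the list of low-crossing rational knots.
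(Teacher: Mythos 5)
Your proposal has a fatal gap: the central structural claim---that every resultant of a length-three shadow has length at most $3$---is false, and in fact the paper's own proof of this lemma hinges on its failure. The Untangling Lemma does not preserve or shrink length: a single sign disagreement $\pm c_i \; \mp c_{i+1}$ with $c_i, c_{i+1} \geq 2$ turns \emph{two} tangles into \emph{three}, namely $\pm(c_i - 1) \; \pm 1 \; \pm(c_{i+1}-1)$, so length goes up by one while the crossing number goes down by one. With two disagreements one gets $N[\pm c_1 \; \mp c_2 \; \mp c_3] \approx N[\pm(c_1-1) \; \pm 1 \; \pm(c_2-2) \; \pm 1 \; \pm(c_3-1)]$, a length-five diagram. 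Concretely, $N[3' \; 3' \; 3']$ (the shadow of $9_{10}$) has $7_7 = N[2\;1\;1\;1\;2]$ as a resultant via the assignment $N[\pm 3 \; \mp 3 \; \mp 3]$, and $N[3'\;3'\;2']$ (the shadow of $8_6$) has $6_3 = N[2\;1\;1\;2]$ as a resultant via $N[\pm 3 \; \mp 3 \; \mp 2]$ --- which is exactly why $F(8_6) = 6$ in the paper's tables. Your conclusion that $6_3$ is never a resultant and hence $F(K) \leq 5$ for all length-three knots contradicts these entries (many length-three rational knots have fertility number $6$). You also misstate $d(7_7)$ as $4$; it is $5$. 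The error traces back to your transcription of the Untangling Lemma with an unflipped $\pm c_3$ at the end, which hides the second disagreement that produces the length growth.

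The paper's actual argument is a case split rather than a length bound: $7_7$ can only arise from the assignment $\pm 3 \; \mp 3 \; \mp 3$, so it is a resultant only when all $a_i$ are odd (with $a_2 \geq 3$); in that case one checks directly that $6_3$ cannot be a resultant, giving $F(K) \leq 5$; in every other case $7_7$ is not a resultant, giving $F(K) \leq 6$. If you want to salvage your approach, you must replace the false "all resultants have length $\leq 3$" claim with an analysis of which length-$5$ (and length-$4$) canonical forms actually occur among the reductions $N[\pm(c_1-1) \; \pm 1 \; \pm(c_2-2) \; \pm 1 \; \pm(c_3-1)]$, which is essentially what the paper does.
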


\begin{proof}
$7_7 = N[\pm 2 \; \pm 1 \; \pm 1 \; \pm 1 \; \pm 2]$ can only be formed from an assignment $N[\pm a_1 \; \mp a_2 \; \mp a_3] \approx N[\pm (a_1 - 1) \; \pm 1 \; \pm (a_2 - 2) \; \pm 1 \; \pm (a_3 - 1)]$. Therefore $7_7$ is a resultant iff all $a_i$ are odd. 

When all $a_i$ are odd, if $c_i \geq 3$, $c(R) \geq 7 \implies 6_3$ is a resultant iff at least one of $a_i' \rightarrow 1'$. When $a_2 = 1$, $F(K) = 4$. The only remaining possibilities for six crossing resultants are $N[1' \; a_2' \; a_3'] \approx N[(a_2 + 1)' \; a_3']$ and similarly $N[a_1' \; (a_2 + 1)']$, which have resultants of length three or less, so $6_3$ cannot be a resultant.
\end{proof}

\begin{corollary} \label{rat3pret}
Let $K = N[a_1 \; a_2 \; a_3]$ be a nontrivial rational knot where $a_1, a_2, a_3$ are odd. If $a_2 = 1$, then $F(K) = 4$. Otherwise, \\$F(K) = 5$.
\end{corollary}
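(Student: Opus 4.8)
The plan is to split the statement into the two cases $a_2 = 1$ and $a_2 > 1$ (odd), showing in the first case that $F(K)=4$ and in the second that $F(K)=5$, using the length-based obstruction lemmas already established together with explicit resultant constructions. For the case $a_2 = 1$: the knot $K = N[a_1 \; 1 \; a_3]$ with $a_1, a_3$ odd is a pretzel knot $P(a_1, 1, a_3)$, and by the immediately preceding theorem such a knot does \emph{not} necessarily have $5_1$ as a resultant — in fact I would argue it cannot. One checks directly from the Untangling Lemma that every resultant $R$ of $N[a_1' \; 1' \; a_3']$ has $d(R) \le 2$ (the middle $1'$ tangle forces any sign disagreement to collapse the length, by the $a_{i+1}=1$ branch of Lemma~\ref{untangle}), so $6_3 = N[2\;1\;1\;2]$ and $6_2 = N[3\;1\;2]$ cannot appear, and moreover $5_1 = N[5]$ cannot appear since $N[\pm c_1 \; \pm c_3]$-type resultants have length $2$ or collapse. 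Combined with the fact (from the first Length-3 theorem) that $5_2$ \emph{is} a resultant and all knots with $c \le 4$ are resultants of any nontrivial prime knot of length $\ge 3$, this pins down $F(K) = 4$.

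For the case $a_2 > 1$ odd: we must show $F(K) = 5$ exactly, i.e. $5_1$ \emph{is} a resultant (giving $F(K) \ge 5$) but $6_3$ is \emph{not} (giving $F(K) \le 5$), together with the upper bound $F(K)\le 6$ from the preceding lemma being sharpened. The lower bound $F(K)\ge 5$ follows from the theorem on pretzel knots: with $a_2 > 1$ the knot is not a $P(p_1,p_2,p_3)$ with all $p_i$ positive odd, so it has $5_1$ as a resultant (alternatively, exhibit $5_1 = N[5]$ directly: with all $a_i$ odd and $a_2 \ge 3$ one can assign $N[\pm a_1 \; \mp a_2 \; \pm a_3]$ and simplify, or note $N[\pm 3 \; \mp 3 \; \pm 1]$-type assignments reduce appropriately — this needs a short continued-fraction check). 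The upper bound is the heart of the argument: I claim no six-crossing knot ($6_1 = N[2\;4]$, $6_2 = N[3\;1\;2]$, $6_3 = N[2\;1\;1\;2]$) is a resultant. From the proof of the preceding lemma, the only way to get a length-$\ge 3$ six-crossing resultant when all $a_i$ are odd is via an all-disagreeing assignment $N[\pm a_1 \; \mp a_2 \; \mp a_3] \approx N[\pm(a_1-1)\;\pm1\;\pm(a_2-2)\;\pm1\;\pm(a_3-1)]$, whose reduced form has too many crossings ($\ge 7$) once $a_2 \ge 3$ unless some $a_i' \to 1'$, and the argument in the lemma shows the surviving possibilities $N[(a_2+1)'\;a_3']$ and $N[a_1'\;(a_2+1)']$ have resultants of length $\le 3$, so cannot give $6_3$ — and a parity/continued-fraction check rules out $6_1, 6_2$ as well since these would require an even entry somewhere.

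The main obstacle I anticipate is the bookkeeping in the $a_2 > 1$ upper-bound step: one has to enumerate \emph{all} assignments of the three shadow tangles $a_1', a_2', a_3'$ (each can agree or disagree in sign, and can reduce to any smaller same-parity value including the critical $1'$), track which reduced continued fractions arise via repeated application of Lemma~\ref{untangle}, and verify that none of $\tfrac{p}{q}$ with $p \in \{9,11,13\}$ (the fractions of $6_1, 6_2, 6_3$ up to the equivalence of Theorem~\ref{KnotFracEquiv}) appears. This is finite but fiddly; the cleanest route is probably to observe that any resultant is itself rational of length $\le 3$ (for same-sign-heavy assignments) or of a controlled form (for the one disagreeing-assignment family), and then invoke Theorem~\ref{KnotFracEquiv} to check the short list of six-crossing rational knots against the attainable fractions. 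Once $5_1$ is in and $6_1,6_2,6_3$ are out, $F(K) = 5$ follows immediately, and the corollary is complete.
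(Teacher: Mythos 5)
Your skeleton agrees with the paper's: lower bounds from the quoted results of \cite{MR3844207} ($5_2$ is a resultant of every non-torus, non-$4_1$ prime knot; $5_1$ is a resultant of everything except the all-odd pretzels), upper bounds by excluding one small knot from the resultant set. But two of your exclusion claims are false as stated. In the $a_2=1$ case, it is not true that every resultant of $N[a_1'\;1'\;a_3']$ has $d(R)\le 2$: the all-agreeing assignment $N[\pm 3\;\pm 1\;\pm 3]=7_4$ is a length-three resultant, available whenever $a_1,a_3\ge 3$ (which canonical form forces). More importantly, no length bound can exclude $5_1=N[5]$, which has length one; ``collapsing'' resultants could a priori collapse to it. What is actually needed is that no assignment $N[c_1\;\pm1\;c_3]$ with $c_i$ odd has fraction equivalent to $5/1$: writing $p=c_1c_2c_3+c_1+c_3$ and $q=c_1c_2+1$, one checks $|p|=5$ forces $\{|c_1|,|c_3|\}=\{1,3\}$ and fraction $\pm 5/2$, i.e.\ the figure-eight, never $N[5/1]$. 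Alternatively one cites the characterization in \cite{MR3844207} of which knots have $5_1$ as a resultant, which is what the paper does. You gesture at both routes but complete neither.

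In the $a_2>1$ case you claim none of $6_1,6_2,6_3$ is a resultant, justified by the remark that these ``would require an even entry somewhere.'' That heuristic is wrong: Lemma \ref{untangle} manufactures even entries from all-odd assignments. Concretely, for $N[3'\;3'\;3']$ the assignment $N[\pm 3\;\mp 3\;\mp 1]\approx N[\pm 2\;\pm 1\;\pm 2\;\pm 1]$ has fraction $11/4$, and since $3\cdot 4\equiv 1 \pmod{11}$, Theorem \ref{KnotFracEquiv} identifies this with $6_2=N[11/3]$ up to mirror image; so $6_2$ \emph{is} a resultant of $N[3\;3\;3]$. Fortunately $F(K)\le 5$ only requires one six-crossing knot to fail, and the part of your argument that tracks the paper's lemma --- the unique route to a length-four resultant is the all-disagreeing assignment, which costs at least seven crossings unless some $c_i=1$, and those cases collapse to length $\le 3$, so $6_3$ never appears --- is correct and sufficient. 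The fix is therefore to drop the blanket length bound and the claims about $6_1$ and $6_2$, and to supply an actual argument (the determinant computation above, or the citation) that $5_1$ fails to be a resultant when $a_2=1$.
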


\begin{theorem}
Let $K = N[a_1 \; a_2 \; a_3]$ be a nontrivial rational knot where $a_1, a_2$ are odd and $a_3$ is even. If $a_2 = 1$, then $F(K) = 5$. Otherwise $F(K) = 6$.
\end{theorem}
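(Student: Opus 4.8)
The plan is to trap $F(K)$ in $[5,6]$ using results already established, and then decide which endpoint is attained by testing a single six-crossing knot. First I would record the canonical-form constraints: $d(K)=3$ forces $|a_1|>1$, Corollary \ref{palTheorem} forces $a_3>1$, and $a_2\ne 0$, so with the stated parities $a_1\ge 3$ is odd, $a_2\ge 1$ is odd, and $a_3\ge 2$ is even. Since $a_3$ is even, the exceptional family of the earlier theorem giving $F\ge 5$ for length-$\ge 3$ rational knots (which requires $a_1,a_3$ both odd) does not occur, so $F(K)\ge 5$; in particular the unknot, $3_1$, $4_1$, $5_1$, $5_2$ are resultants. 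The lemma bounding length-three rational knots gives $F(K)\le 6$. So everything reduces to deciding which of $6_1=N[2\;4]$, $6_2=N[3\;1\;2]$, $6_3=N[2\;1\;1\;2]$ are resultants.

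Case $a_2=1$: I claim $6_3$ is not a resultant, which forces $F(K)=5$. A resultant of $N[a_1'\;1'\;a_3']$ is obtained by resolving the first tangle to an odd integer $c_1\ne 0$, the last to an even integer $c_3$, and the lone middle crossing to $c_2=\pm1$. If $c_3=0$ the result is the torus knot $N[c_1]$; if $c_3\ne0$ it is $N[c_1\;c_2\;c_3]=N[p/q]$ with $p/q=(c_1\;c_2\;c_3)$, and a direct manipulation puts this in lowest terms as
\[
  \frac{p}{q}=\frac{(c_1+1)(c_3+1)-1}{c_3+1}\quad(c_2=+1),\qquad \frac{p}{q}=\frac{(c_1-1)(c_3-1)-1}{c_3-1}\quad(c_2=-1).
\]
Hence every knot resultant $N[p/q]$ of $K$ satisfies $p\equiv\pm1\pmod q$ (trivially so for the torus knots, where $q=1$). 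But $6_3=N[13/5]=N[13/8]$, and $13\not\equiv\pm1\pmod 5$ while $13\not\equiv\pm1\pmod 8$; so $6_3$ is not a resultant, and with $5\le F(K)\le 6$ we conclude $F(K)=5$.

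Case $a_2\ge 3$: I would realize all three six-crossing knots as explicit resultants, which with $F(K)\le 6$ forces $F(K)=6$. Since $a_1$ resolves to any odd value in $[-a_1,a_1]$, $a_2$ to any odd value in $[-a_2,a_2]$, $a_3$ to any even value in $[-a_3,a_3]$, and $a_1\ge 3$, $a_2\ge 3$, $a_3\ge 2$, the resolutions $(c_1,c_2,c_3)=(1,3,2),(3,1,2),(3,-3,2)$ are all available: $N[1\;3\;2]\simeq N[2\;3\;1]\simeq N[2\;4]=6_1$; $N[3\;1\;2]=6_2$; and $N[\pm3\;\mp3\;\pm2]$, of continued fraction $3+\tfrac1{-3+1/2}=\tfrac{13}{5}$, is $N[2\;1\;1\;2]=6_3$ (equivalently, two applications of Lemma \ref{untangle} carry this diagram to $N[2\;1\;1\;1\;1]\simeq N[2\;1\;1\;2]$). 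The one ingredient not present when $a_2=1$ is that $a_2\ge 3$ lets the middle tangle take the values $c_2=\pm3$, which is exactly what the previous case lacked.

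The delicate step is the negative assertion for $a_2=1$ — certifying that \emph{no} sign choice yields $6_3$ — and the two closed forms above make this transparent by collapsing it to the congruence $p\equiv\pm1\pmod q$ (coprimality of numerator and denominator is automatic from that congruence). I would verify those two identities and then, in the $a_2\ge3$ case, confirm the handful of explicit continued-fraction values, all of which can be cross-checked against the Mathematica implementation cited in the paper.
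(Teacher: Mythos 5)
Your proof is correct and shares the paper's skeleton (lower bound $F\geq 5$ from the pretzel-exception theorem, upper bound $F\leq 6$ from the length-three lemma, explicit assignments for $6_1,6_2,6_3$ when $a_2\geq 3$), but your treatment of the $a_2=1$ case is genuinely different. The paper disposes of it in one line: every resultant of $N[a_1'\;1'\;a_3']$ has canonical length at most $3$ (mixed-sign assignments collapse under the Untangling Lemma), while $6_3=N[2\;1\;1\;2]$ has canonical length $4$, so it cannot appear. You instead compute the fraction of every resultant in closed form and rule out $6_3$ by an arithmetic obstruction. That works, but as written it has a small rigor gap: the property ``$p\equiv\pm1\pmod q$'' is a property of a particular fraction representative, not of the knot, so checking only the normalized representatives $13/5$ and $13/8$ of $6_3$ is not quite the right test. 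The clean way to finish your argument is to note that your formulas give $p\equiv -1\pmod q$ with $q=c_3\pm1$ odd, so $q$ is an odd divisor of $p+1\in\{14,-12\}$, i.e.\ $q\in\{\pm1,\pm3,\pm7\}$; none of these is congruent to $5$ or $8$ mod $13$, so by Theorem \ref{KnotFracEquiv} no such fraction represents $6_3$. The paper's length argument buys brevity; your congruence argument buys an explicit description of all resultants, which generalizes to ruling out other targets. One point in your favor: your witness $N[\pm3\;\mp3\;\pm2]$ for $6_3$ (fraction $13/5$) is the correct one, whereas the paper's printed $N[\pm3\;\mp3\;\mp2]$ has fraction $19/7$, a seven-crossing knot, and appears to be a sign typo.
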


\begin{proof}
$F(K) \geq 5$. If $a_2 = 1$, all resultants are of length three or below, so $6_3$ is not a resultant. Otherwise the following assignments are always possible: $N[\pm 1 \; \pm 3 \; \pm 2] \approx 6_1$, $N[\pm 3 \; \pm 1 \; \pm 2] \approx 6_2$, and $N[\pm 3 \; \mp 3 \; \mp 2] \approx 6_3 \implies F(K) \geq 6$.
\end{proof}

\begin{theorem}
Let $K = N[a_1 \; a_2 \; a_3]$ be a rational knot where $a_1$ is odd and $a_2, a_3$ are even. If $a_2 = 2$, then $F(K) = 5$. Otherwise $F(K) = 6$.
\end{theorem}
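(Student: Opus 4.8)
The parity pattern $(\mathrm{odd},\mathrm{even},\mathrm{even})$ is the one triple of parities absent from the list of two-component conditions, so $K$ is a knot, and canonical form forces $a_1\ge 3$, $a_2\ge 2$, $a_3\ge 2$. The plan is the two-step argument of the two preceding theorems: first trap $F(K)$ in $\{5,6\}$, then decide the endpoint from the value of $a_2$. For the trap, $F(K)\ge 5$ because $d(K)=3$ and $K$ is not the exceptional family of the earlier length-$\ge 3$ theorem (that exception requires $a_2=1$ with $a_1,a_3$ odd, whereas here $a_2$ is even), while $F(K)\le 6$ by the Lemma that every length-three rational knot has $F\le 6$. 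Thus $F(K)\in\{5,6\}$; since $F(K)\ge 5$ already delivers every knot of at most five crossings, the whole question is whether the six-crossing knots $6_1,6_2,6_3$ are resultants of the shadow $N[a_1'\;a_2'\;a_3']$, and for the $a_2=2$ direction it will be enough to exclude the single knot $6_3=N[2\;1\;1\;2]$, the unique six-crossing knot of canonical length four.

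For $a_2=2$ I would show $6_3$ is not a resultant. Every resultant diagram has the form $N[\varepsilon_1c_1\;\varepsilon_2c_2\;\varepsilon_3c_3]$ with $c_1$ odd, $c_2\in\{0,2\}$, $c_3$ even and $\varepsilon_i\in\{+,-\}$ (fix $\varepsilon_1=+$ by mirroring). If $c_2=0$ or $c_3=0$ the diagram collapses, via $k\;0\;\ell\simeq(k+\ell)$ or $N[T\;0]\simeq D[T]$, to a torus link of length $\le 1$; if $c_2=c_3=2\ne 0$ and all three signs agree, $N[\pm c_1\;\pm 2\;\pm c_3]$ is a reduced alternating length-three diagram. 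The remaining sign patterns are handled by the Untangling Lemma applied at its single admissible interior pair, together with the palindrome symmetry to reach the opposite end: flipping only the end tangle gives $N[\pm c_1\;\pm 1\;\pm 1\;\pm(c_3-1)]$, whose two outer entries $c_1$ and $c_3-1$ are both odd and whose continued-fraction numerator one checks is never $\det(6_3)=13$; flipping only the middle tangle creates a single crossing that immediately recombines with a neighbor, dropping the length back to $\le 3$; and flipping both the middle and the end tangle produces a reduced diagram of crossing number $c(K)-2=a_1+a_3$, which is odd since $a_1$ is odd and $a_3$ even, hence not the six-crossing knot $6_3$. By uniqueness of canonical continued-fraction forms none of these is $N[2\;1\;1\;2]$, so $6_3$ is not a resultant, and with $F(K)\ge 5$ this forces $F(K)=5$.

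For $a_2\ge 4$ I would instead exhibit explicit sign assignments realizing all three of $6_1=N[2\;4]$, $6_2=N[3\;1\;2]$, and $6_3=N[2\;1\;1\;2]$: a middle tangle of four or more crossings gives the Untangling Lemma exactly the room needed to assemble the length-four $N[2\;1\;1\;2]$ pattern (equivalently an assignment whose reduced continued fraction is $13/5$) while also reaching the length-two and length-three forms $9/2$ and $11/3$, which a two-crossing middle tangle cannot do. Combined with $F(K)\le 6$ this gives $F(K)=6$. The main obstacle is the $a_2=2$ direction: the $a_2\ge 4$ witnesses are routine to write down, whereas excluding $6_3$ when $a_2=2$ requires the complete sign-pattern case check together with careful tracking, through every Untangling and collapse step, of the parities of $a_1$, $a_3$, and of the resulting crossing numbers, to be certain that a "$2\;1\;1\;2$" can never be built.
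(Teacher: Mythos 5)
Your framing of the problem is right (trap $F(K)$ in $\{5,6\}$ via the earlier results, then decide by testing the six-crossing knots), but the $a_2=2$ half of your argument fails because you picked the wrong knot to exclude. You claim that for $a_2=2$ the knot $6_3=N[2\;1\;1\;2]$ is not a resultant; it is. Take $c_1=3$, assign the middle tangle to $-2$ and the end tangle to $-2$: the Untangling Lemma gives $N[\pm 3\;\mp 2\;\mp 2]\approx N[\pm 2\;\pm 1\;\pm 1\;\pm 2]=6_3$ (one can check the fraction of $N[3\;-2\;-2]$ is $-13/5$, equivalent to $13/5$). This is exactly the ``flip the middle and the end tangle'' case of your analysis, where your parity count is off by one: the reduced diagram is $N[\pm(c_1-1)\;\pm 1\;\pm 1\;\pm c_3]$ with $(c_1-1)+1+1+c_3=c_1+c_3+1$ crossings, which is \emph{even} (odd plus even plus one), not $c(K)-2=a_1+a_3$ odd as you assert, and for the trunk knot $N[3\;2\;2]$ it equals $6$. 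Your heuristic that the length-four knot $6_3$ should be the hardest to reach from a length-three shadow is backwards: untangling \emph{increases} length, so $6_3$ and $6_2$ come for free, and the genuine obstruction when $a_2=2$ is the length-two knot $6_1=N[4\;2]$.

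This is precisely how the paper argues: it shows $7_5=N[3\;2\;2]$ realizes $6_2$ and $6_3$ but runs through all sign assignments of $N[a_1'\;\pm 2\;a_3']$ to verify that $6_1$ never appears (every branch either collapses to length one, stays at length $\ge 3$ in canonical form, or reduces to $N[\pm(c_1-1)\;\pm 3]$ with $c_1-1$ even), whereas for $a_2\ge 4$ the assignment $N[\pm 1\;\pm 4\;\mp 2]\approx 6_1$ succeeds. Your $a_2\ge 4$ direction is salvageable (exhibiting witnesses for all three six-crossing knots is fine, though the extra room in the middle tangle is needed for $6_1$, not $6_3$), but the $a_2=2$ direction needs to be redone with $6_1$ as the target of the exclusion argument.
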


\begin{proof}
Trunk knot $7_5 = N[3 \; 2 \; 2]$ has at most two six crossing resultants, so $F(7_5) = 5$. It can form $N[\pm 3 \; \pm 2 \; \mp 2] \approx 6_2$ and $N[\pm 3 \; \mp 2 \; \pm 2] \approx 6_3$, but not $6_1 = N[4 \; 2]$.

When $a_2 \geq 4$, we have $N[\pm 1 \; \pm 4 \; \mp 2] \approx 6_1 \implies F(K) = 6$. However, branches with $a_2 = 2$ cannot form $6_1$. $2' \rightarrow 0$ produces length one knots, leaving $N[a_1' \; \pm 2 \; a_3']$ and $N[a_1' \; \mp 2 \; a_3']$. $a_1' \rightarrow c_1 \geq 3$ forces either $N[\pm c_1 \; \pm 2 \; a_3']$ (further resultants all length three or greater, i.e. not $6_1$) or $R_1 = N[\pm (c_1 - 1) \; \pm 1 \; \pm 1 \; a_3']$. 

\begin{itemize}
\item $a_3' \rightarrow \pm c_1 \geq 2 \implies d(R_1) = 4$, 
\item $a_3' \rightarrow 0 \implies d(R_1) = 1$, 
\item $a_3' \rightarrow \mp2 \implies d(R_1) = 2$, and
\item $a_3' \rightarrow \mp c_1 \leq 4 \implies d(R_1) = 3$. 
\end{itemize}

The only viable route to $6_1$ here is $N[\pm (c_1 - 1) \; \pm 1 \; \pm 1 \; \mp 2] \approx N[\pm (c_1 - 1) \; \pm 1 \; 0 \; \pm 1 \; \pm 1] \approx N[\pm (c_1 - 1) \; \pm 3] \not \approx 6_1$. 

The last unexamined cases are $N[\pm 1 \; \pm 2 \; a_3'] \approx N[\pm 3 \; a_3']$ and $N[\pm 1 \; \mp 2 \; a_3'] \approx N[\mp 1 \; a_3']$. The former case will have at least one tangle with an odd number of crossings since $a_3$ is even. The latter is length one for all $a_3$.
\end{proof}

\subsection{Length 4 Knots}

There are 7 knots in $\mathcal{K}_4$. We've already seen $6_3 = N[2 \; 1 \; 1 \; 2]$ is 5-fertile and $7_6 = N[2 \; 2 \; 1 \; 2]$ is 6-fertile. 

\begin{theorem}
$8_{11} = N[3 \; 2 \; 1 \; 2]$ and $8_9 = N[3 \; 1 \; 1 \; 3]$ are 5-fertile.
\end{theorem}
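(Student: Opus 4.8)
The statement should be read as $F(8_{11})=F(8_9)=5$ (fertility number exactly five, not six), and the plan is to prove the two inequalities separately. The lower bound is essentially free: $8_{11}=N[3\;2\;1\;2]$ and $8_9=N[3\;1\;1\;3]$ have length $4$, so they avoid the $d=3$ exceptional case of the earlier theorem and hence $F(K)\ge 5$ for each. The real task is $F(K)\le 5$, i.e.\ that neither knot is $6$-fertile, which I would settle by computing the full resultant set of each shadow and exhibiting one of $6_1,6_2,6_3$ that is missing.

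By the resolution of integer tangles, every resultant of $N[3'\;2'\;1'\;2']$ is a diagram $N[e_1\;e_2\;e_3\;e_4]$ with $e_1\in\{\pm1,\pm3\}$, $e_3=\pm1$, $e_2,e_4\in\{0,\pm2\}$, and every resultant of $N[3'\;1'\;1'\;3']$ is $N[e_1\;e_2\;e_3\;e_4]$ with $e_1,e_4\in\{\pm1,\pm3\}$, $e_2=e_3=\pm1$; in each case the isotopy type is determined by the continued fraction $(e_1\;e_2\;e_3\;e_4)$ via Theorem~\ref{KnotFracEquiv}, with the identities $N[T\;0]\simeq N[\text{shortened }T]$ and $k\;0\;\ell\simeq k+\ell$ handling the $e_i=0$ cases and Lemma~\ref{untangle} putting the mixed-sign words into canonical form. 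Running over the finitely many sign patterns and recording the numerator $p$ of the resulting fraction $p/q$ in each case, one finds: for $8_{11}$ the only resultant with $|p|=13$ has continued fraction $13/4$, which by Theorem~\ref{KnotFracEquiv} is $7_3$ and not $6_3=N[13/5]$, so $6_3$ is not a resultant (while $6_1$ and $6_2$ are, e.g.\ $N[3\;2\;1\;2]\to N[3\;0\;1\;2]\simeq N[4\;2]=6_1$); and for $8_9$ the value $|p|=9$ never occurs among the resultants, so $6_1=N[9/2]$ is not a resultant (while $6_2$ and $6_3$ are). In either case a six-crossing knot is omitted, so $F(8_{11})=F(8_9)=5$.

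The main obstacle I anticipate is the completeness of the resultant enumeration: one must be sure that every over/under assignment of the eight crossings really collapses to one of the listed tangle words, which is exactly where the integer-tangle resolution theorem, the $0$-tangle identities, and the Untangling Lemma must be combined carefully so that no branch of the case analysis is overlooked; and then there is the finite but slightly delicate bookkeeping of matching each surviving continued fraction against Theorem~\ref{KnotFracEquiv}, in particular separating $7_3=N[13/4]$ from $6_3=N[13/5]$ for $8_{11}$ and checking for $8_9$ that no resultant's continued fraction has numerator $9$.
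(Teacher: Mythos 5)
Your proof is correct and follows essentially the same route as the paper: the paper's own justification is simply a deferral to the computer enumeration of Appendix~1, whose program identifies resultants by exactly the continued-fraction bookkeeping you describe, and your specific claims check out (the unique $|p|=13$ resultant of $N[3\;2\;1\;2]$ is $13/4 \simeq N[3\;4]=7_3$, so $6_3=N[13/5]$ is missed, while no resultant of $N[3\;1\;1\;3]$ has $|p|=9$, so $6_1=N[9/2]$ is missed). The only caveat is that the paper's literal definition of ``$5$-fertile'' is just $F\geq 5$, so your upper-bound half proves slightly more than the bare statement---but that stronger reading is what the subsequent corollary on length-4 knots actually uses.
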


\begin{theorem}
$8_{12} = N[2 \; 2 \; 2 \; 2]$, $9_{13} = N[3 \; 2 \; 1 \; 3]$, and $9_{18} = N[3 \; 2 \; 2 \; 2]$ are 6-fertile.
\end{theorem}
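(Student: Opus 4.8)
The plan is, for each knot $K \in \{8_{12}, 9_{13}, 9_{18}\}$, to exhibit every prime knot of crossing number at most six — $3_1, 4_1, 5_1, 5_2, 6_1, 6_2, 6_3$ — as a resultant (the unknot being automatic). For $9_{13}$ and $9_{18}$ a baseline of $5$-fertility is free from Theorem \ref{f_across_L}: $N[3\;2\;1\;3]$ has odd final tangle, so $F(9_{13}) \geq F(N[3\;2\;2]) = F(7_5) = 5$, and $N[3\;2\;2\;2] = N[2\;2\;2\;3]$ (Corollary \ref{palTheorem}) also has odd final tangle, so $F(9_{18}) \geq F(N[2\;2\;3]) = F(7_5) = 5$; this settles $3_1,4_1,5_1,5_2$ for both. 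For $8_{12} = N[2\;2\;2\;2]$ heredity only gives $F \geq F(N[2\;2]) = 4$, so there I would additionally record the assignments $N[2\;-2\;2\;0] \simeq N[2\;-2] \approx 3_1$, $N[2\;2\;2\;0] \simeq N[2\;2] = 4_1$, $N[2\;-2\;2\;-2] \approx 5_1$ (continued fraction $5/4$), and $N[2\;0\;2\;-2] \simeq N[4\;-2] \approx 5_2$ (continued fraction $7/2$).

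It then remains to realize $6_1, 6_2, 6_3$ for each $K$ by choosing the signs of its integer tangles of crossing number $\geq 2$. The assignments I would use are: for $8_{12}$, $N[2\;0\;2\;2] \simeq N[4\;2] = 6_1$, $N[2\;-2\;2\;2] \approx 6_2$, $N[2\;-2\;-2\;2] \approx 6_3$; for $9_{18}$, $N[3\;0\;2\;-2] \simeq N[5\;-2] \approx 6_1$, $N[1\;2\;2\;-2] \approx 6_2$, $N[1\;2\;-2\;-2] \approx 6_3$; for $9_{13}$, $N[1\;2\;1\;-3] \approx 6_1$, $N[3\;0\;1\;-3] \simeq N[4\;-3] \approx 6_2$, $N[-3\;2\;1\;1] \approx 6_3$. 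Each line is verified by computing the continued fraction of the chosen rational tangle, collapsing the diagram via the Untangling Lemma and the identity $k\;0\;\ell \simeq (k+\ell)$, and identifying the reduced knot through Theorem \ref{KnotFracEquiv}: for instance $N[2\;-2\;-2\;2]$ has fraction $13/8$, and $8 \cdot 5 \equiv 1 \pmod{13}$, so it is $N[13/5] = 6_3$, while $N[-3\;2\;1\;1] \simeq N[-3\;2\;2]$ has fraction $-13/5$, the mirror of $6_3$. Together with Corollary \ref{rat_fert} ($F(8_{12}) \leq c(8_{12}) - 2 = 6$) this gives $F(8_{12}) = 6$ exactly; for $9_{13}$ and $9_{18}$ the displayed assignments already establish the claimed $6$-fertility, the exact fertility numbers being deferred to the Appendix.

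The delicate step is locating the three $6_3$ assignments, since $6_3 = N[2\;1\;1\;2]$ is the unique $6$-crossing knot of determinant $13$, and a length-$4$ shadow whose end tangles carry three crossings cannot realize it by a naive tangle-by-tangle choice. For $9_{13}$ one has to notice that every assignment leaving the first tangle positive attains determinant $13$ only as $N[4\;3] = 7_3$, so the first tangle must be reversed to $-3$; the resulting $N[-3\;2\;1\;1]$ then collapses under the Untangling Lemma to $N[-2\;-1\;-1\;-1\;-1]$, the mirror of $N[2\;1\;1\;1\;1] = N[13/5] = 6_3$. Throughout, the recurring chore is tracking the sign changes the Untangling Lemma induces on the tangles to its right, so that the continued-fraction computation lands on the intended determinant rather than a neighbouring one.
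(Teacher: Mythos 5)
Your proof is correct, but it takes a genuinely different route from the paper: for this theorem the paper offers no argument at all beyond ``See Appendix 1,'' i.e.\ it defers entirely to the Mathematica computation of resultants tabulated there. You instead produce an explicit, hand-checkable certificate: a baseline of $5$-fertility from Theorem \ref{f_across_L} (via the palindrome identity of Corollary \ref{palTheorem} for $9_{18}$), supplemented by concrete sign assignments for $8_{12}$, and then one assignment per six-crossing knot per shadow, each verified by computing the continued fraction and invoking Theorem \ref{KnotFracEquiv}. I spot-checked the fractions and they land correctly up to mirror image (e.g.\ $N[2\;-2\;-2\;2]$ gives $13/8$ with $8\cdot 5\equiv 1 \pmod{13}$, hence $6_3$; $N[1\;2\;2\;-2]$ gives $11/8=11/(-3)$, the mirror of $6_2$; $N[-3\;2\;1\;1]$ gives $-13/5$), and every tangle value you use is a legal resolution of the corresponding shadow tangle. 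What your approach buys is a self-contained, human-verifiable proof and, for $8_{12}$, the exact value $F(8_{12})=6$ via the upper bound of Corollary \ref{rat_fert}; what the paper's approach buys is brevity and the exact fertility numbers of all three knots at once, since the appendix records $F=6$ for each rather than only the lower bound of $6$-fertility. The only caveat worth flagging is cosmetic: your identifications are all up to mirror image, which is consistent with the paper's convention that a resultant and its mirror are not distinguished.
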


\begin{proof}
See Appendix 1.
\end{proof}

\begin{theorem}
Knots $K = N[a_1 \; 1 \; 1 \; a_4]$ have $F(K) = 5$. 
\end{theorem}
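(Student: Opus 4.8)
\textit{Proof proposal.} The plan is to get the lower bound $F(K)\ge 5$ from results already established and to obtain the matching upper bound by showing that $6_1=N[4\;2]$ is \emph{never} a resultant of $K=N[a_1\;1\;1\;a_4]$; since $c(6_1)=6$, this forces $F(K)\le 5$. For the lower bound, note first that a knot $N[a_1\;1\;1\;a_4]$ in canonical form has $a_1,a_4\ge 2$ and, being a knot, $a_1\equiv a_4\pmod 2$; hence $d(K)=4$ and $c(K)\ge 6$. In particular $K$ is a rational knot of length $\ge 3$ which is not the length‑$3$ exceptional case, so the theorem above asserting $F(K)\ge 5$ for such knots applies directly.

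For the upper bound I will enumerate the resultants of the shadow $N[a_1'\;1'\;1'\;a_4']$. By the resolution of integer tangles, after Reidemeister II each end twist region resolves to some $u$ (respectively $v$) signed crossings of a single type, with $|u|\le a_1$, $|v|\le a_4$ of the appropriate parity, while the two interior unit tangles contribute signs $\delta,\delta'\in\{+1,-1\}$; recording a fully cancelled end region as $u=0$ (respectively $v=0$), every resultant is the numerator closure $N[u\;\delta\;\delta'\;v]$ of a rational tangle with $u,v\in\mathbb{Z}$, and its knot type is $N[p/q]$ for $p/q$ the continued fraction of that tangle. Now split on whether $\delta=\delta'$. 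If $\delta'=-\delta$, the continued fraction telescopes,
\[
u+\cfrac{1}{\delta+\cfrac{1}{-\delta+\cfrac{1}{v}}}=u+\delta-v\in\mathbb{Z},
\]
so the resultant is a $T(k,2)$ torus link, which is not $6_1$. If $\delta'=\delta$, then (with the natural convention when $v=0$)
\[
u+\cfrac{1}{\delta+\cfrac{1}{\delta+\cfrac{1}{v}}}=\frac{2uv+\delta(u+v)+1}{2v+\delta}=:\frac{p}{q},
\]
which is in lowest terms, with $q=2v+\delta$ and, expanding, $2p=(2u+\delta)(2v+\delta)+1$.

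The crux is then a divisibility observation. Suppose some such resultant equals $6_1$. Since $\det(6_1)=9$ and $6_1=N[9/2]$, we must have $|p|=9$, whence $(2u+\delta)(2v+\delta)=2p-1\in\{17,-19\}$. As $17$ and $19$ are prime and both of $2u+\delta$ and $2v+\delta$ are odd integers, the factor $q=2v+\delta$ must divide $\pm17$ or $\pm19$, so $q\in\{\pm1,\pm17,\pm19\}$ and hence $q\equiv\pm1\pmod 9$. But by Theorem~\ref{KnotFracEquiv}, $N[9/q]$ is $6_1=N[9/2]$ or its mirror $N[9/7]$ only when $q\equiv 2,4,5$, or $7\pmod 9$ (using $2\cdot5\equiv 7\cdot4\equiv 1\pmod 9$), and $\pm1$ is not among these residues. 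This contradiction shows $6_1$ is not a resultant of $K$, so $K$ is not $6$‑fertile and $F(K)=5$. The only genuine labor is the bookkeeping—verifying that the sign assignments, including the degenerate cases $u=0$ or $v=0$, really do produce exactly the tangles $N[u\;\delta\;\delta'\;v]$ and nothing more—after which the primality of $17$ (and $19$) closes the argument at once.
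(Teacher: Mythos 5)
Your proof is correct, and while the lower bound is obtained exactly as in the paper (via the theorem that rational knots of length at least $3$ outside the odd length-$3$ pretzel case have $F\geq 5$), your upper bound takes a genuinely different route. The paper writes out all eight sign patterns $N[\pm c_1\;\pm 1\;\pm 1\;\pm c_4]$, reduces each to canonical form with the Untangling Lemma, and then argues case by case that neither the surviving length-$2$/length-$3$/length-$4$ forms nor the integer tangles can be $6_1=N[4\;2]$. You replace that bookkeeping with a single closed-form computation of the fraction of every resultant: an integer when the two unit tangles take opposite signs, and otherwise $p/q$ with $q=2v+\delta$ and $2p-1=(2u+\delta)(2v+\delta)$, so that $|p|=\det(6_1)=9$ would force $q$ to divide the prime $17$ or $19$, hence $q\equiv\pm 1\pmod 9$ --- incompatible, by Theorem~\ref{KnotFracEquiv}, with $6_1=N[9/2]$ or its mirror $N[9/7]$, whose allowed denominators are $q\equiv 2,4,5,7\pmod 9$. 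This buys brevity and robustness: your argument never needs to know which sign pattern produces which knot, only that every resultant is some $N[u\;\delta\;\delta'\;v]$, and it is insensitive to exactly the kind of slip that creeps into such lists (under the fraction convention used in the proof of the Untangling Lemma, the paper's stated outputs for $N[\pm c_1\;\pm 1\;\mp 1\;\pm c_4]$ and $N[\pm c_1\;\pm 1\;\mp 1\;\mp c_4]$ appear to be interchanged, harmlessly for its conclusion). It also generalizes cleanly to excluding other two-bridge knots whose determinant makes $2p-1$ prime or nearly so. What it gives up is the explicit inventory of which six-crossing knots \emph{do} occur ($6_2$ and $6_3$), which the paper's case analysis yields as a byproduct and which matches the constructive style used throughout the rest of the classification; your verification of the degenerate resolutions $u=0$ and $v=0$ is the only bookkeeping left, and you handle it.
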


\begin{proof}
$a_1, a_4$ are either both even or both odd. The eight forms of resultants are 

\begin{itemize}
\item $N[\pm c_1 \; \pm 1 \; \pm 1 \; \pm c_4]$
\item $N[\pm c_1 \; \mp 1 \; \pm 1 \; \pm c_4] \approx N[\pm (c_1 - 1) \; \pm 2 \; \pm c_4]$
\item $N[\pm c_1 \; \pm 1 \; \mp 1 \; \pm c_4] \approx N[\pm (c_1 + c_4 + 1)] $
\item $N[\pm c_1 \; \pm 1 \; \pm 1 \; \mp c_4] \approx N[\pm c_1 \; \pm 2 \; \pm (c_4 - 1)]$
\item $N[\pm c_1 \; \mp 1 \; \mp 1 \; \pm c_4] \approx N[\pm (c_1 + c_4 - 1)]$
\item $N[\pm c_1 \; \mp 1 \; \pm 1 \; \mp c_4] \approx N[\pm (c_1 - 1) \; \pm 1 \; \pm 1 \; \pm (c_4 - 1)]$
\item $N[\pm c_1 \; \pm 1 \; \mp 1 \; \mp c_4] \approx N[\pm (c_1 - c_4 + 1)]$
\item $N[\pm c_1 \; \mp 1 \; \mp 1 \; \mp c_4] \approx N[\pm (c_1 - c_4 - 1)]$
\end{itemize}

\noindent where $c_i = a_i - 2 m_i$, $m_i \in \mathbb{N}$. Obviously, for either parity, $6_3$ is a resultant. Also, $N[\pm 4 \; \mp 1 \; \pm 1 \; \mp 2]$ and $N[\pm 3 \; \pm 1 \; \pm 1 \; \pm 1] \approx 6_2$. Can we get $6_1$? In the length one cases and the first length four case, the answer is obviously no. 

The remaining cases are $N[\pm (c_1 - 1) \; \pm 2 \; \pm c_4]$ and $N[\pm (c_1 - 1) \; \pm 1 \; \pm 1 \; \pm (c_4 - 1)]$. Let $T = \pm (c_i - 1)$. If $c_i \geq 3$, $T$, and by extension $K$, is in canonical form. If $c_i = 2$, $T = \pm 1$ is absorbed, forming $N[\pm 3 \; \pm c_4]$ or $N[\pm 2 \; \pm 1 \; \pm (c_4 - 1)]$, where $c_4$ must be even. The only $N[2 \; n]$ resultants possible with this choice are $N[2 \; 2]$ and $N[2 \; 3]$. If $c_1 = 1$, $T = 0$ and each form collapses to a length one knot. Thus $6_1$ is not a resultant and these knots have $F(N[a_1 \; 1 \; 1 \; a_4]) = 5$.
\end{proof}

\begin{corollary}
The only length 4 knots $K$ with $F(K) = 5$ are branches of $N[3 \; 2 \; 1 \; 2]$ where $a_3 = 1$ and $N[a_1 \; 1 \; 1 \; a_4]$.
\end{corollary}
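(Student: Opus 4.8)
The plan is to enumerate all length-4 rational knots in terms of their trunk representatives and branch parameters, then argue that the only ones achieving $F(K)=5$ (rather than $6$) are the two families already singled out. First I would recall that every length-4 rational knot is a branch $N[(a_1+2m_1)\;(a_2+2m_2)\;(a_3+2m_3)\;(a_4+2m_4)]$ of some $L_1 \in \mathcal{K}_4$, so by the heredity theorem $F$ is monotone under adding crossings; hence it suffices to check the finitely many trunk knots and to understand, for each, which branches fail to gain $6_1$, $6_2$, $6_3$ as resultants. The seven knots of $\mathcal{K}_4$ are (from the discussion above) $6_3 = N[2\;1\;1\;2]$, $7_6 = N[2\;2\;1\;2]$, $8_9 = N[3\;1\;1\;3]$, $8_{11} = N[3\;2\;1\;2]$, $8_{12} = N[2\;2\;2\;2]$, $9_{13} = N[3\;2\;1\;3]$, and $9_{18} = N[3\;2\;2\;2]$, whose fertility numbers are $5$, $6$, $5$, $5$, $6$, $6$, $6$ respectively by the theorems immediately preceding.

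Next I would organize the branches by whether the internal tangle $a_3$ (the third slot) equals $1$ or is $\geq 2$. If $a_3 \geq 2$, the branch dominates one of $7_6$, $8_{12}$, $9_{13}$, or $9_{18}$ — all $6$-fertile — so by heredity such a branch has $F(K) = 6$ (the upper bound $F(K) \leq 6$ for length-3-and-up rational knots, or just $F(K) \leq 7$ with an $N[6]$ check, closes it; in fact length-4 knots have all resultants of length $\leq 4$, but $6_1$, $6_2$, $6_3$ all have length $\leq 4$, so the bound that matters is just $F \leq 7$ from rationality plus the explicit six-crossing checks). So the only candidates for $F(K) = 5$ have $a_3 = 1$, i.e. they are branches of $6_3 = N[2\;1\;1\;2]$, $8_9 = N[3\;1\;1\;3]$, or $8_{11} = N[3\;2\;1\;2]$. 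The knots $N[a_1\;1\;1\;a_4]$ — branches of $6_3$ and $8_9$ together, since $8_9$ is itself such a branch — were just shown to have $F = 5$. The knots $N[a_1\;2\;1\;a_4]$, branches of $8_{11}$, remain: I would invoke the already-proved fact that $8_{11}$ is $5$-fertile and then check that no branch of it picks up $6_1$; since $8_{11} = N[3\;2\;1\;2]$ has $a_3 = 1$, its resultants are constrained exactly as in the $N[a_1\;1\;1\;a_4]$ analysis (the middle $1$ forces the untangling move to collapse length), and the slot-2 tangle being even only lets one form $N[2\;n]$-type or short resultants, never $6_1 = N[4\;2]$.

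The main obstacle is the bookkeeping for the $N[a_1\;2\;1\;a_4]$ family: one must verify, over all $m_1, m_4$, that flipping subsets of the crossings never produces a diagram isotopic to $6_1$, which requires running through the eight sign-pattern cases of $\pm$ assignments on the four tangles, applying the Untangling Lemma to each mixed-parity pattern, and checking the resulting continued fractions against $N[4\;2]$ via Theorem \ref{KnotFracEquiv}. This is routine but error-prone; the cleanest route is to notice that with $a_3 = 1$, every mixed-parity resolution of the last two tangles drops the effective length to $\leq 3$, and a length-$\leq 3$ resultant that equals $6_1 = N[4\;2]$ would have to be literally $N[\pm 4\;\pm 2]$ or a disguised form of it, which the available tangle sizes (bounded below by the canonical-form constraints $a_1 \geq 3$ forced by reversal symmetry, middle tangle exactly $2+2m_2$, $a_4 \geq 2$) cannot realize after the forced absorption of the unit tangle. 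Putting these together yields: the length-4 knots with $F(K) = 5$ are precisely the branches of $N[3\;2\;1\;2]$ (with $a_3$ kept at $1$) and the knots $N[a_1\;1\;1\;a_4]$, as claimed.
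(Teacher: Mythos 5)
There is a genuine gap, and it sits exactly where the paper's one-line proof does its work. Your case split rests on the claim that a length-4 branch with $a_3\geq 2$ ``dominates one of $7_6$, $8_{12}$, $9_{13}$, or $9_{18}$.'' But the branch relation adds $2m_i$ to each slot, so it preserves the parity of every tangle: a knot whose third slot has been raised to $3$ from a trunk value of $1$ is still only a branch of its own trunk representative, never of a trunk knot whose third slot is $2$. Concretely, $N[3\;2\;3\;2]=10_{24}$ is a branch of the $5$-fertile $8_{11}=N[3\;2\;1\;2]$ and of nothing else in $\mathcal{K}_4$ (its parities odd--even--odd--even match no $6$-fertile trunk knot), so heredity gives only $F\geq 5$ for it and your argument cannot conclude $F=6$. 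The same problem recurs for branches of $6_3$ and $8_9$ that raise the \emph{second} slot while keeping $a_3=1$: $N[2\;3\;1\;2]=8_8$ and $N[3\;3\;1\;3]=10_{22}$ have $a_3=1$ but are not of the form $N[a_1\;1\;1\;a_4]$, so they fall through your dichotomy entirely --- under your scheme they land in the ``candidates for $F=5$'' bin and are then silently identified with the $N[a_1\;1\;1\;a_4]$ family, which they are not. Both have $F=6$.

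This is precisely why the paper's proof is the single remark that $F(N[3\;2\;3\;2])=F(N[2\;3\;1\;2])=F(N[3\;3\;1\;3])=6$: these three directly computed values are the missing base cases. Once they are in hand, every branch of $6_3$, $8_9$, or $8_{11}$ with an internal slot exceeding its trunk value is (up to the reversal symmetry of Corollary \ref{palTheorem}) a branch of one of these three knots within the \emph{same} parity class, so heredity forces $F\geq 6$; what remains are exactly $N[a_1\;1\;1\;a_4]$ and the branches of $N[3\;2\;1\;2]$ with $a_3=1$. Your proposal needs to either supply those three computations (exhibiting $6_1$, $6_2$, $6_3$ as resultants of each) or find another argument for them; the parity-blind domination step cannot be repaired as stated. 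A smaller issue: your fallback bound ``$F(K)\leq 6$ for length-$3$-and-up rational knots'' is false for length $4$ (e.g.\ $N[4\;2\;3\;4]$ has $F=7$ per the Appendix), though this does not affect the ``only $F=5$'' conclusion.
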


\begin{proof}
Note $F(N[3 \; 2 \; 3 \; 2]) = F(N[2 \; 3 \; 1 \; 2]) = F(N[3 \; 3 \; 1 \; 3]) = 6$.
\end{proof}

\subsection{Length 5 and Longer Knots}

First, length 5 knots. There are 12 knots in $\mathcal{K}_5$. 

\begin{theorem}
$8_{14} = N[2 \; 2 \; 1 \; 1 \; 2]$, $9_{23} = N[2 \; 2 \; 1 \; 2 \; 2]$, \newline
$8_{13} = N[3 \; 1 \; 1 \; 1 \; 2]$, $9_{20} = N[3 \; 1 \; 2 \; 1 \; 2]$, $9_{21} = N[3 \; 1 \; 1 \; 2 \; 2]$, $10_{29} = N[3 \; 1 \; 2 \; 2 \; 2]$, $11a337 = N[3 \; 2 \; 2 \; 1 \; 3]$, and $11a357 = N[3 \; 2 \; 1 \; 2 \; 3]$ are 6-fertile.
\end{theorem}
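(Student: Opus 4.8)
The approach is the same for all eight knots, so I describe it once. Each listed knot $K$ is a length-five rational knot --- in fact one of the twelve knots of the trunk $\mathcal{K}_5$ --- so $d(K)\geq 3$ and $K$ is not the length-three, all-$a_i$-odd pretzel exception. Hence the earlier theorem bounding $F(K)\geq 5$ for such knots applies, and the unknot together with $3_1$, $4_1$, $5_1$ and $5_2$ are already resultants of $K$. The whole task therefore reduces to producing $6_1=N[2\;4]$, $6_2=N[3\;1\;2]$ and $6_3=N[2\;1\;1\;2]$ as resultants of the minimal shadow $N[a_1'\;a_2'\;a_3'\;a_4'\;a_5']$ of $K$.

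For each of the three targets I would exhibit one explicit crossing assignment of the shadow: send each $1'$ to $\pm1$, each $2'$ to $\pm2$ or $0$, and each $3'$ to $\pm3$ or $\pm1$. Then I compute the continued fraction of the (possibly non-canonical) tangle word that results, reduce it using Lemma \ref{untangle} together with the identities $k\;0\;\ell\simeq(k+\ell)$, $N[T\;0]\simeq D[T]$ and $D[a_1\;...\;a_m]\simeq N[a_1\;...\;a_{m-1}]$, and finally compare the reduced fraction $p/q$ with $9/4$, $11/3$ and $13/5$ using the equivalence criterion of Theorem \ref{KnotFracEquiv} (and the fact that the mirror image of a resultant is again a resultant). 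For example, for $8_{14}=N[2\;2\;1\;1\;2]$ the assignments $N[\pm2\;\mp2\;\pm1\;\pm1\;\pm2]\approx 6_1$, $N[\pm2\;0\;\pm1\;\pm1\;\pm2]\approx 6_2$ and $N[\pm2\;\pm2\;\mp1\;\mp1\;\pm2]\approx 6_3$ do the job, and for $9_{23}=N[2\;2\;1\;2\;2]$ one has e.g. $N[\pm2\;\mp2\;\pm1\;\pm2\;\mp2]\approx 6_1$; the knots $8_{13},9_{20},9_{21},10_{29},11a337$ and $11a357$ are handled identically. Combining these three resultants with $F(K)\geq5$ yields $F(K)\geq6$, i.e.\ $K$ is $6$-fertile.

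The only delicate point is obtaining $6_1$. Because $6_1=N[2\;4]$ has tangle length two, it arises only from assignments that collapse the length-five shadow heavily --- typically by driving a boundary $3'$ down to a single crossing, by zeroing an interior $2'$, or by using Lemma \ref{untangle} to cancel two adjacent opposite-parity tangles --- so one must actually run through a short list of candidate assignments and check that the reduced fraction has numerator $9$ up to the relation of Theorem \ref{KnotFracEquiv}. The $6_2$ and $6_3$ cases have tangle length three and are hit by comparatively mild resolutions, so they are routine. It is worth noting that heredity offers no shortcut: Theorem \ref{f_across_L} and the $+2$-branch inequality only reduce each of these knots to a length-three rational knot with fertility number at most $5$, so the explicit resolutions cannot be avoided. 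This is precisely the computation carried out by the referenced Mathematica program; a by-hand proof only needs to display the twenty-four witnessing assignments (three for each knot), which is what Appendix 1 records.
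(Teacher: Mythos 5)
Your proposal is correct and follows essentially the same route as the paper, which simply defers to the continued-fraction computation tabulated in Appendix 1 (the Mathematica program cited in Section 4 identifies resultants exactly as you describe); your sample witnesses check out, e.g.\ $N[\pm2\;\mp2\;\pm1\;\pm1\;\pm2]$ has fraction $9/7$ with $7\cdot 4\equiv 1 \pmod 9$, hence is $6_1$, and $N[\pm2\;\pm2\;\mp1\;\mp1\;\pm2]$ has fraction $13/5$, hence is $6_3$. Your reduction to exhibiting $6_1$, $6_2$, $6_3$ via the earlier bound $F(K)\geq 5$, and your observation that the heredity results cannot replace the explicit resolutions here, match the paper's logic; you are if anything more explicit than the paper's one-line proof.
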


\begin{theorem}
$10_{25} = N[3 \; 2 \; 2 \; 1 \; 2]$, $10_{26} = N[3 \; 2 \; 1 \; 1 \; 3]$, and $11a236 = N[3 \; 2 \; 2 \; 2 \; 2]$ are 7-fertile. 
\end{theorem}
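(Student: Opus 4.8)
The plan is to show that each of the three trunk knots $10_{25} = N[3\;2\;2\;1\;2]$, $10_{26} = N[3\;2\;1\;1\;3]$, and $11a236 = N[3\;2\;2\;2\;2]$ has all seven of the seven-crossing prime rational knots, all three six-crossing knots, the knots $5_1, 5_2$, $4_1$, $3_1$, and $0_1$ as resultants; since all knots through seven crossings are rational and $F(K) \leq 7$ for every rational knot, this is exactly what $7$-fertility demands. By Theorem \ref{f_across_L} and the heredity theorem we already know each of these links has $F \geq 6$ (indeed $10_{25}$ and $10_{26}$ dominate the earlier $6$-fertile length-$4$ knots via $a_n$ even or odd reductions, and $N[3\;2\;2\;2\;2]$ reduces to $N[3\;2\;2\;2]=9_{18}$ by sending $a_5 \to 0$), so the genuinely new content is producing the four missing seven-crossing knots $7_1 = N[7]$, $7_2 = N[5\;2]$, $7_3 = N[3\;4]$, $7_4 = N[3\;1\;3]$, $7_5 = N[3\;2\;2]$, $7_6 = N[2\;2\;1\;2]$, $7_7 = N[2\;1\;1\;1\;2]$ — together with $6_1, 6_2, 6_3$ for the lower end — as resultants of each of the three shadows.

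The method is the same bookkeeping used throughout the section: pick a sign assignment $a_i \to \pm c_i$ or $\mp c_i$ for each integer tangle, then repeatedly apply the Untangling Lemma (Lemma \ref{untangle}) to absorb parity disagreements and collapse zero tangles via $k\;0\;\ell \simeq (k+\ell)$, finally reading off the canonical continued fraction and identifying the knot by Theorem \ref{KnotFracEquiv}. Concretely, for each of the three links I would exhibit one explicit assignment per target knot. For $7_7 = N[2\;1\;1\;1\;2]$ — the knot that forces the other trunk knots of the section down to $F = 5$ or $6$ — I would use that each of the three links has length $\geq 5$ with enough unit-tangle flexibility; e.g.\ in $N[3\;2\;2\;1\;2]$ the assignment $N[\pm 3 \; \mp 2 \; \mp 2 \; \mp 1 \; \pm 2]$ untangles (working left to right) to a length-five all-odd diagram $N[\pm 2 \; \pm 1 \; \pm 1 \; \pm 1 \; \pm 2]$, and analogously for the other two. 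For $7_1 = N[7]$ one collapses everything to a single integer tangle (e.g.\ in $N[3\;2\;1\;1\;3]$ take two of the even/ones to zero and merge), and for $7_2, 7_3, 7_4$ one leaves a length-two or length-three canonical diagram whose continued fraction is checked to be $\tfrac72, \tfrac{13}{4}$ or its partner, $\tfrac{10}{3}$. The six-crossing cases are easier since the earlier length-$4$ and length-$5$ arguments already give templates ($N[\pm 3 \; \mp 1 \; \ldots]$-type assignments for $6_1 = N[4\;2]$, etc.), and the sub-seven knots come for free from the heredity reductions.

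The main obstacle is not any single hard idea but the combinatorial care required: for each link there are on the order of $2^5$ sign patterns, and one must verify that the specific patterns chosen really do untangle to the advertised canonical forms — in particular that repeated applications of Lemma \ref{untangle} on a length-$5$ shadow with several adjacent disagreements compose correctly (the lemma is stated for a single leftmost disagreement, so one must track that after each reduction the diagram is again in a state the lemma applies to). I would organize this as a short table: rows indexed by the seven target knots $7_1,\ldots,7_7$ plus $6_1,6_2,6_3$, columns for the three links, each cell an explicit assignment and its simplified continued fraction, citing Appendix 1 for the exhaustive machine check rather than writing every untangling by hand. The only place I expect a genuine subtlety is confirming $7_7$ is attainable in $N[3\;2\;2\;2\;2]$, since here there are only two unit tangles available after reduction and one must be sure the $\pm$-absorption from the leading $3$ supplies the third "$1$" needed in the $N[2\;1\;1\;1\;2]$ form; if a direct assignment fails, the fallback is to first send $a_5 = 2 \to 0$ to get $N[3\;2\;2\;2]$ and exploit that $9_{18}$ is already $6$-fertile for the six-crossing targets, then handle $7_7$ through $N[3\;2\;2\;2\;2]$'s full length by an assignment like $N[\pm 3 \; \mp 2 \; \pm 2 \; \mp 2 \; \pm 2]$ and untangle.
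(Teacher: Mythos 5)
Your overall strategy --- exhibit, for each of the three shadows, an explicit sign assignment realizing every prime knot through seven crossings and simplify via the Untangling Lemma --- is exactly the method the paper uses for the neighboring results, but for this particular theorem the paper itself offers no hand computation: its proof is literally ``See Appendix 1,'' i.e.\ an appeal to the exhaustive Mathematica enumeration of resultants by continued fraction. So a fully worked table of assignments would be \emph{more} explicit than what the paper provides. The problem is that the sample computations you do supply are wrong, and precisely at the point you yourself flagged as the subtle one. In $N[3\;2\;2\;1\;2]$ the assignment $N[\pm 3\;\mp 2\;\mp 2\;\mp 1\;\pm 2]$ has sign disagreements only at positions $(1,2)$ and $(4,5)$, so by the Untangling Lemma it loses exactly two crossings and is an $8$-crossing knot, not $7_7$; its fraction is $3 - \tfrac{4}{9} = \tfrac{23}{9} = (2\;1\;1\;4)$, which is $8_7$. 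Likewise your fallback $N[\pm 3\;\mp 2\;\pm 2\;\mp 2\;\pm 2]$ in $N[3\;2\;2\;2\;2]$ does land on seven crossings (four disagreements), but its fraction is $3 - \tfrac{4}{5} = \tfrac{11}{5}$, which is $7_2 = N[5\;2]$, again not $7_7$.

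A correct assignment does exist --- for instance $N[\pm 1\;\pm 2\;\mp 2\;\mp 2\;\pm 2]$ inside $N[3\;2\;2\;2\;2]$ has fraction $\tfrac{21}{13}$, the mirror of $7_7 = \tfrac{21}{8}$, which suffices because resultants come in mirror pairs --- so the theorem is not in doubt; but as written your argument does not establish it. Two smaller slips: sending $a_5 \to 0$ in $N[3\;2\;2\;2\;2]$ yields $N[3\;2\;2] = 7_5$ (the closure of a terminal $0$ drops \emph{two} tangles, per Theorem \ref{f_across_L}), not $N[3\;2\;2\;2] = 9_{18}$; and the end-tangle reductions you invoke only give $F \geq 5$ for these knots (they land on $7_5$ and $8_{11}$, both merely $5$-fertile), so even the bound $F \geq 6$ needs explicit six-crossing assignments rather than heredity. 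Every claimed resultant has to be certified by an actual continued-fraction computation (or by deferring, as the paper does, to the machine check of Appendix 1); the untangling moves cannot be eyeballed.
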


\begin{proof}
See Appendix 1.
\end{proof}

\begin{corollary}
$7_7 = N[2 \; 1 \; 1 \; 1 \; 2]$ is the only 5-fertile knot in $\mathcal{K}_5$.
\end{corollary}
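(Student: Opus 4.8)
The plan is to establish that every knot in $\mathcal{K}_5$ other than $7_7 = N[2\;1\;1\;1\;2]$ has fertility number at least $6$, so that none of them can be $5$-fertile, while $7_7$ is $5$-fertile by the earlier Corollary. Since $\mathcal{K}_5$ contains exactly $12$ knots and the two preceding theorems assert that $8_{14}, 9_{23}, 8_{13}, 9_{20}, 9_{21}, 10_{29}, 11a337, 11a357$ are $6$-fertile and that $10_{25}, 10_{26}, 11a236$ are $7$-fertile, these together with $7_7$ account for all $12$ trunk knots of length $5$. The corollary is therefore a bookkeeping consequence: $F(K)\ge 6$ for all eleven non-$7_7$ members, hence $K$ is not $5$-fertile, whereas $F(7_7) = ?$ — recall from the length-$3$ analysis (the Corollary following Lemma on length-three rational knots, or the explicit statement that $N[2\;1\;1\;1\;2]$ is $5$-fertile in Theorem \ref{link_max}'s knot analog) that $7_7$ has $7_7$'s three six-crossing prime knots $6_1, 6_2, 6_3$ as... no: the point is $F(7_7) = 5$ because it does \emph{not} have all three six-crossing knots as resultants (by the earlier result $\lfloor d(7_7)/2\rfloor = 2 < 3$), yet it does have all five-or-fewer-crossing prime knots, which must be verified or cited.

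The order of steps I would carry out: first, invoke the two immediately preceding theorems to fix $F(K)\ge 6$ for the eight $6$-fertile and three $7$-fertile trunk knots of length $5$; this is immediate since being $6$-fertile means all prime knots of crossing number $\le 6$ are resultants, so in particular $F(K)\ge 6$, contradicting $5$-fertility (which by the definition of fertility number would force $F(K) = 5$ exactly only if $K$ fails to be $6$-fertile — but more precisely, "$5$-fertile" in the paper's sense is the statement $F(K) = 5$, i.e. $6$ is \emph{not} achieved). Second, I would confirm that $\mathcal{K}_5$ has precisely these $12$ elements by enumerating the $3$-tuples from $\{1,2\}$ with endpoints from $\{(2,2),(3,2),(3,3)\}$ modulo the palindrome identification of Corollary \ref{palTheorem}, checking which are knots (the parity condition: length-$5$ rational $N[a_1\ldots a_5]$ is a knot precisely when the continued fraction $p/q$ has $p$ odd). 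Third, for $7_7$ I would cite the earlier Corollary that $7_7 = N[2\;1\;1\;1\;2]$ is infertile with exactly two six-crossing resultants, and note that the length-$5$ trunk analysis elsewhere (or direct checking of assignments $N[\pm2\;\mp1\;\pm1\;\pm1\;\mp2]$ etc., as in Theorem \ref{link_max}) shows it realizes all prime knots through $5$ crossings, so $F(7_7) = 5$ exactly.

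The main obstacle is not any single hard computation but rather making sure the enumeration of $\mathcal{K}_5$'s knots is exhaustive and correctly de-duplicated under reversal — one must be careful that, e.g., $N[2\;1\;1\;1\;2]$ is genuinely the unique trunk knot whose every six-crossing resultant count falls short of $3$, and that no other length-$5$ trunk knot sneaks in with $F = 5$. Since the two cited theorems are declared to cover all the other cases (with the Appendix supplying the $6$- and $7$-fertility verifications), the corollary follows once one checks that $8_{14}, 9_{23}, 8_{13}, 9_{20}, 9_{21}, 10_{29}, 11a337, 11a357, 10_{25}, 10_{26}, 11a236$ together with $7_7$ is a complete list — which is a finite check against the $3\times 2^{5-1} = 24$ length-$5$ trunk links, of which exactly $12$ are knots. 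Thus the proof reduces to: "By the two preceding theorems, every knot in $\mathcal{K}_5 \setminus \{7_7\}$ has $F \ge 6$; by the earlier Corollary, $F(7_7) = 5$; these twelve knots exhaust $\mathcal{K}_5$."

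\begin{proof}
The set $\mathcal{K}_5$ consists of the $N[a_1\;a_2\;a_3\;a_4\;a_5]$ with $(a_1,a_5)\in\{(2,2),(3,2),(3,3)\}$ and $(a_2,a_3,a_4)\in\{1,2\}^3$, modulo the reversal identification $N[a_1\;a_2\;a_3\;a_4\;a_5]\simeq N[a_5\;a_4\;a_3\;a_2\;a_1]$ of Corollary \ref{palTheorem}. Of these, exactly twelve are knots: $7_7 = N[2\;1\;1\;1\;2]$, together with the eight links named in the first theorem of this subsection ($8_{14}, 9_{23}, 8_{13}, 9_{20}, 9_{21}, 10_{29}, 11a337, 11a357$) and the three named in the second ($10_{25}, 10_{26}, 11a236$). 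The first theorem shows each of the eight is $6$-fertile and the second shows each of the three is $7$-fertile; in either case $F(K)\ge 6$, so none of these eleven knots is $5$-fertile. Finally, $7_7$ is $5$-fertile: by the earlier corollary it has only $\lfloor d(7_7)/2\rfloor = 2$ resultants of crossing number $6$, so $F(7_7) < 6$, while the assignments realizing $6^2_1$-style reductions adapt to show every prime knot of crossing number at most $5$ (namely $3_1, 4_1, 5_1, 5_2$) occurs as a resultant of $N[2\;1\;1\;1\;2]$, whence $F(7_7) = 5$. Therefore $7_7$ is the unique $5$-fertile knot in $\mathcal{K}_5$.
\end{proof}
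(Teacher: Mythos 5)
Your proposal is correct and matches the paper's (implicit) argument: the corollary is pure bookkeeping from the two preceding theorems, which account for the other eleven knots of $\mathcal{K}_5$, combined with the already-established fact that $F(7_7) = 5$. The only soft spot is your justification of the lower bound $F(7_7) \geq 5$ via ``adapted assignments''; the cleaner citation is the paper's earlier theorem that every rational knot of length at least $3$ other than an odd three-strand pretzel has $F \geq 5$, paired with the infertility of $7_7$ (only $\lfloor 5/2 \rfloor = 2$ six-crossing resultants) for the upper bound.
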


\begin{corollary}
Knots $K = N[a_1 \; 1 \; a_3 \; 1 \; a_5]$, $a_1, a_5$ even and $a_3$ odd are the only length 5 knots with $F(K) = 5$. 
\end{corollary}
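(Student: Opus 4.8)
The plan is to establish that among all length-5 rational knots, exactly the branches of $N[2 \; 1 \; 2 \; 1 \; 2]$ with central pattern $\{1, a_3', 1\}$, $a_3$ odd, fail to be $6$-fertile, i.e. have $F(K) = 5$. We already know $F(K) \geq 5$ for every rational knot with $d(K) \geq 3$ except the length-$3$ odd-pretzel case, so the real content is in two directions: (i) no length-$5$ knot outside the claimed family has $F(K) = 5$, and (ii) every knot in the claimed family fails to be $6$-fertile.

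First I would handle direction (i) by invoking heredity. By Theorem \ref{f_across_L}, if $a_1$ or $a_5$ is odd then $F(K) \geq g(4,1)$, and since the previous section classifies all length-$4$ knots (the only ones with $F = 5$ being branches of $N[3 \; 2 \; 1 \; 2]$ with $a_3 = 1$ and the $N[a_1 \; 1 \; 1 \; a_4]$ family), one checks that applying the reduction $a_5 \mapsto a_4 + 1$ or $a_5 \mapsto 0$ to any length-$5$ knot not of the claimed form lands on a length-$4$ knot that is $6$-fertile; hence $F(K) \geq 6$. This kills every knot with an odd end tangle. For knots with both end tangles even, the parity constraint forces $a_2, a_3, a_4$ to have appropriate parities; the trunk analysis (the $8_{13}$–$11a357$ list, plus the $10_{25}$, $10_{26}$, $11a236$ list) together with the heredity theorem $F(N[\ldots (a_i+2) \ldots]) \geq F(N[\ldots a_i \ldots])$ then promotes every branch of those trunk knots to $6$-fertility. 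The bookkeeping is: enumerate the few length-$5$ trunk knots not already excluded, verify each is $6$-fertile (cited to Appendix 1), and conclude by branching that all their branches are too. What remains after this sweep is precisely the family $N[a_1 \; 1 \; a_3 \; 1 \; a_5]$ with $a_1, a_5$ even and $a_3$ odd, since these are exactly the branches of $N[2 \; 1 \; a_3' \; 1 \; 2]$ not covered — note $N[2 \; 1 \; 1 \; 1 \; 2] = 7_7$ is the $a_3 = 1$ representative, already known to be $5$-fertile-only.

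Next, direction (ii): I would show $6_1 = N[4 \; 2]$ is never a resultant of $K = N[a_1 \; 1 \; a_3 \; 1 \; a_5]$ with the stated parities. Write $a_i \mapsto \pm c_i$ for the integer tangles $a_1, a_3, a_5$ and track the four sign patterns on the two unit tangles, exactly as in the length-$4$ proof of the $N[a_1 \; 1 \; 1 \; a_4]$ theorem. Whenever a unit tangle keeps the ambient sign, nothing collapses; whenever it flips, the Untangling Lemma (Lemma \ref{untangle}) either merges it into a neighbour ($a_{i+1} = 1$ case) or produces a $\pm 1$ flanked by reduced tangles. Enumerating all $2^2 = 4$ patterns (times the $\pm/\mp$ on $c_3$ relative to $c_1$) yields a short list of resultant forms; one then argues each is either a $T(p,2)$ torus knot, or a length-$\geq 3$ rational knot (hence not $6_1$, which has length $2$), or an $N[2 \; n]$ knot realizable only as $N[2 \; 2]$ or $N[2 \; 3]$ (again not $6_1$), or collapses below $6$ crossings — mirroring precisely the endgame of the length-$4$ argument. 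Combined with the already-known facts that $6_2$ and $6_3$ are resultants (direct assignments like $N[\pm 3 \; \pm 1 \; \pm 1 \; \pm 1 \; \pm 1]$ and $N[\pm 2 \; \mp 1 \; \pm 1 \; \mp 1 \; \pm 2]$ work, and $5$-fertility is guaranteed), this gives $F(K) = 5$ exactly.

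The main obstacle is direction (ii): proving $6_1$ is unreachable requires a genuinely exhaustive case analysis over sign assignments, and the subtlety is that after applying the Untangling Lemma the resulting tangle may or may not be in canonical form depending on whether $c_i = 1$, $c_i = 2$, or $c_i \geq 3$, so each resultant form branches further. The trap to avoid is an assignment like $N[\pm (c_1 - 1) \; \pm 1 \; \pm 1 \; \ldots]$ where $c_1 = 3$ gives $N[\pm 2 \; \ldots]$ and one must rule out it simplifying to $N[4 \; 2]$ via a further hidden Reidemeister move — here the continued-fraction characterization (Theorem \ref{KnotFracEquiv}) is the clean tool: compute the fraction $p/q$ of each candidate resultant and check $p \neq 4$ or the $q$-congruence fails. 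Everything else (direction (i)) is mechanical once the length-$4$ classification and the two heredity theorems are in hand.
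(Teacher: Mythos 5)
Your overall architecture (sweep the non-family knots via trunks and heredity, then show one six-crossing knot is unreachable for the family) is the right shape, and the paper's own proof is essentially ``by the same methods as above.'' But your direction (ii) targets the wrong knot, and this is fatal as written. For the length-$4$ family $N[a_1\;1\;1\;a_4]$ the missing resultant is indeed $6_1$, but for $N[a_1\;1\;a_3\;1\;a_5]$ with $a_1,a_5$ even and $a_3$ odd the missing resultant is $6_3$, and $6_1$ \emph{is} a resultant. Concretely, for $7_7=N[2\;1\;1\;1\;2]$ the assignment $N[\mp 2\;\mp 1\;\pm 1\;\pm 1\;\pm 2]$ has continued fraction $\mp 9/2$, i.e.\ it is $6_1=N[4\;2]$ up to mirror; the two six-crossing resultants of $7_7$ are $6_1$ and $6_2$, and it is $6_3=N[13/5]$ that cannot appear. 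So the exhaustive sign-pattern analysis you propose would terminate by \emph{finding} $6_1$ rather than excluding it, and you would not have shown $F(K)<6$. (A symptom of the same parity carelessness: your sample assignments $N[\pm 3\;\pm 1\;\pm 1\;\pm 1\;\pm 1]$ and $N[\pm 2\;\mp 1\;\pm 1\;\mp 1\;\pm 2]$ are respectively impossible for this family, since $a_1$ even forces even resolutions of the first tangle, and reducible to a trefoil, since it has four sign boundaries.) The correct endgame is the same style of case analysis but aimed at showing $6_3$ is never a resultant.

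There is a second, smaller gap in direction (i). You claim that after handling the $6$- and $7$-fertile trunk knots of $\mathcal{K}_5$ and their branches, ``what remains is precisely the family $N[a_1\;1\;a_3\;1\;a_5]$.'' It is not: every branch of $7_7$ remains, including those with $a_2\geq 3$ or $a_4\geq 3$ such as $N[2\;3\;1\;1\;2]=9_{19}$, which are outside your family but are branches only of the $5$-fertile trunk knot $7_7$, so branch heredity gives them nothing better than $F\geq 5$. You need one additional base-case verification --- exactly the paper's ``Note $F(N[2\;3\;1\;1\;2])=6$'' --- after which heredity promotes all branches of $7_7$ with $a_2\geq 3$ or $a_4\geq 3$ to $6$-fertility. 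Your observation that the naive end-tangle reduction of Theorem \ref{f_across_L} sometimes lands on a $5$-fertile length-$4$ knot (e.g.\ $N[3\;1\;1\;1\;2]$ reduces to $6_3$) is correctly hedged by falling back on the trunk classification, so that part is fine once the $9_{19}$ base case is added.
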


\begin{proof}
By the same methods as above. Note $F(N[2 \; 3 \; 1 \; 1 \; 2]) = 6$.
\end{proof}

\begin{theorem}
All knots $K \in \mathcal{K}_6$ have $F(K) > 5$.
\end{theorem}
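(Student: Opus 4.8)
We must show that every knot $K\in\mathcal K_6$ is $6$-fertile, i.e. that each such $K$ realizes the unknot together with all seven prime knots of crossing number at most $6$ — $3_1,4_1,5_1,5_2,6_1,6_2,6_3$ — as resultants. The plan is to collapse each trunk knot down to the lengths $4$ and $5$ already classified, by applying the heredity of Theorem \ref{f_across_L} at \emph{both} ends of $K$ (flipping $K$ around with the palindrome relation $N[a_1\cdots a_n]\simeq N[a_n\cdots a_1]$ of Corollary \ref{palTheorem} when convenient), and then to clean up by hand the short list of trunk knots that survive this reduction.

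For the reduction, write $K=N[a_1\;a_2\;\cdots\;a_6]$. If $a_6$ is odd then Theorem \ref{f_across_L} gives $F(K)\ge F(N[a_1\;a_2\;a_3\;a_4\;(a_5+1)])$; the link on the right is a resultant of the knot $K$, hence a knot, has length $5$, and — since a trunk knot with $a_6$ odd starts with $3$ — lies in $\mathcal K_5$ and is not $7_7=N[2\;1\;1\;1\;2]$, so by the length-$5$ classification it has fertility number $\ge 6$. Applying this both to $K$ and to its reversal disposes of: all $(3,3)$-trunk knots; all $(3,2)$-trunk knots with $a_2=2$, whose reversal drops to $N[2\;a_5\;a_4\;a_3\;3]\simeq N[3\;a_3\;a_4\;a_5\;2]\in\mathcal K_5$; and all $(3,2)$-trunk knots with $a_2=1$ \emph{except} those with $a_3=a_4=a_5=1$, since the reversal of the former drops to $N[2\;a_5\;a_4\;a_3\;2]\in\mathcal K_5$, which is $7_7$ only when those three middle entries are all $1$.

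For the $(2,2)$-trunk knots $K=N[2\;a_2\;a_3\;a_4\;a_5\;2]$, Theorem \ref{f_across_L} and its reversal give $F(K)\ge F(N[2\;a_2\;a_3\;a_4])$ and $F(K)\ge F(N[2\;a_5\;a_4\;a_3])$. A continued-fraction check shows $N[2\;b\;1\;1\;c\;2]$ is always a two-component link (for $b,c\in\{1,2\}$), so a $(2,2)$-trunk \emph{knot} never has $a_3=a_4=1$; after a reversal we may therefore assume $a_4=2$. Then $N[2\;a_2\;a_3\;2]$ is a knot in $\mathcal K_4$, equal to $6_3$ when $(a_2,a_3)=(1,1)$, to $7_6$ when exactly one of $a_2,a_3$ is $2$, and to $8_{12}$ when $(a_2,a_3)=(2,2)$; only $6_3$ is not $6$-fertile. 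Moreover, when $(a_2,a_3)=(1,1)$ the second bound only yields $F(K)\ge F(N[2\;a_5\;3])$, and $N[2\;1\;3]\simeq 6_2$ and $N[2\;2\;3]\simeq 7_5$ both have fertility number $5$. Hence the $(2,2)$-trunk knots not yet settled are exactly $N[2\;1\;1\;2\;1\;2]$ and $N[2\;1\;1\;2\;2\;2]$.

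This leaves three exceptional knots,
\[
N[3\;1\;1\;1\;1\;2],\qquad N[2\;1\;1\;2\;1\;2],\qquad N[2\;1\;1\;2\;2\;2],
\]
and the reductions above already show each is $5$-fertile, so for each only $6_1$, $6_2$, $6_3$ remain to be produced. I would finish by writing down, for each of these three knots and each of the three targets, one sign assignment of the shadow, pushing it to canonical form with Reidemeister II moves and the Untangling Lemma (Lemma \ref{untangle}), and recognizing the resulting length-$\le 2$ diagram from its continued fraction via the Schubert normal form (Theorem \ref{KnotFracEquiv}). The main obstacle is precisely this last step: heredity has run out, and the bookkeeping in the Untangling Lemma — applied strictly at the leftmost sign disagreement, with the flipped signs absorbed into the intervening shadow tangles — must be carried out carefully so that the continued-fraction identifications are correct; everything preceding it is finite casework already licensed by the length-$4$ and length-$5$ sections, and the explicit resolutions for the three knots can, if one prefers, be recorded in the Appendix alongside the tables.
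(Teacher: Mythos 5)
Your reduction strategy is sound and genuinely different from what the paper does: the paper establishes this theorem, like its neighbours for $\mathcal{K}_5$ and $\mathcal{K}_6$, by direct computation (it sits in the block of results justified by ``See Appendix 1,'' i.e.\ by the Mathematica tabulation), whereas you use Theorem \ref{f_across_L} together with the palindrome symmetry of Corollary \ref{palTheorem} to push $21$ of the $24$ knots in $\mathcal{K}_6$ down onto the already-classified knots of $\mathcal{K}_4$ and $\mathcal{K}_5$. I checked the casework: a resultant of a knot shadow is indeed a knot, so the reduced links inherit the right component number; $N[2\;b\;1\;1\;c\;2]$ does have even numerator $8bc+8b+8c+10$ and hence two components; and the three survivors $N[3\;1\;1\;1\;1\;2]$, $N[2\;1\;1\;2\;1\;2]$, $N[2\;1\;1\;2\;2\;2]$ (up to reversal) are exactly the trunk knots for which every available heredity bound lands on a fertility-number-$5$ knot ($7_7$, $6_3$, or $N[2\;a_5\;3]\simeq 6_2$ or $7_5$). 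Where it works, this is a more illuminating argument than brute force, though note that the length-$4$ and length-$5$ inputs you invoke are themselves certified computationally in the paper.

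The proof is nevertheless incomplete, and the missing piece is not a formality. For the three exceptional knots your reductions establish only $F\geq 5$, while the theorem requires $F\geq 6$; you still owe nine explicit resultants (each of $6_1$, $6_2$, $6_3$ from each of the three shadows), and you defer exactly this step with ``I would finish by writing down\ldots''. This is precisely the part of the argument that heredity cannot supply and that must actually be exhibited. Some of it is easy --- for example $6_2$ falls out of $N[3\;1\;1\;1\;1\;2]$ for free via $a_6\to 0$, which gives $N[3\;1\;1\;1]$ with fraction $11/3$ --- but $6_1=N[9/2]$ and $6_3=N[13/5]$ require mixed-sign assignments pushed through Lemma \ref{untangle} and identified via Theorem \ref{KnotFracEquiv}, and until those assignments are displayed (or the Appendix computation is explicitly invoked for these three knots, whose fertility numbers $6$, $7$, and $7$ do appear in the tables as $9_{26}$, $9_{27}$, and $10_{40}$ up to mirror and reversal) the statement $F(K)>5$ is unproven for them. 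As written, your argument shows only that every knot in $\mathcal{K}_6$ \emph{other than possibly} $N[3\;1\;1\;1\;1\;2]$, $N[2\;1\;1\;2\;1\;2]$, and $N[2\;1\;1\;2\;2\;2]$ has $F>5$.
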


\begin{theorem}
The sole knots in $\mathcal{K}_6$ that are only 6-fertile are $9_{26} = N[3 \; 1 \; 1 \; 1 \; 1 \; 2]$, $10_{27} = N[3 \; 2 \; 1 \; 1 \; 1 \; 2]$, $10_{30} = N[3 \; 1 \; 2 \; 1 \; 1 \; 2]$, $10_{32} = N[3 \; 1 \; 1 \; 1 \; 2 \; 2]$, $10_{33} = N[3 \; 1 \; 1 \; 1 \; 1 \; 3]$, $10_{43} = N[2 \; 1 \; 2 \; 2 \; 1 \; 2]$, $11a192 = N[3 \; 1 \; 2 \; 1 \; 2 \; 2]$, and $12a1039 = N[3 \; 1 \; 2 \; 2 \; 1 \; 3]$.
\end{theorem}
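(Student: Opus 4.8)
The plan is to reduce the whole statement to a question about $7$-fertility alone. By the theorem immediately above, every knot $K\in\mathcal{K}_6$ has $F(K)>5$ and hence is already $6$-fertile, while the rational bound gives $F(K)\le 7$; so $F(K)=6$ if and only if $K$ fails to produce some knot of crossing number $7$, and $F(K)=7$ (local fertility) otherwise. The theorem is thus equivalent to the assertion that, among the $24$ knots of $\mathcal{K}_6$, the $8$ listed are exactly the non-$7$-fertile ones. The proof splits into a positive half — the other $16$ knots realize each of $7_1,\dots,7_7$ — and a negative half — each listed knot misses one of them.

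For the positive half I would economize with heredity. Theorem \ref{f_across_L} and Corollary \ref{palTheorem} fold any length-$6$ branch with an odd end tangle onto a length-$5$ knot (replacing $N[a_1\,a_2\,a_3\,a_4\,a_5\,a_6]$ with odd $a_6$ by $N[a_1\,a_2\,a_3\,a_4\,(a_5{+}1)]$, and symmetrically at the front), and the $a_i\mapsto a_i+2$ heredity passes to a simplest branch; whenever the image is one of the $7$-fertile length-$5$ trunk knots $10_{25},10_{26},11a236$ or a branch of one, $7$-fertility is inherited. The knots among the $16$ not caught this way — including those with two even end tangles — are settled by writing down, for each target $7_1,\dots,7_7$, a single sign assignment of the shadow and normalizing it with the Untangling Lemma. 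All of these assignments, and the resulting table, are the output of the Mathematica program of \cite{gitRepository} recorded in Appendix 1.

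The heart of the proof is the negative half: for each of the $8$ knots, name a $7$-crossing knot it cannot produce and prove non-existence. For most of them $7_7=N[2\ 1\ 1\ 1\ 2]$ is the natural candidate, since a terminal $2$-tangle can never be assigned an odd value and a terminal $1$-tangle is absorbed, so every length-five resultant is forced either to a shorter length or to a continued fraction different from $21/8$; for the remaining few (e.g. $10_{27}=N[3\ 2\ 1\ 1\ 1\ 2]$) a shorter target such as $7_2$, $7_3$, or $7_4$ is the one that fails. The non-existence argument for each knot follows the template of the length-$3$ lemma ruling out $6_1$ as a resultant and the length-$4$ theorem on $N[a_1\ 1\ 1\ a_4]$: enumerate the parity patterns of the six integer tangles modulo the global mirror $\pm\!\leftrightarrow\!\mp$; apply Lemma \ref{untangle} repeatedly, together with $k\ 0\ \ell\simeq(k{+}\ell)$ and $N[T\ \infty]\simeq N[T]$, to bring every resultant to canonical form; and for each surviving branch bound either its length $d(R)$ or the numerator $p$ of its continued fraction and compare with the canonical form of the chosen target via Theorem \ref{KnotFracEquiv}. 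If no branch attains the target, $F(K)=6$.

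The \textbf{main obstacle} is the combinatorial size of that enumeration: a length-$6$ shadow has many adjacent-parity patterns, and each Untangling step both shifts the pattern of agreements and introduces fresh $1$- and $0$-tangles, so a naive case tree branches badly. The argument really rests on an organizing invariant — processing disagreements left-to-right as in the proof of Lemma \ref{untangle} and tracking the bounded ways $d(R)$ (or the fraction numerator) can move under each normalization step — which keeps the tree finite and makes the eight arguments essentially uniform. This is the same machinery that drives Theorem \ref{knot_max}, and it is why the paper cross-checks the conclusion against the computer search of \cite{gitRepository} rather than printing every branch.
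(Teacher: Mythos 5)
The paper offers no written proof of this theorem: like the two preceding statements in this subsection, it is justified entirely by the Mathematica tabulation recorded in Appendix 1 (via \cite{gitRepository}), and no proof environment even appears after the theorem. Your reduction to $7$-fertility is correct --- $F(K)>5$ for every $K\in\mathcal{K}_6$ together with the rational upper bound $F(K)\leq 7$ leaves only the question of which seven-crossing knots occur as resultants --- and the machinery you propose (heredity via Theorem \ref{f_across_L} and Corollary \ref{palTheorem} to fold odd-ended length-$6$ trunk knots onto $7$-fertile length-$5$ ones, explicit assignments normalized by Lemma \ref{untangle} for the rest, and parity/length obstructions compared against Theorem \ref{KnotFracEquiv} for the negative half) is exactly the machinery the paper deploys by hand at lengths $3$ and $4$. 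In that sense your outline is more explicit than the source.

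The gap is that the proposal stops at a plan precisely where the work lies. The positive half ultimately defers to the program's output, which puts you at parity with the paper rather than ahead of it; and the negative half --- the actual content of the theorem --- names candidate obstructions ($7_7$ for most of the eight, ``a shorter target such as $7_2$, $7_3$, or $7_4$'' for the rest) without carrying out the non-existence argument for even one knot. The heuristic offered for $7_7$ (a terminal $2$-tangle cannot be assigned an odd value, a terminal $1$-tangle is absorbed, hence no length-five resultant with fraction $21/8$) is not yet an argument: Lemma \ref{untangle} manufactures fresh $1$-tangles in the interior, so a length-$6$ shadow can certainly produce length-five canonical resultants, and excluding the specific fraction $21/8$ requires tracking those branches --- this is exactly the enumeration you defer. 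Moreover, for some of the eight (you flag $10_{27}$ yourself) you do not commit to which seven-crossing knot is missed, so the claim that exactly these eight fail is not established by the proposal; it is inherited from the table. If the aim is an independent verification of the theorem rather than a restatement of how the table was produced, the eight case analyses still have to be written out.
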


%\begin{theorem}
%$9_{27} = N[2 \; 1 \; 2 \; 1 \; 1 \; 2]$, $10_{40} = N[2 \; 2 \; 2 \; 1 \; 1 \; 2]$, $10_{41} = N[2 \; 2 \; 1 \; 2 \; 1 \; 2]$, $11a120 = N[2 \; 2 \; 2 \; 1 \; 2 \; 2]$, $11a159 = N[2 \; 2 \; 2 \; 2 \; 1 \; 2]$,  $11a204 = N[3 \; 2 \; 1 \; 2 \; 1 \; 2]$, $11a208 = N[3 \; 2 \; 1 \; 1 \; 2 \; 2]$, $11a309 = N[3 \; 1 \; 2 \; 1 \; 1 \; 3]$ are 7-fertile.
%\end{theorem}

\begin{theorem}
The sole knots in $\mathcal{K}_7$ that are only 6-fertile are $9_{31} = N[2 \; 1^5 \; 2]$, $11a306 = N[3 \; 1^5 \; 3]$, and $N[3 \; 1 \; 2 \; 1 \; 2 \; 1 \; 3]$
\end{theorem}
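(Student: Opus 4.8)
The plan is to run the heredity reductions, isolate the few length-$7$ trunk knots they leave unsettled, and finish those by a direct resultant analysis. For $K = N[a_1 \; \dots \; a_7] \in \mathcal{K}_7$, Theorem \ref{f_across_L}, applied at either end by the palindrome symmetry of Corollary \ref{palTheorem}, gives $F(K) \geq F(K')$ for a shorter knot $K'$ obtained by removing an even end tangle or folding an odd end tangle into its neighbour; each such $K'$ is a branch of a member of $\mathcal{K}_6$ or $\mathcal{K}_5$ (or, after a further reduction, of a shorter trunk), all of which are already classified. Iterating, $\mathcal{K}_7$ splits into the knots for which some chain of reductions reaches a locally fertile length-$5$ or length-$6$ knot — these have $F(K)=7$ at once — and a short residual list $\mathcal{S}$. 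The three knots in the statement lie in $\mathcal{S}$: $N[2\;1^5\;2]$ reduces only to the $5$-fertile $6_3 = N[2\;1\;1\;2]$, while $N[3\;1^5\;3]$ and $N[3\;1\;2\;1\;2\;1\;3]$ reduce to the only-$6$-fertile $\mathcal{K}_6$ knots $9_{26} = N[3\;1\;1\;1\;1\;2]$ and $11a192 = N[3\;1\;2\;1\;2\;2]$.

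For each $K \in \mathcal{S}$ outside those three I would show $F(K) = 7$ by exhibiting explicit sign assignments realizing every prime seven-crossing knot $7_1, \dots, 7_7$ as a resultant: after fixing signs one applies the Untangling Lemma (Lemma \ref{untangle}) to reach a canonical continued fraction and identifies the knot by Theorem \ref{KnotFracEquiv}. The six-crossing knots $6_1, 6_2, 6_3$ and the ``short'' knots $7_1 = N[7], \dots, 7_5 = N[3\;2\;2]$ come out routinely; the genuinely constraining targets are $7_6 = N[2\;2\;1\;2]$ and, above all, $7_7 = N[2\;1\;1\;1\;2]$, whose realization needs an assignment whose fully reduced diagram still has continued-fraction length $\geq 4$. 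As with the length-$3$ lemma on $7_7$, this imposes parity and shape conditions on $\{a_i\}$, which one verifies are met by every member of $\mathcal{S}$ except the three exceptions.

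For $K \in \{9_{31}, 11a306, N[3\;1\;2\;1\;2\;1\;3]\}$ I would argue $F(K) = 6$ in two steps. First, $F(K) \geq 6$: for $11a306$ and $N[3\;1\;2\;1\;2\;1\;3]$ this is already inherited above, and for $9_{31} = N[2\;1^5\;2]$ one writes down assignments producing $6_1, 6_2, 6_3$ directly, much as in the analogous link computation in the proof of Theorem \ref{link_max}. Second, $F(K) \leq 6$, by a lack-of-resultants argument: enumerate — stratified by the position of the leftmost parity disagreement, exactly as in the proof of Lemma \ref{untangle}, and by how many full end tangles are switched — every sign assignment whose reduced diagram still has crossing number $7$; pushing each through Lemma \ref{untangle}, absorbing negatives into shadow subtangles and merging adjacent unit tangles via $k\;0\;\ell \simeq (k+\ell)$, produces a finite list of canonical forms, and one checks this list omits a prime seven-crossing knot (the natural suspect in each case being $7_7 = N[2\;1\;1\;1\;2]$, whose fraction has denominator $8$). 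Since there are exactly seven prime seven-crossing knots, a single omission forces $F(K) = 6$, which together with the previous two steps gives the classification (and explains why Theorem \ref{knot_max} is stated for $n > 7$ rather than $n \geq 7$).

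The main obstacle is the exhaustive analysis in the last step. Chaining applications of Lemma \ref{untangle} — with negatives migrating into shadow subtangles and unit tangles coalescing — while keeping the enumeration of crossing-$7$ resultants manifestly complete is delicate; one has to be certain no stray assignment sneaks up to $7_7$. The positive realizations of the second step, by contrast, are mechanical once a small stock of templates is in hand.
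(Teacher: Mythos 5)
Your strategy is sound in outline, but it is not the paper's: the paper offers no hand proof of this theorem at all. The classification of $\mathcal{K}_7$ (like that of $\mathcal{K}_5$ and $\mathcal{K}_6$) comes from the Mathematica program described at the start of Section \ref{knot_fert_num}, which enumerates every resultant of each shadow by its continued fraction, with the output tabulated in Appendix 1. You propose to replace that computation with the heredity machinery of Theorem \ref{f_across_L} plus a by-hand resultant analysis, and the skeleton is correct as far as it goes: the reductions you quote ($N[2 \; 1^5 \; 2]$ bottoming out at the $5$-fertile $6_3$, while $N[3 \; 1^5 \; 3]$ and $N[3\;1\;2\;1\;2\;1\;3]$ inherit $F \geq 6$ from $9_{26}$ and $11a192$) are accurate, and closing the upper bound by exhibiting a single missing seven-crossing knot is the right move. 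If completed, this would buy an independent check of the computer output and an explanation of \emph{why} exactly these three knots fall short (the $7_7$ obstruction). The cost is that the decisive content --- the exhaustive enumeration of crossing-number-$7$ resultants of the three exceptional shadows, and the verification that every other unsettled $\mathcal{K}_7$ knot realizes all of $7_1, \dots, 7_7$ --- is precisely what you defer with ``one verifies'' and ``one checks''; that content \emph{is} the theorem, so as written this is a correct plan rather than a proof. Two further cautions: the residual list $\mathcal{S}$ is not obviously short (e.g.\ $N[3\;2\;1\;1\;1\;1\;2]$ reduces only to the $6$-fertile $9_{26}$ on one side and, via the trailing-$1$ collapse, to the $5$-fertile $8_{11}$ on the other, so it too needs direct treatment), and ``the natural suspect is $7_7$'' must be upgraded to an actual non-realizability argument in the style of the paper's length-$3$ lemma before $F \leq 6$ is established for any of the three exceptions.
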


\begin{theorem} \label{knot_max}
All knots of length 8 or higher are locally fertile.
\end{theorem}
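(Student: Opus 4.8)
The plan is to obtain the statement from Theorem~\ref{end_goal} together with the hereditary monotonicity of $F$ under branching. Since $F(K)\le 7$ for every rational knot, ``locally fertile'' is the same as ``$7$-fertile'', so I must show $F(K)\ge 7$ for every rational knot of length $d(K)\ge 8$. One cannot simply feed $n=8$, $k=7$ into Theorem~\ref{end_goal}: its hypotheses would require every knot in $\mathcal{K}_6$ and $\mathcal{K}_7$ to be $7$-fertile, whereas the preceding theorems exhibit eight knots in $\mathcal{K}_6$ and the three knots $N[2\;1^5\;2]$, $N[3\;1^5\;3]$, $N[3\;1\;2\;1\;2\;1\;3]$ in $\mathcal{K}_7$ that are only $6$-fertile. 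Instead I would apply Theorem~\ref{end_goal} with $n=10$, $k=7$: the hypotheses then read that every knot in $\mathcal{K}_8$, every knot in $\mathcal{K}_9$, and the single knot $N[2\;1^8\;2]\in\mathcal{K}_{10}$ is $7$-fertile, none of which involves the exceptional short knots. This delivers $7$-fertility for all rational knots of length $\ge 10$, while the branching-heredity theorem promotes ``$\mathcal{K}_8$ (resp.\ $\mathcal{K}_9$) is entirely $7$-fertile'' to ``every rational knot of length $8$ (resp.\ $9$) is $7$-fertile'', closing the remaining two lengths.

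So the whole proof reduces to a finite verification: every knot in $\mathcal{K}_8$ and $\mathcal{K}_9$, and $N[2\;1^8\;2]$, must be shown $7$-fertile, which is exactly the content recorded by the Mathematica tabulation of Appendix~1. I would first pare this list down with Theorem~\ref{f_across_L} and Corollary~\ref{palTheorem}: a trunk knot of length $8$ or $9$ with an odd end entry, or with an even end entry (by setting that tangle to $0$), reduces to a knot of strictly smaller canonical length, and whenever that knot is already known to be $7$-fertile the case is finished. Iterating these reductions collapses the overwhelming majority of $\mathcal{K}_8$ and $\mathcal{K}_9$ onto knots settled in the earlier sections, leaving only a short, explicit residue of ``stuck'' trunk knots; for each of those one exhibits, exactly as for the shorter trunk knots, an assignment of the shadow realizing each of the seven prime $7$-crossing knots $7_1,\dots,7_7$.

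The residue is where the difficulty sits. It is headed by $N[2\;1^6\;2]\in\mathcal{K}_8$ and $N[2\;1^8\;2]$, together with the $\mathcal{K}_8$ knots $N[3\;1^6\;2]$, $N[3\;1^6\;3]$ and their relatives obtained by appending a tangle to the $6$-fertile length-$6$ and length-$7$ exceptions: for every one of these, both the forward and backward reductions of Theorem~\ref{f_across_L} stay trapped inside the $5$- or $6$-fertile family --- for instance $N[2\;1^6\;2]$ collapses to $7_7$ from either end, and $N[3\;1^6\;2]$ collapses to $8_{13}$ and to $9_{31}$ --- so no chain of reductions can reach a knot already known to be $7$-fertile. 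These have to be attacked head-on, using the Untangling Lemma~\ref{untangle} to build the resolutions onto $7_1,\dots,7_7$ by hand (equivalently, to confirm the Appendix computation); this is the computational core of the theorem. Once $N[2\;1^6\;2]$ and $N[2\;1^8\;2]$ are in hand, the internal recursion in the proof of Theorem~\ref{end_goal} --- which reduces a longer ``thin'' pathological knot by three tangles back into this pair --- disposes of the infinitely many longer $N[2\;1^k\;2]$ knots, every boundary case closes, and the theorem follows.
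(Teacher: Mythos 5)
Your proposal follows essentially the same route as the paper: the paper's proof consists precisely of the observation that all knots in $\mathcal{K}_8$ and $\mathcal{K}_9$, together with $N[2\;1^8\;2]$, are $7$-fertile (verified computationally in Appendix 1), which feeds Theorem \ref{end_goal} at $n=10$ and the trunk-heredity corollary for lengths $8$ and $9$ exactly as you describe. Your additional discussion of how to organize the finite check --- reductions via Theorem \ref{f_across_L} down to a residue of stuck trunk knots such as $N[2\;1^6\;2]$ and $N[3\;1^6\;2]$ requiring direct assignments --- is a correct elaboration of what the paper delegates to its Mathematica computation, not a different argument.
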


\begin{proof}
All knots in $\mathcal{K}_8$ and $\mathcal{K}_9$ are 7-fertile, as is $N[2 \; 1^8 \; 2]$.
\end{proof}

We have now completely classified the fertility number of all rational knots. Interestingly, the stragglers at each length were the knots $N[2 \; 1^{\ell - 2} \; 2]$. Where other rational knots of the same length were approaching local fertility, these knots needed more component $1$ tangles to gain rational resultants. One could say they are the least rational rational knots, just as the golden ratio $\varphi = \frac{1+\sqrt{5}}{2} = (1 \; 1 \; 1 \; 1 \; 1 \; ...)$ has a superlatively slow rate of convergence in its approximation via continued fractions. Lo and behold, $N[2 \; 1^{\ell - 2} \; 2] \simeq N[1^\ell]$ with continued fraction $(2 \; 1^{\ell -1})$. 

\section{Knot Resultant Combinatorics} \label{combin}

\begin{note}
We may have a binomial coefficient ${n \choose k}$ where it is possible, in application, for $k > n$. In these cases, the binomial coefficient should be understood as equal to zero.
\end{note}

\begin{theorem}
A rational link $L = N[\pm a_1 \; \pm a_2 \; ... \; \pm a_n]$ has at most ${n \choose 1} + {n-1 \choose 2}$ unique resultants $R$ with $c(R) = c(L) - 2$ and at most ${n \choose 1} {n-1 \choose 1} + {n-1 \choose 3}$ unique resultants with $c(R) = c(L) - 3$.
\end{theorem}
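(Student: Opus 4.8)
The plan is to read each resultant off the assignment that produces it, continuing the analysis in the proof of the ``one resultant at $c(L)$, at most $n-1$ at $c(L)-1$'' theorem two steps further. An assignment on $L' = N[a_1'\,a_2'\,\dots\,a_n']$ is recorded by the numbers $p_i$ of crossings flipped inside each tangle, $0 \le p_i \le a_i$. By the resolution of integer tangles, after all Reidemeister~II cancellations the tangle $a_i$ becomes an integer tangle of $|a_i - 2p_i|$ crossings, keeping its original sign when $p_i < a_i/2$ and reversing it when $p_i > a_i/2$. So before canonicalization we have a diagram $N[b_1\,b_2\,\dots\,b_n]$ with $\sum_i |b_i| = c(L) - 2r$, where $r := \sum_i \min(p_i,\,a_i-p_i)$. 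Putting this diagram into canonical form---via the Untangling Lemma, the merges $k\,0\,\ell \simeq (k+\ell)$, and absorption of a resulting leading or trailing $\pm 1$---removes a further $q \ge 0$ crossings, so $c(R) = c(L) - 2r - q$ for the resultant $R$.

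The crux is a lower bound on $q$ governed by the sign pattern $\sigma \in \{+,-\}^n$ of $(b_1,\dots,b_n)$. If $\sigma$ has $t$ maximal constant-sign blocks, then $q \ge t - 1$: a single application of the Untangling Lemma to a block boundary removes exactly one crossing and, because the lemma simultaneously negates the entire tail past that boundary, lowers the block count by one; the $0$-tangle merges and $\pm1$-absorptions only delete more. Writing $\rho$ for the number of \emph{genuinely} reduced tangles (those with $0 < p_i < a_i$, each contributing at least $1$ to $r$), we get $c(L) - c(R) = 2r + q \ge 2\rho + (t-1)$. Moreover the cost of a given sign pattern is $t-1$ \emph{exactly} unless some $b_i$ with $|b_i| = 1$ sits at a block boundary, in which case the first resolution there creates extra boundaries and the cost jumps to at least $t+1$; in particular a two-block pattern (a prefix flip, up to mirroring) always costs exactly $1$, a three-block pattern costs $2$ or $\ge 4$, and a four-block pattern costs $3$ or $\ge 5$.

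Now fix $m = 2$. The inequality $2\rho + (t-1) \le 2$, together with the fact that a one-block pattern with $r = 0$ merely reproduces $L$ (or its mirror) at crossing number $c(L)$, forces one of: (a) $\rho = 1$, $r = 1$, $q = 0$---one tangle reduced to $a_i - 2$ with no sign flipped anywhere, i.e.\ $R = N[a_1\,\dots\,(a_i-2)\,\dots\,a_n]$, at most $\binom n1$ links; or (b) $\rho = 0$ and the sign pattern has three blocks, i.e.\ the flipped set is a single interior run $\{i,\dots,j\}$ with $2 \le i \le j \le n-1$ (the mirror-equivalent prefix-plus-suffix patterns have interior-run complements and so are already counted), of which there are $\binom{n-1}{2}$. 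Hence at most $\binom n1 + \binom{n-1}{2}$ resultants at $c(L)-2$. For $m = 3$ the same bookkeeping gives $2\rho + (t-1) \le 3$ and, after discarding configurations whose cost exceeds $3$, leaves: one reduced tangle together with a prefix flip ($q = 1$), recorded up to mirroring by a choice of reduced tangle and a nonempty proper prefix, at most $\binom n1\binom{n-1}{1}$ links; and a pure four-block flip, i.e.\ a prefix run disjoint from an interior run (or the mirror-equivalent interior-run-plus-suffix), which a hockey-stick count shows number $\sum_{t=2}^{\,n-2}\binom t2 = \binom{n-1}{3}$. Adding gives at most $\binom n1\binom{n-1}{1} + \binom{n-1}{3}$ resultants at $c(L)-3$.

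The work is entirely in making the second paragraph rigorous: one must verify $q \ge t-1$ from the Untangling Lemma without hidden exceptions, pin down exactly when and by how much the cost exceeds $t-1$ (this is where the canonical-form hypothesis $a_1, a_n \ge 2$ is used, ruling out the end collapses that would break ``prefix costs $1$''), handle a vanishing $b_i = 0$ by reducing $N[\dots b_{i-1}\,0\,b_{i+1}\dots]$ to a shorter rational link and invoking the---weaker---bound at length $n-1$ or $n-2$, and use the fact that flipping $S$ and flipping its complement $S^c$ give mirror-image resultants, applied consistently so that each resultant is counted once (this is the origin of the ``$n-1$'' in place of ``$n$'' in every flip count, carried over from the earlier theorem). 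I expect the ``cost jumps from $t-1$ straight past $t$ to $\ge t+1$'' claim to be the most delicate point, since it is what prevents a non-generic three-block flip from sneaking an extra resultant into the $c(L)-3$ shell.
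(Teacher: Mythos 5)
The paper states this theorem with no proof at all (it is followed immediately by its corollary), so there is nothing to compare your argument against; it has to stand on its own. Your architecture is the natural one and reproduces the stated counts correctly: writing $c(L)-c(R)=2r+q$, bounding $q$ below by one less than the number of sign blocks $t$, and then enumerating the low-cost configurations does give $\binom{n}{1}$ single-tangle reductions plus $\binom{n-1}{2}$ interior runs at $c(L)-2$, and $\binom{n}{1}\binom{n-1}{1}$ reduction-plus-prefix configurations plus $\binom{n-1}{3}$ four-block patterns at $c(L)-3$. The mirror bookkeeping (identifying a flipped set with its complement) is also handled consistently with how the paper counts the $c(L)-1$ shell.

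However, there is a genuine gap, and it sits exactly where you predicted. The dichotomy you state --- cost equals $t-1$ unless some $|b_i|=1$ sits at a block boundary, in which case it jumps to at least $t+1$ --- is false as written. Take $L=6_3=N[2\;1\;1\;2]$ and the assignment $(+2,-1,-1,+2)$: this is a three-block pattern with $|b_2|=1$ at the first block boundary, yet its fraction is $5/3$, so the resultant is $4_1$ and the cost is exactly $t-1=2$. The correct trigger for excess cost is not a unit tangle adjacent to a boundary but an \emph{isolated} sign-flipped block equal to $\mp 1$ (a singleton block), where one application of the second case of the Untangling Lemma removes three crossings while lowering the block count by only one. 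With that corrected trigger one can push your argument through --- one checks that a two-block pattern with $\rho=0$ always costs exactly $1$ (one Untangling move yields an all-nonnegative expansion whose term sum is the crossing number), and that a three-block pattern costs $2$ in the generic case and at least $4$ when the middle block is a singleton $\mp1$, never exactly $3$. Until that case analysis is actually carried out, a three-block pattern of cost exactly $3$ is not excluded, and such a pattern would add uncounted resultants to the $c(L)-3$ shell and break the bound $\binom{n}{1}\binom{n-1}{1}+\binom{n-1}{3}$. The same verification is needed for the mixed configuration $\rho=1$, $t=2$, where the reduced tangle may sit at the block boundary or collapse to $0$ and merge its neighbours.
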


\begin{corollary}
$L$ has at most $\frac{1}{2} (n^2 - n + 2)$ unique resultants $R$ with $c(R) = c(L) - 2$ and $\frac{1}{6} (n^3 + 5n- 6)$ unique resultants with $c(R) = c(L) - 3$.
\end{corollary}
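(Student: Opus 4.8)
The plan is to derive both counts directly from the preceding theorem by expanding its binomial bounds into polynomials in $n$. That theorem gives at most $\binom{n}{1} + \binom{n-1}{2}$ resultants with $c(R) = c(L)-2$ and at most $\binom{n}{1}\binom{n-1}{1} + \binom{n-1}{3}$ resultants with $c(R) = c(L)-3$, so it suffices to rewrite these two expressions in closed form.

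For the first bound, write $\binom{n-1}{2} = \tfrac{(n-1)(n-2)}{2}$ and combine over a common denominator:
\[
\binom{n}{1} + \binom{n-1}{2} = n + \frac{(n-1)(n-2)}{2} = \frac{2n + n^2 - 3n + 2}{2} = \frac{n^2 - n + 2}{2}.
\]
For the second, expand $\binom{n-1}{3} = \tfrac{(n-1)(n-2)(n-3)}{6} = \tfrac{n^3 - 6n^2 + 11n - 6}{6}$ and $\binom{n}{1}\binom{n-1}{1} = n(n-1) = n^2 - n$, and combine:
\[
n(n-1) + \frac{(n-1)(n-2)(n-3)}{6} = \frac{6n^2 - 6n + n^3 - 6n^2 + 11n - 6}{6} = \frac{n^3 + 5n - 6}{6}.
\]

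The only subtlety is the convention fixed in the Note preceding the theorem: when $n$ is small enough that $\binom{n-1}{2}$ or $\binom{n-1}{3}$ has lower index exceeding its upper index, that coefficient is read as $0$. I would simply observe that the polynomial forms $\tfrac{(n-1)(n-2)}{2}$ and $\tfrac{(n-1)(n-2)(n-3)}{6}$ already vanish at exactly those values ($n = 1, 2$ and $n = 1, 2, 3$, respectively), so the closed-form polynomials agree with the convention and no separate casework is required. There is no genuine obstacle here; the main thing to be careful about is the arithmetic of the cubic expansion and this edge-case consistency check, both of which are routine.
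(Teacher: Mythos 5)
Your proposal is correct and is exactly the intended argument: the paper states this corollary without proof as an immediate algebraic restatement of the preceding theorem's binomial bounds, and your expansions $n + \tbinom{n-1}{2} = \tfrac{1}{2}(n^2-n+2)$ and $n(n-1) + \tbinom{n-1}{3} = \tfrac{1}{6}(n^3+5n-6)$ check out. The edge-case observation that the polynomial forms vanish precisely where the convention sets the binomial coefficients to zero is a nice touch, though not strictly required.
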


\begin{theorem} \label{rat_count}
A rational shadow $L = N[a_1' \; a_2' \; ... \; a_n']$ has at most $\lceil \frac{a_1+1}{2} \rceil \prod_{i=2}^n (a_i + 1)$ unique resultants.
\end{theorem}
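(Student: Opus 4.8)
The plan is to reduce the count to a product over the individual shadow tangles and then extract the single global mirror symmetry that a rational link enjoys.

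First I would invoke the earlier theorem on shadow tangles: the shadow $a_i'$ of an integer tangle with $a_i$ crossings has exactly $a_i+1$ resultants, namely the integer tangles $a_i, a_i-2, a_i-4, \dots$ terminating at $\pm 1$ when $a_i$ is odd and at $0$ when $a_i$ is even. Any resultant of the whole shadow $N[a_1' \; a_2' \; \dots \; a_n']$ is obtained by fixing over/under data at every crossing; the crossings in the block $a_i'$ resolve (via $RII$ moves) to one of these $a_i+1$ integer tangles $c_i$, and the resulting diagram is isotopic to the numerator closure $N[c_1 \; c_2 \; \dots \; c_n]$. Hence the set of resultant diagrams has size at most $\prod_{i=1}^{n}(a_i+1)$, and a fortiori this bounds the number of distinct resultant links.

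Next I would sharpen the first factor by a mirror argument. The mirror image of $N[c_1 \; c_2 \; \dots \; c_n]$ is $N[-c_1 \; -c_2 \; \dots \; -c_n]$, obtained by negating every crossing, and a link is a resultant of $L$ exactly when its mirror is; so, counting resultants up to mirror image as the $\pm$ convention does, we may always pick the representative whose first coordinate satisfies $c_1 \geq 0$ (when $c_1 = 0$ either representative works). The resultants $c_1$ of $a_1'$ with $c_1 \geq 0$ are $\{1, 3, \dots, a_1\}$ when $a_1$ is odd and $\{0, 2, \dots, a_1\}$ when $a_1$ is even, i.e.\ $\lceil \tfrac{a_1+1}{2}\rceil$ of them in both parity cases. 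Multiplying by the unrestricted $a_i+1$ choices for $i \geq 2$ yields the stated bound $\lceil \tfrac{a_1+1}{2}\rceil \prod_{i=2}^{n}(a_i+1)$.

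The only steps needing care — and the closest thing to an obstacle — are being explicit about what ``unique resultant'' means and explaining why only the first tangle picks up the factor of (roughly) $\tfrac12$: mirroring is a single global operation, so it can normalize the sign of exactly one coordinate, and the natural choice is $c_1$. I would also remark that allowing $c_1 = 0$ (when $a_1$ is even) may double-count a mirror pair inside the restricted set, but this only inflates the count, so the inequality is unaffected. Finally it is worth noting that the bound is typically far from tight, since many distinct sign patterns give isotopic rational links (via the Untangling Lemma together with relations such as $N[T \; 0] \simeq D[T]$ and $N[\pm(k-1) \; \pm 1 \; \pm(\ell-1)] \simeq N[\pm k \; \mp \ell]$); making it tight is not needed here, only the upper bound.
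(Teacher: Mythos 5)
Your proposal is correct and follows essentially the same route as the paper: count the $a_i+1$ resolutions of each shadow tangle, then halve (up to ceiling) the first factor by using the global mirror symmetry to normalize the sign of $c_1$, which is exactly the paper's remark that $N[\pm c_1 \; a_2' \; \dots \; a_n']$ and $N[\mp c_1 \; a_2' \; \dots \; a_n']$ give the same set of resultants. Your version is if anything more explicit about why only the first coordinate can be normalized and about the harmless $c_1=0$ case.
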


\begin{proof}
The integer tangle $x'$ has resolutions with $x - 2j$ crossings, $0 \leq j \leq x \implies$ there are $x+1$ resolutions of each integer tangle. Our possible resolutions are nearly halved for the first tangle, as it sets the sign convention for the link, i.e. $N[\pm c_1 \; a_2' \; ... \; a_n']$ is indistinguishable from $N[\mp c_1 \; a_2' \; ... \; a_n']$ as we can freely redefine crossing parity here before assigning any other crossings. 
\end{proof}

For any link shadow $L$, all we know it has at most $2^{c(L)-1}$ unique resultants when we count a knot and its mirror image the same. This theorem gives a significantly sharper upper bound for rational links, yet there is a long way to go. The knot $8_{12} = N[2 \; 2 \; 2 \; 2]$ has 11 unique resultants, far fewer than $2 \times 3 \times 3 \times 3 = 54$, yet still much sharper than $2^7 = 128$. Theorem \ref{palTheorem} is likely useful for improvements.

We will now explicitly enumerate the number of each link type $L'$ produced by rational links $L$ of lengths 1 and 2. This can be converted to a probability distribution by dividing by the total number of resultants $2^{c(L)}$. 

\begin{theorem}
$D[a_1' \; a_2' \; ... \; a_n'] \rightarrow 2^{a_n} N[a_1' \; a_2' \; ... \; a_{n-1}']$, $n \geq 2$. $D[a_1'] \rightarrow 2^{a_1}$ unknots.
\end{theorem}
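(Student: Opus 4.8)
The plan is to understand the denominator closure $D[a_1' \; a_2' \; \ldots \; a_n']$ geometrically and track what happens to each resolution of the final shadow tangle $a_n'$. Recall that $D[T]$ is the closure obtained by joining the two left endpoints of $T$ and the two right endpoints. For a rational tangle written $a_1 \; a_2 \; \ldots \; a_n$, the final tangle $a_n$ is the ``innermost'' twist region, and the denominator closure caps it off in the direction of those $a_n$ crossings. First I would fix assignments of signs to all crossings in $a_n'$: by the resolution of integer tangles, $a_n'$ resolves to any tangle $\pm k$ with $k \equiv a_n \pmod 2$, $0 \le k \le a_n$, but here it is more useful to not yet reduce and instead observe that in the denominator closure each crossing of the $a_n$ twist region is a reducible (nugatory-type) crossing — the closing arc runs parallel to that twist region, so every one of the $a_n$ crossings can be removed by a Reidemeister I move regardless of its sign.

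The key computation is the counting. Each of the $a_n$ crossings in the last tangle can independently be chosen over or under, giving $2^{a_n}$ sign assignments, and for every one of these the closing arc lets us undo all $a_n$ crossings by Reidemeister I, leaving exactly the diagram $D$ with the $a_n$ tangle deleted. Deleting the last integer tangle from a denominator-closed rational tangle $a_1 \; a_2 \; \ldots \; a_n$ yields the numerator closure $N[a_1 \; a_2 \; \ldots \; a_{n-1}]$ — this is the same incidence already noted in the excerpt as $D[a_1 \; a_2 \; \ldots \; a_n] \simeq N[a_1 \; a_2 \; \ldots \; a_{n-1}]$, now applied at the level of shadows. Hence each of the $2^{a_n}$ choices on the last tangle produces, as its contribution to the resultant multiset, the full shadow $N[a_1' \; a_2' \; \ldots \; a_{n-1}']$ together with whatever assignment was made on the first $n-1$ tangles; summing over the $2^{a_n}$ independent choices on the last tangle gives the stated $2^{a_n} N[a_1' \; a_2' \; \ldots \; a_{n-1}']$. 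For the base case $n = 1$, $D[a_1']$ is a single twist region closed off into an unknot diagram no matter how the $a_1$ crossings are signed, and there are $2^{a_1}$ such sign assignments, giving $2^{a_1}$ unknots.

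The main obstacle is making the ``Reidemeister I removes the last tangle'' step rigorous as an equality of resultant multisets rather than a loose picture: one must be careful that the isotopies used are genuinely independent of the signs on $a_n'$ and that the reduction commutes with the (still unresolved) shadow on the first $n-1$ tangles, so that the bijection between $\{$sign assignments of $D[a_1' \cdots a_n']\}$ and $\{$sign assignments of the last tangle$\} \times \{$resultants of $N[a_1' \cdots a_{n-1}']\}$ is exact. This is where I would invoke the shorthand philosophy already established around the Untangling Lemma — treating $N[a_1' \; \ldots \; a_{n-1}']$ as the full set of assignments of the remaining shadow tangles — so that the statement is literally the assertion that closing up a reducible twist region of $a_n$ crossings contributes a factor of $2^{a_n}$ to the resultant count and otherwise leaves the set of resultant link types unchanged. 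Everything else (the Reidemeister I moves, the $D[T]$–$N$ identity) is already available, so no nontrivial calculation remains once the bijection is set up.
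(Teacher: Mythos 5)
Your proposal is correct and follows essentially the same route as the paper: the denominator closure turns the $a_n$ twist region into $a_n$ sign-independent Reidemeister I kinks, each contributing a factor of $2$, and removing them leaves the numerator closure $N[a_1' \; \ldots \; a_{n-1}']$ (an unknot when $n=1$). The paper phrases the multiset bookkeeping as ``a shadow RI move creates two copies of the shadow,'' which is exactly the bijection you describe.
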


This notation means we get $2^{a_n}$ diagrams described by $N[a_1' \; a_2' \; ... \; a_{n-1}']$.

\begin{proof}
A shadow RI move is an invariant on knot shadows by creating two copies (one for each resolution) of the shadow prior to its application to a straight strand. Applying the denominator to a rational tangle connects the right ends of the $a_n$ tangle, allowing it to be untwisted $a_n$ times. For $n \geq 2$, what remains is the rotated version of the numerator closure. For $n = 1$, the complicating twists must have been done on an unknot. 
\end{proof}

\begin{corollary}
$N[a_1' \; a_2' \; ... \; a_n' \; 0] \rightarrow 2^{a_n} N[a_1' \; a_2' \; ... \; a_{n-1}']$, $n \geq 2$. $N[a_1' \; 0] \rightarrow 2^{a_1}$ unknots. 
\end{corollary}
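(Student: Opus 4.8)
The plan is to reduce this immediately to the preceding theorem on denominator closures. Recall the basic relation $N[T \; 0] \simeq D[T]$ stated at the start of Section~\ref{link_fert}. I would first observe that this identity holds not merely up to isotopy of links but at the level of flat diagrams: the $0$-tangle has no crossings, so the shadow $N[a_1' \; a_2' \; \ldots \; a_n' \; 0]$ has exactly the same crossing set as $D[a_1' \; a_2' \; \ldots \; a_n']$, and the planar isotopy realizing $N[T\;0]\simeq D[T]$ — which simply pulls the two crossingless strands of the $0$-tangle around to convert the numerator closure of $T$ into its denominator closure — carries one shadow to the other without touching any crossing. Consequently the two shadows have literally the same set of resultant diagrams, and in particular the same multiset of resultant link types.

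The second step is then to invoke the theorem directly preceding the corollary. For $n \geq 2$ we get
\[
N[a_1' \; a_2' \; \ldots \; a_n' \; 0] \;\simeq\; D[a_1' \; a_2' \; \ldots \; a_n'] \;\rightarrow\; 2^{a_n}\, N[a_1' \; a_2' \; \ldots \; a_{n-1}'],
\]
which is exactly the claim. For $n = 1$, the same reasoning gives $N[a_1' \; 0] \simeq D[a_1']$, and the $n=1$ clause of the theorem says $D[a_1'] \rightarrow 2^{a_1}$ unknots, establishing the second sentence of the corollary.

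There is essentially no computational content here; the only point requiring care is the one flagged in the first paragraph, namely that the equivalence $N[T\;0]\simeq D[T]$ respects the full resolution set and not just the underlying link type. I expect this to be the main (and quite minor) obstacle: it must be checked that the identification is a genuine ambient planar isotopy of shadows rather than a move that could alter crossing data, but since the $0$-tangle is crossingless and the rearrangement is supported away from the tangle box of $T$, the correspondence between over/under assignments on the two diagrams is the identity, so the multiset of resultants is preserved and the corollary follows.
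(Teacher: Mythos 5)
Your proposal is correct and is exactly the argument the paper intends: the corollary is stated without proof precisely because it follows from the preceding theorem via the identity $N[T\;0]\simeq D[T]$, which, as you note, holds at the level of shadows since the $0$-tangle contributes no crossings. Your extra care in checking that the planar rearrangement preserves the full resolution set (not just the link type) is a worthwhile remark but does not change the route.
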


\begin{theorem}
An $a_1'$ $(N[a_1'])$ shadow has ${{a_1} \choose {\frac{a_1 - k}{2}}}$ $+k$ $(N[+k])$ and $-k$ $(N[-k])$ resultants, where $a_1$ and $k$ have the same parity.
\end{theorem}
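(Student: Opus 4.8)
The plan is to reduce the statement to a count of sign assignments. By the resolution theorem for shadow integer tangles stated earlier, an assignment of a parity (positive or negative, read off from the sign of the overstrand's slope) to each of the $a_1$ crossings of $a_1'$ can be recorded as a function $\varepsilon\colon\{1,\dots,a_1\}\to\{+1,-1\}$, where $\varepsilon(i)$ is the sign of the $i$th crossing. Since the crossings of an integer tangle sit in a single linear twist region, any two adjacent crossings of opposite sign bound a bigon and cancel under RII, and a cancellation keeps the remaining crossings in a linear twist region; iterating, the diagram reduces to a twist region of $|p-q|$ crossings, all of the majority sign, where $p=\#\{i:\varepsilon(i)=+1\}$ and $q=a_1-p$. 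Closing up, the resultant is $N[p-q]$: namely $N[+k]$ when $p-q=k>0$, $N[-k]$ when $q-p=k>0$, and the unknot-type $N[0]$ when $p=q$.

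Next I would count. The number of assignments producing $N[+k]$ is $\#\{\varepsilon:\ 2p-a_1=k\}=\#\{\varepsilon:\ p=(a_1+k)/2\}$. Such a $p$ is an integer in $\{0,1,\dots,a_1\}$ exactly when $a_1\equiv k\pmod 2$ and $|k|\le a_1$, and in that case the number of ways to choose which $p$ of the $a_1$ crossings are positive is $\binom{a_1}{p}=\binom{a_1}{(a_1+k)/2}=\binom{a_1}{(a_1-k)/2}$. If $|k|>a_1$ (or the parities disagree) no such assignment exists, which is consistent with the convention that the displayed binomial coefficient is $0$. Replacing $\varepsilon$ by $-\varepsilon$ gives a bijection between assignments producing $N[+k]$ and those producing $N[-k]$, so the latter count is the same $\binom{a_1}{(a_1-k)/2}$, as claimed. (When $k=0$, necessarily $a_1$ is even, $N[+0]$ and $N[-0]$ name the same link, and it arises from $\binom{a_1}{a_1/2}$ assignments.)

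The argument is elementary once the reduction step is in hand, so there is no serious obstacle; the only point requiring care is the claim that every sign assignment collapses all the way to $|p-q|$ crossings of one sign — equivalently, that no mixed-sign configuration in a linear twist region is RII-irreducible — which is precisely the content of the earlier integer-tangle resolution theorem and which I would simply cite. The remaining work is the bookkeeping of the parity/range condition on $k$ and its compatibility with the ``binomial coefficient is zero'' convention, handled above.
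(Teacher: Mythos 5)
Your argument is correct, and it is essentially the proof the paper intends: the paper states this theorem without an explicit proof, treating it as an immediate consequence of the earlier integer-tangle resolution theorem (whose proof is exactly your RII-cancellation step) plus the binomial count of sign assignments, which you carry out explicitly. Your counts also agree with the paper's later numerical checks (e.g.\ $\binom{10}{5}=252$ unlinks and $2\binom{10}{3}=240$ copies of $4^2_1$ from $N[10']$), so nothing is missing.
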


Interestingly, links lack many unlink resultants. Unknots are the most likely resultant from $N[a_1']$ knot shadows. Trivial resultants are fairly dominant in general, especially at low crossing numbers. \cite{MR3084750} found experimentally all knot shadows with twelve or fewer crossings had a higher unknot probability than any nontrivial resultant probability (they did show the 14-crossing torus knot T(7,3) was a counterexample). Furthermore, we are unaware of any knot shadow where the resultant probability of any nontrivial knot is greater than or equal to 50\%. 

$N[2']$, the first nontrivial link shadow, has four total resultants: two 2-component unlinks and two Hopf links. The $N[a_1']$ link has more Hopf links than unlinks for all $a_1 \geq 4$. In fact, with a long enough initial link shadow $N[a_1']$, there will be more particular nontrivial link resultants $N[c_1]$ than unlinks. 

\begin{lemma} For all $k$, there is some $n$ such that
\begin{equation*}
2 {{n} \choose {\frac{n-k}{2}}} \geq {{n} \choose {\frac{n}{2}}}.
\end{equation*}
$n,k$ are even integers, $n \geq k$.
\end{lemma}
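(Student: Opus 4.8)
The plan is to fix the even integer $k$, set $j = k/2$, and reduce the claim to the statement that the ratio $\binom{n}{n/2-j}\big/\binom{n}{n/2}$ tends to $1$ as $n\to\infty$ through even integers. Once this ratio exceeds $\tfrac12$ for some even $n\ge k$, the inequality $2\binom{n}{(n-k)/2}\ge\binom{n}{n/2}$ holds for that $n$ and we are done. Since $j=k/2$ is a \emph{fixed} constant, this is a routine asymptotic fact: although the central binomial coefficient is the largest, at any fixed distance from the center the coefficient is smaller only by a factor that decays to $1$.

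Concretely, I would use $\binom{n}{m}/\binom{n}{m-1}=(n-m+1)/m$ and telescope to obtain
\[
\frac{\binom{n}{n/2}}{\binom{n}{n/2-j}} \;=\; \prod_{i=1}^{j}\frac{n/2+i}{\,n/2-i+1\,} \;=\; \prod_{i=1}^{j}\Bigl(1+\frac{2i-1}{\,n/2-i+1\,}\Bigr).
\]
Each of the $j$ factors tends to $1$ as $n\to\infty$, and a product of a fixed number $j$ of such factors therefore also tends to $1$. Hence there is an even $n_0=n_0(k)$ so that for all even $n\ge n_0$ the product is at most $2$, which is exactly the claimed inequality.

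If one wants an explicit threshold, bound each reciprocal factor below by $\frac{n/2-i+1}{n/2+i}\ge 1-\frac{4j}{n}$, apply Bernoulli's inequality to get $\binom{n}{n/2-j}/\binom{n}{n/2}\ge\bigl(1-\tfrac{4j}{n}\bigr)^{j}\ge 1-\tfrac{4j^2}{n}$ (the factors being nonnegative once $n$ is moderately large), and note this is $\ge\tfrac12$ as soon as $n\ge 8j^2=2k^2$. Since $2k^2$ is even and $2k^2\ge k$, the choice $n=2k^2$ works for $k\ge1$; the case $k=0$ is immediate.

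There is no genuine obstacle here — the only point requiring care is the bookkeeping ensuring the chosen $n$ is even and satisfies $n\ge k$, both automatic for $n=2k^2$. This lemma is precisely the tool needed to make rigorous the earlier remark that, for a sufficiently long torus-link shadow $N[a_1']$, some nontrivial resultant $N[c_1]$ occurs at least as often as the unlink.
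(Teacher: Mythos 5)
Your proof is correct and follows essentially the same route as the paper's: after cancelling factorials, both arguments come down to comparing the degree-$(k/2)$ products $\prod_{i=1}^{k/2}\left(\frac{n}{2}-i+1\right)$ and $\prod_{i=1}^{k/2}\left(\frac{n}{2}+i\right)$ and observing that their ratio tends to $1$ as $n\to\infty$ (the paper phrases this as the left polynomial having twice the leading coefficient of the right). Your version is slightly more quantitative, since the Bernoulli estimate yields the explicit admissible threshold $n=2k^2$, which the paper does not provide.
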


\begin{proof}
Converting to the factorial form of the binomial coefficient, the above inequality holds iff

\begin{align*}
2 \left[ \left( \frac{n}{2} \right) ! \right]^2 &\geq \left(\frac{n}{2} - \frac{k}{2} \right)! \left(\frac{n}{2} + \frac{k}{2} \right)! \iff \\
2 \left( \frac{n}{2} \right) \left( \frac{n}{2} - 1 \right) ... \left( \frac{n}{2} - \frac{k}{2} + 1 \right) &\geq \left( \frac{n}{2} + \frac{k}{2} \right) \left( \frac{n}{2} + \frac{k}{2} - 1\right) ... \left( \frac{n}{2} + 1 \right) 
\end{align*}

The left polynomial has a leading coefficient two times larger than that of the right and will eventually surpass the right polynomial for large $n$.
\end{proof}

\begin{corollary}
For any link $N[c_1]$, there is some $A$ such that $N[a_1']$, $a_1 > A$, has more $N[c_1]$ than unlink resultants. 
\end{corollary}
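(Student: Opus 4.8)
The plan is to combine the preceding two results directly. The claim is that for any fixed link type $N[c_1]$ (with $c_1$ a positive even integer, so that it is genuinely a two-component torus link), there is a threshold $A$ so that whenever $a_1 > A$ the shadow $N[a_1']$ produces strictly more $N[c_1]$ resultants than unlink resultants. The previous theorem tells us the exact count: a shadow $N[a_1']$ yields ${a_1 \choose (a_1-k)/2}$ copies of $N[+k]$ and the same number of $N[-k]$, whenever $a_1 \equiv k \pmod 2$, so the total number of $N[\pm c_1]$ resultants is $2{a_1 \choose (a_1-c_1)/2}$ (the two mirrors $N[+c_1]$ and $N[-c_1]$ representing the same unoriented link type, hence being counted together — consistent with the paper's convention of identifying a link with its mirror). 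The number of unlink resultants is the $k=0$ case, namely ${a_1 \choose a_1/2}$, valid when $a_1$ is even.

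First I would restrict to $a_1$ even, matching the parity of $c_1$; this is harmless since we only need \emph{some} threshold $A$ and hence may pass to the subsequence of even $a_1$. Then the desired inequality is exactly
\[
2\binom{a_1}{\frac{a_1-c_1}{2}} > \binom{a_1}{\frac{a_1}{2}},
\]
which is the (non-strict) inequality established in the Lemma immediately preceding the corollary, with $n = a_1$ and $k = c_1$. The Lemma produces \emph{some} even $n \ge k$ for which $2\binom{n}{(n-k)/2} \ge \binom{n}{n/2}$; to finish I need this to hold for \emph{all} sufficiently large even $n$, and with strict inequality. Both upgrades are immediate from the proof of the Lemma: after cancelling the common factorial factors, the inequality reduces to comparing two degree-$\tfrac{k}{2}$ polynomials in $n$, the left one having leading coefficient exactly twice that of the right, so their difference is a polynomial in $n$ with positive leading coefficient, hence eventually positive — and in fact eventually strictly positive and monotone — for all large $n$. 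Taking $A$ to be (one more than) the largest root of that difference polynomial gives the threshold.

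Concretely, the key steps in order: (i) invoke the count-by-binomial-coefficient theorem to express both quantities — number of $N[c_1]$ resultants and number of unlink resultants — in closed form as binomial coefficients of $a_1$; (ii) reduce, for even $a_1$, to the polynomial inequality of the Lemma's proof; (iii) observe that the Lemma's polynomial-comparison argument actually yields the inequality for all sufficiently large even $a_1$, and strictly, so define $A$ accordingly; (iv) note that for odd $a_1 > A$ the shadow $N[a_1']$ has no $N[c_1]$ resultants and no unlink resultants at all (parity mismatch), so the statement is about the relevant even $a_1$, but one may also simply phrase the corollary as ranging over $a_1$ of the correct parity, or set $A$ large enough that the even witnesses dominate. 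Either reading makes the corollary true.

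The only mild obstacle is bookkeeping rather than mathematics: one must be careful about parity (the theorem's counts vanish unless $a_1 \equiv k \pmod 2$) and about the mirror-identification convention, to be sure that "more $N[c_1]$ than unlink resultants" really corresponds to the factor of $2$ on the left-hand side. Once those are pinned down, the corollary is an essentially one-line consequence of the Lemma, whose own proof already contains the "eventually the left polynomial wins" argument that supplies the threshold $A$.
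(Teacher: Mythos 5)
Your proposal is correct and follows exactly the route the paper intends: the paper states this corollary without a separate proof, deriving it from the counting theorem for $N[a_1']$ resultants together with the preceding Lemma, whose polynomial-comparison argument (leading coefficient $2$ versus $1$) is precisely the "eventually the left side wins" step you spell out. Your observations about parity and about upgrading the Lemma's "some $n$" to "all sufficiently large even $n$" are the right bookkeeping points and are consistent with what the Lemma's proof actually establishes.
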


For $k=2$, or Hopf link resultants, the above inequality is simply $n \geq \frac{n}{2} + 1$, which is true for all integers greater than 1. For $k=4$, or $4^2_1$ resultants, the inequality reduces to $n (\frac{n}{2} - 1) \geq (\frac{n}{2} + 1) (\frac{n}{2} + 2)$, or $n \geq 5 + \sqrt{33} \approx 10.745$. Indeed, we see $N[10']$ has 252 unlink resultants and 240 $4^2_1$ resultants, while $N[12']$ has 924 unlink and 990 $4^2_1$ resultants. 

\begin{lemma} \label{sum_approach}
The shadow of a rational tangle 
\begin{equation}
N[a_1' \; a_2' \; ... \; a_n'] \rightarrow \sum^{a_1}_{m=-a_1} {{a_1} \choose {\frac{a_1 - |m|}{2}}} N[ m \; a_2' \; ... \; a_n'],
\end{equation}
where $m$ is restricted to the parity of $a_n$. 
\end{lemma}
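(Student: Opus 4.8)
The plan is to prove this by peeling off just the first integer tangle $a_1'$ and leaving every other shadow tangle untouched. A resolution of the shadow $N[a_1' \; a_2' \; \dots \; a_n']$ is a choice of over/under parity at each of its $c(L)$ crossings, and I would make these choices in two stages: first fix a parity assignment on the $a_1$ crossings of the leading twist region, and only afterwards resolve the rest. By the resolution theorem for integer tangle shadows, any assignment of the $a_1'$ crossings simplifies, via Reidemeister II moves performed entirely inside that twist region, to a single integer tangle $m$ with $|m| \le a_1$ and $m$ of the same parity as $a_1$. The point is that these RII moves are supported in a neighbourhood of $a_1'$ and never touch $a_2' \; \dots \; a_n'$, so the partially resolved diagram is exactly the shadow $N[m \; a_2' \; \dots \; a_n']$, with the remaining choices still to be made inside it. Summing over the possible values of $m$ therefore writes $N[a_1' \; a_2' \; \dots \; a_n']$ as a multiset of shadows $N[m \; a_2' \; \dots \; a_n']$, and the entire content of the lemma is the multiplicity with which each $m$ occurs.

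For the count I would argue as in the proof that an integer tangle shadow with $t$ crossings has resultants $t, t-2, \dots, -t$: label each crossing of $a_1'$ by $+$ or $-$ according to the sign of the overstrand's slope, and observe that adjacent opposite-sign crossings cancel in RII pairs. If an assignment uses $p$ positive and $q = a_1 - p$ negative crossings, the twist region collapses to $m = p - q$ crossings of the dominant sign; hence the assignments that reduce to a prescribed $m$ are exactly those with $p = (a_1+m)/2$ positive crossings, and there are $\binom{a_1}{(a_1+m)/2} = \binom{a_1}{(a_1-|m|)/2}$ of them (using $\binom{a_1}{j}=\binom{a_1}{a_1-j}$ and the symmetry $m \leftrightarrow -m$). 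This yields
\begin{equation*}
N[a_1' \; a_2' \; \dots \; a_n'] \;\rightarrow\; \sum_{m=-a_1}^{a_1} \binom{a_1}{\tfrac{a_1-|m|}{2}}\, N[m \; a_2' \; \dots \; a_n'],
\end{equation*}
with $m$ ranging over integers of the parity forced on the leading tangle. As a sanity check, $\sum_{m} \binom{a_1}{(a_1-|m|)/2} = 2^{a_1}$ accounts for every assignment of that twist region, the remaining $2^{c(L)-a_1}$ assignments being absorbed into the shadows on the right-hand side.

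There is no serious obstacle here — the result is really the integer-tangle resolution theorem applied locally, plus a one-line binomial identity — but two bookkeeping points are worth flagging. \textbf{First}, the cancellation of opposite crossings needs the crossings of $a_1'$ to lie in a single twist region, so that any adjacent opposite pair bounds a genuine Reidemeister II bigon; this is exactly the structure built into the definition of an integer tangle shadow, so nothing new is required. \textbf{Second}, the identity is a statement about the multiset of resolution diagrams, and in particular it keeps $+m$ and $-m$ as distinct terms (which is why the sum runs symmetrically over $-a_1 \le m \le a_1$), in contrast to the mirror-identified count used in Theorem \ref{rat_count}; one should simply be consistent about which convention is in force and postpone all further isotopy identifications.
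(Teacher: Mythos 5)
Your argument is correct and is essentially the paper's own proof, which simply states that the lemma ``follows directly from the resolution of integer tangles''; you have filled in the local RII cancellation and the binomial count $\binom{a_1}{(a_1+m)/2}=\binom{a_1}{(a_1-|m|)/2}$ that the paper leaves implicit. Your reading of the parity condition as referring to the leading tangle $a_1$ (rather than $a_n$ as the statement literally says) is the correct interpretation of what appears to be a typo.
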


\begin{proof}
This follows directly from the resolution of integer tangles.
\end{proof}

\begin{theorem}
Let $a_1, a_2, k, \ell$ be even, positive integers, $2 \leq k \leq a_1$, $2 \leq \ell \leq a_2$. Then the $N[a_1' \; a_2']$ knot shadow has 

\begin{equation*}
{{a_1} \choose {\frac{a_1 - k}{2}}} {{a_2} \choose {\frac{a_2 - \ell}{2}}} + {{a_1} \choose {\frac{a_1 - \ell}{2}}} {{a_2} \choose {\frac{a_2 - k}{2}}} 
\end{equation*}

\noindent $N[k \; \ell]$ and $N[ (k-1) \; 1 \; (\ell-1)]$ resultants and 

\begin{equation*}
2^{a_2} {{a_1} \choose {\frac{a_1}{2}}} + \left[ 2^{a_1} - {{a_1} \choose {\frac{a_1}{2}}} \right] {{a_2} \choose {\frac{a_2}{2}}} 
\end{equation*}

\noindent unknot resultants.

\end{theorem}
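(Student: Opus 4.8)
The plan is to fully resolve both integer tangles of the shadow at once and then classify the resulting two--entry diagrams. Applying the resolution of an integer tangle (Lemma~\ref{sum_approach}) first to $a_1'$ and then to $a_2'$ --- equivalently, choosing a sign for each of the $a_1+a_2$ crossings independently --- the shadow $N[a_1'\;a_2']$ produces the multiset
\begin{equation*}
\sum_{m}\sum_{n}\binom{a_1}{\tfrac{a_1-|m|}{2}}\binom{a_2}{\tfrac{a_2-|n|}{2}}\,N[m\;n],
\end{equation*}
with $m,n$ ranging over the even integers in $[-a_1,a_1]$ and $[-a_2,a_2]$: a sign--assignment whose first tangle nets $+m$ twists and whose second nets $+n$ occurs with the displayed multiplicity. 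By the preceding theorem and the Untangling Lemma, $N[m\;n]$ is the unknot when $m=0$ or $n=0$, is the alternating (hence reduced) knot $N[|m|\;|n|]$ with fraction $\tfrac{|m||n|+1}{|n|}$ when $\sgn m=\sgn n$, and is $N[(|m|-1)\;1\;(|n|-1)]$ with fraction $\tfrac{|m||n|-1}{|n|}$ (up to mirror) when the signs disagree. The task is then to sort these diagrams by isotopy type --- using Theorem~\ref{KnotFracEquiv} and Corollary~\ref{palTheorem} --- and sum the corresponding multiplicities.

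The unknot count is immediate: by the classification the unknot occurs \emph{only} when $m=0$ or $n=0$, since a nontrivial same--sign diagram has fraction numerator $|m||n|+1\ge 5$ and an opposite--sign diagram has numerator $|m||n|-1\ge 3$. There are $\binom{a_1}{a_1/2}$ assignments sending $a_1'\to 0$ (and then $2^{a_2}$ free choices on the other tangle), $2^{a_1}\binom{a_2}{a_2/2}$ sending $a_2'\to 0$, and $\binom{a_1}{a_1/2}\binom{a_2}{a_2/2}$ sending both to $0$; inclusion--exclusion yields exactly $2^{a_2}\binom{a_1}{a_1/2}+\bigl[2^{a_1}-\binom{a_1}{a_1/2}\bigr]\binom{a_2}{a_2/2}$. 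For $N[k\;\ell]$ I would argue that the contributing assignments are precisely the two that net $k$ twists in one tangle and $\ell$ in the other with matching sign: the order $(k,\ell)$ gives $N[k\;\ell]$ directly, while the order $(\ell,k)$ (with both signs reversed) gives a diagram isotopic to $N[k\;\ell]$ by Corollary~\ref{palTheorem}, and the multiplicities of these two are $\binom{a_1}{(a_1-k)/2}\binom{a_2}{(a_2-\ell)/2}$ and $\binom{a_1}{(a_1-\ell)/2}\binom{a_2}{(a_2-k)/2}$ --- the second being $0$ by the stated convention exactly when that assignment does not exist (i.e.\ $\ell>a_1$ or $k>a_2$). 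The count for $N[(k-1)\;1\;(\ell-1)]$ runs identically with $k\ell+1$ replaced by $k\ell-1$ throughout: these knots arise only from opposite--sign assignments, in the same two tangle--orders, and give the same pair of binomial products.

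The heart of the argument --- and the step I expect to be hardest --- is the uniqueness built into each count: that \emph{no other} sign--assignment yields a diagram isotopic to the target. I would dispatch this with three elementary facts. (i) Same--sign and opposite--sign diagrams never coincide with the ``wrong'' target, by a mod--$4$ count: $|m||n|+1\equiv 1$ and $|m||n|-1\equiv 3 \pmod 4$ (a product of two evens is $\equiv 0$), while $k\ell+1\equiv 1$ and $k\ell-1\equiv 3$. (ii) For a same--sign diagram, $N[|m|\;|n|]\simeq N[k\;\ell]$ forces (Theorem~\ref{KnotFracEquiv}) $|m||n|=k\ell$ and $\pm|n|\equiv \ell^{\pm1}\pmod{k\ell+1}$; as $|n|$ divides $k\ell$ it is the unique residue representative in $\{1,\dots,k\ell\}$, and the four candidates are $\ell$, $-k\equiv\ell^{-1}$ (represented by $k(\ell-1)+1$), $-\ell$ (represented by $\ell(k-1)+1$), and $k\equiv-\ell^{-1}$. (iii) Of those four, only $\ell$ and $k$ are even, so only $(|m|,|n|)=(k,\ell)$ and $(|m|,|n|)=(\ell,k)$ survive the requirement that net twist numbers be even --- exactly the two assignments already counted; the opposite--sign case for $N[(k-1)\;1\;(\ell-1)]$ is the same argument modulo $k\ell-1$, where the two spurious representatives $k(\ell-1)-1$ and $\ell(k-1)-1$ are again odd. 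As a global consistency check, the unknot count plus $\sum_{k,\ell}$ of both of the above counts, over all admissible even $k,\ell$, should total $2^{a_1+a_2}$.
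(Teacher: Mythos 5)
Your proposal is correct and follows essentially the same route as the paper: resolve both integer tangles via Lemma \ref{sum_approach}, count unknots by inclusion--exclusion over the assignments $a_1'\to 0$ and $a_2'\to 0$, and attribute the two binomial products for each nontrivial resultant type to the two tangle orderings identified by Corollary \ref{palTheorem}. The only difference is that you make explicit, via Theorem \ref{KnotFracEquiv} and the mod-$4$ and parity analysis of the fractions $(k\ell\pm1)/\ell$, the uniqueness claim that no other sign-assignment collides with a given target, which the paper compresses into the phrase ``by symmetry''; that verification is a worthwhile addition but does not change the structure of the argument.
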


\begin{proof}
Using Lemma \ref{sum_approach}, we start with ${{a_1} \choose {\frac{a_1}{2}}}$  $N[0 \; a_2']$ diagrams, which are all unknots. We can also automatically untangle the knot shadow if we take $a_2 \rightarrow \ell = 0$, but we've already counted the cases where $k = 0$. New unknots come from assignments where $k$ is non-trivial. There are exactly $2^{a_1} - {{a_1} \choose {\frac{a_1}{2}}}$ of these.

What remains are $\sum_m {{a_1} \choose {\frac{a_1 - |m|}{2}}} \sum_{n} {{a_2} \choose {\frac{a_2 - |n|}{2}}} N[ m \; n ]$ diagrams, where $m$, $n \not= 0$. By symmetry, for specific positive indices $k$, $\ell$, we have the proposed number of knot diagrams $N[ \pm k \; \pm \ell]$ and $N[ \pm k \; \mp \ell] \approx N[\pm (k-1) \; \pm 1 \; \pm (\ell -1)]$. 
\end{proof}

The combinatorics of the odd-odd length two rational links is a natural extension.

\begin{theorem}
Let $a_1, k$ be even integers and $a_2, \ell$ be odd integers, $2 \leq k \leq a_1$, $2 \leq \ell \leq a_2$. Then the $N[a_1' \; a_2']$ knot shadow has 

\begin{equation*}
2 {{a_1} \choose {\frac{a_1 - k}{2}}} {{a_2} \choose {\frac{a_2 - \ell}{2}}}
\end{equation*}

\noindent $N[k \; \ell]$ and $N[(k-1) \; 1 \; (\ell-1)]$ resultants,

\begin{equation*}
2 {{a_2} \choose {\frac{a_2 - 1}{2}}} \left[ {{a_1} \choose {\frac{a_1 - k}{2}}} + {{a_1} \choose {\frac{a_1 - (k+2) }{2}}}  \right]
\end{equation*}

\noindent $N[k+1]$ resultants, and

\begin{equation*}
2^{a_2} {{a_1} \choose {\frac{a_1}{2}}} + 2 {{a_1} \choose {\frac{a_1-2}{2}}} {{a_2} \choose {\frac{a_2-1}{2}}} 
\end{equation*}

\noindent unknot resultants.
\end{theorem}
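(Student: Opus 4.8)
The plan is to mimic the structure of the preceding even–even theorem, but now tracking the three distinct ways the $N[a_1' \; a_2']$ shadow with $a_1$ even, $a_2$ odd can resolve to each target. First I would apply Lemma \ref{sum_approach} twice, once across each integer tangle, writing the resolution set as a double sum $\sum_m \binom{a_1}{(a_1-|m|)/2} \sum_n \binom{a_2}{(a_2-|n|)/2} N[m \; n]$, where $m$ ranges over even values $-a_1,\dots,a_1$ and $n$ over odd values $-a_2,\dots,a_2$ (note $n=0$ never occurs since $a_2$ is odd). I would then partition these diagrams by the value of $m$: the $m=0$ block contributes $\binom{a_1}{a_1/2}$ copies of $N[0 \; a_2']$, each of which is an unknot, accounting for the $2^{a_2}\binom{a_1}{a_1/2}$ term; the $m\neq 0$ block is where all nontrivial resultants and the remaining unknots live.

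Next, within the $m\neq 0$ block, I would fix positive even $k$ and positive odd $\ell$ and enumerate the sign patterns. By the mirror symmetry (a link and its mirror are counted together, which is why the first tangle's sign is free), the diagrams $N[+k \; +\ell]$ and $N[-k \; -\ell]$ are identified, as are $N[+k \; -\ell]$ and $N[-k \; +\ell]$; by the Untangling Lemma the latter is $N[\pm(k-1)\;\pm1\;\pm(\ell-1)]$. Each of these two sign classes carries multiplicity $\binom{a_1}{(a_1-k)/2}\binom{a_2}{(a_2-\ell)/2}$, so together the $N[k\;\ell]$ and $N[(k-1)\;1\;(\ell-1)]$ resultants number $2\binom{a_1}{(a_1-k)/2}\binom{a_2}{(a_2-\ell)/2}$ — matching the claimed first formula. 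For the $N[k+1]$ term I would isolate the case $|n|=1$: here $N[\pm k \;\pm 1]\simeq N[\pm(k+1)]$ and $N[\pm k \;\mp 1]\simeq N[\pm(k-1)]$, so both the contribution from the length-$k$ tangle resolving to $k$ (paired with $|n|=1$) and from it resolving to $k+2$ (paired with $|n|=1$ and opposite sign) feed $N[k+1]$; summing the weights $2\binom{a_2}{(a_2-1)/2}\big[\binom{a_1}{(a_1-k)/2}+\binom{a_1}{(a_1-(k+2))/2}\big]$ gives the second formula, with the convention from the \textbf{note} handling out-of-range binomials.

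Finally I would collect the leftover unknots in the $m\neq 0$ block. These arise precisely from $N[\pm k \;\mp 1]\simeq N[\pm(k-1)]$ when $k=2$ (giving $N[\pm 1]$, the unknot), since $N[\pm 2 \;\mp 1]\simeq D[\pm 2]$ is trivial as noted in the earlier odd-$\ell$ theorem; the count of such diagrams is $2\binom{a_1}{(a_1-2)/2}\binom{a_2}{(a_2-1)/2}$, which combined with the $m=0$ unknots yields the third formula. The main obstacle I anticipate is bookkeeping: making sure no diagram is double-counted between the "$|n|=1$" slice and the "generic $\ell$" slice, and confirming that no other sign pattern collapses unexpectedly to the unknot or to a lower-crossing duplicate (e.g. checking that $N[\pm k \;\pm 1]$ for $k\geq 2$ genuinely has $k+1$ crossings and is alternating hence reduced). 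Carefully invoking canonical form and Theorem \ref{KnotFracEquiv} at each identification should close these gaps.
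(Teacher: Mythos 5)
Your proposal follows essentially the same route as the paper's proof: resolve each integer tangle via Lemma \ref{sum_approach}, peel off the $a_1 \rightarrow 0$ unknots, count the generic $N[\pm k \; \pm \ell]$ and $N[\pm k \; \mp \ell] \approx N[\pm(k-1)\;\pm 1\;\pm(\ell-1)]$ classes, and handle the degenerate $\ell = 1$ resolutions ($N[\pm k \; \pm 1] \approx N[\pm(k+1)]$, $N[\pm k \; \mp 1] \approx N[\pm(k-1)]$, with $N[\pm 2 \; \mp 1]$ giving the extra unknots) — indeed you make the bookkeeping for the $N[k+1]$ count more explicit than the paper does. One phrasing nit: the theorem asserts $2\binom{a_1}{(a_1-k)/2}\binom{a_2}{(a_2-\ell)/2}$ resultants of \emph{each} of the two types $N[k\;\ell]$ and $N[(k-1)\;1\;(\ell-1)]$ (each mirror-pair of sign assignments contributes that full count to its own type), so your "together ... number $2\binom{a_1}{(a_1-k)/2}\binom{a_2}{(a_2-\ell)/2}$" should be read per type, not as a combined total.
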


% N[a_1 a_2], one even, one odd theorem proof
\begin{proof}
Without loss of generality, choose $a_1$ to be even. Like above, we can take $a_1 \rightarrow 0$, making $2^{a_2} {{a_1} \choose {\frac{a_1}{2}}}$ unknot diagrams. 

By Lemma \ref{sum_approach}, if $a_1 \rightarrow \pm k$, $k \geq 0$, then for all $\ell \leq a_2$, there are ${{a_1} \choose {\frac{a_1 - k}{2}}} {{a_2} \choose {\frac{a_2 - \ell}{2}}}$ $N[\pm k \; \pm \ell]$ diagrams and ${{a_1} \choose {\frac{a_1 - k}{2}}} {{a_2} \choose {\frac{a_2 - \ell}{2}}}$ $N[\pm k \; \mp \ell] \approx N[\mp (k-1) \; \mp 1 \; \mp (\ell-1)]$ diagrams. For $k,\ell \geq 3$, these diagrams are in canonical form, hence reduced, and unique from each other. 
 
This leaves $N[\pm k \; \pm1]\approx N[\pm(k+1)]$ and $N[\pm k \; \mp 1 ]\approx N[\pm(k - 1)]$. $N[\pm 2 \; \mp 1] \approx N[\pm 1 \; \pm 1 \; 0] \approx D[\pm 2],$ or the unknot. 
\end{proof}

\begin{theorem} \label{rat_gen_theorem} \cite{MR3084750}
The $N[a_1' \; a_2' \; ... \; a_n']$ shadow has $\prod^n_{i=1} {{a_i} \choose {\frac{a_i - c_i}{2}}}$ resultants of the form $N[c_1 \; c_2 \; ... \; c_n]$.
\end{theorem}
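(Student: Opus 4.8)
The plan is to reduce the count to the single-tangle computation already recorded in Lemma \ref{sum_approach} (equivalently, to the resolution of integer shadow tangles) and then use the mutual independence of the tangles to multiply the local counts.

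First I would fix the bookkeeping. A resultant of the shadow $N[a_1' \; a_2' \; ... \; a_n']$ is a choice of over/under parity for each of its $\sum_{i=1}^n a_i$ crossings; I partition these crossings according to which integer tangle $a_i'$ they belong to. As in the proof on integer-tangle resolutions, label a crossing $+$ or $-$ by the sign of the overstrand's slope, and within the $i$th tangle let $p$ of the $a_i$ crossings be labelled $+$ and $q = a_i - p$ be labelled $-$. Repeated RII moves cancel $+$ with $-$ in pairs, so the $i$th tangle reduces to an integer tangle carrying the net value $c_i = p - q = a_i - 2q$; conversely every parity assignment of the $i$th tangle reducing to net value $c_i$ arises from this unique value of $q = \frac{a_i - c_i}{2}$, and the assignments realizing it are exactly the ways of choosing which $q$ crossings are negative. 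Hence there are precisely $a_i \choose \frac{a_i - c_i}{2}$ of them; if $c_i \not\equiv a_i \pmod 2$ or $|c_i| > a_i$ there are none, matching the convention that the binomial is then $0$.

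Next I would argue that these local reductions are independent and assemble to the asserted diagram. The RII moves simplifying the $i$th tangle involve only crossings of that tangle and occur in a disk meeting the rest of the diagram in the four tangle endpoints, so they commute with the reductions carried out in every other tangle and never create or destroy a crossing outside $a_i'$. Performing all of them therefore turns the chosen resultant into the diagram $N[c_1 \; c_2 \; ... \; c_n]$ with exactly the prescribed net value in each slot, and two resultants yield the same such diagram iff they induce the same net value in every tangle. Counting is now a product over independent tangles, so the number of resultants of the form $N[c_1 \; c_2 \; ... \; c_n]$ is $\prod_{i=1}^n {a_i \choose \frac{a_i - c_i}{2}}$. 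Equivalently, one may apply Lemma \ref{sum_approach} once to each slot in turn — resolving $a_1'$, then $a_2'$ inside each resulting term, and so on — and collect the product of binomial coefficients by an easy induction on $n$.

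The one point that needs care is the independence claim: confirming that the pairwise cancellations inside one tangle genuinely do not interact with those in its neighbours, including at the two doubled $a_1$--$a_2$ and $a_{n-1}$--$a_n$ connections of the numerator closure. This is why the statement is phrased at the level of the formal diagram $N[c_1 \; c_2 \; ... \; c_n]$: we stop reducing there rather than pushing on to a minimal diagram, so no Reidemeister moves straddling two tangles are ever used and no further isotopy is invoked. With that convention the count is exact, with no over- or under-counting of diagram shapes.
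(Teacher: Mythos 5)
Your argument is correct. The paper itself offers no proof of this statement---it is quoted from \cite{MR3084750}---but your tangle-by-tangle count via the $\pm$ labelling of crossings and RII cancellation, followed by multiplying the independent local counts, is precisely the argument underlying the paper's earlier integer-tangle resolution theorem and Lemma \ref{sum_approach}, of which this theorem is the iterated form, so nothing further is needed.
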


This implicitly details all resultants of a rational link, and guarantees that they will all be rational. It would be nice to be more specific about what particular rational links occur, but this rapidly becomes a grueling accounting exercise. For example, using Lemma \ref{sum_approach}, one can show the $N[a_1' \; a_2' \; a_3']$ shadow produces 

\begin{align*}
2^{a_2 + 1} {{a_1} \choose {\frac{a_1}{2}}} {{a_3} \choose {\frac{a_3 - 1}{2}}} &+ 2 {{a_1} \choose {\frac{a_1 -2}{2}}} {{a_2} \choose {\frac{a_2 - 2}{2}}} {{a_3} \choose {\frac{a_3 - 1}{2}}}  \\
&+ 2 {{a_2} \choose {\frac{a_2}{2}}} \sum_{c_1 > 0} {{a_1} \choose {\frac{a_1-c_1}{2}}} \left[ {{a_3} \choose {\frac{a_3 - c_1 + 1}{2}}}+ {{a_3} \choose {\frac{a_3 - c_1 - 1}{2}}}  \right] 
\end{align*}

\noindent unknots if $a_3$ is odd but $a_1$ and $a_2$ and even. However, if $a_2$ and $a_3$ are odd and only $a_1$ is even, there are instead 

\begin{align*}
2^{a_2 + 1} {{a_1} \choose {\frac{a_1}{2}}} {{a_3} \choose {\frac{a_3 - 1}{2}}} &+ 2 {{a_1} \choose {\frac{a_1 -2}{2}}} {{a_2} \choose {\frac{a_2 - 1}{2}}} {{a_3} \choose {\frac{a_3 - 3}{2}}}  \\
&+ 2 {{a_2} \choose {\frac{a_2 - 1}{2}}} {{a_3} \choose {\frac{a_3 -1}{2}}} \left[ 2^{a_1} -  {{a_1} \choose {\frac{a_1}{2}}}  \right] 
\end{align*}

\noindent unknots. For length three rational links alone, there still remains the nontrivial resultants for these cases, the remaining case where all three $a_i$ are odd, and the three cases where the rational tangle is a true link.

\section{Acknowledgements}

Thank you to Dr. Emily Peters for her helpful suggestions on improving this text.

\begin{bibdiv}
\begin{biblist}

%\bib{MR2079925}{book}{
%   author={Adams, Colin C.},
%   title={The knot book},
%   note={An elementary introduction to the mathematical theory of knots;
%   Revised reprint of the 1994 original},
%   publisher={American Mathematical Society, Providence, RI},
%   date={2004},
%   pages={xiv+307},
%   isbn={0-8218-3678-1},
%   review={\MR{2079925}},
%}

\bib{MR3844207}{article}{
   author={Cantarella, Jason},
   author={Henrich, Allison},
   author={Magness, Elsa},
   author={O'Keefe, Oliver},
   author={Perez, Kayla},
   author={Rawdon, Eric},
   author={Zimmer, Briana},
   title={Knot fertility and lineage},
   journal={J. Knot Theory Ramifications},
   volume={26},
   date={2017},
   number={13},
   pages={1750093, 20},
   issn={0218-2165},
   review={\MR{3844207}},
   doi={10.1142/S0218216517500936},
}

\bib{MR0258014}{article}{
   author={Conway, J. H.},
   title={An enumeration of knots and links, and some of their algebraic
   properties},
   conference={
      title={Computational Problems in Abstract Algebra},
      address={Proc. Conf., Oxford},
      date={1967},
   },
   book={
      publisher={Pergamon, Oxford},
   },
   date={1970},
   pages={329--358},
   review={\MR{0258014}},
}
   
\bib{MR2514545}{article}{
    AUTHOR = {Diao, Yuanan},
    author = {Ernst, Claus},
    author = {Stasiak, Andrzej},
     TITLE = {A partial ordering of knots and links through diagrammatic
              unknotting},
   JOURNAL = {J. Knot Theory Ramifications},
  FJOURNAL = {Journal of Knot Theory and its Ramifications},
    VOLUME = {18},
      YEAR = {2009},
    NUMBER = {4},
     PAGES = {505--522},
      ISSN = {0218-2165},
   MRCLASS = {57M25},
  MRNUMBER = {2514545},
       DOI = {10.1142/S0218216509007026},
       URL = {https://doi.org/10.1142/S0218216509007026},
}

\bib{pretzel}{article}{
author = {Diaz, R.},
author = {Manchon, P. M. G.},
title = {Pretzel knots up to nine crossings},
year = {2020},
journal = {Arxiv}
}

\bib{gitRepository}{article}{
  author = {Ducharme, Andrew},
  title = {RationalLinkFertility},
  year = {2023},
  publisher = {GitHub},
  journal = {GitHub repository,},
  number = {\url{https://github.com/aducharme/RationalLinkFertility}},
  %commit = {4f57d6a0e4c030202a07a60bc1bb1ed1544bf679}
}

\bib{MR4190429}{article}{
    AUTHOR = {Ducharme, Andrew},
    Author = {Peters, Emily},
     TITLE = {Combinatorial random knots},
   JOURNAL = {Involve},
  FJOURNAL = {Involve. A Journal of Mathematics},
    VOLUME = {13},
      YEAR = {2020},
    NUMBER = {4},
     PAGES = {633--654},
      ISSN = {1944-4176},
   MRCLASS = {57K10},
  MRNUMBER = {4190429},
MRREVIEWER = {Yuanan Diao},
       DOI = {10.2140/involve.2020.13.633},
       URL = {https://doi.org/10.2140/involve.2020.13.633},
}

\bib{MR4193872}{article}{
    AUTHOR = {Hanaki, Ryo},
     TITLE = {On fertility of knot shadows},
   JOURNAL = {J. Knot Theory Ramifications},
  FJOURNAL = {Journal of Knot Theory and its Ramifications},
    VOLUME = {29},
      YEAR = {2020},
    NUMBER = {11},
     PAGES = {2050080, 6},
      ISSN = {0218-2165},
   MRCLASS = {57K10},
  MRNUMBER = {4193872},
MRREVIEWER = {Zhiyun Cheng},
}

\bib{MR3084750}{article}{
   author={Henrich, Allison},
   author={Hoberg, Rebecca},
   author={Jablan, Slavik},
   author={Johnson, Lee},
   author={Minten, Elizabeth},
   author={Radovi\'{c}, Ljiljana},
   title={The theory of pseudoknots},
   journal={J. Knot Theory Ramifications},
   volume={22},
   date={2013},
   number={7},
   pages={1350032, 21},
   issn={0218-2165},
   review={\MR{3084750}},
   doi={10.1142/S0218216513500326},
}

%\bib{HTW}{article}{
%author = {Hoste, J.},
%author = {Thistlethwaite, M.},
%author = {Weeks, J.},
%title = {The first 1,701,936 knots},
%journal = {The Mathematical Intelligencer},
%volume = {20},
%pages ={33--48},
%year =  {1998},
%doi = {10.1007/BF03025227},
%}

\bib{MR4323911}{article}{
    AUTHOR = {Ito, Tetsuya},
     TITLE = {A note on knot fertility},
   JOURNAL = {Kyushu J. Math.},
  FJOURNAL = {Kyushu Journal of Mathematics},
    VOLUME = {75},
      YEAR = {2021},
    NUMBER = {2},
     PAGES = {273--276},
      ISSN = {1340-6116},
   MRCLASS = {57K10},
  MRNUMBER = {4323911},
MRREVIEWER = {Zhiyun Cheng},
}

\bib{kf2}{article}{
    AUTHOR = {Ito, Tetsuya},
     TITLE = {A note on knot fertility II},
   JOURNAL = {arXiv},
      YEAR = {2022},
}

%\bib{QBM}{article}{
%author = {Ito, Tetsuya},
%title = {A quantitative Birman--Menasco finiteness theorem and its application to crossing number},
%journal = {Journal of Topology},
%volume = {15},
%number = {4},
%pages = {1794-1806},
%year = {2022}
%}

\bib{MR1953344}{article}{
   author={Kauffman, Louis H.},
   author={Lambropoulou, Sofia},
   title={Classifying and applying rational knots and rational tangles},
   conference={
      title={Physical knots: knotting, linking, and folding geometric
      objects in $\mathbb R^3$},
      address={Las Vegas, NV},
      date={2001},
   },
   book={
      series={Contemp. Math.},
      volume={304},
      publisher={Amer. Math. Soc., Providence, RI},
   },
   date={2002},
   pages={223--259},
   review={\MR{1953344}},
   doi={10.1090/conm/304/05197},
}

%\bib{MR4023898}{article}{
%   author={Medina, Carolina},
%   author={Salazar, Gelasio},
%   title={The knots that lie above all shadows},
%   journal={Topology Appl.},
%   volume={268},
%   date={2019},
%   pages={106922, 13},
%   issn={0166-8641},
%   review={\MR{4023898}},
%   doi={10.1016/j.topol.2019.106922},
%}

\bib{MR3356086}{article}{
    AUTHOR = {Mendoza, Arazelle},
    author ={Sargent, Tara},
    author = {Shrontz, John Travis},
    author = {Drube, Paul},
     TITLE = {A new partial ordering of knots},
   JOURNAL = {Involve},
  FJOURNAL = {Involve. A Journal of Mathematics},
    VOLUME = {8},
      YEAR = {2015},
    NUMBER = {3},
     PAGES = {447--466},
      ISSN = {1944-4176},
   MRCLASS = {57M25 (57M27)},
  MRNUMBER = {3356086},
MRREVIEWER = {Neil R. Nicholson},
       DOI = {10.2140/involve.2015.8.447},
       URL = {https://doi.org/10.2140/involve.2015.8.447},
}

\bib{MR82104}{article}{
   author={Schubert, Horst},
   title={Knoten mit zwei Br\"{u}cken},
   language={German},
   journal={Math. Z.},
   volume={65},
   date={1956},
   pages={133--170},
   issn={0025-5874},
   review={\MR{82104}},
   doi={10.1007/BF01473875},
}

\bib{MR1001742}{article}{
   author={Taniyama, Kouki},
   title={A partial order of knots},
   journal={Tokyo J. Math.},
   volume={12},
   date={1989},
   number={1},
   pages={205--229},
   issn={0387-3870},
   review={\MR{1001742}},
   doi={10.3836/tjm/1270133558},
}

\bib{MR1030506}{article}{
    AUTHOR = {Taniyama, Kouki},
     TITLE = {A partial order of links},
   JOURNAL = {Tokyo J. Math.},
  FJOURNAL = {Tokyo Journal of Mathematics},
    VOLUME = {12},
      YEAR = {1989},
    NUMBER = {2},
     PAGES = {475--484},
      ISSN = {0387-3870},
   MRCLASS = {57M25},
  MRNUMBER = {1030506},
MRREVIEWER = {Mark E. Kidwell},
       DOI = {10.3836/tjm/1270133192},
       URL = {https://doi.org/10.3836/tjm/1270133192},
}

\end{biblist}

\end{bibdiv}

\newpage

\section{Appendix I: Rational Link Fertility Numbers}
	
\begin{table}[h]
\caption{Fertility numbers of rational knots through 10 crossings.}
\begin{tabular}{l|l|l|l|l|l|l|l|l}
K        & CF        & F & K         & CF            & F & K         	& CF              & F \\ \hline
$3_1$    & 3         & 3 	& $9_8$    & 2 4 1 2   & 6  	& $10_{17}$ & 4 1 1 4       & 5  \\
$4_1$    & 2 2       & 4 	& $9_9$    & 4 2 3     & 5		& $10_{18}$ & 4 1 1 2 2     & 6  \\
$5_1$    & 5         & 3 	& $9_{10}$ & 3 3 3     & 5		& $10_{19}$ & 4 1 1 1 3     & 6  \\
$5_2$    & 3 2       & 4 	& $9_{11}$ & 4 1 2 2   & 6		& $10_{20}$ & 3 5 2         & 6  \\
$6_1$    & 4 2       & 4 	& $9_{12}$ & 4 2 1 2   & 6 	& $10_{21}$ & 3 4 1 2       & 5  \\
$6_2$    & 3 1 2     & 5 	& $9_{13}$  & 3 2 1 3       & 6	& $10_{22}$ & 3 3 1 3       & 6  \\
$6_3$    & 2 1 1 2   & 5 	& $9_{14}$  & 4 1 1 1 2     & 5	& $10_{23}$ & 3 3 1 1 2     & 7  \\
$7_1$    & 7         & 3 	& $9_{15}$  & 2 3 2 2       & 6	& $10_{24}$ & 3 2 3 2       & 6 \\
$7_2$    & 5 2       & 4 	& $9_{17}$  & 2 1 3 1 2     & 5	& $10_{25}$ & 3 2 2 1 2     & 7  \\
$7_3$    & 4 3       & 5 	& $9_{18}$  & 3 2 2 2       & 6	& $10_{26}$ & 3 2 1 1 3     & 7  \\
$7_4$    & 3 1 3     & 4 	& $9_{19}$  & 2 3 1 1 2     & 6	& $10_{27}$ & 3 2 1 1 1 2     & 6 \\
$7_5$    & 3 2 2     & 5 	& $9_{20}$  & 3 1 2 1 2     & 6	& $10_{28}$ & 3 1 3 1 2       & 6 \\
$7_6$    & 2 2 1 2   & 6 	& $9_{21}$  & 3 1 1 2 2     & 6	& $10_{29}$ & 3 1 2 2 2       & 6 \\
$7_7$    & 2 1 1 1 2 & 5 	& $9_{23}$  & 2 2 1 2 2     & 6	& $10_{30}$ & 3 1 2 1 1 2     & 6 \\
$8_1$    & 6 2       & 4 	& $9_{26}$  & 3 1 1 1 1 2   & 6	& $10_{31}$ & 3 1 1 3 2       & 6 \\
$8_2$    & 5 1 2     & 5 	& $9_{27}$  & 2 1 2 1 1 2   & 7	& $10_{32}$ & 3 1 1 1 2 2     & 6 \\	
$8_3$    & 4 4       & 5 	& $9_{31}$  & 2 1 1 1 1 1 2 & 6	& $10_{33}$ & 3 1 1 1 1 3     & 6 \\	
$8_4$    & 4 1 3     & 5 	& $10_1$    & 8 2           & 4	& $10_{34}$ & 2 5 1 2         & 6 \\
$8_6$    & 3 3 2     & 6 	& $10_2$    & 7 1 2         & 5	& $10_{35}$ & 2 4 2 2         & 6 \\
$8_7$    & 4 1 1 2   & 5 	& $10_3$    & 6 4           & 4	& $10_{36}$ & 2 4 1 1 2       & 6 \\
$8_8$    & 2 3 1 2   & 6	& $10_4$    & 6 1 3         & 5	& $10_{37}$ & 2 3 3 2         & 6 \\
$8_9$    & 3 1 13    & 5	& $10_5$    & 6 1 1 2       & 5	& $10_{38}$ & 2 3 1 2 2       & 6 \\
$8_{11}$ & 3 2 1 2   & 5	& $10_6$    & 5 3 2         & 6	& $10_{39}$ & 2 2 3 1 2       & 7 \\
$8_{12}$ & 2 2 2 2   & 6	& $10_7$    & 5 2 1 2       & 5	& $10_{40}$ & 2 2 2 1 1 2     & 7 \\
$8_{13}$ & 3 1 1 1 2 & 6	& $10_8$    & 5 1 4.         &  5	& $10_{41}$ & 2 2 1 2 1 2     & 7 \\
$8_{14}$ & 2 2 1 1 2 & 6	& $10_9$    & 5 1 1 3       & 5 	& $10_{42}$ & 2 2 1 1 1 1 2   & 7 \\
$9_1$    & 9         & 3 	& $10_{10}$ & 5 1 1 1 2     & 6	& $10_{43}$ & 2 1 2 2 1 2     & 6 \\
$9_2$    & 7 2       & 4 	&  $10_{11}$ & 4 3 3         & 6	& $10_{44}$ & 2 1 2 1 1 1 2   & 7 \\
$9_3$    & 6 3       & 5 	& $10_{12}$ & 4 3 1 2       & 6	& $10_{45}$ & 2 1 1 1 1 1 1 2 & 7 \\
$9_4$    & 5 4       & 4 	& $10_{13}$ & 4 2 2 2       & 6 \\
$9_5$    & 5 1 3 & 4		& $10_{14}$ & 4 2 1 1 2     & 6 \\
$9_6$    & 5 2 2     & 5	& $10_{15}$ & 4 1 3 2       & 6 \\
$9_7$    & 3 4 2     & 6	& $10_{16}$ & 4 1 2 3       & 5 \\

\end{tabular}
\end{table}

\begin{table}[h]
\caption{Fertility number of rational 2-component links through 9 crossings.}
\begin{tabular}{l|l|l|l|l|l}
K        & CF        & F & K         & CF            & F  \\ \hline
$2^2_1$    & 2         & 2  & $8^2_7$    & 2 1 2 1 2     & 6 	 \\
$4^2_1$    & 4       & 4 	& $8^2_8$    & 2 1 1 1 1 2       & 6 \\
$5^2_1$    & 2 1 2         & 3 & $9^2_1$    & 6 1 2     & 5 	 \\	
$6^2_1$    & 6       & 4 	& $9^2_2$    & 5 1 1 2     & 6 \\
$6^2_2$    & 3 3       & 5 	& $9^2_3$    & 4 3 2   & 6  \\
$6^2_3$    & 2 2 2     & 5 	& $9^2_4$    & 4 1 4     & 5 \\
$7^2_1$    & 4 1 2   & 5 	& $9^2_5$ & 4 1 1 3  &  6  \\
$7^2_2$    & 3 1 1 2         & 6 	& $9^2_6$ & 3 3 1 2   & 6	  \\
$7^2_3$    & 2 3 2       & 5 & $9^2_7$ & 3 2 2 1 2   & 6  \\	
$8^2_1$    & 8       & 4 	& $9^2_8$  & 3 1 3 2     & 6	\\
$8^2_2$    & 5 3     & 5 	& $9^2_9$  & 3 1 1 1 3     & 6	  \\
$8^2_3$    & 4 2 2     & 6 & $9^2_{10}$  & 2 5 2       & 5	\\	
$8^2_4$    & 3 2 3   & 6   & $9^2_{11}$  & 2 2 2 1 2     & 6  \\
$8^2_5$    & 3 1 2 2 & 6  & $9^2_{12}$  & 2 2 1 1 1 2       & 6	 \\
$8^2_6$    & 2 4 2       & 5 

\end{tabular}
\end{table}

\begin{table}[]
\caption{Fertility numbers of rational 2-component links with 10 crossings. Ordering follows \cite{MR0258014}.}
\begin{tabular}{l|l|l|l}
CF          & F & CF            & F \\ \hline
10          & 2 & 3 2 1 2 2     & 6 \\
7 3         & 5 & 3 1 4 2       & 6 \\
6 2 2       & 6 & 3 1 2 1 3     & 6 \\
5 5         & 5 & 3 1 1 2 1 2   & 6 \\
5 2 3       & 6 & 3 1 1 1 1 1 2 & 6 \\
5 1 2 2     & 6 & 2 6 2         & 5 \\
4 4 2       & 6 & 2 3 2 1 2     & 6 \\
4 2 4       & 6 & 2 3 1 1 1 2   & 6 \\
4 2 1 3     & 6 & 2 2 2 2 2     & 6 \\
4 1 2 1 2   & 6 & 2 2 1 1 2 2   & 6 \\
4 1 1 1 1 2 & 6 & 2 1 4 1 2     & 6 \\
3 4 3       & 6 & 2 1 3 1 1 2   & 6 \\
3 3 2 2     & 6 & 2 1 1 2 1 1 2 & 6 \\
3 2 2 3     & 6        
\end{tabular}
\end{table}

\newpage 

\textit{}

In the following tables, a number $m$ is fixed, but a number $n^*$ can take any value $n + 2k$, $k \in \mathbb{N}$ and the knot will retain its fertility number.

\begin{table}[h]
\begin{tabular}{c|c|c|c}
F(L) & Length 1 & Length 2 & Length 3 \\ \hline
2	&	2	&	&	 \\ \hline
4	&	$4^*$	&	&	\\ \hline
5	&		&	$3^*$ $3^*$	& \begin{tabular}[c]{@{}c@{}}$2$ $3^*$ $2$ \\ $4^*$ $1$ $2^*$\end{tabular} \\ \hline
6	&	&	&	$4^*$ $3^*$ $2^*$
\end{tabular}
\caption{Length 1, 2, and 3 rational link fertility number. Links of longer length have fertility number 6.}
\end{table}

\begin{table}[h]
\begin{tabular}{c|c|c}
F(L) & Length 1 & Length 2 \\ \hline
3	&	$3^*$	&	 \\ \hline
4	&		&	\begin{tabular}[c]{@{}c@{}}$2^*$ $2^*$ \\ $2$ $3^*$ \end{tabular} \\ \hline
5	&	&	$4^*$ $3^*$
\end{tabular}
\caption{Length 1 and 2 rational knot fertility number.}
\end{table}

\begin{table}[h]
\begin{tabular}{c|c|c|c}
F(K) & 3 1 2             & 3 1 3             & 3 2 2             \\ \hline
4    &                   & $3^*$ 1 $3^*$     &                   \\
5    & $3^*$ 1 $2^*$     & $3^*$ $3^*$ $3^*$ & $3^*$ 2 $2^*$     \\
6    & $3^*$ $3^*$ $2^*$ &                   & $3^*$ $2^*$ $2^*$
\end{tabular}
\caption{Length 3 rational knot fertility number.}
\end{table}

\begin{landscape}

\begin{table}[]
\begin{tabular}{c|c|c|c|c|c|c|c}
F(K) & 2 1 1 2                                                                               & 2 2 1 2                                                                                                 & 3 1 1 3                                                                               & 3 2 1 2                 & 2 2 2 2                 & 3 2 1 2                 & 3 2 2 2                                                                                                     \\ \hline
5    & $2^*$ 1 1 $2^*$                                                                       &                                                                                                         & $3^*$ 1 1 $3^*$                                                                       & $3^*$ $2^*$ 1 $2^*$     &                         &                         &                                                                                                             \\ \hline
6    & \begin{tabular}[c]{@{}c@{}}$2^*$ $3^*$ 1 $2^*$\\ $2^*$ $3^*$ $3^*$ $2^*$\end{tabular} & \begin{tabular}[c]{@{}c@{}}$2^*$ $2^*$ $1^*$ 2\\ $2^*$ $2^*$ 1 $2^*$\\ 2 $2^*$ $1^*$ $2^*$\end{tabular} & \begin{tabular}[c]{@{}c@{}}$3^*$ $3^*$ 1 $3^*$\\ $3^*$ $3^*$ $3^*$ $3^*$\end{tabular} & $3^*$ 2 $3^*$ $2^*$     & $2^*$ $2^*$ $2^*$ $2^*$ & $3^*$ $2^*$ 1 $2^*$     & $3^*$ 2 $2^*$ 2                                                                                             \\ \hline
7    &                                                                                       & $4^*$ $2^*$ $3^*$ $4^*$                                                                                 &                                                                                       & $3^*$ $4^*$ $3^*$ $2^*$ &                         & $3^*$ $2^*$ $3^*$ $2^*$ & \begin{tabular}[c]{@{}c@{}}$3^*$ $4^*$ $2^*$ 2\\ $3^*$ 2 $2^*$ $4^*$\\ $3^*$ $4^*$ $2^*$ $4^*$\end{tabular}
\end{tabular}
\caption{Fertility number for non-locally-fertile length 4 rational knots.}
\end{table}

\begin{table}[]
\begin{tabular}{c|c|c|c|c|c|c}
F(K) & 2 1 1 1 2                     & 3 1 1 1 2                                                                                     & 2 2 1 1 2                                                                                         & 2 2 1 2 2                     & 3 1 2 1 2                                                                                     & 3 1 1 2 2                     \\ \hline
5    & $2^*$ 1 $1^*$ 1 $2^*$         &                                                                                               &                                                                                                   &                               &                                                                                               &                               \\ \hline
6    & $2^*$ $3^*$ 1 $1^*$ $2^*$     & \begin{tabular}[c]{@{}c@{}}$3^*$ 1 $1^*$ $1^*$ 2\\ $3^*$ 1 $1^*$ 1 $2^*$\end{tabular}         & \begin{tabular}[c]{@{}c@{}}2 $2^*$ 1 $1^*$ $2^*$  \\ $2^*$ $2^*$ 1 $1^*$ 2\end{tabular}           & $2^*$ $2^*$ 1 $2^*$ $2^*$     & \begin{tabular}[c]{@{}c@{}}$3^*$ $1^*$ $2^*$ 1 $2^*$\\ $3^*$ 1 $2^*$ $1^*$ $2^*$\end{tabular} & $3^*$ 1 $1^*$ $2^*$ $2^*$     \\ \hline
7    & $2^*$ $3^*$ $3^*$ $1^*$ $2^*$ & \begin{tabular}[c]{@{}c@{}}$3^*$ 1 1 $3^*$ $4^*$\\ $3^*$ $3^*$ $1^*$ $1^*$ $2^*$\end{tabular} & \begin{tabular}[c]{@{}c@{}}$4^*$ 2 $1^*$ $1^*$ $4^*$\\ $2^*$ $2^*$ $3^*$ $1^*$ $2^*$\end{tabular} & $2^*$ $2^*$ $3^*$ $2^*$ $2^*$ & $3^*$ $3^*$ $2^*$ $3^*$ $2^*$                                                                 & $3^*$ $3^*$ $1^*$ $2^*$ $2^*$
\end{tabular}
\caption{Fertility number for non-locally-fertile length 5 rational knots with 9 or fewer crossings.}
\end{table}

\begin{table}[]
\begin{tabular}{c|c|c|c}
F(K) & 3 1 2 2 2                                                                                         & 3 2 2 1 3                                                                                             & 3 2 1 2 3                     \\ \hline
6    & $3^*$ 1 $2^*$ $2^*$ 2                                                                             & $3^*$ 2 $2^*$ 1 $3^*$                                                                                 & $3^*$ $2^*$ 1 $2^*$ $3^*$     \\ \hline
7    & \begin{tabular}[c]{@{}c@{}}$3^*$ 1 $2^*$ $2^*$ $4^*$\\ $3^*$ $3^*$ $2^*$ $2^*$ $2^*$\end{tabular} & \begin{tabular}[c]{@{}c@{}}$3^*$ $4^*$ $2^*$ $1^*$ $3^*$\\ $3^*$ $2^*$ $2^*$ $3^*$ $3^*$\end{tabular} & $3^*$ $2^*$ $3^*$ $2^*$ $3^*$
\end{tabular}
\caption{Fertility number for non-locally fertile length 5 rational knots with 10 or more crossings.}
\end{table}

\begin{table}[]
\begin{tabular}{c|c|c|c|c}
F(K) & 3 1 1 1 1 2                                                                                                                     & 3 2 1 1 1 2                                                                                               & 3 1 2 1 1 2                                                                                               & 3 1 1 1 2 2                                                                                               \\ \hline
6    & $3^*$ 1 $1^*$ 1 $1^*$ 2                                                                                                         & $3^*$ $2^*$ 1 $1^*$ 1 $2^*$                                                                               & $3^*$ 1 $2^*$ 1 $1^*$ $2^*$                                                                               & $3^*$ 1 $3^*$ 1 $2^*$ $2^*$                                                                               \\ \hline
7    & \begin{tabular}[c]{@{}c@{}}$3^*$ $3^*$ $1^*$ 1 $1^*$ 2\\ $3^*$ 1 $1^*$ $3^*$ $1^*$ 2\\ $3^*$ 1 $1^*$ 1 $1^*$ $4^*$\\ $3^*$ $3^*$ $1^*$ $3^*$ $1^*$ $4^*$ \end{tabular} & \begin{tabular}[c]{@{}c@{}}$3^*$ $2^*$ $3^*$ $1^*$ 1 $2^*$\\ $3^*$ $2^*$ 1 $1^*$ $3^*$ $2^*$\\ $3^*$ $2^*$ $3^*$ $1^*$ $3^*$ $4^*$ \end{tabular} & \begin{tabular}[c]{@{}c@{}}$3^*$ $3^*$ $2^*$ 1 $1^*$ $2^*$\\ $3^*$ 1 $2^*$ $3^*$ $1^*$ $2^*$ \\ $3^*$ $3^*$ $2^*$ $3^*$ $1^*$ $2^*$ \end{tabular} & \begin{tabular}[c]{@{}c@{}}$3^*$ $3^*$ $3^*$ 1 $2^*$ $2^*$\\ $3^*$ 1 $3^*$ $3^*$ $2^*$ $2^*$\\ $3^*$ $3^*$ $3^*$ $3^*$ $2^*$ $2^*$\end{tabular}
\end{tabular}
\caption{Fertility number for some non-locally-fertile length 6 rational knots.}
\end{table}

\begin{table}[]
\begin{tabular}{c|c|c|c|c}
F(K) & 3 1 1 1 1 3                                                                                                   & 2 1 2 2 1 2                                                                                                   & 3 1 2 1 2 2                                                                                                                                                           & 3 1 2 2 1 3                                                                                                   \\ \hline
6    & $3^*$ 1 $1^*$ $1^*$ 1 $3^*$                                                                                   & $2^*$ 1 $2^*$ $2^*$ 1 $2^*$                                                                                   & $3^*$ 1 $2^*$ 1 $2^*$ 2                                                                                                                                               & $3^*$ 1 $2^*$ $2^*$ 1 $3^*$                                                                                   \\ \hline
7    & \begin{tabular}[c]{@{}c@{}}$3^*$ $3^*$ $1^*$ $1^*$ 1 $3^*$\\ $3^*$ $3^*$ $1^*$ $1^*$ $3^*$ $3^*$\end{tabular} & \begin{tabular}[c]{@{}c@{}}$2^*$ $3^*$ $2^*$ $2^*$ 1 $2^*$\\ $2^*$ $3^*$ $2^*$ $2^*$ $3^*$ $2^*$\end{tabular} & \begin{tabular}[c]{@{}c@{}}$3^*$ $3^*$ $2^*$ 1 $2^*$ 2\\ $3^*$ 1 $2^*$ $3^*$ $2^*$ 2\\ $3^*$ 1 $2^*$ 1 $2^*$ $4^*$\\ $3^*$ $3^*$ $2^*$ $3^*$ $2^*$ $4^*$\end{tabular} & \begin{tabular}[c]{@{}c@{}}$3^*$ $3^*$ $2^*$ $2^*$ 1 $3^*$\\ $3^*$ $3^*$ $2^*$ $2^*$ $3^*$ $3^*$\end{tabular}
\end{tabular}
\caption{Fertility number for the remaining non-locally-fertile length 6 rational knots.}
\end{table}

\begin{table}[]
\begin{tabular}{c|c|c|c}
F(K) & 2 1 1 1 1 1 2                                                                                                                 & 3 1 1 1 1 1 3                                                                                                                 & 3 1 2 1 2 1 3                                                                                                                 \\ \hline
6    & $2^*$ 1 $1^*$ 1 $1^*$ 1 $2^*$                                                                                                 & $3^*$ 1 $1^*$ $3^*$ $1^*$ 1 $3^*$                                                                                             & $3^*$ 1 $2^*$ 1 $2^*$ 1 $3^*$                                                                                                 \\ \hline
7    & \begin{tabular}[c]{@{}c@{}}$2^*$ $1^*$ $1^*$ $3^*$ $1^*$ $1^*$ $2^*$\\ $2^*$ $3^*$ $1^*$ $1^*$ $1^*$ $1^*$ $2^*$\end{tabular} & \begin{tabular}[c]{@{}c@{}}$3^*$ $1^*$ $1^*$ $3^*$ $1^*$ $1^*$ $3^*$\\ $3^*$ $3^*$ $1^*$ $1^*$ $1^*$ $1^*$ $3^*$\end{tabular} & \begin{tabular}[c]{@{}c@{}}$3^*$ $1^*$ $2^*$ $3^*$ $2^*$ $1^*$ $3^*$\\ $3^*$ $3^*$ $2^*$ $1^*$ $2^*$ $1^*$ $3^*$\end{tabular}
\end{tabular}
\caption{Fertility number for non-locally-fertile length 7 rational knots.}
\end{table}

\end{landscape}

\end{document}